\newcommand{\lusim}[1]{\smash{\underset{\raisebox{1.2pt}[0cm][0cm]{$\sim$}}
		{{#1}}}}
\DeclareRobustCommand\widecheck[1]{{\mathpalette\@widecheck{#1}}}
\def\@widecheck#1#2{%
	\setbox\z@\hbox{\m@th$#1#2$}%
	\setbox\tw@\hbox{\m@th$#1%
		\widehat{%
			\vrule\@width\z@\@height\ht\z@
			\vrule\@height\z@\@width\wd\z@}$}%
	\dp\tw@-\ht\z@
	\@tempdima\ht\z@ \advance\@tempdima2\ht\tw@ \divide\@tempdima\thr@@
	\setbox\tw@\hbox{%
		\raise\@tempdima\hbox{\scalebox{1}[-1]{\lower\@tempdima\box
				\tw@}}}%
	{\ooalign{\box\tw@ \cr \box\z@}}}
\theoremstyle{plain}
\newtheorem{thm}{Theorem}[section]
\newtheorem{claim}[thm]{Claim}
\newtheorem{lemma}[thm]{Lemma}
\newtheorem{corollary}[thm]{Corollary}
\newtheorem{remark}[thm]{Remark}
\newtheorem{defn}[thm]{Definition}
\newtheorem*{remark*}{Remark}
\newtheorem*{lemma*}{Lemma}
\newtheorem*{claim*}{Claim}
\newtheorem*{thm*}{Theorem}
\begin{document}

\title{Non-stationary support iterations of Prikry Forcings and
	Restrictions of Ultrapower Embeddings to the Ground Model}

\baselineskip=18pt
\author{ Moti Gitik\footnote{ The work was partially supported by ISF grant No. 1216/18.}  and Eyal Kaplan}

\date{\today}
\maketitle

\begin{abstract}
	We continue the study started in \cite{RestElm} and characterize $j\restriction_V$, where $j:V[G]\to M[H]$ is an ultrapower embedding by a normal ultrafilter after a non-stationary support iteration of Prikry forcings.
\end{abstract}

\section*{Introduction}

Let $ P $ be a forcing notion, and assume that $ G\subseteq P $ is generic over $ V $. Assume that a cardinal $ \kappa $ is measurable in $ V\left[G\right] $, and let $ W\in V\left[G\right] $ be a normal measure on $ \kappa $, with a corresponding ultrapower embedding $ j_W\colon V\left[G\right]\to M\left[H\right] $. We continue our study from \cite{RestElm}, and consider the embedding $ j_W\restriction_{V} $, focusing on the following questions:
\begin{enumerate}
	\item Is $ j_W\restriction_{V} $ an iteration of $ V $ (by its measures or extenders)?
	\item Is $ j_W\restriction_{V} $ definable over $ V $?
\end{enumerate}
The answers to both questions depend on the forcing notion $ P $ and the ground model $ V $. The first question is answered affirmatively, for every forcing notion $ P $, assuming that there is no inner model with a Woodin cardinal, and   $ V = K $ is the core model \cite{schindler2006iterates}. The answer to the second question can go both ways. For instance, if $ P $ has a gap below $ \kappa $, in the sense of \cite{hamkins2001gap}, $ j_W\restriction_{V} $ is a definable class of $ V $, no matter what is the ground model. On the other hand, the answer for $2$ can be negative, even when we force over the core model and $ \kappa $ is measurable there (see, e.g., section 5.2 in \cite{RestElm}).

In this paper we focus on iterations of Prikry forcings. Let $ \kappa $ be a measurable limit of measurables, and assume that $ \mbox{GCH} $ holds below $ \kappa $. We would like to perform an iterated forcing, destroying the measurability of every  measurable cardinal $ \alpha<\kappa $. Our main goal will be the following theorem:

\begin{thm}
	Assume $ \mbox{GCH} $ below $ \kappa $. Let $ P $ be a nonstationary support iteration of Prikry forcings below $ \kappa $. Let $ G\subseteq P $ be generic over $ V $, and $ W\in V\left[G\right] $ be a normal measure on $ \kappa $ in $ V\left[G\right] $. Then $ j_W\restriction_{V} $ is an iterated ultrapower of $ V $ by normal measures.
	\\Moreover, a description of such iteration is given. 
\end{thm}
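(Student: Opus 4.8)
\emph{Proof idea.} The plan is to first reduce the statement to identifying the target model of $j_W\restriction_{V}$, and then to resolve this embedding as an iterated ultrapower by a recursion that reads the measures and the length off $W$ and the generic $G$. Following \cite{RestElm}: since $W$ is a normal ultrafilter on $\kappa$ in $V[G]$ we have $\mathrm{crit}(j_W)=\kappa$, and $j_W\restriction_{V}$ coincides with the ultrapower embedding $j_{W\cap V}\colon V\to M$, where $W\cap V:=W\cap\mathcal P(\kappa)^{V}$ is a $V$-normal, countably complete $V$-ultrafilter and $M$ --- the ground model of the extension $M[H]=\mathrm{Ult}(V[G],W)$ --- is the transitive collapse of $\mathrm{Ult}(V,W\cap V)$; this ultrapower is well-founded since it embeds into $M[H]$. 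It therefore suffices to show that the (possibly external) ultrapower map $j_{W\cap V}$ equals an iterated ultrapower of $V$ by normal measures, and to describe that iteration.

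Two facts, both consequences of the support being nonstationary, organize the analysis. First, no condition $p\in P$ has its support in $W$ --- supports are nonstationary in $\kappa$ while $W$ contains every club --- so $\kappa\notin j_W(\mathrm{supp}(p))$ for every $p$, and hence $j_W[G]$ imposes no constraint at the $\kappa$-th coordinate of $j_W(P)$. Second, $\{\alpha<\kappa:\alpha\text{ is measurable in }V\}\notin W$: otherwise $\alpha\mapsto t_\alpha$ (the Prikry sequence $G$ adds at a measurable $\alpha$) is defined $W$-almost everywhere and $[\alpha\mapsto t_\alpha]_W$ is a cofinal $\omega$-sequence in $\kappa$ lying in $M[H]$, which is an inner model of $V[G]$ --- impossible, as $\kappa$ is regular in $V[G]$. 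The same inclusion $M[H]\subseteq V[G]$ forces $M\models$``$\kappa$ is not measurable'': if $\kappa$ were measurable in $M$, then since $\kappa<j_W(\kappa)$, elementarity would make $j_W(P)$ Prikry-force at $\kappa$, singularizing $\kappa$ in $M[H]$. So $j_W\restriction_{V}$ genuinely destroys the measurability of $\kappa$: if $W\cap V\in V$ it is a single ultrapower by a normal measure on $\kappa$ of Mitchell order zero (so that $\kappa$ loses its measurability already at the first step), and otherwise a strictly longer iteration, all of whose critical points are $\geq\kappa$, with first critical point $\kappa$.

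I would then resolve $j_W\restriction_{V}$ as an iteration by recursion. Set $V_0=V$ and $k_0=j_W\restriction_{V}\colon V\to M$; given $k_i\colon V_i\to M$ with $\kappa_i=\mathrm{crit}(k_i)$, let $U_i=\{A\in\mathcal P(\kappa_i)^{V_i}:\kappa_i\in k_i(A)\}$, put $V_{i+1}=\mathrm{Ult}(V_i,U_i)$ and $k_{i+1}([f]_{U_i})=k_i(f)(\kappa_i)$, take direct limits at limit stages, and stop at the least $\theta$ with $k_\theta=\mathrm{id}$. Each $V_i$ is well-founded (it embeds into $M$ via $k_i$), the critical points $\kappa_i$ strictly increase, and one must check that the recursion halts at a set stage, whence $V_\theta=M$ and $j_{0\theta}=j_W\restriction_{V}$. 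The content of the ``moreover'' is the identification of $\theta$, of the $U_i$, and of the critical sequence $\langle\kappa_i\rangle$: I expect $W\cap V$ to split canonically as a Mitchell-order-zero normal measure on $\kappa$ in $V$ together with a piece of $P$-generic data carried by $W$ --- concretely, the object $[\alpha\mapsto G\restriction\alpha]_W$ read at a $W$-generic coordinate --- with the first iteration step the ultrapower at $\kappa$ by that measure and the remaining steps the canonical iteration that ``plays out'' the generic data, so that the tail of the critical sequence becomes Prikry-generic over $M$ for the diagonal measure $[\alpha\mapsto U_\alpha]_{W\cap V}$, and $\theta$ is the stage at which this is exhausted.

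The main obstacle I anticipate is justifying the recursion above: proving that each derived ultrafilter $U_i$ actually lies in $V_i$ and is a normal measure there --- equivalently, that $k_i$ has a $V_i$-internal first generator --- and that the process closes off at a set stage with direct limit exactly $M$ and limit map exactly $j_W\restriction_{V}$. This is precisely where the nonstationary support is essential: keeping the supports of conditions out of $W$ (the first fact above) is what makes the relevant subsets of the $\kappa_i$ available inside $V_i$ and forces $W\cap V$ to split into ``ground-model measure plus $P$-generic data''. A further, bookkeeping-heavy difficulty is turning that split into a uniform, explicit description of $\langle\kappa_i\rangle$, $\langle U_i\rangle$ and $\theta$ in terms of the interaction of $W$ with the Prikry sequences $\langle t_\alpha\rangle$, rather than a bare existence statement; here countable completeness of $W$ and the GCH-below-$\kappa$ hypothesis enter.
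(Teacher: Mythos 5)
Your overall plan --- derive an ultrafilter from $\mathrm{crit}(k_i)$, take one ultrapower, repeat, and stop when the factor map is the identity --- is indeed the architecture of the paper's proof. But the proposal has two genuine problems, one a misstatement of the structure and one a missing idea.

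First, the setup is wrong. You assert that $j_W\restriction_V$ ``coincides with the ultrapower embedding $j_{W\cap V}\colon V\to M$'' and that $M$ ``is the transitive collapse of $\mathrm{Ult}(V,W\cap V)$,'' and then you give a case split ``if $W\cap V\in V$ it is a single ultrapower\dots otherwise a strictly longer iteration.'' Both halves of this are off. The paper proves (via the no-fresh-subsets lemma) that $W\cap V\in V$ \emph{always} for this forcing, so the ``otherwise'' branch never occurs --- and yet $j_W\restriction_V$ is never a single ultrapower. The internal ultrapower $M_U=\mathrm{Ult}^V(V,W\cap V)$ is a \emph{proper} submodel of the ground model $M$ of $M[H]$: the factor map $k\colon M_U\to M$, $[f]_U\mapsto[f]_W$, has critical point $\mu_0$, the least measurable of $M_U$ above $\kappa$ (Lemma 3.4 in the paper). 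So $j_W\restriction_V\ne j_{W\cap V}$; it only \emph{factors} through $j_U$, and the entire content of the theorem is resolving the nontrivial map $k$. Your phrasing collapses exactly the distinction the theorem is about.

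Second, you correctly flag the ``main obstacle'' --- showing each derived $U_i$ lives in $V_i$ and is a normal measure there, and that the recursion terminates --- but the mechanism you offer (``keeping the supports out of $W$ \dots is what makes the relevant subsets available inside $V_i$'') does not by itself close the gap. The fact that supports are not in $W$ gives the \emph{first} factor ($W\cap V\in V$ and $U_{\mu_0}\in M_U$); to push this through all the later critical points one needs the Multivariable Fusion Lemma (Lemma 3.10), which controls names relative to the finite vector of ``future critical points'' $\langle\mu_{\alpha_0}(\xi),\dots,\mu_{\alpha_k}(\xi)\rangle$ read off $G$, not just relative to $\xi$. That machinery is what lets one prove, for each $\alpha$, that $\lambda_\alpha=\mathrm{crit}(k_\alpha)$ is measurable in $M_\alpha$ (Lemma 3.11), that $\lambda_\alpha$ appears in a Prikry sequence in $M[H]$ (Lemma 3.12), and --- the hard part you single out --- that $U_{\lambda_\alpha}\in M_\alpha$ and has Mitchell order $0$ (Lemma 3.13). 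Without something like the multivariable fusion argument, ``nonstationary support'' is not enough to establish internality of the derived measures at successor stages, and the recursion is not justified. Relatedly, the ``moreover'' in the theorem is answered concretely in the paper --- the length is $\kappa^*=j_U(\kappa)$, the $\mu_\alpha$ are the least measurables above $\sup_{\alpha'<\alpha}\mu_{\alpha'}$ with $V$-cofinality $>\kappa$ (later simplified in Section 4), and the Prikry sequences are, up to a finite initial segment, the critical sequences of $\omega$-iterations by the $U_{\mu_\alpha}$ --- whereas your description of ``a diagonal measure $[\alpha\mapsto U_\alpha]_{W\cap V}$ whose Prikry generic is the critical sequence'' is an impressionistic sketch that does not match the actual interleaved structure established in Corollary 4.9.
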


We focus on the nonstationary-support iteration for sake of simplicity; the full-support iteration will be considered in a future work.

This paper is structured as follows: In section $ 1 $, we present the forcing and its basic properties. In section $ 2 $, we characterize all the normal measures $ W\in V\left[G\right] $ on $ \kappa $, using and extending results from  \cite{ben2017homogeneous} and \cite{RestElm}; More specifically, we prove that every such measure is the unique extension of some normal measure of Mitchell order $ 0 $ on $ \kappa $ in $ V $. In section $ 3 $, we present the structure of $ j_W\restriction_{V} $ as an iterated ultrapower, and provide a sufficient condition for its definability in $ V $. Finally, in section $ 4 $, we study iterated ultrapowers of $ V $ in general, developing tools for computation of cofinalities, in $ V $, of ordinals which become inaccessibles at some stage in an iteration; we apply those tools to simplify the  presentation of $ j_W\restriction_{V} $ as an iteration of $ V $.

We assume throughout this paper that $ \mbox{GCH}_{\leq \kappa} $ holds.

\section{The Forcing} 

\begin{defn}
An iteration $ \langle P_\alpha, \lusim{Q}_\beta \colon \alpha\leq\kappa\ ,\ \beta <\kappa \rangle $ is called a nonstationary support iteration of Prikry-type forcings if and only if, for every $ \alpha\leq\kappa $ and $p\in P_\alpha$,
\begin{enumerate}
\item  $ p $ is a function with domain $ \alpha $ such that for every $\beta <\alpha$, $p\restriction \beta  \in P_\beta$, and $p\restriction \beta \Vdash p(\beta) \in \lusim{Q}_\beta \mbox{ and }  \langle \lusim{Q}_{\beta}, \lusim{\leq}_{\lusim{Q}_{\beta} }, \lusim{\leq}^*_{\lusim{Q}_{\beta} } \rangle \ \mbox{ is a Prikry-type forcing.}$ 
\item If $ \alpha \leq \kappa $ is inaccessible, then  $\mbox{supp}(p)\cap \alpha$ is nonstationary in $\alpha$ (where $ \mbox{supp}(p)\subseteq \alpha$ is the complement of the set $\{ \beta< \alpha \colon p\restriction_{\beta}\Vdash p(\beta) \mbox{ is trivial} \}$). In other words, there exists a club $ C\subseteq \alpha $ such that for every $ \beta\in C $, $ p\restriction_{\beta}\Vdash p(\beta) \mbox{ is trivial} $.
\end{enumerate}

Suppose that $p,q \in P_\alpha$. Then $p\geq q$, which means that $p$ extends $q$, holds if and only if:
\begin{enumerate}
\item $\mbox{supp}(q)\subseteq \mbox{supp}(p)$.
\item For every $\beta \in \mbox{supp}(q)$, $p\restriction \beta \Vdash p(\beta)\geq_\beta q(\beta)$ (where $ \geq_\beta$ is the order of $ Q_\beta $).
\item There is a finite subset $b\subseteq \mbox{supp}(q)$, such that for every $\beta \in \mbox{supp}(q)\setminus b$, $p\restriction \beta \Vdash p(\beta)\geq^{*}_\beta q(\beta)$ (where $ \geq^{*}_{\beta} $ is the direct extension order of $ Q_\beta $).
\end{enumerate} 
If $b=\emptyset$, we say that $p$ is a direct extension of $q$, and denote it by $p\geq^* q$. 

\end{defn}

In this section we consider a nonstationary support iteration of Prikry forcings,  $ \langle P_\alpha, \lusim{Q}_\beta \colon \alpha\leq\kappa\ ,\ \beta <\kappa \rangle $. Denote by $ \Delta \subseteq \kappa $ the set of measurable cardinals below $ \kappa $ in $ V $. Assume that $ \alpha \in \Delta $ and $ P_{\alpha} $ has been defined. Assume that $ \lusim{U}^{*}_{\alpha} $ is a $ P_{\alpha} $-name for a normal measure on $ \alpha $ in $ V^{P_{\alpha}} $ (we will prove that at least one such measure exists). Let $ \lusim{Q}_{\alpha} $ be the Prikry forcing with $ \lusim{U}^*_{\alpha} $. If $ \alpha  $ is not measurable in $ V $,  $ \lusim{Q}_{\alpha} $ is the trivial forcing.

We did not specify the normal measure $ U^{*}_{\alpha} $ which is used at stage $ \alpha $. As we will prove, each such measure in $ V^{P_{\alpha}} $ is the unique extension of a normal measure $ U_{\alpha} $ of Mitchell order $ 0 $ in $ V $. Let $ \lusim{ \mathcal{U}} = \langle \lusim{U}_{\alpha} \colon \alpha\in \Delta \rangle $ be a sequence of names, such that, for every $ \alpha\in \Delta $, $ \lusim{U}_{\alpha} $ is forced by the weakest condition in $ P_{\alpha} $ to be $ \lusim{U}^{*}_{\alpha} \cap V $. Given $ G\subseteq P_{\kappa} $ generic over $ V $, let $ \mathcal{U} = \langle U_{\alpha} \colon \alpha\in \Delta \rangle $ be the interpretation of the names in $ \lusim{\mathcal{U}} $.  Then $ \mathcal{U} $ is a sequence of measures in $ V $, but $ \mathcal{U} $ itself does not necessarily belong to $ V $. Since $\mathcal{U} $ depends on $ G $, a more accurate notation would be $ \mathcal{U}_G $, but most of the time $ G $ will be clear from the context.

An iteration of Prikry-type forcings with nonstationary support was studied in \cite{ben2017homogeneous}. The following key property was proved:

\begin{lemma} \label{Lemma: NS, Prikry property}
$P= P_\kappa$ satisfies the Prikry property.
\end{lemma}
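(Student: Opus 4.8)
The Prikry property for an iteration says: for every statement $\sigma$ of the forcing language and every condition $p \in P_\kappa$, there is a direct extension $p^* \geq^* p$ deciding $\sigma$. The plan is the standard one for iterated Prikry-type forcings: prove it by induction on the length of the iteration, isolating the real work at the inaccessible stages (in particular at $\kappa$ itself), where the nonstationary-support bookkeeping is used. At successor stages $\alpha+1$, one argues that $P_{\alpha+1} \simeq P_\alpha * \lusim{Q}_\alpha$, and one can first handle the $\lusim{Q}_\alpha$-coordinate using the Prikry property of Prikry forcing (i.e. of $\lusim{Q}_\alpha$ in $V^{P_\alpha}$, which holds because each $\lusim{Q}_\alpha$ is a genuine Prikry forcing with a normal measure), then feed the resulting reduced statement down to the inductive hypothesis for $P_\alpha$.

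**The main case: limit inaccessible stages.** Let me describe how I would treat a limit inaccessible $\alpha \leq \kappa$ (the case $\alpha = \kappa$ being the one we actually need). Fix $p \in P_\alpha$ and $\sigma$. Since $\mathrm{supp}(p) \cap \alpha$ is nonstationary, fix a club $C \subseteq \alpha$ on which $p$ is trivial. The crucial structural fact — which I would either cite from \cite{ben2017homogeneous} or reprove — is that below $\alpha$ the iteration factors nicely: for any $\beta < \alpha$, $P_\alpha \simeq P_\beta * \lusim{P}_{[\beta,\alpha)}$, and moreover the tail forcing $\lusim{P}_{[\beta,\alpha)}$, for $\beta$ large enough (above the supremum of the relevant part of $\mathrm{supp}(p)$ and with $\beta \in C$), is sufficiently closed — here one uses that between consecutive points of $C$ the support is bounded, so each "block" of the iteration is a small (Prikry-type) forcing, and the nonstationary support makes the whole tail behave like a $\leq^*$-closed product over the club. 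The goal is to build an increasing (in $\geq^*$) sequence of direct extensions $\langle p_\beta : \beta \in C \rangle$ together with a decreasing sequence of dense-below information, fuse them along $C$ using the support structure, and obtain a single direct extension $p^*$ deciding $\sigma$. Concretely: one enumerates a maximal antichain of conditions below $p$ deciding $\sigma$; using the factorization and the fact that each block is Prikry-type, one pushes the decision into the direct-extension order block by block; the nonstationarity of the support guarantees that the fusion sequence, glued together along the club $C$, is again a legitimate condition of $P_\alpha$ (its support is a subset of a union of bounded pieces over $C$, still nonstationary) and is a direct extension of $p$.

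**The key lemmas I would need.** Two auxiliary facts do the heavy lifting. First, a \emph{factorization and closure lemma}: if $C$ witnesses that $p$ has nonstationary support, then for $\beta \in C$ the quotient $P_\alpha / P_\beta$ restricted to conditions extending $p$ is $\leq^*$-directed-closed for sequences of length $< \beta$ (or at least closed enough to run the fusion), because the support must miss a club, so any $<\beta$-sequence of direct extensions has support contained in a nonstationary set and thus a lower bound. Second, a \emph{local Prikry property with fusion}: below any $\beta \in C$, $P_\beta$ already satisfies the Prikry property by induction, so one can decide $\sigma$ "up to the tail"; then the closure of the tail lets one absorb the decision into $\geq^*$. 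Combining the two via a diagonal/fusion argument along $C$ yields $p^* \geq^* p$ deciding $\sigma$.

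**Expected main obstacle.** The delicate point is the fusion at the inaccessible stage: one must verify that the glued-together condition genuinely lies in $P_\alpha$, i.e. that its support is still nonstationary, and that it is a \emph{direct} extension (no finite "bad" set $b$ creeps in during the limit construction). This requires carefully controlling, at each stage $\beta \in C$, that the extensions chosen are $\geq^*$ and that the supports added stay within the prescribed bounded windows between successive points of $C$ — so that their union is contained in a set disjoint from a club. Getting the bookkeeping right so that the limit of the $p_\beta$'s has the correct support (and so that one has decided $\sigma$ rather than merely made it decided by a dense set, which one then needs a further genericity/density argument to convert) is where the real care is needed; everything else reduces to the Prikry property of a single Prikry forcing plus the inductive hypothesis.
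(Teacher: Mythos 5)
The paper does not give its own proof of this lemma; it cites \cite{ben2017homogeneous} for it, and what it develops here is the Fusion Lemma (Lemma~\ref{Lemma: NS, Fusion Lemma}) together with Claim~\ref{Claim: NS, fusion for dense open sets}, which are the ingredients from which the Prikry property is assembled there. Your sketch correctly identifies those ingredients (fusion along a club, $\leq^*$-closure of the tails, nonstationarity of supports), and the successor-step reduction via $P_{\alpha+1}\simeq P_\alpha*\lusim{Q}_\alpha$ is sound.

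The genuine gap is at the inaccessible limit stage, and it is precisely the issue you flag in your last paragraph and then wave away as ``a further genericity/density argument.'' Fusion alone does not hand you a $p^*\geq^* p$ deciding $\sigma$. What it gives, in the form of Claim~\ref{Claim: NS, fusion for dense open sets}, is a $p^*\geq^* p$ and a club $C$ such that whenever $q\geq p^*$ decides $\sigma$ and $\gamma\in C$ lies above the finitely many non-direct coordinates of $q$ over $p^*$, the hybrid $q\restriction_{\gamma+1}{}^\frown p^*\setminus(\gamma+1)$ still decides $\sigma$. That hybrid retains finitely many non-direct coordinates, now all below $\gamma+1$; no density argument removes them for free. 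The missing content is the induction on the number of non-direct extensions: take $q\geq p^*$ deciding $\sigma$ with the \emph{fewest} non-direct coordinates, let $\gamma$ be the largest such, use the pull-back so the tail above $\gamma$ is a fixed direct extension of $p^*$, and then integrate over the one-step Prikry extensions at $\gamma$, using the fact that the direct-extension order of the tail is more than $\gamma$-closed to obtain a single direct tail deciding uniformly, and shrinking the measure-one set at $\gamma$ to stabilize the decision. This drops the count by one, contradicting minimality unless it was already zero, i.e.\ $p^*$ decides. This is the same device that drives the proof of Lemma~\ref{Lemma: NS, every new function is evaluated by F in V}. Without it, ``enumerate a maximal antichain and push the decisions block by block'' does not close: antichain members make genuinely incompatible Prikry choices, and there is no way to absorb them all into $\geq^*$ without the measure-one integration at the offending coordinate.
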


The proof relies on a fusion property which holds in our iteration. We will use the formulation of this  property as it is stated and proved in \cite{ben2017homogeneous}:

\begin{lemma} (Fusion Lemma) \label{Lemma: NS, Fusion Lemma}
Let $ \lambda \leq \kappa $ be a limit ordinal, and assume that $ p\in P_{\lambda} $. Suppose that $e\colon \lambda \to V$ is a function such that for every $\alpha <\lambda$, $ e(\alpha) $ is a $ P_{\alpha+1} $-name, such that,
\begin{align*}
p\restriction_{\alpha+1}\Vdash & \mbox{"} e(\alpha) \mbox{ is a dense open subset of  } P_{\lambda}\setminus \left(\alpha+1\right)  \mbox{ above } p\setminus \left(\alpha+1\right)  \mbox{, } \\
& \mbox{ with respect to the direct extension order."}
\end{align*}
assume also that $ \nu< \lambda $ is an ordinal. Then there exists $p^* \geq^{*} p$ which satisfies $ p^* \restriction_{ \nu } = p\restriction_{ \nu } $, and a club $ C\subseteq \lambda $,  such that for every $ \alpha\in C $,
$$ p^* \restriction_{\alpha+1} \Vdash p^* \setminus \left(\alpha+1\right)  \in e(\alpha)$$
\end{lemma}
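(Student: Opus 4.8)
The plan is to prove the Fusion Lemma by a transfinite recursion along $\lambda$, building an increasing-in-$\leq^*$ sequence of conditions whose ``diagonal limit'' is the desired $p^*$; the nonstationary-support hypothesis is exactly what makes such a limit a legitimate condition in $P_\lambda$. First I would fix the given $p$, the function $e$, and the ordinal $\nu<\lambda$, and set up an auxiliary recursion producing conditions $p_\alpha\in P_\alpha$ for $\nu\leq\alpha\leq\lambda$, together with a decreasing (in the appropriate sense) sequence of names for dense open sets, such that $p_\alpha\restriction_\nu=p\restriction_\nu$, $p_\alpha\restriction_\beta\geq^* p_\beta\restriction_\beta$ for $\beta<\alpha$, and $p_\alpha$ ``meets'' $e(\beta)$ on coordinate $\beta$ for all relevant $\beta<\alpha$ — more precisely, $p_\alpha\restriction_{\beta+1}\Vdash p_\alpha\setminus(\beta+1)\in e(\beta)$, or at least a localized version of this on a tail of coordinates. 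At successor steps $\alpha\to\alpha+1$ I would use denseness of $e(\alpha)$ (below the current condition, in the direct extension order on $P_\lambda\setminus(\alpha+1)$) to shrink $p_\alpha$ on the coordinates above $\alpha$ to land inside $e(\alpha)$, doing this below the condition $p_\alpha\restriction_{\alpha+1}$ by an appeal to the maximal principle so that the choice is captured by a single name; crucially this modification is a direct extension and does not touch coordinates $\leq\alpha$.

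The delicate steps are the limit stages of the recursion. At a limit ordinal $\delta\leq\lambda$ I would take a ``coordinatewise union'' of the $p_\alpha$ for $\alpha<\delta$: on each coordinate $\beta<\delta$ the sequence $\langle p_\alpha(\beta):\beta<\alpha<\delta\rangle$ is eventually $\leq^*$-increasing, so its (name for the) union makes sense, and one defines $p_\delta$ to be the resulting function on domain $\delta$ (extended trivially to $\lambda$ at the very end). The point to verify is that $p_\delta\in P_\delta$, i.e. that condition (2) of the definition holds: its support must be nonstationary in $\delta$ whenever $\delta$ is inaccessible. Here is where the hypothesis pays off — each $p_\alpha$ has nonstationary support below $\alpha$, witnessed by a club, and by the usual reflection/diagonal-intersection argument for inaccessible $\delta$ one assembles from these clubs (together with the clubs witnessing that the trivial-coordinate sets are clubs, and using that a diagonal intersection of $<\delta$ clubs is club) a single club avoiding $\mathrm{supp}(p_\delta)$. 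One must be slightly careful that the recursion only adds a bounded set of new nontrivial coordinates at each successor step — indeed $e(\alpha)$ lives above $\alpha+1$, so the support of $p_\delta$ is contained in a diagonal-type union of the supports of the $p_\alpha$, which is nonstationary by the standard closure properties of the nonstationary ideal on an inaccessible (or more simply, because at inaccessible $\delta$ the relevant unions stay nonstationary). Finally, setting $p^*:=p_\lambda$ and letting $C$ be the club extracted at stage $\lambda$ (the club witnessing nonstationarity of $\mathrm{supp}(p^*)$, intersected with the diagonal intersection of the clubs that track where each $e(\alpha)$-requirement has been met), I would check that for $\alpha\in C$ the tail $p^*\setminus(\alpha+1)$ has been driven into $e(\alpha)$ by construction, and that $p^*\geq^* p$ with $p^*\restriction_\nu=p\restriction_\nu$ by the coordinatewise-direct-extension property, completing the proof.

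I would organize the write-up by first stating the recursion hypothesis precisely as a conjunction of clauses (domain, agreement below $\nu$, direct-extension coherence, and the ``$e(\beta)$ met'' clause with an explicit club $C_\beta$ attached), then handling the successor step, then the limit step, and finally reading off $p^*$ and $C$. The main obstacle is unquestionably the limit step: making sense of the coordinatewise union as a genuine $P_\delta$-name and verifying clause (2) of the definition of the forcing for $p_\delta$ at inaccessible $\delta$. Everything else — the successor step via denseness of $e(\alpha)$ and the maximal principle, and the final extraction of $C$ — is routine bookkeeping with the direct extension order. Since the paper explicitly cites \cite{ben2017homogeneous} as the source of this formulation, I would also remark that the coherence of the construction across limit stages (and in particular the fusion-type closure of $P_\delta$ under these diagonal unions) is precisely the content proved there, so the argument here can be presented as an adaptation of that fusion machinery to the present indexing by $\nu$ and the particular family $e$.
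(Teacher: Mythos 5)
Your high-level plan — build a $\leq^*$-increasing sequence of conditions by transfinite recursion and use the nonstationary-support clubs to control the limit stages — is the right shape and matches the spirit of the paper's argument. But the concrete mechanism that makes the limit stages work is missing, and the version you describe would not go through as stated.

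The gap is this: you propose to build conditions $p_\alpha$ for $\alpha\in[\nu,\lambda]$ and at stage $\alpha+1$ drive the tail into $e(\alpha)$, i.e.\ to meet $e(\alpha)$ for every (or every cofinally many) $\alpha$. This cannot work. Each successor step is allowed to enlarge the support of the current condition above $\alpha$ by a (potentially) club-sized set; if you run this for all $\alpha<\lambda$, the union of supports has no reason to be nonstationary in $\lambda$, and your assertion that ``at inaccessible $\delta$ the relevant unions stay nonstationary'' is false in general — a union of $\delta$-many nonstationary sets in $\delta$ need not be nonstationary. You sense the issue (``or at least a localized version of this on a tail of coordinates''), but a tail is also not the right answer; the lemma itself promises only a club $C$, not a tail, and that is not an artifact.

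The device the paper uses, and that you need, is to \emph{not} index the recursion by the coordinate $\alpha$ at which $e(\alpha)$ is met. Instead one carries a simultaneously-constructed strictly increasing continuous sequence of ``break points'' $\nu_\xi$, where $\nu_{\xi+1}$ is deliberately chosen in the intersection of the clubs $C_\eta$ witnessing nonstationarity of $\mathrm{supp}(p_\eta)$ for $\eta\le\xi$; one freezes $p_{\xi+1}\restriction_{\nu_{\xi+1}+1}=p_\xi\restriction_{\nu_{\xi+1}+1}$ and only direct-extends above $\nu_{\xi+1}$ to land in $e(\nu_{\xi+1})$. This has two payoffs that your sketch does not secure: (i) at a limit $\xi$, the union $p_\xi\restriction_{\nu_\xi}=\bigcup_{\eta<\xi}p_\eta\restriction_{\nu_\eta+1}$ is a legitimate condition because the points $\nu_\eta$ themselves are excluded from every support, so even when $\nu_\xi$ is inaccessible there is a club (containing $\{\nu_\eta:\eta<\xi\}$) disjoint from its support; and (ii) the final $p^*=\bigcup_\xi p_\xi\restriction_{\nu_\xi+1}$ has support disjoint from the club $C=\{\nu_\xi:\xi<\lambda\}\subseteq\triangle_\xi C_\xi$, which is exactly the $C$ in the statement, and $e(\alpha)$ is met only for $\alpha\in C$, not for all $\alpha$. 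Without this coupling of the meeting-coordinates to the support-clubs, the recursion does not close off. (You also omit the case of singular limit $\lambda$, which the paper handles by a separate, easier argument using a fixed cofinal sequence.) Deferring the limit step entirely to \cite{ben2017homogeneous} leaves exactly the nontrivial content of the lemma unproved.
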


The Fusion Lemma will be applied repeatedly in this paper, and is standard in nonstationary support iterations. For sake of completeness, we provide the proof.

\begin{proof}
As in \cite{ben2017homogeneous}, we focus first on the case where $ \lambda $ is an inaccessible cardinal. The other case is simpler since an inverse limit is taken at $ \lambda $.
	
We construct a sequence $\langle p_\xi\colon \xi< \lambda \rangle$ of conditions in $ P_{\lambda} $, a sequence $\langle \nu_\xi  \colon \xi< \lambda\rangle $ of ordinals below $ \lambda $ and a sequence of clubs $ \langle C_{\xi} \colon \xi <\eta \rangle $, such that, 
\begin{enumerate}
\item The sequence $\langle p_\xi\colon \xi< \lambda \rangle$ is increasing with respect to direct extensions.
\item The sequence $\langle \nu_\xi\colon \xi< \lambda \rangle$ is increasing, continuous and unbounded in $\lambda$.
\item For every $ \xi<\lambda $, $ C_{\xi} \cap \mbox{supp}\left( p_{\xi} \right) = \emptyset $.
\item For every $ \xi< \lambda $, $ \{ \nu_{\eta} \colon \eta<\lambda \} $ is disjoint from the support of $ p_\xi $.
\item For every $ \xi<\lambda $, $ p_{\xi}\restriction_{\left(\nu_{\xi}+1\right)} \Vdash p_{\xi}\setminus \left( \nu_{\xi}+1 \right) \in e\left( \nu_{\xi} \right) $.
\item Whenever $\eta<\xi< \lambda$,
\begin{enumerate}
\item $ \mu_{\xi}\in C_{\eta} $.
\item $p_\xi \restriction_{\nu_\eta+1} = p_\eta \restriction_{\nu_\eta +1}$.
\item $p_\xi \restriction_{\nu_\eta+1 }\Vdash p_\xi\setminus \left( \nu_\eta+1 \right) \geq^{*} p_\eta \setminus \left( \nu_\eta+1 \right)$.
\end{enumerate}
\end{enumerate}
Take $p_0 = p$, $ C_0 $ a club disjoint from $ \mbox{supp}\left( p_0 \right) $,  and $ \nu_0 > \nu$ in $ C_0 $.

\textbf{Successor stages:} Suppose that the construction is done up, and including, some $\xi<\lambda$, and let us construct $p_{\xi+1}$ and $ \nu_{\xi+1}$.  Define-- 
$$\nu_{\xi+1} = \min\left(\bigcap_{\eta<\xi+1} C_\eta \setminus \left(\nu_\xi+1\right)\right)$$
Let us construct $p_{\xi+1}$. First, we require $p_{\xi+1}\restriction_{ \nu_{\xi+1}+1} = p_{\xi}\restriction _{\nu_{\xi+1}+1}$. Now, there exists a $P_{\nu_{\xi+1}}$-name for a direct extension of $p_{\xi} \setminus \nu_{\xi+1}$ which is forced, by $p_{\xi}\restriction_{\nu_{\xi+1}+1}$, to belong to $e(\nu_{\xi+1})$. Let $\sigma$ be this name, and take $p_{\xi+1} \setminus  \nu_{\xi+1} = \sigma$. There exists a $P_{\nu_{\xi+1}}$-name $\lusim{C}$ for a club in $\lambda$ disjoint from $\mbox{supp}(\sigma)$; Since $\lambda$ is inaccessible,  $P_{\nu_{\xi+1}}$ is $\lambda$-c.c., so there exists a club in $\lambda$, $C' \in V$, which is forced to be a subset of $\lusim{C}$. Hence $ p_{\xi+1} $ has a club $ C_{\xi+1} \in  V $ disjoint from its support, and is a legitimate condition in $ P_{\lambda} $.

\textbf{Limit stages:} Suppose that $\xi< \lambda$ is a limit ordinal. Set $\nu_{\xi} = \cup_{\eta<\xi} \nu_{\eta} $. For every $ \eta<\xi $, $ \nu_{\xi} $ is a limit point of $ C_\eta $, and thus $ \nu_{\xi}\notin \mbox{supp}\left( p_{\eta} \right) $. Let us construct $p_{\xi}$. We construct it such that $\nu_\xi \notin \mbox{supp}(p_\xi)$. First, we set-- 
$$p_\xi \restriction_{\nu_{\xi}} = \bigcup_{\eta<\xi} p_{\eta} \restriction_{\nu_{\eta}+1}$$
note that $ \langle \nu_{\eta} \colon \eta<\xi \rangle $ is disjoint from the support of $ p_{\xi}\restriction_{\nu_{\xi}} $, so $ p_{\xi}\restriction_{\nu_{\xi}} \in P_{\nu_{\xi}} $ holds even if $ \nu_{\xi} $ is inaccessible. Also, $p_{\xi} \restriction_{\nu_{\xi}+1}$ forces that $\langle p_{\eta} \setminus \left(\nu_{\xi}+1\right) \colon \eta<\xi \rangle$ is an increasing sequence with respect to direct extension in $P \setminus \left(\nu_\xi+1\right) $, which is forced to be $\left|\nu_{\xi}\right|^{+}$-closed (so it's definitely more than $\xi$-closed). Thus, there exists an upper bound. Take $p_\xi \setminus \left(\nu_\xi+1\right)$ to be a name, which is forced, by $p_\xi \restriction_{\nu_\xi+1}$, to be a direct extension of the upper bound which belongs to $e(\nu_\xi)$. Pick $ C_{\xi}\subseteq \lambda $ as a club disjoint from $\mbox{supp}\left( p_{\xi} \right)$.

This finishes the construction. Finally, set--
$$p^* = \bigcup_{\xi<\lambda} p_{\xi} \restriction_{\nu_{\xi}+1}$$
Let $ C = \{ \nu_{\xi} \colon  \xi<\lambda  \} \subseteq \triangle_{\xi<\lambda} C_\xi$. Then, by our construction, $ C\subseteq \lambda $ is a club disjoint from $ \mbox{supp}\left( p ^* \right) $. Therefore, $ p^* $ is a legitimate condition in $ P_{\lambda} $. Also, given $ \alpha \in C $, let $ \xi<\lambda $ be such that $ \alpha = \nu_{\xi} $.  Then $ p^*\restriction_{\alpha+1} = p_{\xi}\restriction_{\alpha+1} $, and thus it forces that $ p^*\setminus \left(\alpha+1\right) \geq^* p_{\xi}\setminus \left( \alpha+1 \right) \in e(\alpha) $, as desired.

Now, let us adjust the proof to the case where $ \lambda $ is not inaccessible. Fix in advance an increasing, continuous and cofinal sequence $ \langle \nu_{\xi}{\xi} \colon \xi < \mbox{cf}(\lambda) \rangle $ in $ \lambda $, such that $ \nu_0> \mbox{cf}(\lambda) $. Now construct a $ \leq^* $-increasing sequence of conditions $ \langle p_{\xi} \colon \xi <\mbox{cf}(\lambda) \rangle$. In successor steps, assuming that $ p_{\xi} $ has been constructed, pick $ p_{\xi+1} $ such that--
$$ p_{\xi+1}\restriction_{\nu_{\xi+1} +1} = p_{\xi}\restriction_{ \left(\nu_{\xi+1}+1\right) } $$
and $ p_{\xi+1}\restriction_{\nu_{\xi+1}+1} \Vdash p_{\xi+1}\setminus \left(\nu_{\xi+1}+1\right) \in e\left( \nu_{\xi+1} \right) $ . In limit steps, say for limit $ \xi<\mbox{cf}(\lambda) $, choose $ p_{\xi} $ such that--
$$ p_{\xi}\restriction_{\nu_{\xi}} = \bigcup_{\eta<\xi} p_{\eta}\restriction_{\nu_{\eta}+1} $$
and $ p_{\xi}\restriction_{\nu_{\xi}} $ forces that $p_{\xi}\setminus \mu_{\xi} $ is a $ \leq^* $-upper bound of $\langle p_{\eta}\setminus \nu_{\xi} \colon \eta<\xi \rangle$ (this is the main difference from the case where $ \lambda $ is regular. Note that the direct extension order of $ P_{\lambda} \setminus\nu_{\xi} $ is more than $ \xi $-closed, since $ \nu_{\xi} > \xi $). Then, direct extend further above $ \nu_{\xi}+1 $ such that $ p_{\xi}\restriction_{\nu_{\xi}+1} \Vdash p_{\xi}\setminus \nu_{\xi}+1\in e(\nu_{\xi}) $. 

Finally, set $ p^* = \bigcup_{\xi< \mbox{cf}(\lambda)} p_{\xi}\restriction_{\nu_{\xi}+1}$.
 \end{proof}

The following claim takes care of dense open subsets of $ P_{\kappa} $ (not necessarily with respect to direct extensions).

\begin{claim} \label{Claim: NS, fusion for dense open sets}
	Let $ \lambda \leq \kappa $ be a limit ordinal and let $ D\subseteq P_{\lambda} $ be a dense open subset of $ P_{\lambda} $. Assume that $ p\in P_{\lambda} $ and $ \nu<\lambda $. Then there exists $p^* \geq^{*} p$ and a club $ C\subseteq \lambda $, such that $ p^*\restriction_{\nu} = p\restriction_{\nu} $, and, for every $p^* \leq q\in D $, 
	$${q\restriction_{\gamma+1} }^{\frown} p^* \setminus \left( \gamma+1 \right) \in D$$
	where $\gamma \in C $ is the first coordinate for which-- 
	$$ q\restriction_{\gamma+1}\Vdash \mbox{"} q\setminus \gamma \mbox{ is a direct extension of } p^*\setminus \gamma \mbox{"}$$
\end{claim}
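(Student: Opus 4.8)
The plan is to reduce the claim to the Fusion Lemma (Lemma~\ref{Lemma: NS, Fusion Lemma}) by cooking up, for each $\alpha<\lambda$, an appropriate $P_{\alpha+1}$-name $e(\alpha)$. Fix $\alpha<\lambda$ and pass to a generic extension $V[G_{\alpha+1}]$ by $P_{\alpha+1}$; write $R=P_\lambda\setminus(\alpha+1)$ for the tail forcing, and for $t\in G_{\alpha+1}$ and $r\in R$ let ${t}^{\frown}r\in P_\lambda$ be the condition with $P_{\alpha+1}$-part $t$ and tail part $r$ (well defined since $t$ is generic). I would let $e(\alpha)$ name the set of $r\in R$ with $r\ge^*(p\setminus(\alpha+1))[G_{\alpha+1}]$ such that for every $t\in G_{\alpha+1}$,
\[
\bigl(\exists\,r'\ge^* r\ \text{ with }\ {t}^{\frown}r'\in D\bigr)\ \Longrightarrow\ {t}^{\frown}r\in D .
\]
The point is that once a tail condition lies in $e(\alpha)$, membership in $D$ is ``frozen'' along extensions which are direct above coordinate $\alpha$. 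It is essential to quantify here over stems $t$ in the generic and to phrase the requirement as this implication; the coarser set $\{\,r:\exists t\in G_{\alpha+1}\ {t}^{\frown}r\in D\,\}$ would only say the tail enters $D$ for \emph{some} generic stem, not for the particular one we later need.

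Next I would verify that $e(\alpha)$ is $\le^*$-dense open above $p\setminus(\alpha+1)$, uniformly over the generic, so that the weakest condition of $P_{\alpha+1}$ (hence $p\restriction_{\alpha+1}$) forces this, as required by the Fusion Lemma. Openness follows immediately from $D$ being open. For density, given $r_0\ge^*(p\setminus(\alpha+1))[G_{\alpha+1}]$, enumerate $G_{\alpha+1}=\{t_i:i<\theta\}$ and recursively construct a $\le^*$-increasing sequence $\langle r_i:i\le\theta\rangle$ above $r_0$: at stage $i+1$ pick $r_{i+1}\ge^* r_i$ with ${t_i}^{\frown}r_{i+1}\in D$ if such a direct extension exists, and put $r_{i+1}=r_i$ otherwise; in both cases the displayed implication for $t_i$ holds of $r_{i+1}$ and — by openness of $D$ — is preserved under further $\le^*$-extension, so $r_\theta$ lies in $e(\alpha)$. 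What legitimizes this is that $R$ is much more $\le^*$-closed than $\theta\le|P_{\alpha+1}|$: let $\delta$ be the least element of $\Delta$ above $\alpha$; then $\delta$ is measurable, hence inaccessible, so $|P_{\alpha+1}|<\delta$, while $R$ is trivial on $[\alpha+1,\delta)$ and therefore $\le^*$-$\delta$-closed (its first nontrivial coordinate is the Prikry forcing $Q_\delta$, whose direct extension order is $\delta$-closed); hence $\le^*$-upper bounds exist at all limit stages below $\theta$. (If $\Delta\cap(\alpha,\lambda)=\emptyset$ the tail forcing is trivial and there is nothing to prove.)

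Feeding these $e(\alpha)$ and the given $\nu$ into the Fusion Lemma yields $p^*\ge^* p$ with $p^*\restriction_\nu=p\restriction_\nu$ and a club $C\subseteq\lambda$ with $p^*\restriction_{\alpha+1}\Vdash p^*\setminus(\alpha+1)\in e(\alpha)$ for all $\alpha\in C$. For the verification, fix $q\in D$ with $q\ge p^*$ and a finite $b\subseteq\mathrm{supp}(p^*)$ witnessing that $q$ is a direct extension of $p^*$ off $b$. Any $\gamma\in C$ with $\gamma>\max b$ satisfies $q\restriction_{\gamma+1}\Vdash q\setminus\gamma\ge^* p^*\setminus\gamma$, since $b\cap[\gamma,\lambda)=\emptyset$; hence the $\gamma$ of the statement — the least such coordinate in $C$ — exists, and for it both $p^*\restriction_{\gamma+1}\Vdash p^*\setminus(\gamma+1)\in e(\gamma)$ and $q\restriction_{\gamma+1}\Vdash q\setminus(\gamma+1)\ge^* p^*\setminus(\gamma+1)$. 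Fix a $V$-generic $H\subseteq P_{\gamma+1}$ with $q\restriction_{\gamma+1}\in H$; working in $V[H]$, the tail conditions $\rho:=(p^*\setminus(\gamma+1))^{H}$ and $\rho_q:=(q\setminus(\gamma+1))^{H}$ satisfy $\rho\in e(\gamma)^{H}$, $\rho_q\ge^*\rho$, and $(q\restriction_{\gamma+1})^{\frown}\rho_q=q\in D$. Applying the defining implication of $e(\gamma)$ with $t=q\restriction_{\gamma+1}\in H$ and $r'=\rho_q$ gives $(q\restriction_{\gamma+1})^{\frown}\rho\in D$, that is, $(q\restriction_{\gamma+1})^{\frown}(p^*\setminus(\gamma+1))\in D$; since the latter is a condition of $P_\lambda$ lying in $V$, absoluteness of $\in$ gives this in $V$, which is exactly the conclusion.

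I expect the crux to be getting the definition of $e(\alpha)$ right — realizing that one must quantify over the stems in the $P_{\alpha+1}$-generic and use the implication above rather than the projection of $D$ to the tail — and then seeing that the resulting set is still $\le^*$-dense, which hinges on the tail forcing being far more $\le^*$-closed than $|P_{\alpha+1}|$ because its next nontrivial step occurs at an inaccessible cardinal $\delta>\alpha$. The remaining points — that $(q\restriction_{\gamma+1})^{\frown}(p^*\setminus(\gamma+1))$ is a legitimate condition of $P_\lambda$, and the routine passage between coordinatewise direct extension and direct extension of tails in the forcing sense — are standard bookkeeping.
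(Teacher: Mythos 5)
Your proposal is correct and follows essentially the same route as the paper's proof: both define, at each stage, a $\leq^*$-dense open name for tail conditions that "freeze" membership in $D$ relative to every stem in the lower generic, verify density by a closure/distributivity argument using the fact that the tail forcing's first nontrivial coordinate is a measurable strictly above the size of the lower forcing, and then apply the Fusion Lemma. Your implication $(\exists r'\geq^* r,\ t^\frown r'\in D)\Rightarrow t^\frown r\in D$ is logically the same as the paper's disjunction $q^\frown r\in D\ \lor\ \forall r'\geq^* r,\ q^\frown r'\notin D$, and your intersection over $t\in G_{\alpha+1}$ corresponds to the paper's $\bigcap_{q\in G_\xi} e_q(\xi)$. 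The one minor presentational difference is that the paper fixes a non-measurable $\xi$ and defines $e(\xi)$ as a $P_\xi$-name (equivalently a $P_{\xi+1}$-name, since $Q_\xi$ is trivial there), whereas you define $e(\alpha)$ as a $P_{\alpha+1}$-name uniformly for every $\alpha$; your choice is cleanly compatible with the hypotheses of the Fusion Lemma and makes it evident that the stage $\gamma$ produced in the conclusion — whatever its large-cardinal character — is covered by the dense-open names, so nothing further needs to be said there.
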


\begin{proof}
	Fix a non-measurable $ \xi<\lambda $ and $ G_{\xi}\subseteq P_{\xi} $ generic over $ V $ such that $ p\restriction_{\xi}\in G_{\xi} $. Given $p\restriction_{\xi} \leq  q\in G_{\xi} $, we define a subset of $ P\setminus \xi $ which is $ \leq^* $-dense open above $ p\setminus \xi$:
	$$ e_q(\xi) = \{ r\in P\setminus \xi  \colon   q^{\frown} r\in D \mbox{ or }  \left( \forall r'\geq^* r,  \  q^{\frown} r' \notin D  \right)  \} $$
	
	Since $ \xi $ is non-measurable, the direct extension order of $ P\setminus \xi $ is more than $ \left| G_{\xi}\right|^{+} $-distributive. Let $ e(\xi) $ be a $ P_{\xi} $-name for the set--
	$$ e(\xi) = \bigcap_{q\in G_{\xi}}  e_{q}(\xi) $$
	then $ p\restriction_{\xi} $ forces that $ e(\xi) $ is $ \leq^* $-dense open above $ p\setminus \xi $.

	Apply lemma \ref{Lemma: NS, Fusion Lemma}. Let $ p^* \geq^* p $ be such that $ p^*\restriction_{ \nu } = p \restriction_{ \nu } $, and there exists a club $ C $ such that, for every $\alpha\in C$, 
	$$ p^* \restriction_{ \alpha+1 } \Vdash p^*\setminus \left(\alpha+1\right) \in e(\alpha) $$
	Assume now that $ p^* \leq q \in D $. Let $ \gamma\in C $ be as in the formulation of the claim. Then--
	$$ p^* \restriction_{ \gamma+1 } \Vdash p^*\setminus \left(\gamma+1\right) \in e(\gamma+1) $$
	In particular,
	$$ q \restriction_{ \gamma+1 } \Vdash p^* \setminus \left(\gamma+1\right) \in e(\gamma+1) $$
	
	Finally, since there exists a direct extension $ r' = q\setminus \left(\gamma+1\right) \geq^* p^*\setminus \left( \gamma+1\right) $ such that $ {q\restriction_{\gamma+1}}^{\frown} r' \in D $, it follows that ${q\restriction_{\gamma+1}}^{\frown}  p^* \setminus \left(\gamma+1\right)\in D$, as desired.
\end{proof}

\begin{lemma} \label{Lemma: NS, preservation of cardinals}
$P = P_{\kappa}$ preserves cardinals. It also preserves cofinalities $ \geq \kappa^{+} $.
\end{lemma}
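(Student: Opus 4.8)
The plan is to prove the two assertions separately: the preservation of cardinals and cofinalities $\ge\kappa^{+}$ from a chain condition on $P_\kappa$, and the preservation of cardinals $\le\kappa$ from high direct‑extension closure of the tails of the iteration together with the Prikry property. For the chain condition, $\mathrm{GCH}_{\le\kappa}$ first gives $|P_\kappa|\le\kappa^{+}$: a condition is determined by its support (a nonstationary, hence arbitrary, subset of $\kappa$ — there are $2^{\kappa}=\kappa^{+}$ of those) together with a nice $P_\beta$‑name for an element of $\lusim{Q}_\beta$ at each $\beta$ in the support, and an easy induction on $\beta$ using GCH bounds the number of such names at stage $\beta$ by $\kappa$, so there are at most $(\kappa^{+})^{\kappa}=\kappa^{+}$ conditions in all. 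This already yields the $\kappa^{++}$‑c.c.; I would then upgrade it to the $\kappa^{+}$‑c.c.\ by thinning a family of $\kappa^{+}$ conditions via the $\Delta$‑system lemma applied to the finite sets of coordinates on which a condition has a nontrivial stem, and via pigeonhole (using $\kappa^{<\kappa}=\kappa$) on the stem‑values over the common root, so that any two surviving conditions agree wherever either carries a nontrivial stem and are therefore compatible. A $\kappa^{+}$‑c.c.\ forcing preserves all cardinals and cofinalities $\ge\kappa^{+}$, which is the second sentence of the lemma and disposes of all cardinals $>\kappa$.

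For the cardinals $\le\kappa$ the key tool is a closure property of the tails: for $\beta\in\Delta$, over $V[G_\beta]$ the tail $R_\beta:=P_\kappa/G_\beta$ is again a nonstationary‑support iteration of Prikry‑type forcings, so it satisfies the Prikry property there by (the argument of) Lemma~\ref{Lemma: NS, Prikry property}, and its direct extension order $\le^{*}$ is $\beta$‑closed. The closure is exactly the limit‑step computation from the Fusion Lemma (Lemma~\ref{Lemma: NS, Fusion Lemma}): a $\le^{*}$‑increasing chain of length $<\beta$ is bounded coordinatewise, because each iterand, sitting at a measurable $\ge\beta$, is $\ge\beta$‑closed under direct extensions, while the union of the supports stays nonstationary in $\kappa$ and in every inaccessible $\le\kappa$, since the nonstationary ideal on a regular cardinal is complete and the chain is shorter than $\beta$. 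Combining the Prikry property with this $\beta$‑closure, $R_\beta$ adds no new sequence of ordinals of length $<\beta$ over $V[G_\beta]$: given a name for $f\colon\check\rho\to\mathrm{Ord}$ with $\rho<\beta$, one builds a $\le^{*}$‑increasing chain of conditions deciding $f$ value by value (Prikry property at successors, closure at limits), so $f\in V[G_\beta]$.

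Now let $\mu\le\kappa$ be a cardinal of $V$ and suppose toward a contradiction that some condition forces a surjection $\check\rho\to\check\mu$ with $\rho<\mu$. In the main case $\mu$ is a limit of measurables — which includes $\mu=\kappa$, as $\kappa$ is a measurable limit of measurables — so one fixes $\beta\in\Delta$ with $\rho<\beta<\mu$ and factors $P_\kappa\cong P_\beta*R_\beta$. Since $R_\beta$ is $\beta$‑closed, hence $\rho^{+}$‑closed, the surjection lies in $V[G_\beta]$; but $|P_\beta|\le\beta^{+}<\mu$ (as $\mu$ is a limit cardinal), so $P_\beta$ is $\mu$‑c.c.\ and keeps $\mu$ a cardinal — a contradiction. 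If $\mu$ is not a limit of measurables, nothing happens in the iteration past the last measurable below $\mu$, so $P_\mu$ is isomorphic to a strictly shorter iteration and one concludes by induction on the cardinals below $\mu$; the only point requiring more than smallness there is a successor $\mu=\nu^{+}$, which is again an instance of the chain‑condition argument at the relevant inaccessible. Finally, a singular limit cardinal $\le\kappa$ is a supremum of smaller cardinals, all preserved by the above, hence is itself preserved.

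The step I expect to be the main obstacle is the upgrade from the easy $\kappa^{++}$‑c.c.\ to the $\kappa^{+}$‑c.c.\ — equivalently, the assertion that $(\kappa^{+})^{V}$ is not collapsed. Because supports here are only required to be nonstationary rather than bounded, one cannot simply reflect an antichain to an inaccessible below $\kappa$, and the argument has to exploit the precise shape of Prikry‑type conditions (compatibility being determined by the finitely many coordinates carrying a nontrivial stem) together with $\mathrm{GCH}_{\le\kappa}$. Everything else — the closure of the tails and the coordinatewise bounding — is essentially the limit step of the Fusion Lemma and is routine.
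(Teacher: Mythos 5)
Your handling of the cardinals $\le\kappa$ that are limit cardinals (factor as $P_\beta*R_\beta$, use $\rho^{+}$-closure of $\le^{*}$ on the tail plus the Prikry property, and $\mu$-c.c.\ of the small initial segment) is essentially the same device as the paper and is fine. The problem is the backbone of your argument for everything at and above $\kappa^{+}$ --- and for the successor cardinals below $\kappa$ --- namely the claimed $\kappa^{+}$-c.c.\ of $P_\kappa$, and that claim is false.

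The error is in the phrase ``the finite sets of coordinates on which a condition has a nontrivial stem.'' There is no such finiteness. A condition $p\in P_\kappa$ can have a stem of positive length at every coordinate of its (possibly size-$\kappa$) nonstationary support; the ``finitely many non-direct-extension coordinates'' clause in the definition of $p\ge q$ constrains the extension relation, not a single condition. So the $\Delta$-system lemma gives you nothing to work with. In fact the $\kappa^{+}$-c.c.\ genuinely fails: fix $S=\{s(\alpha)\colon\alpha<\kappa\}$ where $s(\alpha)$ is the first measurable above $\alpha$ (an everywhere-nonstationary set of size $\kappa$), choose subsets $B_\gamma\subseteq S$, $\gamma<\kappa^{+}$, with pairwise infinite symmetric difference, and let $p_\gamma$ have $\mathrm{supp}(p_\gamma)=S$ with $p_\gamma(\beta)=\langle\langle 0\rangle,\beta\setminus 1\rangle$ if $\beta\in B_\gamma$ and $p_\gamma(\beta)=\langle\langle 1\rangle,\beta\setminus 2\rangle$ otherwise. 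If $r$ extended both $p_\gamma$ and $p_\delta$, then outside a finite set $r(\beta)$ would be a simultaneous direct extension of $p_\gamma(\beta)$ and $p_\delta(\beta)$, forcing their (checked, distinct) stems to be equal --- impossible on the infinite set where $B_\gamma,B_\delta$ disagree. So $\{p_\gamma\}$ is an antichain of size $\kappa^{+}$, and your route to ``preserves cofinalities $\ge\kappa^{+}$'' and to the preservation of $\kappa^{+}$ (and of $\nu^{+}$ for $\nu$ an inaccessible $<\kappa$, which you also send to a ``chain-condition argument at the relevant inaccessible'') is closed off.

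The missing idea, which the paper uses, is a fusion/covering argument in place of a chain condition. From $|P_\kappa|=\kappa^{+}$ one only gets the $\kappa^{++}$-c.c., handling cardinals and cofinalities $\ge\kappa^{++}$. For each $\lambda\le\kappa$ one then factors $P=P_\lambda*\lusim{Q}_\lambda*P\setminus(\lambda+1)$; the tail is $\ge\lambda^{+}$-closed under $\le^{*}$ and $Q_\lambda$ preserves cardinals, so the issue is $P_\lambda$. One shows that for any $P_\lambda$-name for an increasing $f\colon\lambda\to\lambda^{+}$, there is a direct extension $p^{*}$ and a club $C\subseteq\lambda$ such that $p^{*}\restriction_{\xi+1}$ already decides (up to a bounded set of candidates of size $\le|\xi|^{+}$) the value $f(\xi)$ for $\xi\in C$; summing gives a bound $\delta^{*}<\lambda^{+}$ with $p^{*}\Vdash\mathrm{Im}(\lusim f)\subseteq\delta^{*}$. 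This is exactly Claim~\ref{Claim: NS, fusion for dense open sets} applied to the dense sets deciding $\lusim f(\xi)$, and it is the step your write-up does not supply. It also gives, for free, that $\mathrm{cf}^{V[G]}(\kappa^{+})>\kappa$, which together with cardinal preservation and the $\kappa^{++}$-c.c.\ yields preservation of all cofinalities $\ge\kappa^{+}$.
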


\begin{proof}
$ P $ clearly preserves cardinals and cofinalities $ \geq \kappa^{++} $, since it has cardinality $ \kappa^{+} $. 

Let us prove by induction that every cardinal $\mu\leq \kappa^{+}$ is not collapsed. For limit $\mu$ it's clear. Suppose that $\mu = \lambda^{+}$ is a successor. 
Split $P = P_\lambda * \lusim{Q}_{\lambda} * P\setminus \left( \lambda+1 \right)$. The direct extension order of $P\setminus \left(  \lambda+1\right)$ is more than $\mu$-closed, so it preserves $\mu$. $Q_{\lambda}$ preserves cardinals, whether $\lambda$ is measurable or not. Thus, it suffices to prove that $P_\lambda$ preserves $\lambda^{+} = \mu $, for every $ \lambda\leq \kappa $. Suppose that $\lusim{f}$ is a $P_\lambda$-name for an increasing function from $\lambda$ to $\mu$, and this is forced by an arbitrary condition  $p\in P_\lambda$. We will prove that there exists an extension $p^*$ of $p$ in $P_\lambda$ which forces that the image of $\lusim{f}$ is bounded in $\mu $. 

For every $ \xi< \lambda $, define the following $P_{\xi+1}$-name for a dense open subset of $ P\setminus \left(\xi+1\right) $,
$$ e(\xi) = \{ r\in P\setminus \xi+1 \colon \exists \delta < \lambda^{+}, \ r\Vdash \lusim{f}(\xi)<\delta \} $$
We claim that $ e(\xi) $ is $ \leq^* $-dense open. First, let us argue that this suffices. Indeed, by fusion, there exists $ p^*\in G $ and a club $ C\subseteq \lambda $ such that for every $ \xi\in C $,
$$ p^*\restriction_{\xi+1}\Vdash \exists \delta_{\xi}<\lambda^{+},\ p^*\setminus \left( \xi+1 \right) \Vdash \lusim{f}(\xi)<\delta_{\xi} $$
and set-- 
$$ \delta^* = \sup\left( \bigcup_{\xi\in C} \{  \delta \colon \exists r \geq p^*\restriction_{\xi+1}, \ r\Vdash \lusim{\delta_{\xi}} = \delta \}\right) $$ Then $ \delta^* < \lambda^{+} $ and, since $ \lusim{f} $ is increasing, $ p^*\Vdash \mbox{Im}\left( \lusim{f} \right) \subseteq \delta^{*}+1 $.

Let us prove that $ e(\xi) $ is indeed $ \leq^* $-dense open. Fix $ \xi<\lambda $. Let $ G'\subseteq P_{\xi+1} $ be generic over $ V $, and work in $ V\left[G'\right] $. Denote $ P' = P\setminus \left( \xi+1 \right) $. Apply claim \ref{Claim: NS, fusion for dense open sets} for the dense open set $ D $ of conditions in $ P' $ which decide the value of $ \lusim{f}(\xi) $. Given a condition $ q\in P' $, there exists $ q^* \geq^* q $ and a club $ C\subseteq \lambda $ such that for every $ q^* \leq p\in D $, 
$$ {p\restriction_{\gamma+1}}^{\frown} q^*\setminus \left(\gamma+1\right) \in D $$
where $ \gamma$ is the least coordinate in $ C $ above the non-direct extensions. Let--
$$\delta^* = \sup\left(  \bigcup_{\gamma\in C}  \{ \delta \colon \exists s\in P'_{\gamma+1}, \ s^{\frown} q^*\setminus \left(\gamma+1 \right) \Vdash \lusim{f}(\xi) = \delta \} \right)$$
Then $ q^*\Vdash \lusim{f}(\xi)< \delta^* $.
\end{proof}

The following lemma is a minor modification of lemma 3.6 from \cite{ben2017homogeneous}.

\begin{lemma} \label{Lemma: NS, every new function is evaluated by F in V}
Let $ \lambda \leq \kappa $ be inaccessible. Let $ p\in P_{\lambda} $ and assume that $ \lusim{f}$ is a $ P_{\lambda} $-name for a function from $ \lambda $ to the ordinals. Then there exists $ p^*\geq^* p $, a club $ C\subseteq \lambda $ and a function $ F\colon \lambda \to \left[\mbox{Ord}\right]^{<\lambda} $ in $ V $, such that for every $ \xi\in C $, $ p^*\Vdash \lusim{f}(\xi)\in F(\xi) $. 

\end{lemma}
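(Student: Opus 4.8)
The plan is to combine the Fusion Lemma (Lemma~\ref{Lemma: NS, Fusion Lemma}) with a density argument analogous to the one used in Claim~\ref{Claim: NS, fusion for dense open sets} and in the proof of Lemma~\ref{Lemma: NS, preservation of cardinals}. The point is that for each coordinate $\xi<\lambda$ we want a $P_{\xi+1}$-name $e(\xi)$ for a $\leq^*$-dense open subset of $P_\lambda\setminus(\xi+1)$, consisting of conditions $r$ which, together with a fixed generic $G_{\xi+1}\subseteq P_{\xi+1}$, pin down $\lusim f(\xi)$ into a \emph{small} set of possibilities; "small" meaning of size $<\lambda$, which is available because $\lambda$ is inaccessible and $P_\lambda\setminus(\xi+1)$ is $\lambda$-c.c. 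Then applying the Fusion Lemma to the sequence $\langle e(\xi)\colon\xi<\lambda\rangle$ produces $p^*\geq^* p$ and a club $C$ such that on $C$ the condition $p^*$ "lands" in each $e(\xi)$, and reading off the associated sets of possibilities defines $F$.

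In more detail, I would first fix $\xi<\lambda$, pass to a generic extension $V[G_{\xi+1}]$ with $p\restriction_{\xi+1}\in G_{\xi+1}$, and work with $P'=P_\lambda\setminus(\xi+1)$. Let $D$ be the dense open subset of $P'$ of conditions deciding the value of $\lusim f(\xi)$. Apply Claim~\ref{Claim: NS, fusion for dense open sets} (inside $V[G_{\xi+1}]$) to $p\setminus(\xi+1)$ and $D$: this yields $q^*\geq^* p\setminus(\xi+1)$ and a club $C_\xi\subseteq\lambda$ such that for every $q^*\le q\in D$, the amalgamated condition ${q\restriction_{\gamma+1}}^\frown q^*\setminus(\gamma+1)$ lies in $D$, where $\gamma\in C_\xi$ is the first coordinate above the non-direct-extension block. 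Now the set
$$ B_\xi = \bigcup_{\gamma\in C_\xi}\Big\{\delta\colon \exists s\in P'_{\gamma+1},\ s^\frown q^*\setminus(\gamma+1)\Vdash \lusim f(\xi)=\delta\Big\} $$
has size $<\lambda$: there are $<\lambda$ many $\gamma\in C_\xi$ (well, $\le\lambda$, but for each we use $\lambda$-c.c.\ / the size of $P'_{\gamma+1}<\lambda$ bound), and for each $\gamma$ the condition $q^*\setminus(\gamma+1)$ is a direct extension, so the number of values $\lusim f(\xi)$ can take below such amalgamations is bounded by $|P'_{\gamma+1}|<\lambda$; taking the union over the club gives a set of ordinals of size $<\lambda$, and $q^*\Vdash\lusim f(\xi)\in B_\xi$. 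This is exactly the "$\delta^*$"-type computation from the proof of Lemma~\ref{Lemma: NS, preservation of cardinals}, except that here we keep the whole set $B_\xi$ rather than just its supremum. Back in $V$, let $e(\xi)$ be a $P_{\xi+1}$-name for the $\leq^*$-dense open set of $r\in P'$ that force $\lusim f(\xi)$ to lie in some ground-model set of size $<\lambda$ (equivalently, the name for the set obtained by running the above for the generic $G_{\xi+1}$), and let $\lusim{F}(\xi)$ be the corresponding name for $B_\xi$.

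Finally I would apply the Fusion Lemma to $\langle e(\xi)\colon\xi<\lambda\rangle$ and $p$ (with $\nu$ arbitrary), obtaining $p^*\geq^* p$ and a club $C'\subseteq\lambda$ with $p^*\restriction_{\alpha+1}\Vdash p^*\setminus(\alpha+1)\in e(\alpha)$ for all $\alpha\in C'$. For $\alpha\in C'$, the condition $p^*\restriction_{\alpha+1}$ forces $\lusim{F}(\alpha)$ to be a specific set of ordinals of size $<\lambda$; since $P_{\alpha+1}$ is $\lambda$-c.c.\ (by inaccessibility of $\lambda$), the union $F(\alpha)=\bigcup\{\text{possible values of }\lusim{F}(\alpha)\text{ forced by conditions }\ge p^*\restriction_{\alpha+1}\}$ is still a set of ordinals of size $<\lambda$, and lies in $V$. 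Then $p^*\Vdash\lusim f(\alpha)\in F(\alpha)$ for every $\alpha\in C'$, and defining $F$ arbitrarily (say $F(\xi)=\emptyset$) off $C'$ finishes the proof; shrink $C$ to $C'$. The main obstacle is bookkeeping the two nested $\lambda$-c.c.\ arguments so that the sets stay of size $<\lambda$ — one use of $\lambda$-c.c.\ to absorb the $P_{\xi+1}$-name $\lusim{F}(\xi)$ into a ground-model set, and, inside the proof of the smallness of $B_\xi$, the fact that $q^*\setminus(\gamma+1)$ being a \emph{direct} extension leaves only $|P'_{\gamma+1}|<\lambda$ many ways to decide $\lusim f(\xi)$; getting these two counting steps to interact correctly with the amalgamation property furnished by Claim~\ref{Claim: NS, fusion for dense open sets} is the delicate point, everything else being a routine invocation of fusion.
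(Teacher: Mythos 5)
There is a genuine gap, located exactly where you flag the ``delicate point'' at the end. The reduction to a single coordinate $\xi$, the appeal to Claim~\ref{Claim: NS, fusion for dense open sets}, and the final use of $\lambda$-c.c.\ to convert names into ground-model sets are all fine and match the paper. The problem is the claim that
$$ B_\xi = \bigcup_{\gamma\in C_\xi}\bigl\{\delta\colon \exists s\in P'_{\gamma+1},\ s^\frown q^*\setminus(\gamma+1)\Vdash \lusim f(\xi)=\delta\bigr\} $$
has size $<\lambda$. The index set $C_\xi$ is a club in $\lambda$, hence of size $\lambda$; each summand has size $<\lambda$; and for $\lambda$ inaccessible a union of $\lambda$ many sets each of size $<\lambda$ will in general have size exactly $\lambda$. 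You notice this yourself in the parenthetical ``(well, $\le\lambda$, \ldots)'' but then assert the $<\lambda$ bound anyway. The analogous computation in the proof of Lemma~\ref{Lemma: NS, preservation of cardinals} goes through because there one only needs a bound $<\lambda^+$, where any $\le\lambda$-sized union of ordinals $<\lambda^+$ stays below $\lambda^+$ by regularity of $\lambda^+$; here one needs $<\lambda$, and the naive count does not deliver it.

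The paper closes this gap with machinery that your proposal omits entirely. After applying Claim~\ref{Claim: NS, fusion for dense open sets} to get $q^*$ and the club $C$, the paper builds a further direct extension $q^{**}\geq^* q^*$ (same support) by passing through the measurable coordinates $\mu\in\mathrm{supp}(q^*)$ and shrinking each measure-one set $\lusim{A}^{q^*}_\mu$, Ramsey-style, so that for each $n<\omega$ the property ``there is a direct extension above $\mu$ (up to $\mu'=\min(C\setminus(\mu+1))$, with $q^*$ beyond $\mu'$) that forces $\lusim f(\xi)$ into a $<\lambda$-sized set'' becomes homogeneous over $n$-tuples from the shrunken set. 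It then takes a condition $p\geq q^{**}$ deciding $\lusim f(\xi)$ with the \emph{least} number of non-direct extensions, lets $\gamma$ be the maximal coordinate with a non-direct extension and $\gamma'=\min(C\setminus(\gamma+1))$, and uses the amalgamation property of $q^*$ together with the homogeneity of $q^{**}$ at $\gamma$ to merge the $<\lambda$-sized sets across all possible stems of length $n$ at $\gamma$ into a single $<\lambda$-sized set, thereby removing the non-direct extension at $\gamma$ and contradicting minimality. It is precisely this homogenization and minimal-counterexample step that compresses the a priori $\lambda$-sized union into something of size $<\lambda$; a direct counting on $B_\xi$ cannot do it.

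(You also use a slightly different auxiliary dense set $D$ — conditions that decide $\lusim f(\xi)$ exactly, rather than conditions that force it into a $<\lambda$-sized set — but that choice is cosmetic and not the source of the problem.)
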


\begin{proof}
For each $ \xi < \lambda $, consider the $ P_{\xi+1} $-name for the following set--
$$ e(\xi) = \{ r\in P\setminus \xi \colon \exists A\in \left[\mbox{Ord}\right]^{<\lambda}, \ r\Vdash \lusim{f}(\xi)\in A  \} $$
It suffices to prove that for every $ \xi<\lambda $, $ e(\xi) $ is forced to be $ \leq^* $-dense open subset of $ P\setminus \left(\xi+1\right) $. Indeed, once we prove this, there exists $ p^*\in G $ above $ p $ and a club $ C\subseteq \lambda $ such that for every $ \xi\in C $,
$$ p^*\restriction_{\xi+1}\Vdash \exists A_{\xi}\in \left[ \mbox{Ord}\right]^{<\lambda}, \  p^*\setminus \left(  \xi+1\right)\Vdash \lusim{f}(\xi)\in A(\xi) $$
and then, for every $ \xi\in C $, we can define--
$$ F(\xi) = \{  \gamma \colon \exists q\geq p^*\restriction_{\xi+1}, \ q\Vdash \gamma \in \lusim{A}_{\xi} \} $$
Then $ \left|F(\xi)\right|<\lambda $ for every $ \xi\in C $, and $ p^*\Vdash \lusim{f}(\xi)\in F(\xi) $.

Let us prove that $ e(\xi) $ is $ \leq^* $ dense open.  Fix $ \xi<\kappa $. Let $ G'\subseteq P_{\xi+1} $ be generic over $ V $, and work in $ V\left[G'\right] $. Denote $ P' = P\setminus \left(\xi+1\right) $. It suffices to prove that given a condition $ q\in P' $, there exists a direct extension $ q^*\geq^* q $ and a set $ A \in \left[\mbox{Ord}\right]^{<\lambda}$ such that $ q^*\Vdash \lusim{f}(\xi)\in A $. 

Let $ D\subseteq P' $ be the dense open set of conditions $ r\in P' $ such that, for some $ A\in \left[Ord\right]^{<\lambda} $, $ r\Vdash \lusim{f}(\xi) \in A $. By claim \ref{Claim: NS, fusion for dense open sets}, there exists $ q^* \geq^*q $ and a club $ C\subseteq D $, such that for every $q^*\leq p\in D$, $ {p\restriction_{\gamma'+1}}^{\frown} q^*\setminus \left( \gamma'+1 \right)\in D $, where $ \gamma' = \min\left( C\setminus \left( \gamma+1\right) \right) $, and $ \gamma  $ is the maximal coordinate in which a non-direct extension is taken in the extension $ q^*\leq p $. 

%
Let us construct a direct extension $ q^{**}\geq^*  q^*$ with the same support as $ q^* $. Let $ \mu\in \mbox{supp}\left(q^*\right) $ be a measurable, and assume that $ q^{**}\restriction_{\mu} $ was constructed. Take an arbitrary generic $ G_{\mu}\subseteq P'_{\mu} $ with $ q^{**}\restriction_{\mu}\in G_\mu $. Denote $ \mu' = \min \left(C\setminus \left(\mu+1\right)\right) $. In $ V\left[G',G_{\mu}\right] $, shrink the set $ \lusim{A}^{q^*}_{\mu} $ to a set $ A $ such that, for each $ n<\omega $, exactly one of the following holds: Either for every $ s\in \left[A\right]^n $, there exists  direct extension $ r_{s} \geq^* q^*\restriction_{\left( \mu, \mu' \right]} $ and a set of ordinals $ A_{s} $ with $ \left|A_s\right|<\lambda $, such that--
$${ \langle {t^{q^*}_{\mu}}^{\frown} s  , A\setminus \max(s) \rangle}^{\frown} {r_s} ^{\frown} {q^*}\setminus \left(\mu'+1\right)  \Vdash  \lusim{f}(\xi) \in  \check{A}_s$$
or, there is no such $ s\in \left[A\right]^n $.

Let us prove now that $ q^{**} $ has a direct extension which belongs to $ e(\xi) $. Assume otherwise. Let $ p\geq q^{**} $ be a condition which decides the value of $ \lusim{f}(\xi) $, and is chosen with the least number of non--direct extensions. Let $ \gamma \in \mbox{supp}(q^*) $ be the maximal coordinate in which a non-direct extension is taken, and let $ \gamma' = \min\left( C\setminus \left(\gamma+1\right) \right) $. Clearly $p\geq q^*$, and by the choice of $ q^* $, 
$$ {p\restriction_{\gamma'+1}}^{\frown} q^*\setminus \left( \gamma'+1 \right)\in D $$
In particular, for some $ A\in \left[\mbox{Ord}\right]^{<\lambda} $, 
$$ p\restriction_{\gamma} \Vdash  p\restriction_{\left[\gamma, \gamma'\right]}\Vdash  q^{*}\setminus \left( \gamma'+1 \right)\Vdash \lusim{f}(\xi)\in A $$
Now, let $ G_{\gamma}\subseteq P'_{\gamma} $ be generic over $ V\left[G'\right] $ with $ p\restriction_{\gamma}\in G_{\gamma} $. Then in $ V\left[G',G_{\gamma}\right] $, there exists $ A\in \left[\mbox{Ord}\right]^{<\lambda} $ such that--
$$  {\langle t^{p}_{\gamma}, A^{p}_{\gamma} \rangle}^{\frown} {p\restriction_{\left(\gamma, \gamma'\right]}}^{\frown} {q^{*}\setminus \left( \gamma'+1 \right)} \Vdash \lusim{f}(\xi)\in A $$
Let $ n<\omega $ be such that  $\mbox{lh} \left( t^{p}_{\gamma} \right) = n +  \mbox{lh} \left( t^{q^*}_{\gamma} \right) $. Then $ p\restriction_{\gamma} $ extends $ q^{**}\restriction_{\gamma} $, and thus forces that for every $ s\in \left[ \lusim{A}^{p}_{\gamma} \right]^n $, there exists $ r_s \geq^* q^*\restriction_{\left( \gamma,\gamma' \right]} $ and a set $ A_{s} $ bounded in $\lambda$, such that--
$$  { \langle  { t^{q^*}_{\gamma} }^{\frown} s ,  \lusim{A}^{p}_{\gamma} \setminus \max(s)
	\rangle } ^{\frown} { r_s } ^{\frown} q^*\setminus \left(\gamma'+1\right) \Vdash \lusim{f}(\xi)\in A_{s}  $$
Let $ r $ be a $ P_{\gamma+1} $-name for the direct extension of $ q^* $ which is forced by-- 
$$ { \langle  { t^{q^*}_{\gamma} }^{\frown} s ,  \lusim{A}^{p}_{\gamma} \setminus \max(s)
	\rangle }  $$ 
to be $ r_s $, for every $ s $ of length $ n $. Then $ r\geq^* q^*\restriction_{\left( \gamma, \gamma' \right]} $, and by direct extending $ r $ inside the support of $ q^* $, we can assume that $ r\geq^* q^{**}\restriction_{\left( \gamma, \gamma' \right]} $ (note that the coordinates in which a non-direct extension is taken in the extension $ r\geq^* q^*\restriction_{\left( \gamma, \gamma' \right]} $ does not lie inside $ \mbox{supp}\left( q^* \right) $).

By taking a union of the sets $ A_{s} $ above, there exists a set of ordinals $ A\in V\left[G',G_{\mu}\right] $ with $ \left|A\right|<\lambda $ such that--
$$ { \langle  { t^{q^*}_{\gamma} } ,  \lusim{A}^{p}_{\gamma} 
	\rangle }^{\frown} r ^{\frown} q^*\setminus \left(\gamma+1\right) \Vdash \lusim{f}(\xi)\in A $$
$ G_{\gamma} $ was an arbitrary generic set with $ p\restriction_{\gamma}\in G_{\gamma} $; thus, in $ V\left[G'\right] $,
$$ p\restriction_{\gamma} \Vdash \exists A\in \left[Ord\right]^{<\lambda}, \ { \langle  { t^{q^*}_{\gamma} } ,  \lusim{A}^{p}_{\gamma} 
	\rangle }^{\frown} r ^{\frown} q^*\setminus \left(\gamma+1\right) \Vdash \lusim{f}(\xi)\in A $$
Let $\lusim{A} $ be a $ P'_{\gamma} $-name for the above set $ A $, and let $ A^*\in V\left[G'\right] $ be the set of all possible values of elements in $ \lusim{A} $ as forced by extensions of $ p\restriction_{\gamma} $. Then $ A^*\in \left[\mbox{Ord}\right]^{<\lambda} $, and--
$$  {p\restriction_{\gamma}}^ {\frown} {\langle  t^{q^*}_{\gamma}  ,  \lusim{A}^{p}_{\gamma} 
\rangle }^{\frown} r ^{\frown} q^{**}\setminus \left(\gamma+1\right) \Vdash \lusim{f}(\xi)\in A^* $$
This contradicts the minimality of the number of non-direct extensions in the choice of $ p\geq q^{**} $.
\end{proof}

Let us mention several immediate corollaries of the last lemma, all of them were introduced in \cite{ben2017homogeneous}:

\begin{corollary} \label{Corollary: NS, every name for an ordinal is decided by a direct extension up to boundedly many values}
	Let $ \lambda \leq \kappa $ be a regular cardinal and $ p\in P_{\lambda} $. Assume that $ \lusim{\alpha} $ is a $ P_{\lambda} $-name for an ordinal. Then there exist $ p^*\geq^* p $ and a set of ordinals $ A $ of cardinality $ \left|A\right|< \lambda $, such that $ p^*\Vdash \lusim{\alpha}\in \check{A} $.
\end{corollary}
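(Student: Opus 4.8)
\emph{The plan} is to prove the statement by distinguishing whether $\lambda$ is a limit cardinal or a successor.

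\emph{Case 1: $\lambda$ is a limit cardinal.} Since $\lambda$ is regular and $\mathrm{GCH}$ holds below $\kappa$, $\lambda$ is inaccessible, so Lemma~\ref{Lemma: NS, every new function is evaluated by F in V} applies. I would apply it to $p$ and to the $P_\lambda$-name $\lusim{f}$ for the constant function $\xi\mapsto\lusim{\alpha}$ — a legitimate $P_\lambda$-name for a function from $\lambda$ to the ordinals. This yields $p^*\geq^* p$, a club $C\subseteq\lambda$ and $F\colon\lambda\to[\mathrm{Ord}]^{<\lambda}$ in $V$ with $p^*\Vdash\lusim{\alpha}=\lusim{f}(\xi)\in F(\xi)$ for every $\xi\in C$. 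Picking any single $\xi_0\in C$ and setting $A:=F(\xi_0)$ gives $|A|<\lambda$ and $p^*\Vdash\lusim{\alpha}\in\check A$. (When $\lambda=\omega$ the corollary is trivial, since $P_\omega$ is the trivial forcing.)

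\emph{Case 2: $\lambda=\mu^+$.} First I would factor $P_\lambda\cong P_\mu*\lusim{Q}_\mu$, using that every ordinal in the interval $(\mu,\mu^+)$ is a non-cardinal, hence non-measurable, hence carries trivial forcing. Working over a generic extension $V[G_\mu]$ below the $P_\mu$-coordinate of $p$: if $\mu\notin\Delta$ then $Q_\mu$ is trivial and $\lusim{\alpha}$ is already an ordinal of $V[G_\mu]$; if $\mu\in\Delta$, then $Q_\mu$ is Prikry forcing with $U^*_\mu$, and the Prikry property lets me $\leq^*_{Q_\mu}$-extend $p(\mu)$ to a condition deciding $\lusim{\alpha}$ up to its stem, hence forcing $\lusim{\alpha}$ into a set of ordinals of size $\le|[\mu]^{<\omega}|=\mu$. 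Reflecting this back (mixing over a maximal antichain of $P_\mu$), I obtain $p'\geq^* p$ in $P_\lambda$ and a $P_\mu$-name $\lusim{B}$ for a set of ordinals with $|\lusim{B}|\le\mu$ and $p'\Vdash\lusim{\alpha}\in\lusim{B}$.

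It then remains to absorb $\lusim{B}$ into a ground-model set of size $\le\mu<\lambda$ by a direct extension of $p'\restriction\mu$ in $P_\mu$. Here I would rerun, \emph{inside $P_\mu$}, the coordinate-by-coordinate argument used for Lemma~\ref{Lemma: NS, every new function is evaluated by F in V} — legitimate, since $\mu$ is a limit ordinal, so Lemma~\ref{Lemma: NS, Fusion Lemma} and Claim~\ref{Claim: NS, fusion for dense open sets} are available for $P_\mu$ — applied to a $P_\mu$-name $\lusim{b}$ for a surjection $\mu\to B$. For $\xi<\mu$ one sets $e(\xi)=\{r\in P_\mu\setminus\xi:\exists A_\xi\in[\mathrm{Ord}]^{<\mu},\ r\Vdash\lusim{b}(\xi)\in A_\xi\}$, checks via Claim~\ref{Claim: NS, fusion for dense open sets} (exactly as in the proofs of Lemmas~\ref{Lemma: NS, preservation of cardinals} and~\ref{Lemma: NS, every new function is evaluated by F in V}) that each $e(\xi)$ is $\leq^*$-dense open, fuses using Lemma~\ref{Lemma: NS, Fusion Lemma}, and collects the resulting $<\mu$-sized value sets into a single $V$-set $A$ with $|A|\le\mu$. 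Taking $p^*:=(p_0^*,p'(\mu))$ for the fused $p_0^*\geq^* p'\restriction\mu$ then yields $p^*\Vdash\lusim{\alpha}\in\check A$.

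\emph{Expected main obstacle.} Case 1 is genuinely immediate; all the difficulty is in Case 2, and the delicate point is the last step. The fusion lemma naturally controls a name on a \emph{club} of coordinates, whereas here $\lusim{B}$ must be captured at \emph{every} $\xi<\mu$, with the final set $A$ still of size $<\lambda=\mu^+$. Keeping $|A|<\mu^+$ relies on $|P_\gamma|<\mu$ for all $\gamma<\mu$ together with careful bookkeeping of the $\mu$ many $<\mu$-sized value sets produced along the fusion; confirming that the fusion pins $\lusim{B}$ down globally, rather than only up to a nonstationary error, is what needs the extra care in this case.
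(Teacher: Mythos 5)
Your Case~1 is correct, and the ``constant function'' trick --- feeding $\xi\mapsto\lusim{\alpha}$ into Lemma~\ref{Lemma: NS, every new function is evaluated by F in V} and then evaluating the resulting $F$ at any single $\xi_0\in C$ --- is a clean reduction; it even covers more ground than the paper's first case (the paper splits on whether $\lambda$ is a limit of measurables, not on whether $\lambda$ is inaccessible).

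The real problem is the gap in Case~2 that you yourself flag at the end, and it is not a bookkeeping issue that extra care will patch: it is fatal to the route you chose. After you fuse inside $P_\mu$ against the name $\lusim{b}$ for a surjection $\mu\to B$, the fusion lemma hands you a \emph{club} $C\subseteq\mu$ on which $\lusim{b}(\xi)$ is trapped in $F(\xi)$. But $\mu\setminus C$ still has size $\mu$, so the values $\lusim{b}(\xi)$ for $\xi\notin C$ --- hence potentially $\mu$-many elements of $\lusim{B}$ --- are completely uncontrolled, and nothing forces them into $\bigcup_{\xi\in C}F(\xi)$. Attempts to repair this (e.g.\ replacing $\lusim{b}$ by a name coding $\lusim{b}\restriction\xi$ as a single ordinal) run into circularity: verifying that the relevant $e(\xi)$ is $\leq^*$-dense open in $P_\mu\setminus(\xi{+}1)$ with a $<\mu$ bound is precisely the statement of this corollary for $\mu$, and $\mu$ need not be regular. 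Your two-stage decomposition ($Q_\mu$-Prikry-property, then $P_\mu$-fusion on a surjection name) thus moves the difficulty rather than solving it.

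The paper sidesteps all of this by never passing to a name for a set or surjection. In the non-inaccessible case it lets $\lambda'$ be the supremum of the measurables below $\lambda$, observes that the nontrivial part of $P_\lambda$ is $P_{\lambda'}$ (a $\lambda'$-length iteration), and then runs the \emph{innards} of Lemma~\ref{Lemma: NS, every new function is evaluated by F in V}'s proof on $P_{\lambda'}$ for the single name $\lusim{\alpha}$: apply Claim~\ref{Claim: NS, fusion for dense open sets} to the dense open set $D=\{r\in P_{\lambda'}:\exists A\in[\mathrm{Ord}]^{<\lambda},\ r\Vdash\lusim{\alpha}\in A\}$, then do the Rowbottom-type shrink to get $p^{**}\geq^* p^*$, and conclude by the minimal-counterexample contradiction. (When $\lambda>(\lambda')^+$ it is even simpler, since $|P_{\lambda'}|<\lambda$ and a chain-condition count finishes without any direct extension.) No iteration of fusion over $P_\mu$, no surjection name, and therefore no ``nonstationary error'' to absorb. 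If you want to keep your Case~2 factorization $P_\lambda\cong P_\mu*\lusim{Q}_\mu$, the viable fix is to import this Claim~\ref{Claim: NS, fusion for dense open sets}-plus-Rowbottom argument directly for $\lusim{\alpha}$ over $P_\mu$, rather than to first flatten $\lusim{\alpha}$ into a $\mu$-sized set $\lusim{B}$ and re-fuse.
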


\begin{proof}
	If $ \lambda $ is a limit of measurables, then it is inaccessible, and then the proof is included in the proof of lemma \ref{Lemma: NS, every new function is  evaluated by F in V}. Else, let $ \lambda' < \lambda $ be the supremum of the set of measurables below $ \lambda $. Then $ P_{\lambda} = P_{\lambda'} $. We can now repeat the argument in the proof of lemma \ref{Lemma: NS, every new function is evaluated by F in V} for the forcing $ P_{\lambda'} $, with minor changes: first define $ D = \{  r\in P_{\lambda'} \colon \exists A\in \left[\mbox{Ord}\right]^{<\lambda} \mbox{ such that } r\Vdash \lusim{\alpha}\in A \} $. Direct extend $ p^*\geq^* p $ and find a club $ C\subseteq \mu' $ such that for every $p^*\leq q \in D$, $ {q\restriction_{\gamma'+1}}^{\frown} {p^*\setminus \left( \gamma'+1 \right)} \in D $, where $ \gamma'\in C $ is a above the finite set of non-direct extensions taken in the extension $ q\geq p^* $. Then, direct extend $ p^{**}\geq^* p^* $, without changing the support, as in the previous lemma. Arguing as above, $ p^{**} $ has a direct extension which decides $ \lusim{\alpha} $ up to $ <\lambda $-many possibilities.
	
	We remark that if $ \lambda > {\lambda'}^{+} $, a simpler argument exists: by $ \mbox{GCH} $,  $ P_{\lambda'} $ is $ \lambda-c.c. $. let $ A\in V $ be the set--
	$$ A = \{ \xi  \colon \exists q\geq p, \ q\Vdash \xi = \lusim{\alpha} \} $$
	then $ \left|A\right|<\lambda $ and $ p\Vdash \lusim{\alpha}\in A $ (here a direct extension of $ p $ is not required).
\end{proof}

\begin{corollary} \label{Corollary: NS, every new function is dominated by an old one}
Let $ \lambda \leq \kappa $ be inaccessible, and assume that $ G_{\lambda}\subseteq P_{\lambda} $ is generic over $ V $. Then $ \lambda $ is still regular iv $ V\left[G_{\lambda}\right] $. Moreover, every function $ f\colon \lambda \to \lambda  $ in $ V\left[G_{\lambda}\right] $ is dominated by a function $ g\colon \lambda \to \lambda  $ in $ V $. 
\end{corollary}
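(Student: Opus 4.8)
\emph{Plan.} The plan is to prove the two assertions separately, establishing regularity of $\lambda$ in $V[G_\lambda]$ first --- this is the part that needs an extra combinatorial idea --- and then obtaining the domination statement quickly from Lemma~\ref{Lemma: NS, every new function is evaluated by F in V} together with that regularity.

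For regularity, I would argue by contradiction: suppose $\mu:=\mbox{cf}^{V[G_\lambda]}(\lambda)<\lambda$ and fix in $V[G_\lambda]$ an increasing cofinal map $h\colon\mu\to\lambda$. The obstacle is that Lemma~\ref{Lemma: NS, every new function is evaluated by F in V} only controls a new function $\lambda\to\mbox{Ord}$ on a \emph{club} of $\lambda$, and such a club may entirely miss the (small) range of $h$. To defeat this I would first spread $h$ out over all of $\lambda$: by Solovay's theorem fix, in $V$, a partition $\lambda=\bigsqcup_{i<\mu}S_i$ into stationary subsets of $\lambda$, let $j\colon\lambda\to\mu$ be the map in $V$ with $j[S_i]=\{i\}$, and put $h^{+}=h\circ j\in V[G_\lambda]$, so $h^{+}\colon\lambda\to\lambda$ and $\{\xi<\lambda:h^{+}(\xi)=h(i)\}=S_i$ is stationary for each $i<\mu$. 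Now apply Lemma~\ref{Lemma: NS, every new function is evaluated by F in V} to a name $\lusim{h^{+}}$: the set of conditions $q$ admitting a club $C\in V$ and a function $F\colon\lambda\to[\mbox{Ord}]^{<\lambda}$ in $V$ with $q\Vdash\lusim{h^{+}}(\xi)\in F(\xi)$ for all $\xi\in C$ is dense below any condition of $G_\lambda$ forcing $\lusim{h^{+}}$ to be a total function $\lambda\to\mbox{Ord}$ (the lemma produces such a $q$ by a $\geq^{*}$-extension), so by genericity we get such $C,F\in V$ with $h^{+}(\xi)\in F(\xi)$ for every $\xi\in C$, in $V[G_\lambda]$. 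For each $i<\mu$ pick $\xi_i\in S_i\cap C$ (a stationary set meets every club); then $h(i)=h^{+}(\xi_i)\in F(\xi_i)\cap\lambda$. Hence $\mbox{range}(h)\subseteq U:=\bigcup_{i<\mu}\bigl(F(\xi_i)\cap\lambda\bigr)\subseteq\lambda$, and, choosing the $\xi_i$ in $V$, $U$ is a union in $V$ of fewer than $\lambda$ sets each of size $<\lambda$, so $|U|<\lambda$ by regularity of $\lambda$ in $V$, whence $\sup U<\lambda$ --- contradicting $\sup\mbox{range}(h)=\lambda$. Thus $\lambda$ is regular, and in particular still a cardinal, in $V[G_\lambda]$.

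For the domination clause, let $f\colon\lambda\to\lambda$ be in $V[G_\lambda]$ and set $\bar f(\xi)=\sup\{f(\eta):\eta\le\xi\}$. By the regularity just proved, $\bar f(\xi)<\lambda$ for each $\xi$, so $\bar f\colon\lambda\to\lambda$ is a nondecreasing function lying pointwise above $f$. Applying Lemma~\ref{Lemma: NS, every new function is evaluated by F in V} to $\bar f$ exactly as above yields, in $V[G_\lambda]$, a club $C\in V$ and $F\colon\lambda\to[\mbox{Ord}]^{<\lambda}$ in $V$ with $\bar f(\xi)\in F(\xi)$ for all $\xi\in C$. Define $g\in V$ by $g(\xi)=\sup\bigl(F(\gamma_\xi)\cap\lambda\bigr)+1$, where $\gamma_\xi=\min(C\setminus(\xi+1))$; since each $F(\gamma_\xi)$ has size $<\lambda$ and $\lambda$ is regular in $V$, $g$ maps $\lambda$ into $\lambda$, and for every $\xi<\lambda$ we have $\gamma_\xi>\xi$, hence $f(\xi)\le\bar f(\xi)\le\bar f(\gamma_\xi)<g(\xi)$ by monotonicity of $\bar f$ and $\bar f(\gamma_\xi)\in F(\gamma_\xi)$. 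So $g\in V$ dominates $f$, as required.

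The main obstacle is the regularity clause: Lemma~\ref{Lemma: NS, every new function is evaluated by F in V} pins a new $\lambda$-sequence of ordinals down to a ground-model small-set-valued function only on a club, and nothing a priori prevents that club from avoiding the range of a cofinal map of length $\mu<\lambda$. The device that resolves this is the Solovay partition of $\lambda$ into $\mu$ stationary pieces: precomposing $h$ with the associated $V$-surjection $j\colon\lambda\to\mu$ makes each value $h(i)$ be attained on a stationary set, hence caught by $F$ at some point of \emph{every} club. Everything else is routine --- the density arguments pulling the witnessing condition into $G_\lambda$ (legitimate because the lemma achieves its conclusion by a direct extension), the monotonization trick, and elementary cardinal arithmetic --- and uses nothing beyond the hypothesis that $\lambda$ is inaccessible, hence regular, in $V$.
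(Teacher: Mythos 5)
Your proof is correct, but your argument for the regularity clause is genuinely different from the paper's. The paper splits $P_\lambda = P_{\mu+1} * \bigl(P_\lambda\setminus(\mu+1)\bigr)$, works over an arbitrary $P_{\mu+1}$-generic extension, and uses that the direct extension order of the tail is more than $\mu$-closed together with Corollary~\ref{Corollary: NS, every name for an ordinal is decided by a direct extension up to boundedly many values} to bound $f(\xi)$ for each $\xi<\mu$ simultaneously by a single direct extension; the bound is then pushed to $V$ by taking the supremum over the $<\lambda$-many conditions of $P_{\mu+1}$. You instead reduce regularity to the club-domination Lemma~\ref{Lemma: NS, every new function is evaluated by F in V} alone, using a Solovay partition of $\lambda$ into $\mu$ stationary pieces in $V$ to ``thicken'' a putative cofinal map $h\colon\mu\to\lambda$ into $h^{+}=h\circ j\colon\lambda\to\lambda$, so that each value of $h$ is attained on a stationary set and hence is caught by the club $C$; the rest is cardinal arithmetic in $V$. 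Your route is cleaner in that it does not appeal to the factorisation of the iteration at $\mu+1$, nor separately to the closure/Prikry properties of the tail — it relies only on the one club lemma plus the standard stationary-splitting fact. It does implicitly use preservation of cardinals (so that $\mu$ is still a $V$-cardinal $<\lambda$), which is available from Lemma~\ref{Lemma: NS, preservation of cardinals}. The domination clause is handled by both proofs in essentially the same way: monotonize $f$, dominate it on a club via Lemma~\ref{Lemma: NS, every new function is evaluated by F in V}, then compose with $\xi\mapsto\min(C\setminus(\xi+1))$.
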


\begin{proof}
Assume that $ \lambda $ is singular in $ V\left[G_{\lambda}\right] $. Let $ \mu = \mbox{cf}\left( \lambda \right) $. Let $ f\colon \mu \to \lambda $ be an increasing cofinal sequence in $ V\left[G_{\lambda}\right] $. Let $p\in  P_{\lambda} $ be a condition which forces this. We argue that there exists $ \delta< \lambda $ and $ p^*\geq p $ such that $ p^*\Vdash \mbox{Im}(f)\subseteq \delta $, which is a contradiction. Assume without loss of generality that $ p $ is the weakest condition in $ P_{\lambda} $.

Work in an arbitrary generic extension of $ V $ with the forcing $ P_{\mu+1} $. We argue that every condition $ q\in P\setminus \left(\mu+1\right) $ has a direct extension $ q^*\in P\setminus \left( \mu+1 \right) $ and function $ \alpha\mapsto F(\alpha)  $ such that for every $ \xi<\mu $, $ F(\xi) $ is a bounded subset of $ \lambda  $, and--
$$q^*\Vdash \lusim{f}(\xi)\in F(\xi) $$ 
Indeed, given $ \xi<\mu $ , $ \lusim{f}(\xi) $ is a $ P\setminus \left( \mu+1 \right) $-name for an ordinal below $ \lambda $. By corollary \ref{Corollary: NS, every name for an ordinal is decided by a direct extension up to boundedly many values}, every $ q\in P\setminus \left( \mu+1 \right)$ can be direct extended to $ q^* \in P\setminus \left( \mu+1 \right) $ such that for some set of ordinals $ A_{\xi}\subseteq \lambda$ with $ \left| A_{\xi} \right|<\lambda $, $ q^*\Vdash \lusim{f}(\xi) \in A_{\xi} $. Since the direct extension order of $ P\setminus \left(\mu+1\right) $ is more then $ \mu $-closed, we can find a single $ q^*\in P\setminus \left( \mu+1 \right) $, and, for every $ \xi<\mu $, a bounded subset $ A_{\xi}\subseteq \lambda $ such that $ q^* \Vdash \forall \xi< \mu, \ \lusim{f}(\xi) \in A_{\xi} $; then, set $ F(\xi) = A_{\xi} $ as desired.

Since we worked in an arbitrary generic extension above $ \left(\mu+1\right) $ and gave a density argument in $ P\setminus \left( \mu+1 \right) $, we can assume that there exists $ p^*\in G $ such that--
\begin{align*}
p^*\restriction_{\mu+1} \Vdash & \mbox{ there exists a function } \xi \mapsto F(\xi) \mbox{ such that, for every  } \xi< \mu, \\
&\ F(\xi) \mbox{ is a bounded subset of } \lambda  \mbox{ and } p^*\setminus \left(\mu+1\right)\Vdash \lusim{f}(\xi)\in F(\xi)
\end{align*}
Finally, define, in $V$, 
$$ \delta = \sup \left( \bigcup_{\xi<\mu}   \{ \beta< \lambda \colon \exists q\geq p^*\restriction_{\mu+1}, \ q\Vdash \check{\beta}\in \lusim{F}(\xi) \} \right)$$
and note that $ \delta < \lambda $ and $ p^*\Vdash \mbox{Im}\left(\lusim{f}\right) \subseteq \delta $. 

Let us argue now that every function $ f\colon \lambda\to \lambda $ in $ V\left[G\right] $ is dominated by a function $ g\colon \lambda\to \lambda $ in $ V $. First, in $ V\left[G\right] $, $ f $ is dominated by an increasing function $ f'\colon \lambda\to \lambda $. By \ref{Lemma: NS, every new function is evaluated by F in V}, $ f' $ is dominated on a club $ C\subseteq \lambda $ by a function $ g'\colon \lambda\to \lambda  $ in $ V $. Given $ \xi<\kappa $, let $ c_\xi = \min\left( C\setminus \xi+1 \right) $. Finally, define $ g\colon \lambda\to \lambda $, 
$$ g(\xi) = g'\left( c_{\xi} \right) $$
Then for every $ \xi<\kappa $, 
$ f(\xi) \leq f'(\xi) \leq f'\left( c_\xi \right)  < g'\left( c_\xi \right) = g(\xi)$.
\end{proof}

\begin{corollary}
Let $ \lambda \leq \kappa $ be inaccessible. The forcing $ P_{\lambda} $ preserves stationary subsets of $ \lambda $.
\end{corollary}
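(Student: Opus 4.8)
The plan is to establish the stronger statement that every club subset of $\lambda$ in $V[G_\lambda]$ contains a club subset of $\lambda$ lying in $V$. Preservation of stationary subsets of $\lambda$ follows at once, together with the fact that $\lambda$ remains regular in $V[G_\lambda]$ (Corollary \ref{Corollary: NS, every new function is dominated by an old one}, or Lemma \ref{Lemma: NS, preservation of cardinals}). So suppose toward a contradiction that $S\in V$ is stationary in $\lambda$ and that some $p\in P_\lambda$ forces a name $\lusim{C}$ to be a club in $\lambda$ with $\lusim{C}\cap \check S = \emptyset$. Fix $G_\lambda\subseteq P_\lambda$ generic over $V$ with $p\in G_\lambda$, work in $V[G_\lambda]$, and put $C = \lusim{C}^{G_\lambda}$, a club in $\lambda$ disjoint from $S$.

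First I would extract from $C$ a function to dominate: define $f\colon \lambda\to\lambda$ in $V[G_\lambda]$ by $f(\xi) = \min\left(C\setminus(\xi+1)\right)$, which is well defined since $C$ is unbounded. By Corollary \ref{Corollary: NS, every new function is dominated by an old one}, there is $g\colon \lambda\to\lambda$ in $V$ with $f(\xi)<g(\xi)$ for every $\xi<\lambda$. Working in $V$, set
$$ D = \{ \delta<\lambda \colon \delta \mbox{ is a limit ordinal and } g[\delta]\subseteq\delta \}. $$
Then $D$ is closed, and it is unbounded in $\lambda$ by the usual closure-point argument (using that $\lambda$ is regular in $V$); so $D$ is a club subset of $\lambda$ in $V$.

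It remains to check that $D\subseteq C$, which I would do back in $V[G_\lambda]$: fix $\delta\in D$ and an arbitrary $\xi<\delta$. Then $f(\xi)\in C$ and $\xi<f(\xi)<g(\xi)<\delta$, so $C\cap\delta$ is unbounded in $\delta$; since $C$ is closed and $\delta$ is a limit ordinal, $\delta\in C$. Hence $D\subseteq C$. But $S$ is stationary and $D$ is a club, both computed in $V$, so there is $\delta\in S\cap D$; then $\delta\in S\cap C$, contradicting $C\cap S=\emptyset$.

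The argument is just a repackaging of the standard fact that a forcing under which every new function $\lambda\to\lambda$ is dominated by a ground model function adds no club disjoint from a stationary subset of the ground model; the only real input is Corollary \ref{Corollary: NS, every new function is dominated by an old one}, so there is no genuine obstacle here beyond applying it correctly --- and noting that $\lambda$ stays regular so that ``stationary'' keeps its meaning.
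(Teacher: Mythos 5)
Your proof is correct and takes essentially the same approach as the paper: reduce to showing every club of $\lambda$ in $V[G_\lambda]$ contains a ground-model club, obtain such a club as the closure points of a ground-model function $g$ dominating a function derived from $C$ (the paper uses the increasing enumeration of $C$, you use $\xi\mapsto\min(C\setminus(\xi+1))$, which is an inessential variation), via Corollary \ref{Corollary: NS, every new function is dominated by an old one}.
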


\begin{proof}
It suffices to prove that for every club in $ \kappa $, $ C\in  V\left[G\right] $, there exists a club in $ \kappa $, $ D\in V $, such that $ D\subseteq C $. In $ V\left[G\right] $, let $ f\colon \kappa\to \kappa $ be the increasing enumeration of $ C $. By corollary \ref{Corollary: NS, every new function is dominated by an old one}, there exists $ g\in V $ which dominates $ f $. Let $ D $ be the set of closure points of $ g $. Clearly, $ D $ is a club. Let us prove that $ D\subseteq C $. Given $ \alpha\in D $, $ \alpha $ is a closure point of $ f $, and thus a limit point of $ \mbox{Im}(f) =C $. Therefore $ \alpha\in C $.
\end{proof}

Recall that a set of ordinals $ A\in V\left[G\right] $ is called fresh if $ A\notin V$ and, for every ordinal $ \xi < \sup(A) $, $ A \cap \xi \in V $. Every old measurable $ \mu < \kappa $ clearly has a fresh unbounded subset: its Prikry sequence. So if $ \sup(A) $ was a measurable cardinal below $ \kappa $ in $ V $, $ A $ might be  fresh over $ V $. Let us address the case where $ \sup(A)$ is $\kappa$ or $ \kappa^{+} $.

\begin{lemma} \label{Lemma: NS, no fresh subsets of kappa and kappa+}
$ P=P_{\kappa} $ does not add new unbounded subsets of $ \kappa $ or $ \kappa^{+} $ which are fresh over $ V $.
\end{lemma}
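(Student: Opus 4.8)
Let me think about the structure. We have $P = P_\kappa$, a nonstationary support iteration of Prikry forcings, satisfying the Prikry property (Lemma 1.3), preserving cardinals and cofinalities $\geq \kappa^+$ (Lemma 1.6), and the technical corollaries about functions being dominated / decided up to boundedly many values. A "fresh" set $A \in V[G]$ is one with $A \notin V$ but $A \cap \xi \in V$ for all $\xi < \sup(A)$. We want: no such $A$ with $\sup(A) = \kappa$ or $\sup(A) = \kappa^+$.

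The plan is to split $P = P_\kappa$ into a two-step iteration at a well-chosen point and use closure of the tail. I will handle the two cases ($\sup(A) = \kappa^+$ and $\sup(A) = \kappa$) separately. Suppose first $\dot A$ is a $P$-name for a fresh unbounded subset of $\kappa^+$, forced by some $p$. Since $P$ has cardinality $\kappa^+$, it has the $\kappa^{++}$-c.c., so cofinalities $\geq \kappa^{++}$ are preserved; in particular $\mathrm{cf}(\kappa^+) = \kappa^+$ in $V[G]$, and $A$ is cofinal of order type cofinal in $\kappa^+$. By freshness, every proper initial segment $A \cap \xi$ for $\xi < \kappa^+$ lies in $V$. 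I would like to show $A$ is determined by its initial segments in a way $V$ can see. The key move: since $\mathrm{cf}^{V[G]}(\kappa^+)=\kappa^+$, pick an increasing continuous cofinal sequence $\langle \xi_\eta : \eta < \kappa^+\rangle$ in $\kappa^+$; each $A \cap \xi_\eta \in V$, and the map $\eta \mapsto A \cap \xi_\eta$ is a $\subseteq$-increasing chain in $V$-sets whose union is $A \notin V$. Now I use that $P_\kappa$ has size $\kappa^+$ and a name for an ordinal $<\kappa^+$, together with Prikry-property/fusion style arguments (Corollary 1.7 applied at regular $\kappa$), to show that in fact $\dot A \cap \xi$ for $\xi < \kappa^+$ has few possible values — more precisely that the function $\xi \mapsto (A \cap \xi)$ is, modulo the generic, coded by an element of $V$. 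The cleanest route: split $P = P_\kappa * \dot{\mathbb{1}}$ trivially won't help, so instead use that any name for $A \cap \xi$ (a bounded subset of $\kappa^+$, hence essentially a subset of $\kappa$ since we can reindex, of size $\leq \kappa$) is decided, by the Prikry property plus $\kappa^+$-size of $P$, enough to reconstruct $A$ in $V$ from a single $V$-object (the function $\xi \mapsto$ the $\leq^*$-stable value), contradicting $A \notin V$.

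For the case $\sup(A) = \kappa$: here $A$ is an unbounded subset of $\kappa$, fresh, so $A \cap \alpha \in V$ for all $\alpha < \kappa$. This is the genuinely delicate case because Prikry sequences of old measurables $\mu < \kappa$ really are fresh over $V$, so the argument must use that $\sup(A) = \kappa$ specifically, i.e. that $\kappa$ is inaccessible in $V$ and remains regular in $V[G]$ (Corollary 1.8). The plan is: in $V[G]$, let $f : \kappa \to \kappa$ enumerate $A$; by Corollary 1.9 there is $g \in V$ dominating $f$, and one gets a club $D \in V$ of closure points. Now for each $\alpha \in D$, $A \cap \alpha \in V$ by freshness; and $A = \bigcup_{\alpha \in D} (A \cap \alpha)$. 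Consider the $P$-name $\dot A$ and, for a fixed condition $p$, look at the map $\alpha \mapsto \{ \text{possible values of } \dot A \cap \alpha \text{ below } p\}$: by Corollary 1.7 / the fusion machinery applied to the tail forcing $P \setminus (\alpha+1)$ whose direct extension order is highly closed, each $\dot A \cap \alpha$ is decided up to $<\kappa$ possibilities by a direct extension, and then a fusion argument (Lemma 1.4 / Claim 1.5) glues these into a single $p^* \geq^* p$ and a club $C \subseteq \kappa$ in $V$ such that $p^*$ decides $\dot A \cap \alpha$ to lie in a $V$-set $F(\alpha)$ of size $<\kappa$ for all $\alpha \in C$. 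Since $A \cap \alpha$ is linearly ordered by end-extension, "$<\kappa$-many possibilities at stage $\alpha$" forces, along a cofinal set, a single branch, so $p^*$ in fact decides $\dot A$ entirely, giving $A \in V$ — contradiction.

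The main obstacle, and where I expect to spend most effort, is the $\sup(A) = \kappa$ case and specifically the "gluing" step: showing that after fusion we may assume a single condition $p^*$ pins down $A \cap \alpha$ along a club to $V$-sets of bounded-below-$\kappa$ size, and then arguing that a fresh increasing union of $V$-sets, each pinned down to $<\kappa$ possibilities, cannot escape $V$ when the lengths go cofinally to an inaccessible $\kappa$. This requires carefully combining: (i) the direct-extension closure of tails $P \setminus (\alpha+1)$, (ii) the Fusion Lemma to pass from "for each $\alpha$, some direct extension works" to "one direct extension works for all $\alpha$ in a club", and (iii) a counting/branch argument using inaccessibility of $\kappa$ to collapse "$<\kappa$ possibilities per level" to "one possibility." I would organize the write-up so that (i)–(ii) are quotations of the already-proved Claim 1.5 and Corollary 1.7, leaving only (iii), the branch-uniqueness argument, to verify by hand, together with the easy $\kappa^+$ case which follows from $\kappa^{++}$-c.c. plus the same decision-up-to-$<\kappa$-values principle applied at the regular cardinal $\kappa^+$... actually at $\kappa^+$ one uses that $P$ has size $\kappa^+$ directly: $A \cap \xi$ for $\xi < \kappa^+$ is a subset of $\kappa^+$ of size $<\kappa^+$, the name has $\leq \kappa^+$ antichains to consider, and $\mathrm{cf}(\kappa^+)$ preserved forces the branch to be in $V$.
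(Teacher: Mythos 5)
The paper does not actually present a proof of this lemma; it refers the reader to \cite{RestElm}, so I can only compare your proposal against what a correct argument must accomplish rather than against the paper's own write-up.

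You identify the right toolbox (the Prikry property, the Fusion Lemma, Corollary~\ref{Corollary: NS, every name for an ordinal is decided by a direct extension up to boundedly many values}, Corollary~\ref{Corollary: NS, every new function is dominated by an old one}) and the right case split, but the step you label (iii), the ``branch-uniqueness'' argument, is false as stated and is exactly where the whole proof must live. Knowing that some $p^*\geq^* p$ forces $\dot{A}\cap\alpha\in\check{F}(\alpha)$ with $|F(\alpha)|<\kappa$ for every $\alpha$ in a club does \emph{not} imply that $p^*$ decides $\dot{A}$, nor that the resulting branch is in $V$. A height-$\kappa$ tree with fewer than $\kappa$ nodes at each level can have $2^\kappa$ branches: the tree $2^{<\kappa}$ has exactly two successors at every node, and $\mathrm{Add}(\kappa,1)$ adds a fresh branch through it. So ``$<\kappa$ possibilities per level plus end-extension coherence'' proves nothing by itself; if it did, it would show that $\kappa$-Cohen forcing adds no fresh subsets of $\kappa$, which is false. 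The remark that the $A\cap\alpha$ are ``linearly ordered by end-extension'' is a red herring: the initial segments of the true $A$ are of course nested, but the candidate sets $F(\alpha)$ are not, and nothing in what you wrote prevents the branch $\alpha\mapsto A\cap\alpha$ through $\bigcup_\alpha F(\alpha)$ from being fresh. A genuine proof has to use the Prikry-type structure in a sharper way, roughly: that once the tail $p^*\setminus(\alpha+1)$ is frozen up to direct extension, the value of $\dot{A}\cap\alpha$ is already determined in $V$ from $p^*$ alone, not just narrowed to a small $V$-set, and the nonstationary support is what lets one achieve this along a club and take a diagonal union inside $V$. None of that reduction is carried out. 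The $\kappa^+$ case has a parallel gap: the $\leq^*$-order of $P_\kappa$ is only closed to the first measurable below $\kappa$, so there is no ``decide up to $<\kappa^+$ possibilities by direct extension'' principle available for names of subsets of $\kappa^+$, and $\kappa^{++}$-c.c.\ plus $|P|=\kappa^+$ does not by itself exclude fresh subsets of $\kappa^+$ (again $\mathrm{Add}(\kappa^+,1)$ is a counterexample under GCH). Flagging (iii) as the hard step is honest, but it does not supply the missing idea.
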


The proof appears in \cite{RestElm}. Having no fresh subsets of $ \kappa, \kappa^{+} $, together with preservation of cardinals and $ 2^{\kappa} = \kappa^{+} $, leads to the following key property:

\begin{corollary} \label{Corollary: NS, every measure in generic extension extends a measure from V}
Let  $ W\in V\left[G\right] $ be a $ \kappa $-complete ultrafilter on $ \kappa $. Then $ W\cap V \in V $.
\end{corollary}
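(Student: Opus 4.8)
The plan is to code $W\cap V = W\cap P(\kappa)^V$ by a single subset of $\kappa^{+}$ and to show that this subset lies in $V$, by checking that all of its initial segments do and then invoking Lemma~\ref{Lemma: NS, no fresh subsets of kappa and kappa+}. Working in $V$, where $2^{\kappa}=\kappa^{+}$, I would fix an enumeration $\langle A_{\xi}\colon \xi<\kappa^{+}\rangle\in V$ of $P(\kappa)^V$, and set $B=\{\xi<\kappa^{+}\colon A_{\xi}\in W\}\in V[G]$. Since the enumeration is in $V$, it suffices to prove $B\in V$: then $W\cap V=\{A_{\xi}\colon \xi\in B\}\in V$.

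The key claim, which I would establish first, is that \emph{for every family $\mathcal{A}\in V$ with $\mathcal{A}\subseteq P(\kappa)^V$ and $|\mathcal{A}|\leq\kappa$, we have $W\cap\mathcal{A}\in V$.} Granting this, each initial segment $B\cap\eta$ with $\eta<\kappa^{+}$ (so $|\eta|\leq\kappa$) codes $W\cap\{A_{\xi}\colon \xi<\eta\}$, hence $B\cap\eta\in V$. Consequently, if $B$ is bounded in $\kappa^{+}$ it equals one of its own initial segments and $B\in V$; and if $B$ is unbounded in $\kappa^{+}$, then since every proper initial segment of $B$ lies in $V$, the set $B$ would be fresh over $V$ unless $B\in V$ — but Lemma~\ref{Lemma: NS, no fresh subsets of kappa and kappa+} rules out fresh unbounded subsets of $\kappa^{+}$. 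Either way $B\in V$.

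To prove the key claim I would first treat the case $|\mathcal{A}|=\mu<\kappa$. Enumerate $\mathcal{A}=\langle A_{i}\colon i<\mu\rangle$ in $V$, and in $V$ form the atoms $X_{t}=\bigcap_{i\in t}A_{i}\cap\bigcap_{i\in\mu\setminus t}(\kappa\setminus A_{i})$ for $t\subseteq\mu$, together with $T=\{t\subseteq\mu\colon X_{t}\neq\emptyset\}$. Since $\kappa$ is inaccessible in $V$, $|T|\leq 2^{\mu}<\kappa$, and $\langle X_{t}\colon t\in T\rangle\in V$ partitions $\kappa$ into fewer than $\kappa$ pieces; as $W$ is $\kappa$-complete, some $t_{0}\in T$ has $X_{t_{0}}\in W$. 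Now $t_{0}$ is a member of the set $T\in V$, hence $t_{0}\in V$; and since every element of the algebra generated by $\mathcal{A}$ is a union of atoms, $A_{i}\in W\iff X_{t_{0}}\subseteq A_{i}\iff i\in t_{0}$, so $W\cap\mathcal{A}=\{A_{i}\colon i\in t_{0}\}\in V$. For $|\mathcal{A}|=\kappa$, enumerate $\mathcal{A}=\langle A_{i}\colon i<\kappa\rangle$ in $V$ and put $F=\{i<\kappa\colon A_{i}\in W\}$; for each $j<\kappa$, $F\cap j$ is determined by $W\cap\{A_{i}\colon i<j\}$, a family of size $<\kappa$, so $F\cap j\in V$ by the previous case. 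As before, $F$ is either bounded (so $F\in V$) or unbounded with all proper initial segments in $V$, in which case Lemma~\ref{Lemma: NS, no fresh subsets of kappa and kappa+}, now applied at $\kappa$, forces $F\in V$. Hence $W\cap\mathcal{A}=\{A_{i}\colon i\in F\}\in V$, completing the claim.

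The main obstacle is the case $|\mathcal{A}|<\kappa$: one must notice that the induced partition of $\kappa$ into atoms has \emph{fewer than $\kappa$} pieces — so that $\kappa$-completeness alone (rather than $\kappa^{+}$-completeness, which $W$ need not possess) already pins down a single atom $t_{0}$ — and that this $t_{0}$, although identified only in $V[G]$, is automatically an element of $V$, being a member of a set that lies in $V$. Once this is in hand, the two successive uses of ``no fresh subsets'' (first at $\kappa$, then at $\kappa^{+}$), together with cardinal preservation ensuring $\kappa$ and $\kappa^{+}$ are computed the same way in $V$ and $V[G]$, are routine bookkeeping.
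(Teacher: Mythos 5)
Your proof is correct, and it uses precisely the ingredients the paper itself flags as the source of this corollary (``Having no fresh subsets of $\kappa, \kappa^{+}$, together with preservation of cardinals and $2^{\kappa}=\kappa^{+}$, leads to the following key property'')—the paper simply defers the details to Proposition~2.1 of \cite{RestElm}. The two-stage bootstrap (atoms for families of size $<\kappa$ via $\kappa$-completeness, then ``no fresh unbounded subsets'' applied first at $\kappa$ and then at $\kappa^{+}$) is the standard route and matches what the paper intends.
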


For the proof, see proposition $2.1$ in \cite{RestElm}.

\begin{corollary} \label{Corollary: NS, If W is normal in V[G], then W cap V has Mitchell order 0 in V}
	Let $ W\in V\left[G\right] $ be a normal measure. Then $ W\cap V\in V $ is a normal measure of Mitchell order $ 0 $ in $ V $.
\end{corollary}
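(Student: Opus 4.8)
The plan is to first check that $U:=W\cap V$ is a normal measure on $\kappa$ in $V$ (which is largely routine), and then to rule out Mitchell order $\geq 1$ by extracting a cofinal $\omega$-sequence at $\kappa$ into the ultrapower of $V[G]$ by $W$ — hence into $V[G]$ itself, contradicting the regularity of $\kappa$ there.

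For the first part, Corollary~\ref{Corollary: NS, every measure in generic extension extends a measure from V} (applied to $W$, which is in particular $\kappa$-complete) gives $U=W\cap V\in V$. That $V\models$``$U$ is a normal measure on $\kappa$'' is immediate from the corresponding facts about $W$: for $A\in\mathcal P(\kappa)^V$ exactly one of $A,\ \kappa\setminus A$ lies in $W$, so $U$ is a nonprincipal ultrafilter on $\mathcal P(\kappa)^V$; and for any sequence $\langle A_\alpha:\alpha<\kappa\rangle\in V$ of sets in $U$, the sets $\bigcap_{\alpha<\gamma}A_\alpha$ ($\gamma<\kappa$) and $\triangle_{\alpha<\kappa}A_\alpha$ are computed inside $V$ and, by the $\kappa$-completeness and normality of $W$, lie in $W\cap V=U$. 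So the actual content is the Mitchell order assertion, which I would prove by contradiction.

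Suppose $U$ has Mitchell order $\geq 1$ in $V$, and let $j_U\colon V\to M_0=\mathrm{Ult}(V,U)$. Since ${}^{\kappa}M_0\subseteq M_0$ we have $\mathcal P(\kappa)^{M_0}=\mathcal P(\kappa)^V$, so this assumption is equivalent to: $\kappa$ is measurable in $M_0$. The key structural point is that, because $W\cap V=U$, the embedding $j_W\restriction_V$ is exactly $j_U$: the submodel $\{[f]_W:f\in V\}$ of $M[H]=\mathrm{Ult}(V[G],W)$ is canonically isomorphic to $M_0$, and under this identification $M[H]=M_0\bigl[j_W(G)\bigr]$, where $j_W(G)$ is an $M_0$-generic filter for $j_U(P_\kappa)$. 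This is the standard transfer of the ground-model ultrapower through the forcing $P_\kappa$, using $P_\kappa\in V$; this is the only nonroutine bookkeeping in the proof, and it is exactly where the hypothesis $W\cap V\in V$ enters.

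Now write $j_U(P_\kappa)=P_\kappa*\lusim{R}$, where the initial segment of the iteration below $\kappa$ (computed in $M_0$) is still $P_\kappa$ and $\lusim{R}$ is the tail on the interval $[\kappa,j_U(\kappa))$; then $j_W(G)$ restricts to $G$ on the bottom coordinate and to an $M_0[G]$-generic $H'$ for the tail. Applying $j_U$ to the definition of the iteration — at each measurable stage $\alpha$ the forcing $P_\alpha$ forces $\alpha$ to remain measurable and $\lusim{Q}_\alpha$ is nontrivial Prikry forcing with a normal measure on $\alpha$ — with $\alpha=\kappa<j_U(\kappa)$ (measurable in $M_0$), we obtain that $\kappa$ is measurable in $M_0[G]$ and that $\lusim{R}$ begins with nontrivial Prikry forcing at $\kappa$. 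Hence $H'$ yields a Prikry sequence $c$ at $\kappa$: an $\omega$-sequence cofinal in $\kappa$ with $c\in M_0\bigl[j_W(G)\bigr]=M[H]$. But $M[H]=\mathrm{Ult}(V[G],W)$ is (the transitive collapse of) a well-founded ultrapower, hence a transitive subclass of $V[G]$, so $c\in V[G]$ and $\mathrm{cf}^{V[G]}(\kappa)=\omega$ — contradicting the measurability, in particular the regularity, of $\kappa$ in $V[G]$. Therefore $U$ has Mitchell order $0$. As indicated, the main obstacle is the identification in the previous paragraph; once that is set up the collapse of $\mathrm{cf}(\kappa)$ is immediate.
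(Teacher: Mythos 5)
Your strategy---push the Mitchell-order hypothesis through the ultrapower to extract a cofinal $\omega$-sequence at $\kappa$ into $V[G]$, contradicting regularity---is in principle workable, but the central structural step contains a genuine error: you assert that $j_W\restriction_V=j_U$ and hence $M[H]=M_0\bigl[j_W(G)\bigr]$. This is false. Writing $k\colon M_U\to M$ for the canonical factor map $[f]_U\mapsto[f]_W$, one has $j_W\restriction_V=k\circ j_U$, and $k$ has a nontrivial critical point (the first $M_U$-measurable above $\kappa$); the paper establishes exactly this later, in Lemma~\ref{Lemma: NS, k}. Consequently $j_W(G)$ is generic for $j_W(P_\kappa)$ over $M$, not for $j_U(P_\kappa)$ over $M_0$, and $M[H]$ is a generic extension of $M$, not of $M_0$. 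What you actually need is that $\kappa$ is measurable in $M$ (so that stage $\kappa$ of $j_W(P_\kappa)$ is genuine Prikry forcing and $H$ hands you a Prikry sequence at $\kappa$ inside $M[H]\subseteq V[G]$). This does follow from $\kappa$ being measurable in $M_U$, since $\mathcal P(\kappa)^M=\mathcal P(\kappa)^{M_U}$: if $A=[f]_W\in\mathcal P(\kappa)^M$ then $A=\{\xi<\kappa:\{\alpha<\kappa:\xi\in f(\alpha)\}\in U\}=[f]_U\in M_U$, using $U=W\cap V$. But as written, your proof skips this computation and rests instead on the false identification of $j_W\restriction_V$ with $j_U$.

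It is also worth contrasting with the paper's own proof, which is a short reflection below $\kappa$: if $U$ had Mitchell order $\geq 1$ it would concentrate on the set $\Delta$ of $V$-measurables below $\kappa$, hence $\Delta\in W$; but each element of $\Delta$ has cofinality $\omega$ in $V[G]$ (its Prikry sequence), while a normal measure must concentrate on cardinals that are regular in $V[G]$. No ultrapower analysis is needed. Your ``reflection above $\kappa$'' approach, even once repaired, effectively reproves a piece of Lemma~\ref{Lemma: NS, k} that the paper deliberately postpones to Section~2.
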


\begin{proof}
	Denote $ U = W\cap V $.  By corollary \ref{Corollary: NS, every measure in generic extension extends a measure from V}, $ U \in V $. $ U $ inherits normality from $ W $, since it is closed under diagonal intersections. Finally, let us prove that $ U $ has Mitchell order $ 0 $. Assume  otherwise. Then $ U $ concentrates on the set $ \Delta $ of measurables below $ \kappa $ in $ V $. Hence, $\Delta \in  W $. However, in $ V\left[G\right] $, each cardinal in $ \Delta $ is singular and has cofinality $ \omega $, and by normality of $W$, it cannot concentrate on $ \Delta $.
\end{proof}

\section{Normal Measures in the Generic Extension}

Our goal in this section is to prove that there exists a bijection between  normal measures of Mitchell order $ 0 $ in $ V $, and  normal measures on $ \kappa $ in $ V\left[G\right] $. Let $ U\in V $ be a normal measure of Mitchell order $ 0 $. We will define a normal measure $ U^*\in V\left[G\right] $ which extends $ U $. We will prove the following:

\begin{thm} \label{Theorem: NS, characterization of normal measures in the generic extension}
Every normal measure $ W\in V\left[G\right] $ on $ \kappa $ has the form $ U^* $ for some normal measure $ U \in V $ of Mitchell order $ \ 0 $. Furthermore, $ U^* $ is the unique normal measure in $ V\left[G\right] $ which extends $ U $. 
\end{thm}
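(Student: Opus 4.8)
The plan is to deduce both halves of the theorem from a single statement. By Corollary~\ref{Corollary: NS, If W is normal in V[G], then W cap V has Mitchell order 0 in V}, $U:=W\cap V$ is a normal measure of Mitchell order $0$ in $V$, so it suffices to prove: \emph{(i)} every such $U$ admits a normal extension $U^{*}$ in $V[G]$; and \emph{(ii)} any two normal measures in $V[G]$ extending one and the same $U$ are equal. Then (ii) applied to $W$ and to $U^{*}$ gives $W=U^{*}$, which is the first sentence of the theorem; and (ii) is the uniqueness clause.

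For (i) the idea is to lift $j_{U}\colon V\to M$ through $P=P_{\kappa}$. Since $o(U)=0$, $\kappa$ is not measurable in $M$, so in $M$ the iteration $j_{U}(P_{\kappa})$ factors as $P_{\kappa}*\lusim{R}$, with trivial forcing at coordinate $\kappa$ and $\lusim{R}$ the $M$-iteration on the interval $(\kappa,j_{U}(\kappa))$; being a tail above $\kappa$, $\lusim{R}[G]$ has a $\kappa^{+}$-closed direct extension order in $M[G]$. Using $M^{\kappa}\subseteq M$ and $2^{\kappa}=\kappa^{+}$ one sees that $M[G]$ has at most $\kappa^{+}$ dense subsets of $\lusim{R}[G]$ (indeed $|\mathcal{P}(j_{U}(\kappa))^{M}|\le\kappa^{+}$). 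A diagonalization of length $\kappa^{+}$ in $V[G]$ meeting all of these, while remaining above the conditions $j_{U}(p)\setminus(\kappa+1)$ for $p\in G$ and exploiting the Prikry property of $\lusim{R}[G]$ (Lemma~\ref{Lemma: NS, Prikry property}) to choose the generic sequences, yields $G_{\mathrm{tail}}$ generic over $M[G]$ with $\{j_{U}(p):p\in G\}\subseteq G^{*}:=G*G_{\mathrm{tail}}$. Then $j_{U}$ lifts to $j\colon V[G]\to M[G^{*}]$, and $U^{*}:=\{A\subseteq\kappa\colon A\in V[G],\ \kappa\in j(A)\}$ is a normal measure on $\kappa$ in $V[G]$ extending $U$; independence of $U^{*}$ from the choice of $G_{\mathrm{tail}}$ is subsumed by (ii).

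For (ii), let $W\in V[G]$ be normal on $\kappa$ with $W\cap V=U$ and let $j_{W}\colon V[G]\to N_{W}$ be its ultrapower. As $W\cap V=U$, $j_{W}\restriction V=k\circ j_{U}$ for an elementary $k\colon M\to N_{W}$; from $\mathcal{P}(\kappa)^{M}=\mathcal{P}(\kappa)^{V}$, $\mathcal{P}(\kappa)^{N_{W}}=\mathcal{P}(\kappa)^{V[G]}$ and cardinal preservation (Lemma~\ref{Lemma: NS, preservation of cardinals}) one gets $\mbox{crit}(k)>\kappa^{+}$, so $k$ fixes $\kappa$, $P_{\kappa}$ and every name for a subset of $\kappa+1$; moreover $j_{W}(G)$ is $j_{W}(P_{\kappa})$-generic over $N_{W}$, extends $\{j_{W}(p):p\in G\}$, has $P_{\kappa}$-part $G$, and (since $\kappa$ is non-measurable in $M$, hence in $N_{W}$) has trivial $\kappa$-th stage; thus $j_{W}$ is itself a lift of $j_{U}$ through $P_{\kappa}$. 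I would then argue that the truth value of ``$\kappa\in j_{W}(A)$'' is independent of $W$. A first, structural, step: the tail $P_{\kappa}\setminus(\xi+1)$ has the Prikry property and a highly closed direct extension order, hence adds no new subset of $\xi$ over the $P_{\xi+1}$-extension; applying $j_{W}$ and elementarity, $j_{W}(A)\cap(\kappa+1)$ lies already in $N_{W}[G]$. The decisive step is to show that $j_{W}(A)\cap(\kappa+1)$ is in fact computed there by data depending only on $U$, on a $P_{\kappa}$-name for $A$, and on $G$; for this I would combine ``no fresh subsets of $\kappa$'' (Lemma~\ref{Lemma: NS, no fresh subsets of kappa and kappa+}) with the reduction, on a $W$-large set, of functions $\kappa\to V[G]$ to functions $\kappa\to V$ modulo $W$ afforded by Lemma~\ref{Lemma: NS, every new function is evaluated by F in V} and normality, so that the factor map $k$ carries the computation from $M$ to $N_{W}$ without change. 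Granting this, ``$A\in W$'' becomes a statement about $U$, $G$ and a name for $A$ only, hence is the same for all normal extensions of $U$; in particular $W=U^{*}$, and both halves of the theorem follow.

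I expect this last step — the $W$-independence of ``$\kappa\in j_{W}(A)$'', equivalently the rigidity of the lift of $j_{U}$ (its tail generic is far from unique, but the derived measure should be forced) — to be the main obstacle. A secondary, bookkeeping obstacle is that the whole statement must be proved by simultaneous induction over the measurable cardinals $\alpha\le\kappa$ (not just limits of measurables): the iteration $P_{\alpha}$ invokes at each measurable stage the \emph{unique} extension of a ground-model measure, and the agreement of $P_{\kappa}$ with $j_{U}(P_{\kappa})\restriction\kappa$ and $j_{W}(P_{\kappa})\restriction\kappa$ below $\kappa$ rests on this together with $V_{\kappa}^{M}=V_{\kappa}$ and $\mathcal{P}(\alpha)^{M}=\mathcal{P}(\alpha)^{V}$ for $\alpha<\kappa$.
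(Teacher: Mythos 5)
Your structural plan --- derive both halves from existence of a normal extension plus uniqueness --- is right, and your key ingredient for (ii), the factor map $k\colon M_U\to M$ with $\mbox{crit}(k)>\kappa^+$, is exactly the paper's Lemma~\ref{Lemma: NS, k}. The genuine gap is the step you yourself flag as ``the main obstacle'': you never produce the handle that closes it. The paper's move is to define $U^*$ directly at the name level rather than via a lift: $(\lusim{A})_G\in U^*$ iff some $p\in G$ satisfies $j_U(p)\Vdash\check\kappa\in j_U(\lusim{A})$ in $M_U$. Lemma~\ref{Lemma: NS, extending ultrafilters of Mitchell order 0} then shows, by fusion in $P_\kappa$ over $V$ --- never by constructing a tail generic over $M_U[G]$ --- that this $U^*$ is a normal $\kappa$-complete ultrafilter extending $U$. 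Once that characterization is available, uniqueness is three lines: for $X\in U^*$ take $p\in G$ with $j_U(p)\Vdash\check\kappa\in j_U(\lusim{X})$; applying $k$ (which fixes $p$, $\kappa$, $P_\kappa$) gives $j_W(p)\Vdash\check\kappa\in(j_W\restriction_{V})(\lusim{X})$ in $M$; since $j_W(p)\in H$, $\kappa\in j_W(X)$, i.e.\ $X\in W$, so $U^*\subseteq W$ and hence $U^*=W$. This is precisely your ``$W$-independence of $\kappa\in j_W(A)$'', but it only becomes short because $U^*$ was defined by the forcing relation in $M_U$, which $k$ transports cleanly.

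Your existence argument (i) has a secondary problem. The $\kappa^+$-length diagonalization needs the partial $\le^*$-decreasing sequences, built in $V[G]$ of length $\le\kappa$, to lie in $M_U[G]$ before an upper bound can be taken in the $\kappa^+$-closed direct extension order; this requires $M_U[G]^\kappa\cap V[G]\subseteq M_U[G]$, which would follow from $\kappa^+$-c.c.\ of $P_\kappa$, but the paper never asserts this --- Lemma~\ref{Lemma: NS, preservation of cardinals} shows only that $\kappa^+$ is preserved, which is strictly weaker. The fusion definition of $U^*$ avoids this entirely: all the construction happens over $V$, where $M_U^\kappa\subseteq M_U$ is available, and no generic over $M_U[G]$ is ever built. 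Moreover, with the lifting approach your $U^*$ a priori depends on the choice of tail generic, which --- as you note --- puts all the weight back onto the unresolved (ii).
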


Let $ U \in V$ be any normal measure on $ \kappa $ of Mitchell order $ 0 $. After forcing an iteration of Prikry forcings, with any standard support, one can define, in the generic extension $V\left[G\right]$, a natural filter which extends $ U $: The filter consisting of sets $ \left(\lusim{A}\right)_G $, where $ \lusim{A} $ is a name for subset of $ \kappa $, such that, for some $ p\in G $,
$$  \{ \alpha<\kappa \colon p\Vdash \check{\alpha}\in A \} \in U $$
or simply $j_{U}(p)\Vdash \check{\alpha}\in j_{U}\left( \lusim{A} \right) $, in $ M_{U} $. 

Forcing with nonstationary support has the advantage, that this filter is actually a normal, $\kappa$-complete ultrafilter.

\begin{lemma} \label{Lemma: NS, extending ultrafilters of Mitchell order 0}
Let $ U $ be a normal measure of Mitchell order $ 0 $ on $ \kappa $. Define $ U^*\in V\left[G\right] $ as follows: For every $ P_{\kappa} $-name $ \lusim{A} $ for a subset of $ \kappa $, $ \left(\lusim{A}\right)_G \in U^* $ if and only if there exists $ p\in G $ such that $ j_U(p) \Vdash \check{\kappa} \in j_U\left( \lusim{A} \right)
 $. Then $  U^{*} $ is a normal, $ \kappa $-complete ultrafilter in $ V\left[G\right] $, which extends $ U$.
\end{lemma}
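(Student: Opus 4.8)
The plan is to use throughout the following reformulation, valid by {\L}o\'s's theorem for $j_U$ together with $\kappa=[\mathrm{id}]_U$: for any $P_\kappa$-name $\lusim{A}$ for a subset of $\kappa$,
\[
\left(\lusim{A}\right)_G\in U^{*}\ \Longleftrightarrow\ \exists p\in G\ \left(\left\{\alpha<\kappa: p\Vdash_{P_\kappa}\check\alpha\in\lusim{A}\right\}\in U\right).
\]
In this form the elementary properties are routine. The right-hand side depends only on $\left(\lusim{A}\right)_G$, since if $p\in G$ forces $\lusim{A}=\lusim{B}$ then the two witnessing sets agree below $p$; upward closure and closure under finite intersections follow from the directedness of $G$ and from the fact that $\left\{\alpha: p\Vdash\check\alpha\in\lusim{A}\right\}$ only grows when $p$ is strengthened; $\emptyset\notin U^{*}$ because no condition forces $\check\kappa\in j_U(\check\emptyset)=\emptyset$; and for $A\in V$ one reads off $A\in U^{*}\iff\kappa\in j_U(A)\iff A\in U$, so $U^{*}\cap V=U$. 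It therefore remains to show that $U^{*}$ is an ultrafilter and that it is normal, whence $\kappa$-complete (every normal ultrafilter on $\kappa$ is $\kappa$-complete).

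Both of these I would obtain by producing a lift of $j_U$. Because $U$ has Mitchell order $0$, $\kappa$ is not measurable in $M_U$, so the stage-$\kappa$ forcing of $j_U(P_\kappa)$ is trivial; moreover $j_U(P_\kappa)\restriction\kappa=P_\kappa$, since the names $\lusim{U}^{*}_\alpha$ for $\alpha<\kappa$ are fixed by $j_U$. Hence, inside $M_U$, we may write $j_U(P_\kappa)=P_\kappa\ast\lusim{R}$, where $\lusim{R}$ is a $P_\kappa$-name for the tail of the iteration beginning above $\kappa+1$: applying $j_U$ to Lemma~\ref{Lemma: NS, Prikry property}, the Fusion Lemma~\ref{Lemma: NS, Fusion Lemma} and Claim~\ref{Claim: NS, fusion for dense open sets}, $P_\kappa$ forces that $\lusim{R}$ is a Prikry-type forcing with the Prikry property, and applying $j_U$ to the closure fact used in the proof of Lemma~\ref{Lemma: NS, preservation of cardinals}, $P_\kappa$ forces that the direct extension order of $\lusim{R}$ is more than $\kappa^{+}$-closed. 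Since $P_\kappa\in M_U\subseteq V$, $G$ is $P_\kappa$-generic over $M_U$; the goal is to build, in $V[G]$, a filter $G^{\mathrm{tail}}$ which is $\lusim{R}_G$-generic over $M_U[G]$ and contains $j_U(p)\setminus(\kappa+1)$ for every $p\in G$.

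Granting such a $G^{\mathrm{tail}}$, put $\hat G:=G\ast G^{\mathrm{tail}}$; then $\hat G$ is $j_U(P_\kappa)$-generic over $M_U$ with $j_U{}''G\subseteq\hat G$, so $j_U$ lifts to an elementary $\hat\jmath\colon V[G]\to M_U[\hat G]$ with $\hat\jmath(G)=\hat G$. One then checks $U^{*}=\left\{X\in P(\kappa)^{V[G]}:\kappa\in\hat\jmath(X)\right\}$: the inclusion $\subseteq$ is immediate (if $p\in G$ witnesses $\left(\lusim{A}\right)_G\in U^{*}$ then $j_U(p)\in\hat G$ forces $\check\kappa\in j_U(\lusim{A})=\hat\jmath\left(\left(\lusim{A}\right)_G\right)$), and the reverse inclusion is to be arranged by the construction of $G^{\mathrm{tail}}$. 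Since $\mathrm{crit}(\hat\jmath)=\kappa$ and $\kappa=\hat\jmath(\mathrm{id})(\kappa)$, the ultrafilter $\left\{X:\kappa\in\hat\jmath(X)\right\}$ is automatically normal and $\kappa$-complete, so this would complete the argument.

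The hard part is the construction of $G^{\mathrm{tail}}$, and it is exactly here that the non-stationary support is indispensable. The naive route — forcing below a master condition, i.e.\ a single $\leq^{*}$-upper bound in $\lusim{R}_G$ of $\left\{j_U(p)\setminus(\kappa+1):p\in G\right\}$ — is not available: those conditions are directed but have \emph{no} common upper bound, because each support $j_U(\mathrm{supp}(p))\setminus(\kappa+1)$ is merely non-stationary and cofinal in $j_U(\kappa)$ and the union over $p\in G$ need not be non-stationary in $M_U[G]$. Instead one builds $G^{\mathrm{tail}}$ by a recursion following the iteration structure of $\lusim{R}_G$ coordinate by coordinate, in the style of the Fusion Lemma: at each coordinate interleaving the value prescribed by the $j_U(p)$, $p\in G$ (defined only off a club), with generic choices on a club, using the $\kappa^{+}$-closure of the direct extension order (which survives the $\kappa^{+}$-sized forcing $P_\kappa$) at limit coordinates and the Prikry property of $\lusim{R}_G$ to meet the $\leq\kappa^{++}$ many (by $\mathrm{GCH}$) dense subsets of $\lusim{R}_G$ in $M_U[G]$, while the non-stationarity of each $j_U(\mathrm{supp}(p))$ allows the construction to end up above every $j_U(p)\setminus(\kappa+1)$. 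Organizing this single recursion so that it simultaneously yields a genuine filter, meets every dense set, absorbs every $j_U(p)$, and makes the identification $U^{*}=\left\{X:\kappa\in\hat\jmath(X)\right\}$ correct is, I expect, where essentially all the work lies.
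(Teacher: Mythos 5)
Your reformulation via {\L}o\'s and the routine filter/restriction observations are fine, but the lifting strategy that carries the whole weight of the argument cannot work, and the obstacle is not the technical one you flag (organizing the fusion recursion) but a prior impossibility: the object $G^{\mathrm{tail}}$ you want to build does not exist in $V[G]$. Since $\kappa$ is a limit of measurables in $V$ and $U$ is normal, in $M_U$ there is a measurable $\mu$ with $\kappa<\mu<j_U(\kappa)$; the forcing $\lusim{R}_G$ has a nontrivial Prikry component at $\mu$ (it survives into $M_U[G]$ by Levy--Solovay, as $|P_\kappa|^{M_U}=\kappa^{+}<\mu$), so any $\lusim{R}_G$-generic $G^{\mathrm{tail}}$ over $M_U[G]$ produces an $\omega$-sequence cofinal in $\mu$. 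If $G^{\mathrm{tail}}\in V[G]$ this sequence would lie in $V[G]$, yet $\mathrm{cf}^{V}(\mu)\geq\kappa^{+}$ (because $M_U$ is $\kappa$-closed in $V$ and $\mu$ is regular there) and $P_\kappa$ preserves cofinalities $\geq\kappa^{+}$ (Lemma~\ref{Lemma: NS, preservation of cardinals}), so $\mathrm{cf}^{V[G]}(\mu)\geq\kappa^{+}>\omega$ --- a contradiction. Equivalently, at the level of embeddings: a lift $\hat\jmath\colon V[G]\to M_U[\hat G]$ of $j_U$ living in $V[G]$ would force the factor map from $j_{U^*}$ to $\hat\jmath$ to collapse $j_{U^*}\restriction_{V}$ back to $j_U$, whereas Theorem~\ref{Theorem: NS, full description of j_W restricted to V} (the main result of the paper) shows $j_{U^*}\restriction_{V}$ is a genuine, nontrivial further iteration of $M_U$. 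So the target $M_U[\hat G]$ is simply too small to be the ultrapower target, and no amount of care in the recursion can rescue it.

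The paper's proof sidesteps the lift entirely. It never attempts to meet all dense sets of $j_U(P)\setminus(\kappa+1)$; instead, for each $P_\kappa$-name $\langle\lusim{A}_\beta:\beta<\gamma\rangle$ of a small partition (respectively, each regressive name $\lusim{f}$), it invokes the Fusion Lemma~\ref{Lemma: NS, Fusion Lemma} to produce a single condition $p\in G$ and a club $C$ such that for all $\alpha\in C$, $p\restriction_{\alpha+1}$ already forces $p\setminus(\alpha+1)$ to decide the cell of $\alpha$ (respectively the value $\lusim{f}(\alpha)$). Since $C\in U$, one may then apply $j_U$ to this single condition and read off the decision at $\kappa$ from $j_U(p)$ alone, giving $\kappa$-completeness and normality directly. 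The point is that fusion provides ``decisions on a club'' under a fixed condition, which is exactly enough to verify the ultrafilter axioms --- and crucially, it never requires producing an actual $\lusim{R}_G$-generic. I would suggest reworking the ultrafilter and normality steps along these lines.
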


\begin{proof}
Let us check first that $ U^* $ is well defined. Assume that $ \lusim{A} , \lusim{B}$ are $ P$-names for subsets of $ \kappa $, and $ p\in G $ is a condition such that $ p\Vdash \lusim{A} = \lusim{B} $. Then $ j_{U}(p)\Vdash j_{U}(\lusim{A}) = j_{U}(\lusim{B})$, and thus  $ j_{U}(p)\Vdash \check{\kappa} \in j_{U}(\lusim{A}) $ if and only if $ j_{U}(p)\Vdash \check{\kappa} \in j_{U}(\lusim{B}) $, so $ \left( \lusim{A} \right)_{G} \in U^* $ if and only if $ \left( \lusim{B} \right)_{G} \in U^*  $, as desired.

It's not hard to verify that $ U^{*} $ is a filter. Let us prove that it's $ \kappa $-complete (thus, in particular, it's an ultrafilter). Assume that $ \gamma<\kappa  $ and $ \langle \lusim{A}_\beta \colon \beta<\gamma \rangle $ is forced by the weakest condition to be a partition of $ \kappa $.

For every $ \alpha \in \left( \gamma,\kappa \right) $, let $ e(\alpha)\subseteq P\setminus \left(\alpha+1\right) $ be the following $ \leq^* $-dense open subset:
$$ e(\alpha) = \{ r\in P\setminus \left(\alpha+1\right) \colon \exists \beta^*<\gamma \  \ r\Vdash \check{\alpha} \in \lusim{A}_{\beta^*} \} $$
By lemma \ref{Lemma: NS, Fusion Lemma}, there exists $ p\in G $ and a club $ C\subseteq \kappa $ such that for every $ \alpha\in C $, 
$$ p\restriction_{ \alpha+1} \Vdash p\setminus \alpha+1 \in e(\alpha)$$
$ C $ is a club, so $ C\in U $, and thus,
$$ p^{\frown} 0_{\lusim{Q}_{\alpha}} \Vdash  \exists \beta^*<\gamma  \ \ \ j_{U}(p)\setminus \left(\kappa+1\right)  \Vdash \check{\kappa} \in j_{U}(\lusim{A}_{\beta^*})  $$
therefore, for some $ q>p $, $ q\in G $ and $ \beta^*<\gamma $,
$$ q \Vdash j_{U}(p)\setminus \kappa  \Vdash \check{\kappa} \in j_{U}(\lusim{A}_{\beta^*})  $$
so $ j_{U}(q) \Vdash \check{\kappa} \in j_{U}(\lusim{A}_{\beta^*})   $.

Let us prove normality. Assume that $ \lusim{f} $ is a name for a regressive function from $ \kappa $ to $ \kappa $. Work in $ M_{U} $. $ j_{U}(\lusim{f}) $ is forced there to be a regressive function. There exists a $ P_{\kappa+1} $-name for a dense open subset $ D $ of $ j_{U}(P)\setminus \left( \kappa+1 \right) $, consisting of all the conditions which force that $ j_{U}(\lusim{f})(\kappa) =\beta^* $ for some $ \beta^*<\kappa $. Let $ \alpha\mapsto d(\alpha) $ be a function in $ V $ which represents $ D $ in the ultrapower construction. We can assume that for every $ \alpha<\kappa $, $ e(\alpha) $ is a $ P_{\alpha+1} $-name, forced by the weakest condition to be a $ \leq^* $ dense-open subset of $ P_{\kappa}\setminus \left(\alpha+1\right) $. Now we apply fusion just as before, and find $ p\in G $ such that--
$$ p^{\frown} 0_{\lusim{Q}_{\kappa}} \Vdash  \exists \beta^*<\kappa  \  j_{U}(p)\setminus \left( \kappa+1 \right)  \Vdash  j_{U}(\lusim{f})(\kappa) =\beta^*   $$
so for some $ q\in G $, $ q>p $, and for some $ \beta^*<\kappa $,
$$ j_{U}(q)\Vdash  j_{U}(\lusim{f})(\kappa) =\beta^*   $$
Therefore, $ \{ \xi<\kappa \colon f(\xi)=\beta^* \} \in U^{*}$ as desired.\end{proof}

Now, given a normal measure $ W\in V\left[G\right] $  on $ \kappa $, denote $ U = W\cap V $. By corollary \ref{Corollary: NS, every measure in generic extension extends a measure from V}, $ U\in V $. Our goal will be to prove that $ W = U^* $. We start with the following observation:

\begin{lemma} \label{Lemma: NS, k}
Let $ W\in V\left[G\right] $ be a normal measure on $ \kappa $. Let $ j_{W}\colon V\left[G\right] \to M\left[H\right]$ be the ultrapower embedding. Denote  $ U = W\cap V $, and define $ k\colon M_{U} \to M $ as follows: $ k\left( \left[f\right]_{U} \right) = \left[ f\right]_W $. Then:
\begin{enumerate}
\item $ k \colon M_{U} \to M $ is an elementary embedding.
\item $ k\circ j_{U}  = j_{W} \restriction_{V}$.
\item $ k\restriction_{ \mu } = id $, where $ \mu $ is the first measurable above $ \kappa $ in $ M_{U} $. In particular, for every $ \eta < \mu $, there exists $ f\in V $ such that $\left[f\right]_{U} = \left[f\right]_W = \eta$. 
\item $\mbox{crit}(k) = \mu$.
\item $\left(  V_{\mu} \right)^M = \left( V_{\mu} \right)^{M_{U}}$.
\end{enumerate}

\end{lemma}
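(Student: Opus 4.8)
The plan is to verify each clause in turn, using the basic ultrapower-factorization machinery together with the structural facts proved earlier in the section, most importantly Corollary~\ref{Corollary: NS, every measure in generic extension extends a measure from V} (which gives $U = W\cap V\in V$) and Corollary~\ref{Corollary: NS, If W is normal in V[G], then W cap V has Mitchell order 0 in V} (which gives that $U$ is a normal measure of Mitchell order $0$ in $V$). For (1) and (2), the standard argument applies: since $U = W\cap V$, for $f,g\in V$ we have $\{\xi : f(\xi)=g(\xi)\}\in U$ iff it lies in $W$, so $k([f]_U) = [f]_W$ is well-defined and injective, and $\in$ is preserved in both directions for the same reason; by \L o\'s, for any formula $\varphi$ and $f_1,\dots,f_n\in V$, $M_U\models\varphi([f_1]_U,\dots)$ iff $\{\xi:\varphi(f_1(\xi),\dots)\}\in U$ iff the same set is in $W$ iff $M\models\varphi([f_1]_W,\dots)$, so $k$ is elementary. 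For (2), $k(j_U(x)) = k([c_x]_U) = [c_x]_W = j_W(x)$ for the constant function $c_x$, and since $x\in V$ the value $j_W(x)$ is computed inside $M$ using representatives in $V$, so $k\circ j_U = j_W\restriction_V$.

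For (3), the key point is that every $\eta<\mu$, where $\mu$ is the least measurable of $M_U$ above $\kappa$, has a representing function that can be taken in $V$ — equivalently, that $V\left[G\right]$-functions $\kappa\to\mathrm{Ord}$ bounded below $\mu$ (as interpreted by $W$) are, $W$-a.e., dominated by $V$-functions. This is where the genericity machinery enters: one uses Corollary~\ref{Corollary: NS, every new function is dominated by an old one} (every new $f\colon\kappa\to\kappa$ is dominated by an old one), or rather its analogue for functions into $V^{P_\kappa}_\mu\cap\mathrm{Ord}$, to conclude that any $[h]_W<\mu$ with $h\in V\left[G\right]$ equals $[h']_W$ for some $h'\in V$ with $h'(\xi)<$ (the least measurable of the $\xi$-th iterate, a.e.) — so $k$ is surjective onto $(V_\mu)^{M_U}$ and hence the identity below $\mu$. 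The fact that $\mu$ is genuinely inaccessible (indeed measurable) in $M_U$, together with $\mathrm{GCH}_{\le\kappa}$ and preservation of cardinals (Lemma~\ref{Lemma: NS, preservation of cardinals}), is what allows the "bounded-below-$\mu$'' reduction to go through; note $M_U$ and $M_U\cap V\left[G\right]$ agree on $V_\mu$-sized objects since $j_W$ and $j_U$ have the same critical point $\kappa$ and $\mathrm{cf}$ arguments control the gap. The second sentence of (3) is then immediate: $\eta = k(\eta) = k([f]_U) = [f]_W$ for the witnessing $f\in V$.

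Clause (4) follows from (3) together with the observation that $k$ genuinely moves $\mu$: $M_U\models$ "$\mu$ is measurable'', but in $M = M_W$ we have $H$ generic over $M$ for (the image of) the Prikry iteration, and arguing as in Corollary~\ref{Corollary: NS, If W is normal in V[G], then W cap V has Mitchell order 0 in V} — every $V$-measurable in the relevant range is made singular of cofinality $\omega$ in the extension — together with the fact that $k\circ j_U = j_W\restriction_V$ sends the $V$-ultrafilter sequence forward correctly, one sees $k(\mu)>\mu$; combined with $k\restriction\mu = \mathrm{id}$ this gives $\mathrm{crit}(k) = \mu$. Finally, (5) is a direct consequence of (4): since $\mathrm{crit}(k) = \mu$, $k\restriction V_\mu^{M_U}$ is the identity and, by elementarity of $k$, maps $V_\mu^{M_U}$ onto $V_{k(\mu)}^{M}\cap$ (the rank-$\mu$ initial segment) $= V_\mu^{M}$, so $(V_\mu)^M = (V_\mu)^{M_U}$.

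The main obstacle is clause (3): pinning down $\mu$ and showing every ordinal below it in $M = M_W$ is hit by $k$. The delicate part is transferring the "every new function is dominated by an old one'' corollary — which is about $V$ versus $V\left[G\right]$ at a single inaccessible $\lambda\le\kappa$ — into the ultrapower, where one needs it uniformly along the $U$-iteration to control $V^{P_\kappa}_\mu$ in terms of $V_\mu^{M_U}$; this requires knowing that $j_W$ factors as $k\circ j_U$ with $k$ having critical point $>\kappa$ (so that $H$ and the forcing below $\mu$ in $M$ look, up to rank $\mu$, like the iteration applied to $M_U$), which is mildly circular and must be arranged by first establishing (1), (2) and $k\restriction(\kappa+1) = \mathrm{id}$ before bootstrapping up to $\mu$.
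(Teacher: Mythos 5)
Parts (1), (2), and (5) in your sketch are essentially the paper's argument. For (4) you have the right idea — argue that $\mu$ cannot be measurable in $M$ via cofinalities — although the remark about ``sending the ultrafilter sequence forward correctly'' is extraneous; the paper simply notes that if $\mu$ were measurable in $M$ then $\mathrm{cf}(\mu)^{V[G]} = \omega$, hence $\mathrm{cf}(\mu)^V \leq \kappa$, which by $\kappa$-closure of $M_U$ contradicts $\mu$ being measurable there.

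The genuine gap is in (3), and you concede it yourself. You cite Corollary~\ref{Corollary: NS, every new function is dominated by an old one} as the key ingredient, but that corollary concerns $V$-versus-$V[G]$ domination of functions $\lambda\to\lambda$ at an inaccessible $\lambda \leq \kappa$; the ``analogue'' you invoke is never spelled out, and you end by flagging the transfer as ``mildly circular'' without resolving it. The paper's proof of (3) is different and avoids all of this. It first shows $\mathrm{crit}(k) > \kappa^+$ using canonical functions and the successor-cardinal function. Then it runs a minimal-counterexample argument: suppose $\eta < \mu$ is least with $k(\eta) > \eta$; choose $g \in V[G]$ with $[g]_W = \eta$ and $h \in V$ with $[h]_U = \eta$, so that after adjusting $g$ on a $W$-null set, $g(\xi) \leq h(\xi) <$ the first measurable above $\xi$. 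Since the direct-extension order of $P\setminus(\xi+1)$ is more than $h(\xi)$-closed, the set of conditions deciding $g(\xi)$ is $\leq^*$-dense open, and the Fusion Lemma~\ref{Lemma: NS, Fusion Lemma} over $P_\kappa$ (not the domination corollary) directly yields $p^*\in G$, a club $C$, and $F\in V$ with $|F(\xi)|\leq |\xi|^+$ and $p^*\Vdash g(\xi)\in F(\xi)$ for $\xi\in C$. Then $\eta\in [F]_W = k\bigl(j_U(F)(\kappa)\bigr)$, a set of size $\leq\kappa^+$ in $M_U$, hence below $\mathrm{crit}(k)$, so equal to $k''j_U(F)(\kappa)$; this gives $\eta = k(\alpha)$ for some $\alpha\leq\eta$, and minimality of $\eta$ forces $\alpha=\eta$, contradicting $k(\eta)\neq\eta$. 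It is this combination — covering rather than domination, via fusion at $\kappa$ over $V$, together with the minimality trick — that is missing from your sketch; the circularity you worry about never arises because the whole argument takes place over $V$ once $\mathrm{crit}(k)>\kappa^+$ is in hand.
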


\begin{proof}
\begin{enumerate}
\item $ k $ is well defined, since, if $ \left[f\right]_{U} = \left[g\right]_{U} $, then $ \{ x<\kappa \colon f(x)=g(x) \} \in U $, and thus this set belongs to $ W $. So $ \left[f\right]_W = \left[g\right]_W $. Similarly, $ k $ respects $ \in $. Finally, assume that $ \varphi(a_1,\ldots , a_n) $ is a formula and $ f_1,\ldots ,f_n $ are functions. Then:
\begin{align*}
M_{U}\vDash \varphi\left( \left[f_1\right]_{U},\ldots \left[f_n\right]_{U} \right)  & \iff \{ x<\kappa \colon V\vDash \varphi\left( f_1(x),\ldots ,f_n(x \right) \}\in U \\
& \iff \{ x<\kappa \colon V\vDash \varphi\left( f_1(x),\ldots ,f_n(x \right) \}\in W \\
& \iff M\left[H\right] \vDash M\vDash \varphi\left( \left[f_1\right]_{W},\ldots \left[f_n\right]_{W} \right) \\
& \iff M\vDash \varphi\left( \left[f_1\right]_{W},\ldots \left[f_n\right]_{W} \right)
\end{align*}
\item Clear from the definitions.
\item First, let us note that for every $ \eta \leq \kappa^{+} $, $ k\left(\eta\right) = \eta $, using the canonical function which represents $ \eta $. Also, $ k\left( \kappa^{+} \right) = \kappa^{+} $ since $ \kappa^{+} $ is represented by the successor cardinal function. Thus, $ \mbox{crit}(k)> \kappa^{+} $.

Now, Assume, for contradiction, that there exists $ \eta<\mu $ such that $ k(\eta) > \eta $. Take the minimal such $ \eta $. There exists $ g\in V\left[G\right] $ such that $ \left[g\right]_W = \eta $. Let $ h\colon \kappa \to \kappa $ be a function in $ V $ such that $ \left[h\right]_{U} = \eta $. So--
$$ \left[g\right]_W = \eta = \left[h\right]_{U} \leq \left[h\right]_W $$
and thus, by changing $ g $ on a set which doesn't belong to  $ W $, we can assume that, for every $ \xi<\kappa $, $$ g(\xi)\leq h(\xi) < \mbox{ the first measurable above }\xi $$
For every $ \xi<\kappa $, let $ e(\xi) $ be the $ P_{\xi+1} $-name for the following set--
$$ e(\xi) = \{  r\in P\setminus \left(\xi+1\right)  \colon  \exists \alpha\leq h(\xi), \  r\Vdash g(\xi)=\check{\alpha} \}  $$
this set is $\leq^*$-dense open, since the direct extension order is more than $ h(\xi) $-closed. Apply fusion. There exits $ p^* \geq^* p $ and a club $ C\subseteq \kappa $ such that, for every $ \xi\in C $, 
$$ p^* \restriction_{ \xi+1 } \Vdash p^*\setminus \left( \xi+1 \right) \in e(\xi) $$
in other words,
$$ p^* \restriction_{ \xi+1 } \Vdash  \exists \alpha\leq h(\xi), \  p^*\setminus \left( \xi+1 \right) \Vdash g(\xi)=\check{\alpha} $$

Define $ F\colon C \to V $ as follows: For every $ \xi\in C $, let $ \lusim{\alpha} $ be a $ P_{\xi+1} $-name for $ \alpha $ as above, and set--
$$ F(\xi) = \{  \alpha  \colon \exists a\in P_{\xi+1} \ \  a\geq p^* \restriction_{ \xi+1 } \mbox{ and } a^{\frown} p^*\setminus \left( \xi+1 \right) \Vdash g(\xi) = \check{\alpha}\}$$
Note that for every $ \xi\in C $,  $ p^* \Vdash g(\xi)\in \check{F}(\xi) $, and $ \left|F(\xi)\right|\leq \left|\xi\right|^{+} $. Also, $ C\in U $ and thus $ C\in W $. Therefore, in $ M\left[H\right] $,
$$ \eta = \left[g\right]_W \in \left[F\right]_W = j_{W}(F)(\kappa) = k\left( j_{U}(F)(\kappa) \right) $$
but, in $ M_{U} $, $ \left| j_{U}(F)(\kappa) \right| \leq \kappa^{+} $, which is strictly below the critical point of $ k $. So $ \eta \in \mbox{Im}(k) $, i.e., for some $ \alpha\leq \eta $, $ \eta = k(\alpha) $. But $ \eta $ was the minimal such that $ k(\eta)\neq \eta $, so $ \alpha<\eta $ and $ \alpha = k(\alpha)= \eta $, a contradiction.
\item It suffices to prove that $ k(\mu) \neq \mu $. Since $ \mu $ is measurable in $ M_{U} $, it suffices to prove that $ \mu $ is not measurable in $ M $. Assume otherwise. Then in $ V\left[G\right] $, $ \mbox{cf}(\mu) = \omega $ would hold. Therefore, in $ V $, $ \mbox{cf}(\mu)\leq \kappa $. By closure under $ \kappa $-sequences, this is true in $ M_{U} $ as well, a contradiction.
\item It suffices to prove that, for every $ \alpha<\mu $, $ \left(V_{\alpha} \right)^M =  \left(V_{\alpha} \right)^{M_{U}} $. Indeed, 
$$  \left(V_{\alpha} \right)^M  =  k\left( \left(V_{\alpha} \right)^{M_{U}} \right) = k'' \left( \left(V_{\alpha} \right)^{M_{U}} \right) = \left(V_{\alpha} \right)^{M_{U}}  $$
\end{enumerate}
\end{proof}

We now have all the tools necessary for the proof of theorem \ref{Theorem: NS, characterization of normal measures in the generic extension}

\begin{proof}[Proof of theorem \ref{Theorem: NS, characterization of normal measures in the generic extension}]
Assume that $ W\in V\left[G\right] $ is a normal measure. Denote $ U = W\cap V $. Let us use the notations of lemma \ref{Lemma: NS, k}: Assume that $ j_W \colon V\left[G\right] \to M\left[H\right] $ is the ultrapower embedding of $ W $, and let  $ k\colon M_{U} \to M $ be such that $ j_W\restriction_{V} = k\circ j_U $. 

Let us prove that $ W = U^* $. Since both are ultrafilters on $ \kappa $, it suffices to prove that $ U^* \subseteq W $. Assume that $ X\in W $. Let $ \lusim{X}\in V $ be a $ P_{\kappa} $-name such that $ \left( \lusim{X} \right)_G = X $. There exists $ p\in G $ such that $ j_U(p) \Vdash \check{\kappa}\in j_U\left( \lusim{X} \right) $. By applying $ k $,
$$ j_W(p) \Vdash \check{\kappa} \in j_W\restriction_{V}\left( \lusim{X} \right) $$
since $ \mbox{crit}(k) >\kappa$. But $ j_W(p)\in j_W(G) =  H $. Hence, in $ M\left[H\right] $, 
$$ \kappa \in \left(  j_{W}\restriction_{V}\left( \lusim{X} \right) \right)_H = j_W \left(  \left( \lusim{X} \right)_G \right) = j_W\left( X\right) $$
so $ X\in W $.
\end{proof}

\section{The Structure of $ j_W\restriction_{V} $}

As usual, let $ W\in V\left[G\right] $ be a normal measure, and denote $ U = W\cap V $. Let $ \kappa^* = j_{U}(\kappa) $. Given $ \alpha<\kappa $, recall that $ \lusim{U}^*_{\alpha} $ is a $ P_{\alpha} $-name, forced by the weakest condition in $ P_{\alpha} $ to be the normal measure on $ \alpha $ used in the Prikry forcing $ \lusim{Q}_{\alpha} $. Let $ \lusim{ \mathcal{U}} = \langle \lusim{U}_{\alpha} \colon \alpha\in \Delta \rangle $ be the sequence of names, such that, for every $ \alpha\in \Delta $, $ \lusim{U}_{\alpha} $ is forced by the weakest condition in $ P_{\alpha} $ to be $ \lusim{U}^{*}_{\alpha} \cap V $. Given $ G\subseteq P_{\kappa} $ generic over $ V $, let $ \mathcal{U} = \langle U_{\alpha} \colon \alpha\in \Delta \rangle $ be the interpretation of the names in $ \lusim{\mathcal{U}} $ with respect to the generic $ G $.

Our goal in this section is to factor $ j_W\restriction_{V} $ to an iterated ultrapower of $ V $, while revealing, simultaneously, more and more information about the generic set $ H = j_W(G) $.

By induction, we define for every $ \alpha < \kappa^*$ a model $ M_{\alpha} $, an embedding $ j_{\alpha} \colon V\to M_{\alpha} $, a measurable cardinal $ \mu_{\alpha} $ in $ M_{\alpha} $ and a measure $ U_{\mu_{\alpha}}\in M_{\alpha} $ on it. The definition goes by induction on $ \alpha<\kappa^* $, such that the sequence of models $ \langle M_{\alpha} \colon \alpha<\kappa^* \rangle $ is a linear iterated ultrapower of $ V $ with direct limit $ M_{\kappa^*} $. The iteration is continuous, namely, for every $ \alpha \leq \kappa^* $  limit, $ M_{\alpha} $ is the direct limit of the models $ \langle M_{\alpha'} \colon \alpha'<\alpha \rangle $. 

Given $ \alpha<\kappa^* $, we define $ \mu_{\alpha} $ to be the least measurable $ \mu $ in $ M_{\alpha} $, such that for every $ \alpha'<\alpha $, $ \mu_{\alpha'}< \mu $, and such that $ \left( \mbox{cf}(\mu) \right)^V > \kappa $. We will define a measure $ U_{\mu_{\alpha}} \in M_{\alpha} $ on $ \mu_{\alpha} $. We postpone the definition of $ U_{\mu_{\alpha} } $, but mention only that it will have Mitchell order $ 0 $. After $ U_{\mu_{\alpha}} $ is\ defined, we take $ M_{\alpha+1} =  \mbox{Ult}\left( M_{\alpha} , U_{\mu_\alpha} \right)$ and $ j_{\alpha+1} = \left(  j_{ U_{\mu_\alpha} } \right)^{M_{\alpha}} \circ j_{\alpha} $. 

Our goal in this section will be to prove the following:

\begin{thm} \label{Theorem: NS, full description of j_W restricted to V}
	$ M = M_{\kappa^*} $,  $ j_W\restriction_{V} = j_{\kappa^*} $ and $ \kappa^* = j_W(\kappa) $. If $ \mathcal{U} \in V $, then both $ M $ and $ j_W\restriction_{V} $ are definable classes of $ V $.
\end{thm}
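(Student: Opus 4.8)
The plan is to prove Theorem~\ref{Theorem: NS, full description of j_W restricted to V} by running an induction along the iteration $\langle M_\alpha : \alpha < \kappa^* \rangle$ that \emph{simultaneously} builds the iterated ultrapower of $V$ and extracts information about the generic $H = j_W(G)$ for $j_W(P_\kappa)$. The central observation is the commutativity $j_W\restriction_V = k \circ j_U$ from Lemma~\ref{Lemma: NS, k}, together with the facts that $\mathrm{crit}(k) = \mu$ (the first measurable above $\kappa$ in $M_U$) and $(V_\mu)^M = (V_\mu)^{M_U}$. So $j_W\restriction_V$ agrees with $j_U$ up to the first measurable $\mu_0 = \mu$, and the task is to analyze $k$ itself as an iterated ultrapower of $M_U$ starting at $\mu_0$. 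Thus I would set $M_0 = M_U$, $j_0 = j_U$, $\mu_0 = \mu$, and at each stage $\alpha$ identify $\mu_\alpha$ (as in the statement, the least measurable of $M_\alpha$ above all earlier $\mu_{\alpha'}$ with $V$-cofinality $>\kappa$), define $U_{\mu_\alpha}$, and set $M_{\alpha+1} = \mathrm{Ult}(M_\alpha, U_{\mu_\alpha})$, taking direct limits at limits.

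The key structural input is that in $M_U$ the model $M_U$ computes $j_U(P_\kappa) = j_U(\langle P_\beta, \lusim Q_\gamma\rangle)$, and the part of that iteration \emph{strictly above} $\kappa$ is, from the point of view of $M_U$, a nonstationary-support iteration of Prikry forcings on the interval $(\kappa, \kappa^*)$, where $\kappa^* = j_U(\kappa)$ (and beyond, but $\kappa^*$ is the relevant ordinal). The generic $H$ for $j_W(P_\kappa)$ restricts to $G$ below $\kappa$, adds a Prikry sequence to $\kappa$ itself (coming from $U^* = W$), and then, on each measurable $\mu_\alpha$ of the iterate that survives into $M$, the generic $H$ supplies a Prikry point; the choice of \emph{which} measure is used at $\mu_\alpha$ in $M_\alpha$ is exactly $U_{\mu_\alpha} = U^*_{\mu_\alpha} \cap M_\alpha$, read off from $H$. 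I would argue that a measurable $\mu$ of $M_\alpha$ lies in the range of the next ``active'' coordinate precisely when $(\mathrm{cf}(\mu))^V > \kappa$ — because if the $V$-cofinality were $\leq \kappa$, then by closure of the models under $\kappa$-sequences (or the analysis of Corollary~\ref{Corollary: NS, every new function is dominated by an old one} and Lemma~\ref{Lemma: NS, no fresh subsets of kappa and kappa+}) $\mu$ would already be singular in the generic extension and could not carry a Prikry sequence, forcing it to be iterated away rather than kept. The induction hypothesis at stage $\alpha$ should record that $k$ factors as $k_\alpha \circ (\text{the stage-}\alpha\text{ map})$ where $k_\alpha : M_\alpha \to M$ has critical point $\mu_\alpha$, that $(V_{\mu_\alpha})^M = (V_{\mu_\alpha})^{M_\alpha}$, and that $H$ restricted below $\mu_\alpha$ is already determined by the Prikry sequences read off so far. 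At the end, $\mathrm{crit}(k_{\kappa^*})$ would have to be $\geq \kappa^* = j_W(\kappa)$; showing it is exactly $j_W(\kappa)$ and that the direct limit is well-founded and equals $M$ closes the first two clauses, and $\kappa^* = j_W(\kappa)$ follows since no further critical points below $j_W(\kappa)$ remain.

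For the definability clause: if $\mathcal{U} = \langle U_\alpha : \alpha \in \Delta\rangle \in V$, then the entire recipe for the iteration becomes internal to $V$. Indeed, $j_U$ is definable from $U \in V$; the ordinal $\mu_\alpha$ is defined from $M_\alpha$ using only the notions ``least measurable above \dots'' and ``$(\mathrm{cf}(\cdot))^V > \kappa$'', both of which are $V$-notions; and the measure $U_{\mu_\alpha}$ used at stage $\alpha$ is the image, under the iteration map $j_{0,\alpha} : M_0 \to M_\alpha$ applied appropriately, of a member of (the iterates of) $\mathcal{U}$ — so knowing $\mathcal{U} \in V$ lets $V$ compute $U_{\mu_\alpha}$ without reference to $G$. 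Hence the whole sequence $\langle M_\alpha, j_\alpha, \mu_\alpha, U_{\mu_\alpha} : \alpha < \kappa^*\rangle$ is definable over $V$ (by recursion on $\alpha$), and therefore so are its direct limit $M = M_{\kappa^*}$ and the embedding $j_W\restriction_V = j_{\kappa^*}$.

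The main obstacle I anticipate is the precise matching, at each stage $\alpha$, between ``the next measurable of $M_\alpha$ that $H$ keeps alive'' and ``the least measurable of $M_\alpha$ above the earlier $\mu_{\alpha'}$'s with $V$-cofinality $> \kappa$'' — i.e.\ showing that every measurable $\mu$ of $M_\alpha$ below $\kappa^*$ with $(\mathrm{cf}(\mu))^V \leq \kappa$ really is made singular by $H$ (so must be iterated past, not retained), while every measurable with $V$-cofinality $> \kappa$ genuinely survives and carries the correct Prikry-generic sequence from $H$. This requires combining the absence of fresh subsets of $\kappa$ (Lemma~\ref{Lemma: NS, no fresh subsets of kappa and kappa+}) and the domination/stationarity-preservation corollaries with the factorization of $j_W(P_\kappa)$ through the iteration, and carefully tracking that the direct-limit construction stays well-founded — which it does because it is a subembedding of the well-founded $k$. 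I would isolate this matching as the key lemma and prove it by a fusion-and-genericity argument inside $M$, the rest being bookkeeping along the induction.
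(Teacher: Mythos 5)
Your plan correctly reproduces the scaffolding the paper sets up in Section~3 --- the inductive definition of $\langle M_\alpha, j_\alpha, \mu_\alpha, U_{\mu_\alpha}\rangle$, the commutativity $j_W\restriction_V = k_\alpha\circ j_\alpha$, the identification $\mathrm{crit}(k_\alpha)=\mu_\alpha$, and the definability reduction to $U_{\mu_\alpha} = j_\alpha(\mathcal{U})(\mu_\alpha)$ when $\mathcal{U}\in V$. But all of that is the \emph{preamble} to the theorem, which the paper establishes as properties (A)--(E) before the theorem's proof even begins. The actual content of the theorem's proof is the final closing step, and your proposal does not supply it: you write that showing $\mathrm{crit}(k_{\kappa^*})$ is exactly $j_W(\kappa)$ and that ``the direct limit equals $M$'' will ``close the first two clauses,'' but you give no argument for \emph{why} the direct limit $M_{\kappa^*}$ is all of $M$.

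The missing idea is this: it suffices to show $k_{\kappa^*}$ is the identity, i.e.\ that every ordinal of $M$ lies in its range. Given $\eta=[g]_W$, one invokes Lemma~\ref{Lemma: NS, every new function is evaluated by F in V} to find $p^*\in G$, a club $C$, and a $V$-function $\xi\mapsto A_\xi$ with $|A_\xi|<\kappa$ on $C$, such that $p^*\Vdash g(\xi)\in A_\xi$. Then
$$\eta=[g]_W\in [\xi\mapsto A_\xi]_W=k_{\kappa^*}\bigl(j_{\kappa^*}(\xi\mapsto A_\xi)(\kappa)\bigr),$$
and since $|j_{\kappa^*}(\xi\mapsto A_\xi)(\kappa)|<j_{\kappa^*}(\kappa)=\kappa^*\le\mathrm{crit}(k_{\kappa^*})$, the set is fixed by $k_{\kappa^*}$, hence $\eta\in\mathrm{Im}(k_{\kappa^*})$. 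This covering argument is what forces the iteration to terminate exactly at $\kappa^*$ and it is entirely absent from your proposal; ``no further critical points below $j_W(\kappa)$ remain'' is the conclusion, not an argument. The lemma you instead isolate as the ``key lemma'' (matching measurables of $M_\alpha$ to the ones $H$ keeps alive, via $V$-cofinality $>\kappa$) is indeed a key lemma, but for establishing properties (A)--(E), not for the terminal step of the theorem.
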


\begin{remark}
Given $ \alpha<\kappa^* $, we will prove, in the next section, that every inaccessible $ \lambda $ of $ M_{\alpha} $ above $ \bar{\mu} = \sup\{ \mu_{\alpha'} \colon \alpha'<\alpha \} $ satisfies $ \left( \mbox{cf}(\lambda) \right)^V > \kappa $. So whenever $ \mu_{\alpha} $ is picked as the least measurable above $ \bar{\mu} $ with cofinality $ > \kappa $ in $ V $, it is simply the least measurable above $ \bar{\mu} $. The proof appears in lemma \ref{Lemma: NS, every measurable above mu bar has cofinility above kappa in V}, and a simpler characterization of $ \langle \mu_{\alpha} \colon \alpha<\kappa^* \rangle $ appears in corollary \ref{Corollary: NS, better definition of mu_alpha}. In order to avoid complications in the current section, we chose to provide those results, which involve a detailed study of the iteration $ \langle M_{\alpha} \colon \alpha\leq \kappa^* \rangle $, in the next section.
\end{remark}

The proof of theorem \ref{Theorem: NS, full description of j_W restricted to V}  goes as follows: By induction on $ \alpha \leq \kappa^* $, we define an elementary embedding $ k_{\alpha} \colon M_{\alpha} \to M $, as follows: 
$$  k_{\alpha} \left( j_{\alpha}(h)\left( \kappa,\mu_{\alpha_0},\ldots, \mu_{\alpha_k} \right)  \right) = j_{W}(h)\left( \kappa,\mu_{\alpha_0},\ldots, \mu_{\alpha_k} \right)   $$
for $ h\in V $, $ k<\omega $ and $ \alpha_0<\ldots<\alpha_k<\alpha $.

Note that for $ \alpha=0 $, $ k_{0} =k $ is the embedding defined in lemma \ref{Lemma: NS, k}. In general, it's not trivial that $ k_{\alpha} $ is a well defined elementary embedding. This will be proved in lemma \ref{Lemma: NS, k alpha is elementary}. We denote $ \lambda_{\alpha} = \mbox{crit}\left( k_{\alpha} \right) $. We will prove that for every $ \alpha<\kappa^* $, the following properties hold:

\begin{enumerate}[label = (\Alph*)]
	\item $ k_{\alpha} \colon M_{\alpha}\to M $ is an elementary embedding, and $ j_W\restriction_{V} = k_{\alpha} \circ j_\alpha $.
	\item $ \lambda_{\alpha}$ is measurable in $ M_{\alpha} $.
	\item $ \lambda_{\alpha} $ appears as an element in the Prikry sequence of $ k_{\alpha}\left( \lambda_{\alpha} \right) $ in $ M\left[H\right] $.
	\item $\lambda_{\alpha} =\mu_{\alpha}$.
	\item Let $ U_{\mu_{\alpha}} = \{  X\subseteq \mu_{\alpha} \colon \mu_{\alpha} \in k_{\alpha}(X) \}\cap M_{\alpha} $. Then $ U_{\mu_{\alpha}} \in M_{\alpha} $, and is a normal measure of Mitchell order $ 0 $ there. Moreover, $j_W\left( \mathcal{U} \right)\left( k_{\alpha}(\mu_{\alpha}) \right) = k_{\alpha}\left( U_{\mu_{\alpha}} \right)$, and, if $\mathcal{U} \in V $, then $  U_{\mu_{\alpha}} =  j_{\alpha}\left( \mathcal{U} \right) \left( \mu_{\alpha} \right) $.  
\end{enumerate}

After we prove that properties  (A)-(E)  above hold for every $ \alpha <\kappa^* $, we will show that $ k_{\kappa^*} $ is the identity function.

Let us assume now that the $ M_{\beta} $-ultrafilter $ U_{\mu_{\beta}} $ and the embedding $ k_{\beta} \colon M_{\beta}\to M $ have been defined for every $ \beta<\alpha $, such that properties (A)-(E) hold. We first prove that $ k_{\alpha} $ is indeed elementary.

\begin{lemma} \label{Lemma: NS, k alpha is elementary}
$ k_{\alpha} \colon M_{\alpha} \to M $ is an elementary embedding, and $ j_{W}\restriction_{V} = k_{\alpha} \circ j_{\alpha} $.
\end{lemma}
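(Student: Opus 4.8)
The plan is to mimic the argument used for $k_0 = k$ in Lemma \ref{Lemma: NS, k}, but accounting for the fact that $M_\alpha$ is now a (possibly nontrivial) iterated ultrapower rather than $M_U$ itself. The key structural fact I would use is that every element of $M_\alpha$ has the form $j_\alpha(h)(\kappa, \mu_{\alpha_0}, \ldots, \mu_{\alpha_k})$ for some $h \in V$, finitely many ordinals $\alpha_0 < \cdots < \alpha_k < \alpha$, and the critical points $\mu_{\alpha_i}$ of the successive ultrapowers — this is a standard normal-form/factor representation for linear iterations by normal measures, valid also at the limit stage since $M_\alpha$ is then a direct limit. (For a limit $\alpha$, one should first observe that any given element of the direct limit already lives in some $M_\beta$ with $\beta < \alpha$, so its normal form only involves $\mu_{\alpha_i}$ with $\alpha_i < \beta$.)

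The steps, in order, are: (1) \emph{Well-definedness and $\in$-preservation.} Suppose $j_\alpha(h)(\kappa, \mu_{\alpha_0}, \ldots, \mu_{\alpha_k}) = j_\alpha(h')(\kappa, \mu_{\beta_0}, \ldots, \mu_{\beta_m})$; merging the two tuples of indices into a single increasing tuple $\vec\gamma$ and passing to functions of the full tuple, this reduces to the case of a common tuple $\vec\mu_{\vec\gamma}$, and then the equality in $M_\alpha$ is reflected, by Łoś applied along the iteration, to a statement about $V$ which is then pushed up by $j_W$; the point is that $j_W(h)$ and $j_W(h')$ get evaluated at the same arguments $(\kappa, \vec\mu_{\vec\gamma})$ because $k_\beta$ for $\beta < \alpha$ already fixes each $\mu_{\gamma_i}$ (property (D) together with $\mathrm{crit}(k_{\gamma_i+1}) = \mu_{\gamma_i}$, i.e. property (D) and the definition of $\lambda_\beta$). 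The same merging-and-reflecting argument handles $\in$ and hence injectivity. (2) \emph{Elementarity.} For a formula $\varphi$ and parameters given in normal form with a common index tuple $\vec\gamma$, $M_\alpha \models \varphi(\ldots)$ iff the corresponding $U_{\mu_{\gamma_0}}$-many-$\cdots$-$U$-many statement holds in $V$, iff (applying $j_W$ and using that $j_W$ restricted to $V$ factors as the iteration composed with $k_\alpha$, so the iteration's measures are carried by $j_W$ to the appropriate images) the statement $\varphi(j_W(h)(\kappa, \vec\mu_{\vec\gamma}))$ holds in $M$. (3) \emph{The factorization.} For $v \in V$, take $h$ constant equal to $v$: then $k_\alpha(j_\alpha(v)) = j_W(v)$, i.e. $j_W\restriction_V = k_\alpha \circ j_\alpha$, which is immediate from the defining formula for $k_\alpha$.

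The main obstacle I expect is step (1) — making rigorous the claim that the ordinals $\mu_{\alpha_i}$ used as arguments on the $M_\alpha$-side are literally the \emph{same} ordinals as those used on the $M$-side, so that the two evaluations of $j_W(h)$ really do receive identical inputs. This is exactly where properties (B)--(E) for $\beta < \alpha$ must be invoked: property (D) says $\lambda_\beta = \mu_\beta$, so $\mathrm{crit}(k_\beta) = \mu_\beta$ and hence $k_\beta\restriction(\mu_\beta + 1) = \mathrm{id}$, which (combined with the commuting diagram $k_\alpha \circ (\text{the tail of the iteration}) = k_\beta$) pins each $\mu_{\alpha_i}$ down as a fixed point of the relevant embedding into $M$. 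A secondary subtlety is the limit case: one must check that the direct-limit structure of $M_\alpha$ is compatible with the normal-form representation, i.e. that the transition maps of the iteration commute with the $k_\beta$'s so that $k_\alpha$ on the direct limit is well-defined as the common extension of the $k_\beta$ — this is routine but should be stated. I would also remark that well-definedness of $k_\alpha$ presupposes that the iteration $\langle M_\beta : \beta \le \alpha \rangle$ is itself well-defined and wellfounded up to $\alpha$, which holds since it is a linear iteration of $V$ by (images of) normal measures and $V$ is wellfounded.
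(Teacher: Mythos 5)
Your proposal takes the paper's route — induction on $\alpha$, normal-form representation of elements of $M_\alpha$ as $j_\alpha(h)(\kappa, \mu_{\alpha_0}, \ldots, \mu_{\alpha_k})$, reduction to a common index tuple, and recovery of the factorization $j_W\restriction_V = k_\alpha \circ j_\alpha$ via the constant function — so the overall shape is right. Your treatment of the limit case also matches: any element of the direct limit already lives in some $M_{\alpha'}$ with $\alpha' < \alpha$, and the induction hypothesis $j_W\restriction_V = k_{\alpha'} \circ j_{\alpha'}$ together with $\mathrm{crit}(k_{\alpha'}) = \mu_{\alpha'} > \mu_{\alpha_k}$ does the work.

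Where the write-up is imprecise is the mechanism for well-definedness at a successor stage $\alpha = \alpha'+1$ when the last index in the normal form is $\alpha'$ itself — the only genuinely new case, since if $\alpha_k < \alpha'$ the argument is the same as the limit case. Your gloss that the equality "is reflected, by Łoś applied along the iteration, to a statement about $V$ which is then pushed up by $j_W$" does not quite land: $j_W$ is not an iterated ultrapower, so there is no a priori reason a nested Łoś statement should transfer under it, and the later measures in the iteration live in intermediate models rather than in $V$. What the paper actually does is a \emph{single} Łoś step over $M_{\alpha'}$, yielding the condition $\{ y < \mu_{\alpha'} : j_{\alpha'}(h)(\kappa, \mu_{\alpha_0}, \ldots, \mu_{\alpha_{k-1}}, y) = j_{\alpha'}(h')(\kappa, \mu_{\alpha_0}, \ldots, \mu_{\alpha_{k-1}}, y) \} \in U_{\mu_{\alpha'}}$, followed by the translation of this $U_{\mu_{\alpha'}}$-membership into the statement that $\mu_{\alpha'}$ lies in the $k_{\alpha'}$-image of that set; this translation is exactly property (E) — that $U_{\mu_{\alpha'}}$ is the measure \emph{derived} from $k_{\alpha'}$ — and the whole weight of the successor step rests on it. Your phrase that "the iteration's measures are carried by $j_W$ to the appropriate images" is gesturing in that direction, but you should name (E) explicitly here rather than folding it into the generic list (B)--(E). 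You also slip on an index: you write $\mathrm{crit}(k_{\gamma_i+1}) = \mu_{\gamma_i}$, but property (D) gives $\mathrm{crit}(k_{\gamma_i}) = \mu_{\gamma_i}$, hence $\mathrm{crit}(k_{\gamma_i+1}) = \mu_{\gamma_i+1} > \mu_{\gamma_i}$; the fact actually needed — that $k_\beta$ fixes $\mu_{\gamma_i}$ for $\gamma_i < \beta$ — is nonetheless correct.
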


\begin{proof}
	We prove only that $ k_{\alpha} $ is a well defined injection (and the rest of elementarity follows similarly). Assume that $ a,a'\in M_{\alpha} $. Let $ k<\omega $, $ h,h'\in V $ and $ \alpha_0<\ldots<\alpha_k<\alpha $ be such that--
	$$ a= j_{\alpha}(h)\left( \kappa,\mu_{\alpha_0},\ldots, \mu_{\alpha_k} \right) \ ,\  a'= j_{\alpha}(h')\left( \kappa,\mu_{\alpha_0},\ldots, \mu_{\alpha_k} \right) $$
	If $ \alpha $ is limit, let $ \alpha'<\alpha $ be high enough such that $ \mu_{\alpha'} > \mu_{\alpha_k} $. By induction, $ j_W\restriction_{V} = k_{\alpha'}\circ j_{\alpha'} $, and thus--
	\begin{align*}
		& j_W(h) \left( \kappa,\mu_{\alpha_0},\ldots ,\mu_{\alpha_k} \right) =  j_W(h') \left( \kappa,\mu_{\alpha_0},\ldots ,\mu_{\alpha_k} \right) \\ \iff &
		j_{\alpha'}(h) \left( \kappa,\mu_{\alpha_0},\ldots ,\mu_{\alpha_k} \right) =  j_{\alpha'}(h') \left( \kappa,\mu_{\alpha_0},\ldots ,\mu_{\alpha_k} \right) \\ \iff &
		j_{\alpha}(h) \left( \kappa,\mu_{\alpha_0},\ldots ,\mu_{\alpha_k} \right) =  j_{\alpha}(h') \left( \kappa,\mu_{\alpha_0},\ldots ,\mu_{\alpha_k} \right)
	\end{align*}
	If $ \alpha = \alpha'+1 $ is successor, we can assume that $ \alpha_k = \alpha' $, and then--
	\begin{align*}
		& j_W(h) \left( \kappa,\mu_{\alpha_0},\ldots ,\mu_{\alpha_k} \right) =  j_W(h') \left( \kappa,\mu_{\alpha_0},\ldots ,\mu_{i_{k}} \right) \\ \iff &
		\mu_{\alpha'} \in k_{\alpha'}\left( \{ y<\mu_{\alpha'} \colon j_{\alpha'} (h)(\kappa,\mu_{\alpha_0},\ldots , \mu_{i_{k-1}},y) = j_{\alpha'}(h')(\kappa,\mu_{\alpha_0},\ldots , \mu_{i_{k-1}},y)  \}  \right)  \\ \iff &
		\{ y<\mu_{\alpha'} \colon j_{\alpha'} (h)(\kappa,\mu_{\alpha_0},\ldots , \mu_{i_{k-1}},y) = j_{\alpha'}(h')(\kappa,\mu_{\alpha_0},\ldots , \mu_{i_{k-1}},y)  \} \in U_{\mu_{\alpha'}} \\ \iff &
		j_{\alpha}(h) \left( \kappa,\mu_{\alpha_0},\ldots ,\mu_{\alpha_k} \right) =  j_{\alpha}(h') \left( \kappa,\mu_0,\ldots ,\mu_{i_{k}} \right)
	\end{align*}
	
	Finally, we argue that $ k_{\alpha}\circ j_{\alpha} = j_{W} \restriction_{V} $: For each $ x\in V $, let $ c_x\colon \kappa \to V $ be the function such that for every $ \xi<\kappa $, $ c_x(\xi) = x $. Then--
	$$ k_{\alpha}\left(  j_{\alpha}(x) \right) =  k_{\alpha}\left(  j_{\alpha}(c_x)(\kappa) \right) = j_{W}(c_x)(\kappa) = j_{W}(x) $$
\end{proof}

Since $ \alpha $ is fixed from now on, we denote simply $ \lambda = \lambda_{\alpha} =   \mbox{crit}\left( k_{\alpha} \right) $. Then $ \lambda $ is a regular uncountable cardinal. Our goal will be to prove that it is measurable in $ M_{\alpha} $, and moreover, $ \lambda = \mu_{\alpha} $. There are several straightforward limitations on the value of $ \lambda $:

\begin{claim}
$\sup\{\mu_{\alpha'}\colon \alpha'<\alpha \} \leq \lambda \leq \mu_{ \alpha}$.
\end{claim}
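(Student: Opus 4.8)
The plan is to establish the two inequalities separately, both by straightforward exploitation of the definition of $k_\alpha$ and of $\mu_\alpha$, using the induction hypothesis for $\beta<\alpha$.

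\textbf{Lower bound.} First I would show $\sup\{\mu_{\alpha'}\colon\alpha'<\alpha\}\leq\lambda$. Fix $\alpha'<\alpha$. The point is that $\mu_{\alpha'}$ is not moved by $k_\alpha$: by property (D) of the induction hypothesis, $\mu_{\alpha'}=\lambda_{\alpha'}=\mathrm{crit}(k_{\alpha'})$, and the generator $\mu_{\alpha'}$ appears literally among the ordinal parameters $\kappa,\mu_{\alpha_0},\ldots,\mu_{\alpha_k}$ allowed in the definition of $k_\alpha$. Concretely, writing $\mu_{\alpha'}=j_\alpha(\mathrm{id})(\mu_{\alpha'})$ (or simply as the value of the identity function applied to the parameter $\mu_{\alpha'}$), the definition of $k_\alpha$ gives $k_\alpha(\mu_{\alpha'})=j_W(\mathrm{id})(\mu_{\alpha'})=\mu_{\alpha'}$. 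Hence every $\mu_{\alpha'}$ with $\alpha'<\alpha$ is a fixed point of $k_\alpha$, so lies below (or equals, but it is moved nowhere) — more precisely, since $k_\alpha$ fixes $\mu_{\alpha'}$ and $\mathrm{crit}(k_\alpha)=\lambda$ is the least ordinal moved, every ordinal $\leq\mu_{\alpha'}$ that is moved must be $\geq\lambda$; as $\mu_{\alpha'}$ itself is fixed we get $\mu_{\alpha'}<\lambda$ unless one must argue $\mu_{\alpha'}\ne\lambda$, which follows because $\lambda$ is moved and $\mu_{\alpha'}$ is not. Taking the supremum over $\alpha'<\alpha$ yields $\sup\{\mu_{\alpha'}\colon\alpha'<\alpha\}\leq\lambda$.

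\textbf{Upper bound.} Next I would show $\lambda\leq\mu_\alpha$. Here I use that $\mu_\alpha$ is, by its definition, a measurable cardinal of $M_\alpha$ lying above $\bar\mu=\sup\{\mu_{\alpha'}\colon\alpha'<\alpha\}$ with $(\mathrm{cf}(\mu_\alpha))^V>\kappa$, and that $M_\alpha$ and $M$ agree appropriately below the first point where $k_\alpha$ acts nontrivially. Since $k_\alpha\circ j_\alpha=j_W\restriction_V$ and $j_W\restriction_V$ is an iterated ultrapower with all critical points $\geq\kappa$ (indeed its first critical point, by Lemma~\ref{Lemma: NS, k}, is the first measurable $\mu$ of $M_U$ above $\kappa$, and $k_\alpha$ is built to absorb the generators $\mu_{\alpha'}$), one checks that $k_\alpha$ is the identity on an initial segment of $M_\alpha$ past $\bar\mu$; in particular $k_\alpha(\mu_\alpha)$ and the internal structure of $M_\alpha$ below $\mu_\alpha$ are untouched, so $\lambda=\mathrm{crit}(k_\alpha)\leq\mu_\alpha$. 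The cleanest way to see $\lambda\le\mu_\alpha$ is: if $\lambda>\mu_\alpha$ then $k_\alpha\restriction(\mu_\alpha+1)=\mathrm{id}$, so $\mu_\alpha$ would be measurable in $M$ as well (measurability being downward absorbed along the identity), hence $\mathrm{cf}^{V[G]}(\mu_\alpha)=\omega$ via the Prikry sequence it receives in $M[H]$, hence $\mathrm{cf}^V(\mu_\alpha)\leq\kappa$ — contradicting the requirement $(\mathrm{cf}(\mu_\alpha))^V>\kappa$ built into the choice of $\mu_\alpha$. This mirrors exactly the argument in clause (4) of Lemma~\ref{Lemma: NS, k}.

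\textbf{Main obstacle.} The delicate point is the upper bound, specifically justifying that $k_\alpha$ is genuinely the identity below $\mu_\alpha$ (equivalently, that $\lambda$ cannot be some measurable or inaccessible strictly between $\bar\mu$ and $\mu_\alpha$ that happens to be moved): this requires knowing that no measurable of $M_\alpha$ strictly below $\mu_\alpha$ and above $\bar\mu$ has $V$-cofinality $>\kappa$, which is precisely the content deferred to Lemma~\ref{Lemma: NS, every measurable above mu bar has cofinility above kappa in V} and flagged in the Remark. I would therefore phrase this claim's proof so that it only needs: (i) $k_\alpha$ fixes each $\mu_{\alpha'}$, $\alpha'<\alpha$ (easy, as above), and (ii) the contradiction from $\mu_\alpha$ being measurable in $M$ together with its $V$-cofinality being $>\kappa$ — both of which are available now — leaving the sharper identification $\lambda=\mu_\alpha$ to the subsequent lemmas. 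If a self-contained argument for $\lambda\le\mu_\alpha$ needs the cofinality lemma, I would note that dependency explicitly rather than hide it.
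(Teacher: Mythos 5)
Your argument for the upper bound $\lambda \leq \mu_\alpha$ is essentially the paper's: if $k_\alpha(\mu_\alpha)=\mu_\alpha$ then by elementarity $\mu_\alpha$ is measurable in $M$, hence receives a Prikry sequence in $M[H]$, hence has $V$-cofinality $\leq\kappa$, contradicting the built-in requirement $(\mathrm{cf}(\mu_\alpha))^V>\kappa$. You also correctly observe that the deferred cofinality lemma is \emph{not} needed for this inequality, only for the later sharpening to $\lambda=\mu_\alpha$.

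Your argument for the lower bound, however, has a genuine gap. You establish that each $\mu_{\alpha'}$ ($\alpha'<\alpha$) is a \emph{fixed point} of $k_\alpha$ and then infer ``$\mu_{\alpha'}<\lambda$.'' That inference is invalid: an elementary embedding has plenty of fixed points above its critical point, so $k_\alpha(\mu_{\alpha'})=\mu_{\alpha'}$ does not rule out that some $x<\mu_{\alpha'}$ is moved, in which case $\lambda\leq x<\mu_{\alpha'}$. What one actually needs is that \emph{every} ordinal $x<\mu_{\alpha'}$ is fixed by $k_\alpha$. The paper obtains this by writing any such $x$ as $j_{\alpha'}(h)(\kappa,\mu_{\alpha_0},\ldots,\mu_{\alpha_k})$ with $\alpha_0<\cdots<\alpha_k<\alpha'$ (possible because $j_{\alpha',\alpha}(x)=x$), and then computing
$$k_\alpha(x)=j_W(h)(\kappa,\vec\mu)=k_{\alpha'}\left(j_{\alpha'}(h)(\kappa,\vec\mu)\right)=k_{\alpha'}(x)=x,$$
where the last step uses the induction hypothesis (property (D) at $\alpha'$) that $\mathrm{crit}(k_{\alpha'})=\mu_{\alpha'}>x$. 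Your proposal cites (D) only to recall $\mu_{\alpha'}=\mathrm{crit}(k_{\alpha'})$ as a label; it never applies it to conclude that ordinals below $\mu_{\alpha'}$ are fixed, which is where the induction is actually doing work. Without that step the lower bound does not follow.
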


\begin{proof}
	By the definition of $ k_{\alpha} $, for every $ \alpha'<\alpha $,
	$$ k_{\alpha}\left( \mu_{\alpha'} \right) = k_{\alpha} \left( j_{\alpha}(id)(\mu_{\alpha'}) \right) = j_{W}(id)(\mu_{\alpha'}) = \mu_{\alpha'}$$
	Now, if $ x<\mu_{\alpha'} $ for some $ \alpha'<\alpha $, then  $ j_{\alpha',\alpha}(x) = x $. Thus, for some $ h\in V $, and $ \alpha_0<\ldots< \alpha_k<\alpha' $, $ x = j_{\alpha'}\left( h \right)\left( \kappa, \mu_{\alpha_0},\ldots, \mu_{\alpha_{k}} \right) $. Denote $ \vec{\mu} = \langle \mu_{\alpha_0},\ldots, \mu_{\alpha_{k}} \rangle $. Then--
	$$ k_{\alpha}(x) = k_{\alpha}\left(  j_{\alpha}(h)\left( \kappa,\vec{\mu}   \right)\right)  = j_{W}(h)\left(\kappa, \vec{\mu} \right) =  k_{\alpha'}\left(  j_{\alpha'}(h)\left( \kappa, \vec{\mu} \right) \right) = k_{\alpha'}\left( x \right) =x$$
	where the last equality holds since $ x<\mu_{\alpha'} $, and, by induction, $ \mbox{crit}\left( k_{\alpha'} \right) = \mu_{\alpha'} $.
	
	This shows that $ \mbox{crit}(k_{\alpha}) \geq  \mu_{\alpha'} $ for every $ \alpha'<\alpha $.
	
	For the second inequality, recall that $ \mu_{\alpha} $ is measurable in $ M_{\alpha} $ which satisfies $ \left( \mbox{cf}\left( \mu_{\alpha} \right) \right)^V> \kappa $. If $ k_{\alpha}\left( \mu_{\alpha} \right) = \mu_{\alpha} $, then, by elementarity, $ \mu_{\alpha} $ is measurable in $ M $. Therefore, in $ M\left[H\right] $, $ \mbox{cf}\left( \mu_{\alpha} \right) = \omega $, and thus in $ V\left[G\right] $, $ \mbox{cf}\left( \mu_{\alpha} \right) = \omega $. Therefore, in $ V $,  $ \mbox{cf}\left( \mu_{\alpha} \right) \leq \kappa $, a contradiction.
\end{proof}

Recall that for every $ \beta<\alpha $, $ \mu_{\beta} $ appears as an element in the Prikry sequence added to $ k_{\beta }\left( \mu_{\beta} \right) $ in $ M\left[H\right] $. Assume that it is the $ \left(n_{\beta} +1\right)$-th element in this Prikry sequence, and has an initial segment $ t_{\beta} $ of length $ n_{\beta} $ below it. Note that, by induction, $ k_{\beta}\left( t_{\beta} \right) = t_{\beta} $. 

We now provide a useful way to represent elements in the model $ M_{\alpha} $.

\begin{defn} \label{Definition: NS, Nice seqeunces of ordinals}
An increasing sequence $ \langle \alpha_0,\ldots, \alpha_k \rangle $ of ordinals below $\kappa^* $ is called \textbf{nice} if, for every $ 0\leq i \leq k $, there are functions $ g_i, t_i, F_i \in V $ such that--
$$ \mu_{\alpha_{i}} = j_{\alpha_i}\left( g_i \right) \left( \kappa, \mu_{\alpha_0}, \ldots, \mu_{\alpha_{i-1}} \right) $$
$$ t_{\alpha_{i}} = j_{\alpha_i}\left( t_i \right) \left( \kappa, \mu_{\alpha_0}, \ldots, \mu_{\alpha_{i-1}} \right) $$
$$ U_{\mu_{\alpha_{i}}} = j_{\alpha_i}\left( F_i \right) \left( \kappa, \mu_{\alpha_0}, \ldots, \mu_{\alpha_{i-1}} \right) $$
(for $ i=0 $, $ \mu_{\alpha_0} = j_{\alpha_0}\left( g_{0} \right)(\kappa)  $, $ t_{\alpha_0} = j_{\alpha_0}\left( t_0 \right)\left( \kappa \right) $ and $ U_{\mu_{\alpha_0}} = j_{\alpha_0}\left( F_0 \right)\left( \kappa \right)  $ ).
\end{defn}

(We remark that the functions $ F_i $ used to represent $ U_{\mu_{\alpha_i}} $ will be relevant only in the next section, so the third requirement, that includes them, can be omitted from the definition at the moment).
It's not hard to prove that, given a pair of nice sequences, the increasing enumeration of their union is nice.

\begin{lemma}
Every element in $ M_{\alpha} $ has the form--
$$j_{\alpha}(h)\left( \kappa,\mu_{\alpha_0},\ldots, \mu_{\alpha_k} \right)$$
for some $ k<\omega $, $ \left(k+1\right) $-ary function $ h\in V $ and a nice sequence $\langle \alpha_0,\ldots , \alpha_{k} \rangle$ of ordinals below $\alpha $.
\end{lemma}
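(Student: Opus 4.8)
The plan is to prove the statement by induction on $\alpha$, running in parallel with the construction of the iteration $\langle M_\alpha \colon \alpha \leq \kappa^* \rangle$ and the embeddings $k_\alpha$. The base case $\alpha = 0$ is exactly Lemma \ref{Lemma: NS, k}: every element of $M_0 = M_U$ has the form $j_U(h)(\kappa)$, so the empty nice sequence works. For the successor step, suppose the claim holds for $M_{\alpha'}$ and we pass to $M_{\alpha'+1} = \mbox{Ult}(M_{\alpha'}, U_{\mu_{\alpha'}})$. Every element of the ultrapower is of the form $j_{\alpha', \alpha'+1}(g)(\mu_{\alpha'})$ for some $g \in M_{\alpha'}$; writing $g = j_{\alpha'}(\tilde h)(\kappa, \mu_{\alpha_0}, \ldots, \mu_{\alpha_{k-1}})$ for a nice sequence $\langle \alpha_0, \ldots, \alpha_{k-1}\rangle$ below $\alpha'$ via the induction hypothesis, and observing that $\mu_{\alpha'}$ itself is $j_{\alpha'}(g')(\kappa, \mu_{\alpha_0}, \ldots)$ for a (possibly longer) nice sequence, one combines everything into a single function $h \in V$ applied to $\kappa$ together with the increasing enumeration of the union of the two nice sequences — which is again nice by the remark preceding the lemma. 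One must also append $\alpha'$ itself to the sequence and check the three defining conditions of niceness hold at coordinate $\alpha'$; this is where property (E) of the induction (the representation of $U_{\mu_{\alpha'}}$ by a function in $V$) and the analogous representations of $\mu_{\alpha'}$ and $t_{\alpha'}$ are used.

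For the limit step, suppose $\alpha$ is a limit ordinal and $M_\alpha$ is the direct limit of $\langle M_{\alpha'} \colon \alpha' < \alpha\rangle$. Any element $a \in M_\alpha$ comes from some $M_{\alpha'}$ with $\alpha' < \alpha$ via the direct limit map $j_{\alpha', \alpha}$; apply the induction hypothesis at stage $\alpha'$ to write $a = j_{\alpha', \alpha}(j_{\alpha'}(h)(\kappa, \mu_{\alpha_0}, \ldots, \mu_{\alpha_k}))$ with a nice sequence below $\alpha'$, and then push through the commutativity $j_{\alpha', \alpha} \circ j_{\alpha'} = j_\alpha$ together with the fact that each $\mu_{\alpha_i}$ with $\alpha_i < \alpha'$ is fixed by $j_{\alpha', \alpha}$ (since the critical points of the ultrapowers taken between stages $\alpha'$ and $\alpha$ are the ordinals $\mu_{\beta}$ with $\alpha' \leq \beta < \alpha$, all strictly above each $\mu_{\alpha_i}$). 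This yields $a = j_\alpha(h)(\kappa, \mu_{\alpha_0}, \ldots, \mu_{\alpha_k})$ with the same nice sequence, which is still nice relative to the ambient iteration. So the limit step is essentially bookkeeping once the successor step is in place.

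The main obstacle I expect is the successor step, specifically verifying that the amalgamated sequence is genuinely nice — that $\mu_{\alpha'}$, $t_{\alpha'}$ and $U_{\mu_{\alpha'}}$ are each of the form $j_{\alpha'}(\,\cdot\,)(\kappa, \mu_{\alpha_0}, \ldots, \mu_{\alpha_{k-1}})$ for functions in $V$ applied to the \emph{given} nice sequence, rather than to some larger set of generators. This is not automatic: $\mu_{\alpha'}$ was defined purely as the least measurable in $M_{\alpha'}$ above $\sup\{\mu_\beta \colon \beta < \alpha'\}$ with $V$-cofinality $>\kappa$, and one must argue that this internal definability in $M_{\alpha'}$, combined with the representation $M_{\alpha'} = \{j_{\alpha'}(h)(\kappa, \vec\mu) \colon \vec\mu \text{ nice}\}$ from the induction hypothesis, forces $\mu_{\alpha'}$ to be captured by a \emph{single} nice tuple. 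The point is that if $\mu_{\alpha'} = j_{\alpha'}(h)(\kappa, \mu_{\beta_0}, \ldots, \mu_{\beta_m})$ for some nice $\langle \beta_0, \ldots, \beta_m\rangle$ below $\alpha'$, we simply take that tuple; niceness of the union of two nice sequences then lets us merge it with whatever nice sequence represented $g$. The corresponding statements for $t_{\alpha'}$ and $U_{\mu_{\alpha'}}$ follow the same pattern, using that $t_{\alpha'}$ is definable from $\mu_{\alpha'}$ and $H$ restricted appropriately (property (C)), and $U_{\mu_{\alpha'}}$ is definable from $\mu_{\alpha'}$ and $k_{\alpha'}$ (property (E)); once these are expressed as nice-tuple values, closing up under unions of nice sequences completes the argument.
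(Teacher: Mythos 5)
Your proposal is correct and follows essentially the same route as the paper: induction on $\alpha$, the limit case handled by pushing a representation from some earlier stage through $j_{\alpha',\alpha}$ (which fixes $\kappa$ and the earlier $\mu_{\alpha_i}$'s), and the successor case handled by merging the nice sequences representing $f$, $\mu_{\alpha'}$, $t_{\alpha'}$, and $U_{\mu_{\alpha'}}$ into a single nice sequence, appending $\alpha'$, and composing $h_1$ with evaluation at $\mu_{\alpha'}$. The only slip is attributing the base case to Lemma \ref{Lemma: NS, k}; it is really just the standard representation of elements of $M_0 = \mbox{Ult}(V,U)$ as $j_U(h)(\kappa)$, but this does not affect the argument.
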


\begin{proof}
We assume that the lemma holds for every $ \alpha'<\alpha $. Let $ x\in M_{\alpha} $.

If $ \alpha $ is limit: There exists $ \alpha'<\alpha $ and $ x'\in M_{\alpha'} $ such that $ x = j_{\alpha', \alpha}\left( x' \right) $. By induction, $ x' = j_{\alpha'}\left( h \right)\left( \mu_{\alpha_0},\ldots ,\mu_{\alpha_k} \right) $ for a nice sequence $ \langle \alpha_0,\ldots, \alpha_k \rangle $ below $ \alpha' $. Then $ x = j_{\alpha}\left( h \right)\left( \mu_{\alpha_0},\ldots ,\mu_{\alpha_k} \right) $, as desired.

If $ \alpha = \alpha'+1 $ is successor: Let $ f\in M_{\alpha}$ be a function such that $x= j_{\alpha', \alpha}(f)\left( \mu_{\alpha'} \right) $. Let $ h_1,h_2,h_3,h_4\in V $ be functions, and $ \langle \alpha_0,\ldots, \alpha_k \rangle $, $ \langle \beta_0,\ldots, \beta_l \rangle $, $ \langle \gamma_0, \ldots, \gamma_{s} \rangle $, $ \langle \delta_0,\ldots, \delta_{r} \rangle $ be nice sequences below $ \alpha' $ such that--
$$ f = j_{\alpha'}\left( h_1 \right)\left( \mu_{\alpha_0}, \ldots, \mu_{\alpha_k} \right)  \  ,  \   \mu_{\alpha'} = j_{\alpha'}\left( h_2 \right)\left( \mu_{\beta_0}, \ldots, \mu_{\beta_k} \right) $$
$$ t_{\alpha'} = j_{\alpha'}\left( h_3 \right)\left( \mu_{\gamma_0}, \ldots, \mu_{\gamma_s} \right)  \  ,  \   U_{\mu_{\alpha'}} = j_{\alpha'}\left( h_4 \right)\left( \mu_{\delta_0}, \ldots, \mu_{\delta_r} \right) $$
The increasing enumeration of-- 
$$\langle \alpha_0,\ldots, \alpha_k \rangle \cup \langle \beta_0,\ldots, \beta_l \rangle \cup \langle \gamma_0, \ldots, \gamma_{s} \rangle \cup \langle \delta_0,\ldots, \delta_{r} \rangle  \cup \langle \alpha' \rangle $$
is a nice sequence. Denote it by $ \langle \varepsilon_0, \ldots, \varepsilon_{m}, \alpha' \rangle$, where $ \varepsilon_m < \alpha' $.

By modifying the function $ h_1$ in $ V $, we can assume for simplicity that--
$$ f = j_{\alpha'}\left( h_1 \right)\left( \mu_{\varepsilon_0}, \ldots, \mu_{\varepsilon_m} \right)   $$
Define, in $ V $, a function $ h $, as follows: 
$$ h\left( \langle \nu_0 ,\ldots, \nu_m, \nu \rangle \right) =  h_1\left( \nu_0 , \ldots, \nu_{m} \right) \left( \nu \right)   $$
 Then $ j_{\alpha}(h)\left( \mu_{\varepsilon_0}, \ldots, \mu_{\varepsilon_{m}}, \mu_{\alpha'} \right) = x $.
\end{proof}

We now introduce several notations. We fix those notations throughout the proof that properties (A)-(E) hold at $ \alpha $. Recall that $ \mbox{crit}\left( k_{\alpha} \right) $ is denoted by $ \lambda  $. Let $ h\in V $ be a function such that-- 
$$\lambda = j_{\alpha}(h)\left( \kappa, \mu_{\alpha_0},\ldots, \mu_{\alpha_k} \right) $$
for a nice sequence $ \langle \alpha_0,\ldots, \alpha_k \rangle $ below $ \alpha $. Fix, for every $ 0\leq i \leq k $, functions $ g_i, t_i  \in V $ as in the definition of a nice sequence. In other words--
$$ \mu_{\alpha_{i}} = j_{\alpha_i}\left( g_i \right) \left( \kappa, \mu_{\alpha_0}, \ldots, \mu_{\alpha_{i-1}} \right) $$
$$t_{\alpha_{i}} = j_{\alpha_i}\left( t_i \right) \left( \kappa, \mu_{\alpha_0}, \ldots, \mu_{\alpha_{i-1}} \right) $$
\begin{remark} \label{Remark: NS, nice sequence is chosen such that h is strictly above gk}
\begin{enumerate}
	\item The functions $ g_i $ might be more or less the same. For instance,  set, for every $ \xi<\kappa $, $ g_0 = s(\xi) =$ the first measurable in $ V $ strictly above $ \xi $, and $ g_1(\xi, \nu) = s(\xi) $.	Then $ \mu_0 = j_{0}\left( g_0 \right)(\kappa) $ and $ \mu_1 =j_1(g_0)(\kappa) = j_1\left( g_1 \right)(\kappa) $. 
	\item It is not necessarily true that, given $ \xi, \vec{\nu} $, $ h\left( \xi, \vec{\nu} \right) \geq g_i\left( \xi, \nu_0, \ldots, \nu_{i-1} \right) $. For instance, take, $ \mu_{\alpha_k} $ to be a measurable of Mitchell order $ >0 $ in $ M_U $, and $ \lambda $ to be the first measurable above it in $  M_{ \mu_{\alpha_k} +1} = \mbox{Ult}\left( M_{\alpha_k}, U_{\mu_{\alpha_k}} \right) $. Then $ \lambda = j_{\alpha_{k}+1}(h)\left( \kappa, \mu_{\alpha_k} \right) $, where $ h(\xi, \nu)  = s\left(\nu\right)$. Assume that $ \mu_{\alpha_k} = j_U(f)(\xi) $ for some $ f\in V $. In $ M\left[H\right] $, $ k_{\mu_{\alpha_k}+1} (\lambda) < k_{\mu_{ \alpha_k } }( \mu_{\alpha_k} ) $, namely, $ h( \xi, \mu_{\alpha_k}(\xi) ) < f\left( \xi \right) $ for a set of $ \xi $-s in $ W $, where $ \xi \mapsto \mu_{\alpha_k}(\xi) $ is a function in $ V\left[G\right] $ represents $ \mu_{\alpha_k} $ in the ultrapower with $ W $.
\end{enumerate}
\end{remark}

Given $ \beta<\alpha $, recall that, by induction, $ \mu_{\beta} $ appears in the Prikry sequence of $ k_{\beta} (\mu_{\beta}) $. For every $ 0\leq i\leq k $, denote by $ n_{i}<\omega $ the length of the finite sequence $ t_{i} $, which is the initial segment of the Prikry sequence of $ k_{\alpha_i}\left( \mu_{\alpha_i} \right) $ below $ \mu_{\alpha_i} $. Then $ \mu_{\alpha_i} $ is the $ \left(n_i+1\right) $-th element in this Prikry sequence.

For every $ i\leq k $, we define, in $ V\left[G\right] $, a function $ \xi \mapsto \mu_{\alpha_i}(\xi) $ such that $ \left[ \xi \mapsto \mu_{\alpha_i}(\xi) \right]_{W} = \mu_{\alpha_i} $:
\begin{itemize}
	\item For $ i=0 $,  set the $ \left(n_0+1\right) $-th element in the Prikry sequence of $ g_0(\xi) $ to be $ \mu_{\alpha_0}(\xi) $.
	\item Assume that $ 0<i< k $, and the functions $ \xi\mapsto \mu_{\alpha_j}(\xi) $ have been defined for every $ j\leq i $. Let $ \mu_{\alpha_i}(\xi) $ be the $ \left(n_i+1\right) $-th element in the Prikry sequence of $ g_i\left(  \mu_{\alpha_0}(\xi), \ldots, \mu_{\alpha_{i-1} }(\xi) \right) $.
\end{itemize}
For every $ 0\leq i \leq k $, $ \left[ \xi\mapsto \mu_{\alpha_i}(\xi) \right]_W = \mu_{\alpha_i} $, and-- 
$$t_{\alpha_i} =  \left[ \xi\mapsto t_i\left( \xi, \mu_{\alpha_0}(\xi), \ldots, \mu_{\alpha_{i-1}}(\xi) \right) \right]_W $$ 
where the last equality follows since $ \mbox{crit}\left( k_{\alpha_i} \right) = \mu_{\alpha_i} $ and thus $ k_{\alpha_i}\left( t_{\alpha_i} \right) = t_{\alpha_i} $.

We fix an abbreviation, $  \xi \mapsto \vec{\mu}(\xi) $ for the function $ \xi \mapsto \langle \mu_{\alpha_0}(\xi),\ldots, \mu_{\alpha_k}(\xi) \rangle $.  Given $ \xi, \vec{\nu} = \langle \nu_0,\ldots, \nu_k \rangle $, denote--
$$ \vec{t}\left( \xi, \vec{\nu} \right) = \langle t_0(\xi), t_1(\xi, \nu_0), \ldots, t_k\left( \xi, \nu_0, \ldots, \nu_{k-1} \right) \rangle $$

Our next goal is lemma \ref{Lemma: NS, Multivariable Fusion}, which generalizes the Fusion Lemma \ref{Lemma: NS, Fusion Lemma}. We deal there with sets which are $ \leq^* $ dense open above conditions which decide the values of $ \langle \mu_{\alpha_0}(\xi), \ldots, \mu_{\alpha_{k}}(\xi) \rangle $. We first define the notion of a $ C $-tree, which consists of sequences $\langle \xi, \vec{\nu} \rangle =\langle \xi, \nu_0, \ldots, \nu_{k} \rangle $ which are possible candidates for the exact values of $ \langle \xi, \mu_{\alpha_0}(\xi), \ldots, \mu_{\alpha_{k}}(\xi) \rangle $. Then, we define in \ref{Definition: NS, admissible sequence for a given condition} whenever such a candidate is admissible for a given condition $ p\in G $, in the sense that $ p $ can be extended to force that $ \vec{\mu}(\xi) = \vec{\nu} $.

\begin{defn} \label{Definition: NS, C tree}
A tree $ T\subseteq \left[\kappa\right]^{k+1} $ is called a $ C $-tree (with respect to a fixed nice sequence $ \langle \alpha_0, \ldots, \alpha_k \rangle $) if $ \mbox{Succ}_{T}\left( \langle \rangle \right)  $ is a club in $ \kappa $, and for every $ i<k $ and $ \langle \xi, \nu_0, \ldots, \nu_i \rangle\in T $,
 $\mbox{Succ}_T\left( \xi, \nu_0, \ldots, \nu_i  \right)$ is a club in $ g_{i+1}\left( \xi, \nu_0, \ldots, \nu_i \right) $.
 
Given $ i<k $ and a sequence $ \langle \xi, \nu_0, \ldots, \nu_i \rangle $, a $ C $-tree above it is a tree $ T\subseteq \left[\kappa\right]^{n-i} $, such that $ \mbox{Succ}_T(\langle \rangle) $ is a club in $ g_{i+1}\left( \xi, \nu_0, \ldots, \nu_i \right) $ and, for every $ i+1 \leq j \leq  k -1 $ and $ \langle \nu_{i+1}, \ldots , \nu_{j} \rangle\in T$, $\mbox{Succ}_{T}\left(\nu_{i+1}, \ldots , \nu_{j}  \right)$ is a club in $ g_{j+1}\left( \xi, \nu_0, \ldots, \nu_{j} \right) $.
\end{defn}

\begin{claim} \label{Claim: NS, a C tree contains the real values}
Let $ T $ be a $ C $-tree. Then, in $ V\left[G\right] $,
$$ \{ \xi <\kappa \colon   \langle \xi, \mu_{\alpha_0}(\xi), \ldots , \mu_{\alpha_k}(\xi) \rangle \in T \}\in W $$
\end{claim}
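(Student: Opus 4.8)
The plan is to translate the claim, via \L o\'s's theorem for $W$, into a statement about the single ordinal branch $\langle\kappa,\mu_{\alpha_0},\ldots,\mu_{\alpha_k}\rangle$ in $j_W(T)$, and then to verify that branch membership one coordinate at a time, exploiting the factorization $j_W\restriction_V=k_{\alpha_i}\circ j_{\alpha_i}$ available at each $\alpha_i<\alpha$. Since $\kappa=[\xi\mapsto\xi]_W$ and, as established just before the claim, $\mu_{\alpha_i}=[\xi\mapsto\mu_{\alpha_i}(\xi)]_W$ for every $i\le k$, the set $\{\xi<\kappa\colon \langle\xi,\mu_{\alpha_0}(\xi),\ldots,\mu_{\alpha_k}(\xi)\rangle\in T\}$ lies in $W$ if and only if $\langle\kappa,\mu_{\alpha_0},\ldots,\mu_{\alpha_k}\rangle\in j_W(T)$. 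Here one uses --- and I would point this out explicitly --- that the $C$-tree $T$, being assembled from the ground-model functions $g_i$ and from clubs, is an element of $V$; this is essential, since otherwise the club $\mathrm{Succ}_T(\langle\xi\rangle)$ could be chosen, using $G$, so as to avoid $\mu_{\alpha_0}(\xi)$.

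Then I would prove $\langle\kappa,\mu_{\alpha_0},\ldots,\mu_{\alpha_k}\rangle\in j_W(T)$ by induction along the branch. At the root: $\mathrm{Succ}_T(\langle\rangle)$ is a club subset of $\kappa$ lying in $V$, hence in the normal measure $U=W\cap V$ and so in $W$; equivalently $\kappa\in j_W(\mathrm{Succ}_T(\langle\rangle))=\mathrm{Succ}_{j_W(T)}(\langle\rangle)$, i.e.\ $\langle\kappa\rangle\in j_W(T)$. For the inductive step, assume $\langle\kappa,\mu_{\alpha_0},\ldots,\mu_{\alpha_{i-1}}\rangle$ is a node of $j_W(T)$, and let $f_i\in V$ be the successor-set function of $T$ at level $i$, so that $f_i(\xi,\nu_0,\ldots,\nu_{i-1})$ is a club in $g_i(\xi,\nu_0,\ldots,\nu_{i-1})$ (for $i=0$ this reads $f_0(\xi)$ is a club in $g_0(\xi)$). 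By elementarity the successor set of this node in $j_W(T)$ equals $j_W(f_i)(\kappa,\mu_{\alpha_0},\ldots,\mu_{\alpha_{i-1}})$. Now $f_i\in V$ and $\alpha_i<\alpha$, so by the inductive hypothesis of the main argument (properties (A) and (D) hold at $\alpha_i$) we have $j_W\restriction_V=k_{\alpha_i}\circ j_{\alpha_i}$ and $\mathrm{crit}(k_{\alpha_i})=\mu_{\alpha_i}$, hence $j_W(f_i)=k_{\alpha_i}(j_{\alpha_i}(f_i))$; since $\kappa,\mu_{\alpha_0},\ldots,\mu_{\alpha_{i-1}}$ all lie below $\mu_{\alpha_i}$ and are therefore fixed by $k_{\alpha_i}$ --- and the node is likewise a node of $j_{\alpha_i}(T)$, as $k_{\alpha_i}$ fixes its entries --- this value equals $k_{\alpha_i}(j_{\alpha_i}(f_i)(\kappa,\mu_{\alpha_0},\ldots,\mu_{\alpha_{i-1}}))$. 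Writing $E_i$ for the inner value, elementarity of $j_{\alpha_i}$ shows $E_i$ is a club in $j_{\alpha_i}(g_i)(\kappa,\mu_{\alpha_0},\ldots,\mu_{\alpha_{i-1}})$, which by the definition of a nice sequence is exactly $\mu_{\alpha_i}$. So $E_i$ is a club in $\mu_{\alpha_i}=\mathrm{crit}(k_{\alpha_i})$, and hence $\mu_{\alpha_i}\in k_{\alpha_i}(E_i)$, because the image of a club in the critical point under an elementary embedding always contains the critical point. Therefore $\mu_{\alpha_i}$ lies in the successor set, i.e.\ $\langle\kappa,\mu_{\alpha_0},\ldots,\mu_{\alpha_i}\rangle\in j_W(T)$. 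After $k+1$ steps we obtain $\langle\kappa,\mu_{\alpha_0},\ldots,\mu_{\alpha_k}\rangle\in j_W(T)$, which is the claim.

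The genuine content is concentrated in the inductive step, and that is where I expect the only real subtlety. The apparent difficulty is that the successor sets of a $C$-tree are \emph{arbitrary} clubs in the $g_i$'s, whereas $\mu_{\alpha_i}(\xi)$ is a \emph{fixed} member (the $(n_i+1)$-st) of a Prikry sequence, which a priori need not lie in a given ground-model club; what resolves it is precisely the critical-point phenomenon above, namely that after applying $j_W$ the relevant club becomes the $k_{\alpha_i}$-image of a club in $\mu_{\alpha_i}=\mathrm{crit}(k_{\alpha_i})$, so it automatically contains $\mu_{\alpha_i}$. The remaining points are routine bookkeeping: aligning the arities of the $g_i$'s between the definition of a nice sequence and that of a $C$-tree, and verifying that the node in question belongs to both $j_W(T)$ and $j_{\alpha_i}(T)$, both immediate from $\mathrm{crit}(k_{\alpha_i})=\mu_{\alpha_i}$ and the fact that the entries of the node lie below it.
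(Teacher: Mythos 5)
Your proof is correct and takes the same approach as the paper's: both induct along the branch $\langle\kappa,\mu_{\alpha_0},\ldots,\mu_{\alpha_k}\rangle$ of $j_W(T)$, at each level factoring $j_W\restriction_V=k_{\alpha_i}\circ j_{\alpha_i}$ and observing that the successor set, computed in $M_{\alpha_i}$, is a club in $\mu_{\alpha_i}=\mathrm{crit}(k_{\alpha_i})$, hence its $k_{\alpha_i}$-image contains $\mu_{\alpha_i}$. Your treatment of the root level via $j_{\alpha_0},k_{\alpha_0}$ (rather than the paper's $j_U,k_0,[g_0]_U$ formulation, which is literally exact only when $\alpha_0=0$) and your explicit check that the partial branch lies in $j_{\alpha_i}(T)$ are small tightenings of bookkeeping, not a different argument.
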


\begin{proof}
Work in $ V\left[G\right] $. First, $ \{ \xi<\kappa \colon \mu_{\alpha_0}(\xi)\in \mbox{Succ}_{T}(\xi) \}\in W $. Indeed, for each $ \xi\in \mbox{Succ}_T(\langle \rangle)\in W $, $ \mbox{Succ}_T(\xi) $ is a club in $ g_0(\xi) $, and thus--
$$ \mu_0\in k_0 \left( \left[\xi \mapsto \mbox{Succ}_T\left( \langle \xi \rangle \right) \right]_{U} \right) $$
This holds since $ \left[\xi \mapsto \mbox{Succ}_T\left( \langle \xi \rangle \right) \right]_{U} $ is a club in $ \left[g_0\right]_U = \mu_{\alpha_0} $ and thus belongs to $ U_{\mu_{\alpha_0}} $. 

Now proceed by induction. For every $ i\leq k-1 $, 
$$ \{ \xi < \kappa \colon \mu_{\alpha_{i+1}}(\xi) \in  \mbox{Succ}_T\left( \langle \xi, \mu_{\alpha_0}(\xi), \ldots, \mu_{\alpha_i}(\xi)  \rangle\right) \}\in W $$
Indeed, denote--
$$C = j_{\alpha_{i+1}}\left( \langle \xi, \nu_{0},\ldots, \nu_i \rangle  \mapsto  \mbox{Succ}_{T}\left( \xi, \nu_0, \ldots, \nu_{i} \right) \right)\left( \kappa, \mu_{\alpha_0}, \ldots, \mu_{\alpha_i} \right) $$
Then $ C $ is a club in $ j_{\alpha_{i+1}}\left( g_{i+1} \right)\left( \kappa, \mu_{\alpha_0}, \ldots, \mu_{\alpha_i} \right) = \mu_{\alpha_{i+1}} $. Thus $ C\in U_{\mu_{\alpha_{i+1}}} $, and  $ \mu_{\alpha_{i+1}}\in k_{\alpha_{i+1}}(C) $, as desired.
\end{proof}

\begin{defn} \label{Definition: NS, admissible sequence for a given condition}
	Fix $ \alpha<\kappa $ and a nice sequence $ \langle \alpha_0, \ldots, \alpha_k \rangle $ below $ \alpha $. Let $ p\in P_{\kappa} $ be a condition and $ \langle \xi, \nu_0, \ldots, \nu_{k} \rangle $ be a sequence below $ \kappa $. Let us define whenever $ \langle \xi, \nu_0, \ldots, \nu_{k} \rangle $ is admissible for $ p $, and in that case, define as well an extension $ p^{\frown} \langle \xi, \nu_0, \ldots, \nu_k \rangle \geq p $ in $ P_{\kappa} $.
	\begin{enumerate}
		\item $ \langle \xi ,\nu_0 \rangle $ is admissible for $ p $ if-- 
		$$ p\restriction_{ g_0(\xi) } \Vdash \langle t_{0}\left( \xi \right)^{\frown} \langle \nu_0 \rangle, A^{p}_{g_0(\xi)}\setminus  \left(\nu_0+1\right) \rangle \mbox{ is compatible with } p(g_0(\xi)) $$
		if this holds, and $ t^{p}_{g_0(\xi)}  $ is an initial segment of $ t_0(\xi)^{\frown} \langle \nu_0 \rangle $, let-- 
		$$ p^{\frown} \langle \xi, \nu_0 \rangle = {p\restriction_{g_0(\xi) }}^{\frown} {\langle t_{0}\left( \xi \right)^{\frown} \langle \nu_0 \rangle, A^{p}_{g_0(\xi)}\setminus  \left(\nu_0+1\right) \rangle}^{\frown} p\setminus \left(g_0(\xi)+1\right) $$
		otherwise, let $ p^{\frown} \langle \xi, \nu_0 \rangle = p $.
		\item Let $  0 \leq m <k$. Assume that $ \langle \xi, \nu_0, \ldots, \nu_{m} \rangle $ is admissible for $ p $ and $ p^{\frown} \langle \xi, \nu_0, \ldots, \nu_{m} \rangle $ has been defined. Denote-- 
		$$ g_{m+1} = g_{m+1}\left( \xi, \nu_0, \ldots, \nu_{m} \right) $$
		$$ t_{m+1} = t_{m+1}\left( \xi, \nu_0, \ldots, \nu_{m} \right) $$
		We say that $ \langle \xi, \nu_0, \ldots, \nu_{m+1} \rangle $ is admissible for $ p $ if--
		\begin{align*}	
			p^{\frown} \langle \xi, \nu_0, \ldots, \nu_{m} \rangle \restriction_{ g_{m+1} } \Vdash & \langle {t_{m+1}}^{\frown} \langle \nu_{m+1} \rangle, A^{p}_{g_{m+1}}\setminus  \left(\nu_{m+1}+1\right) \rangle  \mbox{ is }\\
			&\mbox{compatible with }\left( p^{\frown} \langle \xi, \nu_0, \ldots, \nu_{m} \rangle  \right)  (g_{m+1})
		\end{align*}
		if this holds, and $ t^{p^{\frown} \langle \xi, \nu_0, \ldots, \nu_{m} \rangle}_{g_{m+1}} $ is an initial segment of $ {t_{m+1}}^{\frown} \langle \nu_{m+1} \rangle $, let--
		\begin{align*}
			p^{\frown} \langle \xi, \nu_0, \ldots, \nu_{m+1} \rangle = &   	{  \left({p^{\frown}\langle  \xi, \nu_0, \ldots, \nu_{m} \rangle  }   \restriction_{g_{m+1} }\right)}^{\frown}\\
			& {\langle {t_{m+1}}^{\frown} \langle \nu_{m+1} \rangle, A^{p}_{g_{m+1}}\setminus  \left(\nu_{m+1}+1\right) \rangle}^{\frown} \\
			& \left({p^{\frown} \langle \xi, \nu_0, \ldots, \nu_{m} \rangle }\right) \setminus \left(g_{m+1}+1\right) 
		\end{align*}
		else, set $ p^{\frown} \langle \xi, \nu_0, \ldots, \nu_{m+1} \rangle = p^{\frown} \langle \xi, \nu_0, \ldots, \nu_{m} \rangle $.
	\end{enumerate}
Finally, assume that $ p $ is a condition, $ \xi<\kappa $, $ i<k $ and $ \langle  \nu_0, \ldots, \nu_i \rangle $ is a sequence such that $ p\Vdash \langle \lusim{\mu}_{\alpha_0}(\xi), \ldots, \lusim{\mu}_{\alpha_i}(\xi) \rangle = \langle \nu_0, \ldots, \nu_i \rangle $. Given a sequence $ \langle \nu_{i+1}, \ldots, \nu_k \rangle $, we can define similarly whether it is admissible for $ p $; if it is, we say that $ \langle  \nu_{i+1}, \ldots, \nu_k\rangle $ is admissible for $ p $ above $ \langle \xi, \nu_0, \ldots, \nu_i \rangle $, and define, in a similar way as above, the condition $ p^{\frown} \langle \nu_{i+1}, \ldots, \nu_k \rangle $.
\end{defn}

\begin{lemma} [Multivariable Fusion] \label{Lemma: NS, Multivariable Fusion}
	Fix $ \alpha<\kappa $ and a nice sequence $ \langle \alpha_0, \ldots, \alpha_k \rangle $ below $ \alpha $. Let $ p\in P_{\kappa} $ be a condition. Assume that for every $ \langle \xi, \nu_0, \ldots, \nu_k \rangle $ below $ \kappa $ there exists a subset $ e\left( \xi, \vec{\nu} \right) \subseteq P_{\kappa}\setminus \left(\nu_k+1\right) $ which is $ \leq^* $-dense open above every condition $ q\in P\setminus \left(\nu_k+1\right) $ which forces that $ \vec{\lusim{\mu}}(\xi) = \vec{\nu} $.
	Then there exists $ p^*\geq^* p $ and a $ C $-tree $ T $, such that for every $ \langle \xi, \nu_0, \ldots, \nu_k \rangle\in T $ which is admissible for $ p^* $,
	$$ \left({p^*}^{\frown} \langle \xi, \vec{\nu} \rangle\right)\restriction_{\nu_{k}+1}  \Vdash \left({p^*}^{\frown} \langle \xi, \vec{\nu} \rangle\right) \setminus \left(\nu_k+1\right) \in e\left( \xi, \vec{\nu} \right)$$
\end{lemma}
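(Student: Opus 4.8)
The plan is to argue by induction on $k$, peeling off the innermost variable $\nu_k$ at each step; note that the truncation $\langle\alpha_0,\dots,\alpha_{k-1}\rangle$ is again a nice sequence below $\alpha$. The induction bottoms out at the empty nice sequence: there the prefix $\langle\nu_0,\dots,\nu_{k-1}\rangle$ and the would-be ``$C$-tree'' collapse to a single club $C\subseteq\kappa$, the extension operation is trivial (so $p^{\frown}\langle\xi\rangle=p$), and the assertion becomes ``there are $p^{*}\geq^{*}p$ and a club $C$ with $p^{*}\restriction_{\xi+1}\Vdash p^{*}\setminus(\xi+1)\in e'(\xi)$ for $\xi\in C$'', which is exactly Lemma \ref{Lemma: NS, Fusion Lemma} applied to the family $\xi\mapsto e'(\xi)$ constructed below.

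For the inductive step, fix $\langle\alpha_0,\dots,\alpha_k\rangle$ and the sets $e(\xi,\vec{\nu})$. For a prefix $\langle\xi,\nu_0,\dots,\nu_{k-1}\rangle$ write $\gamma=g_k(\xi,\nu_0,\dots,\nu_{k-1})$, the coordinate at which $\mu_{\alpha_k}(\xi)$ would sit; when $\gamma\le\nu_{k-1}$ set $e'(\xi,\nu_0,\dots,\nu_{k-1})=P_\kappa\setminus(\nu_{k-1}+1)$, and otherwise let $e'(\xi,\nu_0,\dots,\nu_{k-1})$ be the set of $r\in P_\kappa\setminus(\nu_{k-1}+1)$ which force $\langle\lusim{\mu}_{\alpha_0}(\xi),\dots,\lusim{\mu}_{\alpha_{k-1}}(\xi)\rangle=\langle\nu_0,\dots,\nu_{k-1}\rangle$ and whose measure-one set $A^{r}_{\gamma}$ is \emph{good}: for every $\nu_k$ for which $\langle\nu_k\rangle$ is admissible for $r$ above $\langle\xi,\nu_0,\dots,\nu_{k-1}\rangle$ one has $(r^{\frown}\langle\nu_k\rangle)\setminus(\nu_k+1)\in e(\xi,\nu_0,\dots,\nu_{k-1},\nu_k)$. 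The main point is that $e'(\xi,\nu_0,\dots,\nu_{k-1})$ is $\leq^{*}$-dense open above every condition in $P_\kappa\setminus(\nu_{k-1}+1)$ forcing $\langle\lusim{\mu}_{\alpha_0}(\xi),\dots,\lusim{\mu}_{\alpha_{k-1}}(\xi)\rangle=\langle\nu_0,\dots,\nu_{k-1}\rangle$. Openness is immediate from openness of the $e$'s, since a direct extension changes neither the stem at $\gamma$ nor (except by shrinking) the set $A_\gamma$, so goodness passes to direct extensions. For density, given such an $r$, pass to a generic $V[G_\gamma]$ with $r\restriction_\gamma\in G_\gamma$, where the tail factors as $\lusim{Q}_\gamma*(P_\kappa\setminus(\gamma+1))$ with the second factor's direct-extension order $\gamma^{+}$-closed, and run the usual Prikry-property amalgamation: for each of the at most $\gamma$ relevant values $\nu_k$, use $\leq^{*}$-density of $e(\xi,\dots,\nu_k)$ to find a condition in $e$ extending $r$ by one Prikry point at $\gamma$ followed by a direct extension above $\gamma$; amalgamate the (at most $\gamma$ many) tail parts above $\gamma$ by closure, diagonally intersect the measure-one subsets of $A^{r}_{\gamma}$ they produce using $\gamma$-completeness and normality of $U^{*}_{\gamma}$, and use openness of the $e$'s to conclude that the resulting measure-one set is good. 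Since the argument is uniform in $G_\gamma$, it returns $r^{*}\geq^{*}r$ in $V$ with $r^{*}\restriction_\gamma=r\restriction_\gamma$ and $r^{*}\in e'(\xi,\nu_0,\dots,\nu_{k-1})$.

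Now apply the induction hypothesis for $\langle\alpha_0,\dots,\alpha_{k-1}\rangle$ to the family $\{e'(\xi,\nu_0,\dots,\nu_{k-1})\}$ and to $p$, obtaining $p^{*}\geq^{*}p$ and a $C$-tree $T'$ for the shorter nice sequence such that, for every $\langle\xi,\nu_0,\dots,\nu_{k-1}\rangle\in T'$ admissible for $p^{*}$, the condition $({p^{*}}^{\frown}\langle\xi,\nu_0,\dots,\nu_{k-1}\rangle)\restriction_{\nu_{k-1}+1}$ forces that $({p^{*}}^{\frown}\langle\xi,\nu_0,\dots,\nu_{k-1}\rangle)\setminus(\nu_{k-1}+1)\in e'(\xi,\nu_0,\dots,\nu_{k-1})$; in particular $p^{*}$ already carries a good measure-one set at $\gamma=g_k(\xi,\nu_0,\dots,\nu_{k-1})$, since forming ${p^{*}}^{\frown}\langle\xi,\nu_0,\dots,\nu_{k-1}\rangle$ touches only the coordinates $g_i(\xi,\dots)$ with $i<k$. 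Extend $T'$ to a $C$-tree $T$ for $\langle\alpha_0,\dots,\alpha_k\rangle$ by picking, above each node $\langle\xi,\nu_0,\dots,\nu_{k-1}\rangle$, any club $\mbox{Succ}_T(\langle\xi,\nu_0,\dots,\nu_{k-1}\rangle)$ in $g_k(\xi,\nu_0,\dots,\nu_{k-1})$ — these clubs are needed only so that Claim \ref{Claim: NS, a C tree contains the real values} still applies (i.e.\ so that the true values are caught in $T$), while the verification of the $e$-requirement rests entirely on the good measure-one set recorded in $p^{*}$, into which ``admissibility for $p^{*}$'' forces the value $\nu_k$. Using the coherence ${p^{*}}^{\frown}\langle\xi,\nu_0,\dots,\nu_k\rangle=({p^{*}}^{\frown}\langle\xi,\nu_0,\dots,\nu_{k-1}\rangle)^{\frown}\langle\nu_k\rangle$, we conclude that for every admissible $\langle\xi,\vec{\nu}\rangle\in T$, $({p^{*}}^{\frown}\langle\xi,\vec{\nu}\rangle)\restriction_{\nu_k+1}\Vdash({p^{*}}^{\frown}\langle\xi,\vec{\nu}\rangle)\setminus(\nu_k+1)\in e(\xi,\vec{\nu})$, as required.

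The step I expect to be the main obstacle is the $\leq^{*}$-density of $e'$ — the amalgamation at the coordinate $\gamma$ — carried out correctly through the bookkeeping of Definitions \ref{Definition: NS, C tree} and \ref{Definition: NS, admissible sequence for a given condition}. The subtlety, exactly the phenomenon flagged in Remark \ref{Remark: NS, nice sequence is chosen such that h is strictly above gk}, is that the coordinates $g_0(\xi),\dots,g_k(\xi,\dots)$ need be neither increasing nor pairwise distinct, so one must verify that the one-point stem-extensions performed by the admissibility operation at the coordinates $g_i(\xi,\dots)$, $i<k$, remain consistent with the good measure-one set chosen at $\gamma=g_k(\xi,\dots)$ when some of these coordinates coincide (the ``is an initial segment of'' clauses in Definition \ref{Definition: NS, admissible sequence for a given condition} are precisely what guarantees this), and that ``admissibility for $p^{*}$'' is inherited along the induction. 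The remaining ingredients — openness of $e'$, the fact that truncations of nice sequences are nice, and the reduction to Lemma \ref{Lemma: NS, Fusion Lemma} at the bottom — are routine.
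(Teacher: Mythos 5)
Your overall strategy — strip the last variable, build an auxiliary family $e'$ at level $k-1$, and recurse down to a single application of Lemma \ref{Lemma: NS, Fusion Lemma} — is the same recursive peeling as the paper's proof, organized slightly differently (induction on $k$ vs.\ the paper's chain of nested definitions $e(\xi,\nu_0,\dots,\nu_i)$). The paper's inner Claim is exactly your inductive step. However, there is a genuine gap in your proof of $\leq^*$-density of $e'$, and it is precisely the part you flag as the main obstacle.

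The problem is the claim ``it returns $r^*\geq^* r$ in $V$ with $r^*\restriction_\gamma=r\restriction_\gamma$''. Your definition of $e'(\xi,\nu_0,\dots,\nu_{k-1})$ requires, for each admissible $\nu_k$, that $(r^\frown\langle\nu_k\rangle)\setminus(\nu_k+1)\in e(\xi,\vec{\nu})$. But $e(\xi,\vec{\nu})\subseteq P_\kappa\setminus(\nu_k+1)$ and $\nu_k<\gamma=g_k(\xi,\nu_0,\dots,\nu_{k-1})$, so membership in $e(\xi,\vec{\nu})$ depends on, and the supplied $\leq^*$-density may need to modify, the coordinates of $r$ in the nontrivial interval $(\nu_k+1,\gamma)$ — \emph{not} only the stem at $\gamma$ and the tail above it. A concrete obstruction: if $e(\xi,\vec{\nu})$ is the set of conditions whose restriction to $[\nu_k+1,\nu_k+2)$ decides some fixed sentence, it is $\leq^*$-dense open, yet membership is entirely a property of the condition strictly below $\gamma$; no amount of shrinking $A^r_\gamma$ and closing off the tail above $\gamma$ will put $r$ into $e'$ without touching $r\restriction_\gamma$. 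Since $\nu_k$ ranges over a measure-one set (hence cofinally) in $\gamma$, you cannot dispose of this by a single preliminary direct extension below $\gamma$ either; you need a fusion \emph{inside} $P\restriction_{(\nu_{k-1},\gamma)}$ to coherently prepare the segment for each candidate $\nu_k$. This is exactly what the paper's Claim does: it defines the auxiliary sets $E(\nu)\subseteq P\restriction_{(\nu+1,g_{i+1})}$ and applies Lemma \ref{Lemma: NS, Fusion Lemma} to the interval $(\nu_i,g_{i+1})$, producing a club $C\subseteq g_{i+1}$ that becomes $\mbox{Succ}_T$ at that node. Correspondingly, in the paper the $C$-tree carries real content (it records the fused clubs below each $g_{i+1}$ and is built into the very definition of the intermediate dense sets), whereas in your argument the clubs $\mbox{Succ}_T(\langle\xi,\nu_0,\dots,\nu_{k-1}\rangle)$ are ``any club'' and do no work — a symptom of the same missing fusion. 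If you redo the density argument for $e'$ with a fusion over $P\restriction_{(\nu_{k-1},\gamma)}$ and let the resulting club feed the $C$-tree, the rest of your inductive scheme should go through and essentially coincide with the paper's.
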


\begin{proof}
	For every $ i<k  $ and  $ \langle \xi, \nu_0,\ldots, \nu_i \rangle $, we define a subset $ e\left( \xi, \nu_0, \ldots, \nu_i \right)\subseteq P\setminus \left( \nu_i+1 \right) $ which is $ \leq^* $-dense open above every condition $ q\in P\setminus \left(\nu_i+1\right) $ which forces that-- 
	$$ \langle \lusim{\mu}_{\alpha_0}(\xi), \ldots, \lusim{\mu}_{\alpha_i}(\xi) \rangle = \langle \nu_0, \ldots, \nu_i \rangle $$ 
	as follows:
	
	\begin{align*}
		e\left( \xi, \nu_0, \ldots, \nu_i \right) = &\{  q\in P\setminus \left( \nu_i+1 \right)  \colon \mbox{there exists a C-tree } T \mbox{ above } \langle \xi, \nu_0, \ldots, \nu_i \rangle\\
		& \mbox{such that, for every } \langle \nu_{i+1}, \ldots, \nu_{k} \rangle\in T, \mbox{ which is admissible for  }\\
		&q \mbox{ above } \langle \xi, \nu_0, \ldots, \nu_{i} \rangle,\\
		&\left(q^{\frown} \langle \nu_{i+1}, \ldots, \nu_{k} \rangle\right) \restriction_{\nu_{k}+1} \Vdash \left(q^{\frown} \langle \nu_{i+1}, \ldots, \nu_{k} \rangle\right)\setminus \left(\nu_{k}+1\right)\in e\left( \xi, \vec{\nu} \right)
		\}
	\end{align*}
	
	The lemma now follows by applying, repeatedly, the following claim:
	\begin{claim}
		Let $ 0\leq i<k $ and fix an increasing sequence $\langle \xi, \nu_0, \ldots, \nu_{i}, \nu_{i+1}\rangle $. Assume that $ e\left( \xi, \nu_0, \ldots, \nu_i, \nu_{i+1} \right) $
		is $ \leq^* $-dense open above every condition in $ P\setminus \left(\nu_{i+1}+1\right) $ which forces that $ \langle \lusim{\mu}_{\alpha_0}(\xi), \ldots, \lusim{\mu}_{\alpha_{i+1}}(\xi) \rangle = \langle \nu_0, \ldots, \nu_{i+1} \rangle $. Then $ e\left( \xi, \nu_0, \ldots, \nu_i \right) $ is $ \leq^* $-dense open above every condition in $ P\setminus \left(\nu_{i}+1\right) $ which forces that $ \langle \lusim{\mu}_{\alpha_0}(\xi), \ldots, \lusim{\mu}_{\alpha_{i}}(\xi) \rangle = \langle \nu_0, \ldots, \nu_{i} \rangle $.
	\end{claim}
	
	\begin{proof}
		Let $ p\in P\setminus \left(\nu_i+1\right) $ be a condition which forces that $ \langle \lusim{\mu}_{\alpha_0}(\xi), \ldots, \lusim{\mu}_{\alpha_{i}}(\xi) \rangle = \langle \nu_0, \ldots, \nu_{i} \rangle $. Denote for simplicity $ g_{i+1} = g_{i+1}\left( \xi, \nu_0, \ldots, \nu_{i} \right) $. First, direct extend $ p\restriction_{g_{i+1}} $ such that it decides the length of $ t^{p}_{g_{i+1}} $, and whether $ t^{p}_{g_{i+1}}, t_{i+1}\left( \xi, \nu_0, \ldots, \nu_{i} \right) $ are compatible:
		\begin{enumerate}
			\item If $ p\restriction_{g_{i+1}} $ decides that $ t^{p}_{g_{i+1}} $ and $ t_{i+1}\left( \xi, \nu_0, \ldots, \nu_{i} \right) $ are incompatible, do nothing.
			\item If $ p\restriction_{g_{i+1}} $ decides that the length of $ t^{p}_{g_{i+1}} $ is at least $ n_{i+1}+1 $, direct extend it further, such that for some $ \gamma< g_{i+1} $, $ p\restriction_{g_{i+1}}\Vdash t^{p}_{g_{i+1}}\left( n_{i+1}+1 \right) <\gamma  $ (namely, $ \gamma $ bounds the $ \left(n_{i+1}+1\right) $-th element in the Prikry sequence of $ g_{i+1} $).
			\item If $ p\restriction_{g_{i+1}} $ decides that $ g_{i+1}\notin \mbox{supp}\left(  p\right) $, direct extend $ p $ such that $ t^{p}_{g_{i+1}} = t_{i+1}\left( \xi, \nu_0, \ldots, \nu_i \right)$.
			\item If $ p\restriction_{g_{i+1}} $ decides that the length of $ t^{p}_{g_{i+1}} $ is less or equal than $ n_{i+1} $, direct extend by shrinking $ \lusim{A}^{p}_{g_{i+1}} $ to $ A^{p}_{g_{i+1}}\setminus \left( \max\left(  t_{i+1}\left( \xi ,\nu_0,\ldots, \nu_{i} \right) \right)  +1 \right) $.
		\end{enumerate}
		Assume that $ p$ is already direct extended as described above. Let us direct extend $ p^*\restriction_{g_{i+1}} \geq^* p\restriction_{g_{i+1}} $ using the Fusion lemma in the forcing $ P\restriction_{  \left( \nu_{i} , g_{i+1}  \right)} $. For every $ \nu\in \left( \nu_{i},g_{i+1}  \right) $, consider the following $ \leq^* $-dense open subset of $ P\restriction_{  \left( \nu+1 , g_{i+1}  \right)}  $:
		\begin{align*}
			E(\nu) = &\{ r\in P\restriction_{  \left( \nu+1 , g_{i+1}  \right)} \colon \mbox{if } r\Vdash t^{r}_{g_{i+1}} = t_{i+1}\left( \xi ,\nu_0, \ldots, \nu_{i} \right) \mbox{ and } \nu\in \lusim{A}^{r}_{g_{i+1}},\\
			&\mbox{there exists a direct extension-- }\\
			&q = q(\nu)\geq^* {\langle {t^{p}_{g_{i+1}}}^{\frown}\langle \nu \rangle , \lusim{A}^{r}_{g_{i+1}}\setminus \left( \nu+1 \right)  \rangle}^{\frown} p\setminus \left( g_{i+1}+1 \right) \\
			&\mbox{such that } r^{\frown} q\in e\left( \xi, \nu_0, \ldots, \nu_{i}, \nu \right)  \}
		\end{align*} 
		The $ \leq^* $-density of $ E(\nu) $ follows from the $ \leq^* $-density of $ e\left( \xi, \nu_0, \ldots, \nu_{i}, \nu \right) $ above any condition which forces that $ \langle \lusim{\nu}_{\alpha_0}(\xi), \ldots, \lusim{\mu}_{\alpha_{i+1}}(\xi) \rangle = \langle \xi, \nu_0, \ldots, \nu_{i}, \nu \rangle $. \\
		Apply Fusion, and let $ p^*\restriction_{g_{i+1}} \geq^* p\restriction_{g_{i+1}} $ be a direct extension, such that for some club $ C = C\left( \xi, \nu_0 ,\ldots, \nu_{i}\right) \subseteq g_{i+1} $, and for every $ \nu\in C $, 
		$$  {p^*\restriction_{\nu+1}} \Vdash p^*\setminus \left( \nu+1 \right)\in e\left( \xi, \nu_0, \ldots, \nu_{i}, \nu \right) $$
		Shrink $ C $ such that $ C\cap \left(\gamma+1\right) =\emptyset $ (if necessary, namely, if $ \gamma  $ was defined and $ C $ contains ordinals below it). \\
		Let us define now $ p^*\left( g_{i+1} \right) $. For every $ \nu\in C $, such that $ p^*\restriction_{g_{i+1}}\Vdash t^{p^*}_{g_{i+1}} = t_{i+1}\left( \xi, \nu_0, \ldots, \nu_{i} \right) $ and $ \nu\in A^{p^*}_{g_{i+1}} $, let $ q(\nu) $ be the condition as in the definition of $ E(\nu) $. For every other value of $ \nu $, let $ q(\nu) = p\setminus g_{i+1} $. Now, direct extend $ p(g_{i+1}) $ to-- 
		$$ p^*\left( g_{i+1} \right)  = \langle t^{p}_{g_{i+1}} , \lusim{A}^{p}_{g_{i+1}} \cap \left( \triangle_{\nu<g_{i+1}} { \lusim{A}^{q(\nu)}_{g_{i+1} }  }  \right)\cap C \rangle$$
		Finally, we define $ p^*\setminus \left( g_{i+1}+1 \right) = q\left( \nu \right) $, where $ \nu $ is the $ \left(n_{i+1}+1\right) $-th element in the Prikry sequence of $ g_{i+1} $.
		
		Let us argue now that $ p^*\in e\left( \xi, \nu_0, \ldots, \nu_i \right) $. We first define a $ C $-tree $ T $ above $ \langle \xi, \nu_0, \ldots, \nu_{i} \rangle $. Let $ \mbox{Succ}_{T}\left( \langle \rangle \right) =C =  C\left( \xi, \nu_0, \ldots, \nu_{i} \right) $ . Fix $ \nu_{i+1} =\nu\in C $, and let us define $T_{\langle \nu\rangle}$, which is the tree $ T $ above the node $\langle \nu \rangle $.
		
		If $ p^*\restriction_{g_{i+1}} $ forces that $ t^{p^*}_{g_{i+1}} \neq  t_{i+1}\left( \xi, \nu_0, \ldots, \nu_i \right) $ or $ \nu\notin \lusim{A}^{p^*}_{g_{i+1}} $, let $ T_{\langle \nu \rangle} $ be any $ C $-tree above $ \langle \xi, \nu_0 ,\ldots, \nu_{i},  \nu \rangle $ (we will prove that any branch starting from $ \nu $ in $ T $ is not admissible for $ p^* $). 
		
		Else, note that--
		$$ {p^*}^{\frown} \langle \nu \rangle ={ {p^*}\restriction_{g_{i+1}}}^{\frown} {\langle {t^{p^*}_{g_{i+1}}}^{\frown} \langle \nu \rangle, A^{p^*}_{g_{i+1}}\setminus \left( \nu+1 \right) \rangle }^{\frown} p^*\setminus \left( g_{i+1}+1 \right) \geq^* {p^*\restriction_{g_{i+1}}}^{\frown} q(\nu) $$
		since $ \lusim{A}^{p^*}_{g_{i+1}}\setminus \left( \nu_{i+1}+1 \right) \subseteq \lusim{A}^{q\left( \nu_{i+1} \right)}_{g_{i+1}} $. Thus, $ {p^*}^{\frown} \langle \nu \rangle $ belongs to $ e\left( \xi, \nu_0, \ldots, \nu_{i}, \nu \right) $. This is witnessed by a $ P\restriction_{\nu+1} $-name for a $ C $-tree $ \lusim{T}\left( \nu \right) $ above $ \langle \xi, \nu_0, \ldots, \nu_{i}, \nu \rangle $. We construct $ T_{\langle \nu \rangle} $ in $ V^{P_{\nu+1} } $ to be a $ C $-tree which is forced, by $ {{p^*}^{\frown}\langle \nu \rangle}\restriction_{ \nu+1} = p^*\restriction_{\nu+1} $ to be contained in $ \lusim{T}\left( \nu \right) $. The definition is inductive: First, let $ \mbox{Succ}_{T}\left( \nu \right)\subseteq g_{i+1} $ be a club in $ V^{P_{\nu_i+1}} $ which is forced by $  p^*\restriction_{\nu+1} = \left({p^*}^{\frown} \langle \nu \rangle \right)\restriction_{\nu+1} $ to be contained in $ \mbox{Succ}_{\lusim{T}(\nu)} \left( \langle \rangle \right) $; Such a club exists since the forcing $ P\restriction_{\left( \nu_i, \nu+1 \right)} $ has cardinality strictly below $ g_{i+1} $. Now, given $ \nu_{i+2}\in \mbox{Succ}_{T}\left(  \nu \right) $, let $ \mbox{Succ}_{T}\left( \nu, \nu_{i+2} \right) \subseteq g_{i+2}\left( \xi, \nu_0, \ldots, \nu_i, \nu, \nu_{i+2} \right) $ be a club which is forced by $ p^*\restriction_{\nu+1} $  to be contained in $ \mbox{Succ}_{\lusim{T}(\nu)  }\left( \langle \nu_{i+2} \rangle \right) $. Continue in this fashion.
		
		This finishes the definition of $ T $. Finally, assume that $ \langle \nu_{i+1}, \ldots, \nu_{k} \rangle $ belongs to $ T $ and is admissible for $ p^* $ above $ \langle \xi, \nu_0, \ldots, \nu_{i} \rangle $. Then $ p^*\restriction_{\nu_{i+1}+1} $ forces that $ \langle \nu_{i+2}, \ldots, \nu_{k} \rangle \in \lusim{T}\left( \nu \right) $. By admissibility of $ \langle \nu_{i+1}, \dots, \nu_{k} \rangle $ for $ p^* $, $ \nu_{i+1}\in \lusim{A}^{p^*}_{g_{i+1}} $, and $ t^{p^*}_{g_{i+1}}  $ is compatible with, but not a strict initial segment of $ t_{i+1}\left( \xi, \nu_0 ,\ldots, \nu_{i} \right)$. Since $ \nu_{i+1} $ belongs to  $C\left( \xi, \nu_0, \ldots, \nu_{i} \right) $, and in particular is above $ \gamma $, 
		$ t^{p^*}_{g_{i+1}}  = t_{i+1}\left( \xi, \nu_0 ,\ldots, \nu_{i} \right)$. Thus, 
		$$ p^*(g_{i+1}) = \langle t_{i+1}\left( \xi, \nu_{0}, \ldots ,\nu_{i} \right), \lusim{A}^{p^*}_{g_{i+1}}  \rangle $$
		and as before,  $ {p^*}^{\frown} \langle \nu_{i+1} \rangle \geq^* q\left( \nu_{i+1} \right)$. Thus $ {p^*}^{\frown} \langle \nu_{i+1} \rangle $ forces that $ \langle \nu_{i+2}, \ldots, \nu_{k} \rangle \in \lusim{T}\left( \nu_{i+1} \right) $ and therefore, 
		$$\left({p^*}^{\frown} \langle \nu_{i+1} , \nu_{i+2}, \ldots, \nu_{k} \rangle\right) \restriction_{ \nu_{k}+1 } \Vdash \left({p^*}^{\frown} \langle \nu_{i+1} , \nu_{i+2}, \ldots, \nu_{k} \rangle\right)\setminus \left( \nu_k+1 \right) \in e\left( \xi, \nu_0, \ldots, \nu_{k} \right)  $$
		as desired.
	\end{proof}
	Let us prove that the above claim completes the proof of the Multivariable Fusion Lemma. Let $ \xi<\kappa $. By applying the claim repeatedly, the set $ e(\xi) $ is $ \leq^* $-dense open, where $ e\left( \xi \right) $ is defined as follows:
	
	\begin{align*}
		e\left( \xi\right) = &\{  q\in P\setminus \left( \xi+1 \right)  \colon \mbox{there exists a C-tree } T \mbox{ above } \langle \xi\rangle  \mbox{ such that, for every }\\
		&\langle \nu_{0}, \ldots, \nu_{k} \rangle\in T, \mbox{ which is admissible for  } q \mbox{ above } \langle \xi\rangle,\\
		&\left(q^{\frown} \langle \nu_{0}, \ldots, \nu_{k} \rangle\right) \restriction_{\nu_{k}+1} \Vdash \left(q^{\frown} \langle \nu_{0}, \ldots, \nu_{k} \rangle\right)\setminus \left(\nu_{k}+1\right)\in e\left( \xi, \vec{\nu} \right)
		\}
	\end{align*}
	Thus, given a condition $ p\in P_{\kappa} $, there exists $ p^*\geq^* p $ and a club $ C\subseteq \kappa $, such that, for every $ \xi\in C $,
	$$ p^*\restriction_{\xi+1}\Vdash p^*\setminus \left(\xi+1\right)\in e(\xi) $$
	In particular, $ p^*\restriction_{\xi+1} $ forces that there exists a $ P_{\xi+1} $-name for a $ C $-tree $ \lusim{T}(\xi) $ above $ \langle \xi \rangle $, such that for every $ \langle \nu_0, \ldots, \nu_{k} \rangle\in \lusim{T}(\xi) $ which is admissible for $ p^*\setminus \left( \xi+1 \right) $ above $ \xi $, 
	$$  {\left({p^*\setminus \left( \xi+1 \right)}\right)^{\frown} \langle \nu_0, \ldots, \nu_{k} \rangle  }\restriction_{\nu_{k}+1} \Vdash {\left({p^*\setminus \left( \xi+1 \right)}\right)^{\frown} \langle \nu_0, \ldots, \nu_{k} \rangle  }\setminus \left( \nu_k +1 \right) \in e\left( \xi, \nu_0, \ldots, \nu_{k} \right) $$
	Now, we can construct in $ V $ the $ C $-tree $ T $ as desired in the formulation of the lemma, such that $ \mbox{Succ}_{T}\left( \langle \rangle \right)  =C $, and, for every $ \xi\in C $, $ T_{\langle \xi \rangle} $ is a tree in $ V $ which is forced by $ p^*\restriction_{\xi+1} $ to be contained in $ \lusim{T}(\xi) $. Then $ p^*, T $ are a desired.
\end{proof}

\begin{remark} \label{Remark: NS, if p* in G then }
The condition $ p^* $ and the $ C $-tree $ T $, obtained from the Multivariable Fusion Lemma, can be assumed to satisfy the following property: For every $ i<k $, $ \langle \xi, \nu_0, \ldots, \nu_i \rangle\in T $ which is admissible for $ p^* $ , and for every $ \nu_{i+1}\in \mbox{Succ}_{T}\left(  \xi, \nu_0, \ldots, \nu_i \right) $,
$$ {p^*}^{\frown} \langle \xi, \nu_0, \ldots, \nu_i \rangle\restriction_{g_{i+1}\left( \xi, \nu_0, \ldots, \nu_{i} \right) } \parallel \langle \xi, \nu_0, \ldots, \nu_i, \nu_{i+1} \rangle \mbox{ is admissible for } p^*  $$
this requires a minor change in the definition of the set $ e\left( \xi, \nu_0, \ldots, \nu_{i}  \right) $, which is adding the above as requirement (the same proof provided shows that this additional requirement holds).

Thus, if we apply the standard density argument and choose the condition $ p^* $ provided by the Multivariable Fusion Lemma inside $ G $, it follows that--
$$ \{ \xi <\kappa \colon \langle \xi, \vec{\mu}(\xi) \rangle\in T \ \mbox{is admissible for } p^* \mbox{ and } {p^*}^{\frown}\langle \xi, \vec{\mu}(\xi) \rangle\in G   \}\in W $$
Indeed, note first that $ X = \{ \xi <\kappa \colon \langle \xi, \vec{\mu}(\xi) \rangle\in T  \}\in W $ by claim \ref{Claim: NS, a C tree contains the real values}. Note that if $ Y = \{ \xi\in X \colon \langle \xi, \vec{\mu}(\xi) \rangle \mbox{ is admissible for } p^*  \}\in W $ then $  \{ \xi\in Y \colon {p^*}^{\frown} \langle \xi, \vec{\mu}(\xi) \rangle\in G  \}\in W $, since $ p^*\in G $, by the definition of the functions $ \vec{\mu}(\xi) $.

Thus, it's enough to argue that--
$$  \{ \xi <\kappa \colon \langle \xi, \vec{\mu}(\xi) \rangle\in T \ \mbox{is admissible for } p^*  \}\in W $$
Indeed, we proceed by induction on $ i\leq k $. Assume that--
\begin{align*}
	\{ \xi\in X \colon  {{p^*}^{\frown} \langle \xi, \mu_{\alpha_0}(\xi),\ldots, \mu_{\alpha_i}(\xi) \rangle} \mbox{ is admissible for } p^*   \} \in W
\end{align*}
For every such $ \xi<\kappa $, 
$${{p^*}^{\frown} \langle \xi, \mu_{\alpha_0}(\xi),\ldots, \mu_{\alpha_i}(\xi) \rangle}\restriction_{ g_{i+1}\left( \xi,  \mu_{\alpha_0}(\xi), \ldots, \mu_{\alpha_i}(\xi) \right) }   \parallel  \langle \xi, \mu_{\alpha_0}(\xi), \ldots, \mu_{\alpha_{i+1}}(\xi) \rangle \mbox{ is admissible for } p^* $$
and the decision must be positive for a set of $ \xi $-s in $ W $, since $ {t_{i+1}\left( \mu_{\alpha_0}(\xi), \ldots, \mu_{\alpha_i}(\xi) \right)}^{\frown} \langle \mu_{\alpha_{i+1}}(\xi) \rangle $ is an initial segment of $ g_{i+1}\left( \xi, \mu_{\alpha_0}(\xi), \ldots, \mu_{\alpha_{i}}(\xi) \right) $ for a set of $ \xi $-s in $ W $.  Therefore,
\begin{align*}
	\{ \xi\in X \colon  {{p^*}^{\frown} \langle \xi, \mu_{\alpha_0}(\xi),\ldots, \mu_{\alpha_{i+1}}(\xi) \rangle} \mbox{ is admissible for } p^*   \} \in W
\end{align*}
\end{remark}

We are now ready to prove that $ \lambda $ is measurable in $ M_{\alpha} $, which is property (B) above. 

\begin{lemma} \label{Lemma: NS, lambda is measurable}
$ \lambda $ is measurable in $ M_{\alpha} $.
\end{lemma}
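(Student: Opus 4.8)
The plan is to produce a normal measure on $\lambda$ inside $M_\alpha$, and the natural candidate is the ultrafilter derived from $k_\alpha$,
$$ U_\lambda = \{\, X \in \mathcal{P}(\lambda)\cap M_\alpha : \lambda\in k_\alpha(X)\,\}. $$
Since $k_\alpha$ is elementary (Lemma \ref{Lemma: NS, k alpha is elementary}) with $\mathrm{crit}(k_\alpha)=\lambda$, it is routine that $U_\lambda$ is a $\lambda$-complete, nonprincipal, normal ultrafilter on the algebra $\mathcal{P}(\lambda)\cap M_\alpha$: $\lambda$-completeness from the critical point, nonprincipality from $\lambda\notin\mathrm{ran}(k_\alpha)$, and normality because any $f\colon\lambda\to\lambda$ in $M_\alpha$ that is $U_\lambda$-a.e.\ regressive has $k_\alpha(f)(\lambda)<\lambda$, hence equals a fixed $\eta<\lambda = k_\alpha(\eta)$, so $f$ is $U_\lambda$-a.e.\ constant. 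Everything therefore reduces to showing that $U_\lambda\in M_\alpha$.

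I would prove this by contradiction: assume $\lambda$ is not measurable in $M_\alpha$. Then $k_\alpha(\lambda)$ is a regular, non-measurable cardinal of $M$, and since the iteration $j_W(P)$ over $M$ preserves cardinals (the $M$-version of Lemma \ref{Lemma: NS, preservation of cardinals}) and the Prikry-type forcing above the active coordinate adds no bounded subsets there, $k_\alpha(\lambda)$ is still regular in $M[H]$. Using $j_W\restriction_V=k_\alpha\circ j_\alpha$ together with the representation lemma, fix a nice sequence $\langle\alpha_0,\ldots,\alpha_k\rangle$ below $\alpha$ and $h_\lambda\in V$ with $\lambda=j_\alpha(h_\lambda)(\kappa,\vec{\mu})$, where $\vec{\mu}=\langle\mu_{\alpha_0},\ldots,\mu_{\alpha_k}\rangle$; then $k_\alpha(\lambda)=j_W(h_\lambda)(\kappa,\vec{\mu})=[\xi\mapsto h_\lambda(\xi,\vec{\mu}(\xi))]_W$, so by \L o\'s, for $W$-almost every $\xi$ the ordinal $h_\lambda(\xi,\vec{\mu}(\xi))$ is a regular cardinal of $V[G]$ lying above $\mu_{\alpha_k}(\xi)$, hence not measurable in $V$, so the direct extension order of the forcing above $\mu_{\alpha_k}(\xi)$ is highly closed relative to it.

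Now fix $\ell\in V[G]$ with $[\ell]_W=\lambda$ and $\ell(\xi)<h_\lambda(\xi,\vec{\mu}(\xi))$, and apply the Multivariable Fusion Lemma \ref{Lemma: NS, Multivariable Fusion} to the sets
$$ e(\xi,\vec{\nu})=\{\, r\in P_\kappa\setminus(\nu_k+1) : \exists A\subseteq h_\lambda(\xi,\vec{\nu}),\ |A|<h_\lambda(\xi,\vec{\nu}),\ r\Vdash\lusim{\ell}(\xi)\in\check A\,\}, $$
which are $\leq^*$-dense open above every condition forcing $\vec{\lusim{\mu}}(\xi)=\vec{\nu}$: one decides $\lusim{\ell}(\xi)$ modulo the low part $P_{h_\lambda(\xi,\vec{\nu})}$, controlled by Corollary \ref{Corollary: NS, every name for an ordinal is decided by a direct extension up to boundedly many values} applied to the regular cardinal $h_\lambda(\xi,\vec{\nu})$, using the closure and Prikry property of the tail forcing above it. This yields $p^*\geq^* p$, which we take in $G$, and a $C$-tree $T$ such that, by Remark \ref{Remark: NS, if p* in G then }, for $W$-almost every $\xi$ the sequence $\langle\xi,\vec{\mu}(\xi)\rangle$ is admissible for $p^*$, ${p^*}^{\frown}\langle\xi,\vec{\mu}(\xi)\rangle\in G$, and this condition forces $\lusim{\ell}(\xi)$ into
$$ A(\xi,\vec{\nu})=\{\,\zeta : \exists s,\ s^{\frown}\bigl({p^*}^{\frown}\langle\xi,\vec{\nu}\rangle\bigr)\setminus(\nu_k+1)\Vdash\lusim{\ell}(\xi)=\check\zeta\,\}\in V, $$
a set of size $<h_\lambda(\xi,\vec{\nu})$. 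Consequently $\lambda=[\ell]_W\in[\xi\mapsto A(\xi,\vec{\mu}(\xi))]_W=j_W(A)(\kappa,\vec{\mu})=k_\alpha(\hat A)$, where $\hat A=j_\alpha(A)(\kappa,\vec{\mu})\in M_\alpha$ satisfies $|\hat A|^{M_\alpha}<j_\alpha(h_\lambda)(\kappa,\vec{\mu})=\lambda=\mathrm{crit}(k_\alpha)$. Since a set of $M_\alpha$-size below $\mathrm{crit}(k_\alpha)$ is mapped by $k_\alpha$ onto its pointwise image, $\lambda\in\mathrm{ran}(k_\alpha)$, contradicting $\lambda=\mathrm{crit}(k_\alpha)$. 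Hence $\lambda$ is measurable in $M_\alpha$, with $U_\lambda$ the witnessing normal measure.

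The main obstacle is the density verification for the $e(\xi,\vec{\nu})$: showing that, for $W$-almost every $\xi$, a $P_\kappa$-name for an ordinal below $h_\lambda(\xi,\vec{\mu}(\xi))$ can be direct-extended so as to be confined to fewer than $h_\lambda(\xi,\vec{\mu}(\xi))$ possibilities. This is exactly where the assumption that $\lambda$ is not measurable in $M_\alpha$ enters: it forces $k_\alpha(\lambda)$ to remain a regular, non-Prikry-singularized cardinal of $M[H]$, which makes the target cardinals $h_\lambda(\xi,\vec{\mu}(\xi))$ regular in $V[G]$ and lets the closure of the tail forcing above $\mu_{\alpha_k}(\xi)$ pin the name down; the delicate point is to track cardinalities through the fusion so that $\hat A$ ends up with $M_\alpha$-size strictly below $\mathrm{crit}(k_\alpha)$, rather than merely $\le\mathrm{crit}(k_\alpha)$, so that the final step $k_\alpha(\hat A)=k_\alpha[\hat A]$ is legitimate.
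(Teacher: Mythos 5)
Your proposal is correct and follows essentially the same route as the paper's proof: assume $\lambda$ is not measurable in $M_\alpha$, arrange that the representing function $h$ takes non-measurable regular values so that Corollary \ref{Corollary: NS, every name for an ordinal is decided by a direct extension up to boundedly many values} and the Multivariable Fusion Lemma apply, and then pin $\lambda=[\ell]_W$ into a set of $M_\alpha$-size below $\mathrm{crit}(k_\alpha)$ to get a contradiction. The only cosmetic difference is how the contradiction is phrased: the paper bounds $\lambda$ strictly below an ordinal that is fixed by $k_\alpha$ (since it lies below the critical point), whereas you put $\lambda$ into $k_\alpha(\hat A)$ with $|\hat A|^{M_\alpha}<\mathrm{crit}(k_\alpha)$ and conclude $\lambda\in\mathrm{ran}(k_\alpha)$; both are standard and equivalent observations. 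Your side remark that $k_\alpha(\lambda)$ stays regular in $M[H]$ is a true consequence of the non-measurability assumption but is not actually needed for the argument — what is used is that, via \L o\'s, $h(\xi,\vec\nu)$ may be taken non-measurable regular in $V$, which is what powers the fusion density check exactly as the paper does.
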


\begin{proof}
Assume otherwise. Then it can be assumed that for every $\xi$ and $ \vec{\nu}  $, $ h\left( \xi, \vec{\nu} \right) $ is a non-measurable regular cardinal.
Let $ f \in V\left[G\right] $ be a function such that $ \left[ f \right]_{W} = \lambda $. Let $ \lusim{f} \in V $ be a $ P_{\kappa} $-name such that $ \left(\lusim{f}\right)_G = f $. Similarly, let $ \lusim{\vec{\mu}} \in V$ be the sequence of $ P $--names $ \langle \lusim{\mu}_{\alpha_0}(\xi), \ldots, \lusim{\mu}_{\alpha_k}(\xi) \rangle $ described above. In $ M\left[H\right] $, 
$$ \left[f\right]_{W} < j_{W}\left( h \right)\left(  \kappa, \vec{\mu} \right) $$
and thus we can assume that there exists a condition $ p\in G $ such that, for every $ \xi<\kappa $, 
$$p\Vdash \lusim{f}(\xi)< h\left( \xi, \lusim{\vec{\mu}}(\xi) \right) $$
From now on we work above $ p $. We can also assume that $ p $ forces, for every $ 0\leq i \leq k $, that $ \lusim{\mu}_{\alpha_i}(\xi) $ is the $ n_i+1 $-th element in the Prikry sequence of $ g_i\left( \xi, \lusim{\mu}_{\alpha_0}(\xi),\dots, \lusim{\mu}_{\alpha_{i-1}}(\xi) \right) $.

Apply the Multivariable Fusion Lemma. For every $ \langle \xi, \vec{\nu} \rangle = \langle \xi, \nu_0, \ldots, \nu_{k} \rangle $, let--
\begin{align*}
	e\left( \xi, \vec{\nu} \right) = &\{  r\in P\setminus \left(\nu_k+1\right)  \colon \exists \alpha< h\left( \xi, \vec{\nu} \right) , \ r\Vdash \lusim{f}(\xi) < \alpha \}
\end{align*}
Since $ h\left( \xi, \vec{\nu} \right) $ is regular and non-measurable, and by corollary \ref{Corollary: NS, every name for an ordinal is decided by a direct extension up to boundedly many values}, $ e\left( \xi, \vec{\nu} \right) $ is $ \leq^* $-dense open above conditions which force that $ \vec{\mu}(\xi)  = \vec{\nu} $.

 Let $ p^*\geq^* p $ and $ T $ be a $ C $-tree, such that, for every $ \langle \xi, \nu_0, \ldots, \nu_k \rangle\in T $ which is admissible for $ p^* $,
$$ \left({p^*}^{\frown} \langle \xi, \vec{\nu} \rangle \right)\restriction_{\nu_k+1} \Vdash \exists \alpha< h\left( \xi, \vec{\nu}\right) , \ \left( {p^*}^{\frown} \langle \xi, \vec{\nu} \rangle \right)\setminus \left( \nu_k+1 \right) \Vdash \lusim{f}(\xi) < \alpha $$
We can assume that $ p^* \in G $, by applying the same argument above any condition which extends $ p $. For every $ \langle \xi, \vec{\nu} \rangle\in T $, pick a $ P_{\nu_k+1} $-name $ \lusim{\alpha}\left( \xi, \vec{\nu} \right) $ for the above $ \alpha $.

Given $ \langle \xi, \nu_0,\ldots, \nu_k \rangle\in T $ which is admissible for $ p^* $, let-- 
$$ \delta\left( \xi, \vec{\nu} \right) = \sup\{ \gamma < h\left( \xi, \vec{\nu} \right) \colon \exists r\geq {p^*}^{\frown} \langle \xi, \vec{\nu} \rangle \restriction_{\nu_k+1} , \ r\Vdash \lusim{\alpha}\left( \xi, \vec{\nu} \right) = \gamma \} $$

Note that $
\delta\left( \xi, \vec{\nu} \right) < h\left( \xi, \vec{\nu} \right) $ since the forcing $ P\restriction_{\nu_k+1} $ has cardinality strictly below $ h\left( \xi, \vec{\nu} \right) $ (we can assume that $ h\left( \xi, \vec{\nu} \right) > \left|\nu_{k}\right|^{+}$ since $ \lambda > \mu_{\alpha_k}^{+} $. The latter can be easily verified since $ k_{\alpha} $ maps $ \mu_{\alpha_k} $, and its successor, to themselves, and $ \lambda = \mbox{crit}\left( k_{\alpha} \right) $ ). It follows that for every $ \langle \xi, \vec{\nu} \rangle\in T $ which is admissible for $ p^* $,
$$ {p^*}^{\frown} \langle \xi, \vec{\nu} \rangle \Vdash \lusim{f}(\xi)< \delta\left( \xi, \vec{\nu} \right) $$
and the mapping $ \langle \xi, \vec{\nu} \rangle \mapsto \delta\left( \xi, \vec{\nu} \right) $ lies in $ V $. 

Apply remark \ref{Remark: NS, if p* in G then }, and let us assume that--
$$ \{ \xi <\kappa \colon \langle \xi, \vec{\mu}(\xi) \rangle\in T \mbox{ is admissible for } p^* \mbox{ and } {p^*}^{\frown} \langle \xi, \vec{\mu}(\xi) \rangle\in G \}\in W $$
For every $ \xi $ in the above set, $ f(\xi)< \delta \left( \xi, \vec{\mu}(\xi) \right) $ holds in $ V\left[G\right] $. Thus, in $ M\left[H\right] $,
$$ \lambda = \left[f\right]_W < \left[ \xi \mapsto \delta\left( \xi, \vec{\mu}(\xi) \right) \right]_W = k_{\alpha}\left(  j_{\alpha}  
\left(  \langle \xi, \vec{\nu} \rangle\mapsto \delta\left( \xi, \vec{\nu} \right) \right) \left( \kappa, \mu_{\alpha_0}, \ldots, \mu_{\alpha_k} \right)
   \right) $$
But this is a contradiction since $ \lambda = \mbox{crit}\left(k_\alpha\right) $ and--
$$ j_{\alpha}  \left(  \langle \xi, \vec{\nu} \rangle\mapsto \delta\left( \xi, \vec{\nu} \right) \right) \left( \kappa, \mu_{\alpha_0}, \ldots, \mu_{\alpha_k} \right) < j_{\alpha}\left( h \right)\left( \kappa, \vec{\mu} \right)= \lambda $$
\end{proof}

\begin{lemma} \label{Lemma: NS, lambda appears in the Prikry sequence of it's image}
Denote $ \lambda^* = k_{\alpha}(\lambda) $. Then $ \lambda $ appears in the Prikry sequence of $ \lambda^* $ in $ M\left[H\right] $.
\end{lemma}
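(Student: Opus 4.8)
The plan is to pin down the Prikry sequence of $\lambda^{*}=k_{\alpha}(\lambda)$ in $M[H]$ by an ultrapower ({\L}o{\'s}) computation, and then, assuming $\lambda$ does not lie on it, to use the Multivariable Fusion Lemma \ref{Lemma: NS, Multivariable Fusion} to capture $\lambda$ inside a subset of $M_{\alpha}$ of $M_{\alpha}$-cardinality $<\lambda=\mbox{crit}(k_{\alpha})$, which is absurd.

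First I would set up the Prikry sequence. By Lemma \ref{Lemma: NS, lambda is measurable}, $\lambda$ is measurable in $M_{\alpha}$, so by elementarity $\lambda^{*}=k_{\alpha}(\lambda)$ is measurable in $M$; moreover $\lambda<\lambda^{*}$, and $\lambda^{*}=k_{\alpha}\bigl(j_{\alpha}(h)(\kappa,\vec{\mu})\bigr)=j_{W}(h)(\kappa,\vec{\mu})<j_{W}(\kappa)=\kappa^{*}$ (using $\lambda=j_{\alpha}(h)(\kappa,\vec{\mu})\le\mu_{\alpha}<j_{\alpha}(\kappa)$, a bookkeeping point about the choice of $\mu_{\alpha}$). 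Since $\lambda^{*}$ is a measurable of $M$ below $j_{W}(\kappa)$, the stage-$\lambda^{*}$ forcing of $j_{W}(P)$ is a nontrivial Prikry forcing, so in $M[H]$ the cardinal $\lambda^{*}$ carries a cofinal $\omega$-sequence $\vec{c}$. Writing $\lambda(\xi)=h(\xi,\vec{\mu}(\xi))$, we have $\lambda^{*}=[\,\xi\mapsto\lambda(\xi)\,]_{W}$, so by {\L}o{\'s} $\lambda(\xi)$ is a measurable cardinal of $V$ below $\kappa$ for $W$-almost every $\xi$; letting $\vec{c}^{\,\mu}$ denote the Prikry sequence in $V[G]$ of a measurable $\mu<\kappa$, the map $\mu\mapsto\vec{c}^{\,\mu}$ lies in $V[G]$, and its $j_{W}$-image evaluated at $\lambda^{*}$ is $[\,\xi\mapsto\vec{c}^{\,\lambda(\xi)}\,]_{W}$, which by elementarity is exactly $\vec{c}$.

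Now $\vec{c}$ is increasing of order type $\omega$ and cofinal in $\lambda^{*}>\lambda$, so $\vec{c}\cap\lambda$ is a finite proper initial segment and $\vec{c}(n)\ge\lambda$ for the appropriate $n<\omega$; if $\vec{c}(n)=\lambda$ we are done. Assume toward a contradiction $\vec{c}(n)>\lambda$, so $\lambda\notin\vec{c}$. Fix $f\in V[G]$ with $[f]_{W}=\lambda$; modifying $f$ on a $W$-null set we may assume $f(\xi)<\lambda(\xi)$ for all $\xi$, and $f(\xi)\notin\vec{c}^{\,\lambda(\xi)}$ whenever $\lambda(\xi)$ is measurable (this uses $[f]_{W}<\lambda^{*}$ and $\lambda\notin\vec{c}$ with {\L}o{\'s}). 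Let $\lusim{f}\in V$ be a $P_{\kappa}$-name for $f$, and fix $p\in G$ forcing the above uniformly in $\xi$. I would then apply the Multivariable Fusion Lemma \ref{Lemma: NS, Multivariable Fusion} to the nice sequence $\langle\alpha_{0},\dots,\alpha_{k}\rangle$, with $e(\xi,\vec{\nu})$ the set of $r\in P\setminus(\nu_{k}+1)$ that either force $\lusim{f}(\xi)\in\vec{c}^{\,h(\xi,\vec{\nu})}$, or force $\lusim{f}(\xi)\in\check A$ for some $A\subseteq h(\xi,\vec{\nu})$ with $\lvert A\rvert<h(\xi,\vec{\nu})$. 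The crucial point is that each $e(\xi,\vec{\nu})$ is $\le^{*}$-dense open above every condition $q$ forcing $\lusim{\vec{\mu}}(\xi)=\vec{\nu}$ and $\lusim{f}(\xi)<h(\xi,\vec{\nu})$: by the Prikry property of the tail forcing one direct-extends $q$ to decide whether $\lusim{f}(\xi)$ will belong to the Prikry sequence of $h(\xi,\vec{\nu})$, and in the negative case a diagonal-intersection and normality homogenization at stage $h(\xi,\vec{\nu})$ pins $\lusim{f}(\xi)$, by a direct extension, strictly below the Prikry point of $h(\xi,\vec{\nu})$ beneath which it falls, hence into a set of fewer than $h(\xi,\vec{\nu})$ possibilities — the same mechanism used in the proofs of Lemma \ref{Lemma: NS, every new function is evaluated by F in V} and Corollary \ref{Corollary: NS, every name for an ordinal is decided by a direct extension up to boundedly many values}.

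Choosing the resulting $p^{*}$ inside $G$ as in Remark \ref{Remark: NS, if p* in G then } and reading off from the $C$-tree a function $A'\in V$ with $A'(\xi,\vec{\nu})\subseteq h(\xi,\vec{\nu})$ and $\lvert A'(\xi,\vec{\nu})\rvert<h(\xi,\vec{\nu})$, we get that for $W$-almost every $\xi$ the condition ${p^{*}}^{\frown}\langle\xi,\vec{\mu}(\xi)\rangle\in G$ forces either $\lusim{f}(\xi)\in\vec{c}^{\,\lambda(\xi)}$ or $\lusim{f}(\xi)\in\check{A'}(\xi,\vec{\mu}(\xi))$; the first alternative is false in $V[G]$ for $W$-almost every such $\xi$, so $f(\xi)\in A'(\xi,\vec{\mu}(\xi))$ for $W$-almost every $\xi$. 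Hence $\lambda=[f]_{W}\in[\,\xi\mapsto A'(\xi,\vec{\mu}(\xi))\,]_{W}=k_{\alpha}\bigl(j_{\alpha}(A')(\kappa,\vec{\mu})\bigr)$. Set $B_{0}=j_{\alpha}(A')(\kappa,\vec{\mu})\in M_{\alpha}$; then $B_{0}\subseteq j_{\alpha}(h)(\kappa,\vec{\mu})=\lambda$ and $\lvert B_{0}\rvert^{M_{\alpha}}<\lambda=\mbox{crit}(k_{\alpha})$, so $k_{\alpha}(B_{0})=k_{\alpha}{}''B_{0}=B_{0}\subseteq\lambda$, giving $\lambda\in k_{\alpha}(B_{0})\subseteq\lambda$, a contradiction. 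Therefore $\vec{c}(n)=\lambda$, i.e. $\lambda$ occurs in the Prikry sequence of $\lambda^{*}$ — in fact as its $(n{+}1)$-st element, with $\langle\vec{c}(0),\dots,\vec{c}(n-1)\rangle$ below it, which is the data later feeding the nice-sequence bookkeeping. The principal obstacle in carrying this out is the verification of the $\le^{*}$-density of the sets $e(\xi,\vec{\nu})$: the Prikry-forcing combinatorics at stage $h(\xi,\vec{\nu})$ must be done carefully, and uniformly enough in $\langle\xi,\vec{\nu}\rangle$ to be admissible input to the Multivariable Fusion Lemma.
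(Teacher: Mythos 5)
Your overall strategy — use the Multivariable Fusion Lemma to capture $\lambda$ in a small set of $M_{\alpha}$, contradicting $\lambda=\mbox{crit}(k_{\alpha})$ — is the same as the paper's, and your ultrapower computation identifying the Prikry sequence of $\lambda^{*}$ with $[\xi\mapsto\vec{c}^{\,\lambda(\xi)}]_{W}$ is correct. However, the step you yourself flag as the principal obstacle, the $\le^{*}$-density of $e(\xi,\vec{\nu})$, has a genuine gap, and the dense set is not dense as you have defined it.

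The claim that in the negative case a direct extension ``pins $\lusim{f}(\xi)$ strictly below the Prikry point of $h(\xi,\vec{\nu})$ beneath which it falls, hence into a set of fewer than $h(\xi,\vec{\nu})$ possibilities'' is false: the index of that Prikry point is not determined by direct extensions, and its value varies over a measure-one set. Concretely, let $q$ have stem $t$ of length $m$ at $h:=h(\xi,\vec{\nu})$ and suppose $\lusim{f}(\xi)$ denotes ``the $(m{+}2)$-nd Prikry point of $h$ plus $1$.'' No direct extension of $q$ forces $\lusim{f}(\xi)\in\vec{c}^{\,h}$, and no direct extension confines $\lusim{f}(\xi)$ to a set of size $<h$, since shrinking the measure-one set at stage $h$ does not bound the next Prikry points. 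So $e(\xi,\vec{\nu})$ need not be $\le^{*}$-dense above $q$, and the fusion lemma cannot be invoked. The hypotheses you do record — $f(\xi)<h$ and $f(\xi)\notin\vec{c}^{\,h}$ — are too weak to rule this out. What is missing is precisely what the paper builds in: (a) the sharper bound $f(\xi)<\lambda(\xi)$, where $\lambda(\xi)$ is the $(n^{*}{+}1)$-th Prikry point of $h(\xi,\vec{\nu})$ (available since $[f]_{W}=\lambda<\vec{c}(n)$, not merely $<\lambda^{*}$), and (b) a conditional formulation of $e(\xi,\vec{\nu})$ — ``there is $\alpha<h$ such that if $t_{\lambda}(\xi,\vec{\nu})$ is an initial segment of the Prikry sequence of $h$ then $\lusim{f}(\xi)<\alpha$'' — together with a case split on the relation between the condition's stem and $t_{\lambda}(\xi,\vec{\nu})$. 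This conditional structure makes the set trivially contain any condition whose stem is incompatible with $t_{\lambda}$, reduces the remaining work to the single equal-stem case, and there the bound $f(\xi)<\lambda(\xi)$ makes the function $\alpha\mapsto\delta_{\alpha}$ regressive so that one application of normality (Fodor) pins the value. Without these two ingredients your density argument does not go through; with them you are essentially reconstructing the paper's proof, whose conclusion is phrased slightly differently ($[f]_{W}<\lambda$ rather than $\lambda\in k_{\alpha}(B_{0})$) but amounts to the same contradiction.
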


\begin{proof}
In $ M\left[H\right] $, denote by $ t_{\lambda} $ the finite initial segment of the Prikry sequence of $ \lambda^* $, which contains all the elements strictly below $ \lambda $. By modifying the nice sequence $ \langle \alpha_0, \ldots, \alpha_k \rangle $, we can assume that there exists a function $\langle \xi, \vec{\nu} \rangle \mapsto t_{\lambda}\left( \xi, \vec{\nu} \right) $ in $ V $, such that $ t_{\lambda} = j_{\alpha}\left(\langle \xi, \vec{\nu} \rangle \mapsto t_{\lambda}\left( \xi, \vec{\nu} \right)\right)\left( \kappa, \vec{\mu} \right) $. Assume that $ t_\lambda $ has length $ n^*<\omega $.

Define (in $ V\left[G\right] $) a function $\xi \mapsto \lambda(\xi) $ with domain $ \kappa $, such that for each $ \xi<\kappa $, $ \lambda(\xi) $ is the $ \left(n^*+1 \right)$-th element in the Prikry sequence of $ h\left( \xi, \mu_{\alpha_0}(\xi),\ldots, \mu_{\alpha_k}(\xi) \right) $. Clearly $ \left[\xi\mapsto \lambda(\xi)\right]_{W} \geq \lambda $, as it is the first element which appears after $ t_{\lambda} $ in the Prikry sequence of $ \lambda^* $. Thus, it suffices to prove that for every $ \eta < \left[\xi \mapsto \lambda(\xi) \right]_{W} $, $ \eta <\lambda $. 

Assume that $ f\in V\left[G\right] $ is a function such that $  \eta = \left[ f \right]_{W} < \left[\xi \mapsto \lambda\left({\xi}\right)\right]_{W} $. Assume that for every $ \xi<\kappa $, $ f(\xi) < \lambda(\xi) $. Let $ p\in G $ be a condition which forces this. 

Let us apply the Multivariable Fusion Lemma. For every $ \langle \xi, \vec{\nu} \rangle = \langle \xi, \nu_0, \ldots, \nu_{k} \rangle $, let--
\begin{align*}
e\left( \xi, \vec{\nu} \right) = \{&  r\in P\setminus \left(\nu_k+1\right) \colon \exists \alpha< h\left( \xi, \vec{\nu} \right), \ r\Vdash  \mbox{ if } t_{\lambda}\left( \xi, \vec{\nu} \right) \mbox{ is an initial segment} \\ &\mbox{of the Prikry sequence of } h\left( \xi, \vec{\nu} \right), \mbox{ then }  \lusim{f}(\xi) < \alpha  \} 
\end{align*}
We argue that $ e\left( \xi, \vec{\nu} \right) $ is $ \leq^* $ dense above every condition which forces that $ \lusim{\vec{\mu}}(\xi) = \vec{\nu} $. Let $ p\in P\setminus \left( \nu_k+1 \right) $ be such a condition. Denote for simplicity $ h = h\left( \xi, \vec{\nu} \right) $. First, direct extend $ p\restriction_{\left( \nu_{k}+1, h \right)} $ such that it decides whether $ t_{\lambda}\left( \xi, \vec{\nu} \right) $ and $ t^{p}_{h} $ are compatible:
\begin{enumerate}
	\item If $ h\notin \mbox{supp}\left( p \right) $, direct extend $ p $ such that $ t^{p}_{h} = t_{\lambda}\left( \xi, \vec{\nu} \right) $.
	\item If $ t_{\lambda}\left( \xi, \vec{\nu} \right) $ and $ t^{p}_{h} $ are incompatible, pick $ \alpha =0 $.
	\item If $ t^{p}_{h} $ is a strict initial segment of $ t_{\lambda}\left( \xi, \vec{\nu} \right) $, direct extend by replacing $ \lusim{A}^{p}_{h} $ with $ \lusim{A}^{p}_{h}\setminus \max\left( t_{\lambda}\left( \xi, \vec{\nu} \right)   \right)+1 $. Then take $\alpha =0 $.
	\item If $ t_{\lambda}\left( \xi, \vec{\nu} \right) $ is strictly an initial segment of $ t^{p}_{h} $, direct extend $ p^*\restriction_{\left( \nu_{k}+1, h \right)} \geq^* p\restriction_{\left( \nu_{k}+1, h \right)} $ such that for some $ \alpha <h $, $ p^*\restriction_{\left( \nu_{k}+1, h \right)} $ forces that the $ \left(n^*+1\right) $-th element of $ t^{p}_{h} $ is bounded by $ \alpha$. It will follow that $ {p^*\restriction_{h}}^{\frown} p\setminus h \Vdash \lusim{\lambda}(\xi) < \alpha $.
\end{enumerate}
Let us assume that $ p $ has already been direct extended as above, and $ p\restriction_{h}\Vdash t^{p}_{h} = t_{\lambda}\left( \xi, \vec{\nu} \right) $. Direct extend $ p^*\setminus \left(h+1\right) \geq^* p\setminus \left(h+1\right) $ such that--
$$ p\restriction_{h+1} \Vdash \exists \delta< h, \ p^*\setminus \left(h+1\right) \Vdash \lusim{f}(\xi) = \delta$$
Since it is forced that $ f(\xi)< \lambda(\xi) $, $ p\restriction_{h} $ forces that for every $ \alpha\in \lusim{A}^{p}_{h} $ there exists an ordinal $ \delta_{\alpha} < \alpha $ and a set $ B_{\alpha} $ such that-- 
$$ \langle { t_{\lambda}\left( \xi, \vec{\nu} \right)  }^ {\frown} \langle \alpha \rangle , B_{\alpha}  \rangle^{\frown} p^*\setminus \left( h+1 \right) \Vdash \lusim{f}(\xi) = \delta_{\alpha}  $$
Thus, there exists a set $ B\in \lusim{U}^{*}_{h} $, $ B\subseteq \lusim{A}^{p}_{h} \cap \left( \triangle_{ \alpha< h }  B_{\alpha} \right) $,  and an ordinal $ \delta<h $, such that for every $ \alpha\in B $, $ \delta_{\alpha} = \delta $. Direct extend $ p^*(h)\geq^* p(h) $ such that $ \lusim{A}^{p^*}_{h} = B $. Finally, direct extend $ p^*\restriction_{h} \geq^* p\restriction_{h} $ such that, for some $ \alpha < h $ (in $ V^{P_{\nu_{k}+1}}$), $p^*\restriction_{h} \Vdash \lusim{\delta} < \alpha$. Thus, $ p^*\geq^* p $ forces that $ \lusim{f}(\xi) < \alpha $.

Now, fix $ p^*\in G $ and a $ C $-tree $ T $ such that for every $ \langle \xi, \vec{\nu}\rangle\in T $,
\begin{align*}
{\left({ p^*}^{\frown} \langle \xi, \vec{\nu} \rangle\right)}\restriction_{\nu_k+1} \Vdash & \exists \alpha<h\left( \xi, \vec{\nu} \right), \mbox{ if } t_{\lambda}\left( \xi, \vec{\nu} \right) \mbox{ is an initial segment of the Prikry} \\ & \mbox{sequence of } h\left( \xi, \vec{\nu} \right) \mbox{ then }  \left({ p^*}^{\frown} \langle \xi, \vec{\nu} \rangle\right)\setminus \left( \nu_k+1 \right)\Vdash \lusim{f}(\xi)<\alpha
\end{align*}
Let $ \lusim{\alpha}\left( \xi, \vec{\nu} \right) $ be a name for the above $ \alpha $, and define--
$$ \delta\left( \xi, \vec{\nu} \right) = \sup\{  \gamma< h\left( \xi, \vec{\nu} \right) \colon \exists r\geq {p^*}^{\frown} \langle \xi, \vec{\nu} \rangle \restriction_{\nu_k+1} , \ r\Vdash \lusim{\alpha}\left( \xi, \vec{\nu} \right) = \gamma \} $$
as before, $ \delta\left( \xi, \vec{\nu} \right) < h\left( \xi, \vec{\nu} \right) $. 

Finally, work in $ V\left[G\right] $. As before,
$$ \{ \xi < \kappa \colon \langle \xi, \vec{\mu}(\xi) \rangle\in T \mbox{ is admissible for } p^* \mbox{ and }  {p^*}^{\frown} \langle \xi, \vec{\mu}(\xi) \in G \rangle \} \in W $$
Moreover, 
$$ \{ \xi<\kappa \colon t_{\lambda}\left( \xi, \vec{\mu}(\xi) \right)  \mbox{ is an initial segment of the Prikry sequence of } h\left( \xi, \vec{\mu}(\xi) \right)\}\in W $$
Thus, in $ M\left[H\right] $,
$$ \left[f\right]_W < \left[ \xi \mapsto \delta\left( \xi, \vec{\mu}(\xi) \right) \right]_W = k_{\alpha} \left( j_{\alpha} \left(  \langle \xi, \vec{\nu} \rangle \mapsto \delta\left( \xi, \vec{\nu} \right) \right)\left( \kappa, \vec{\mu} \right) \right)$$
but $ j_{\alpha} \left(  \langle \xi, \vec{\nu} \rangle \mapsto \delta\left( \xi, \vec{\nu} \right) \right)\left( \kappa, \vec{\mu} \right)< \lambda $ since $ \delta\left( \xi, \vec{\nu} \right) < h\left( \xi, \vec{\nu} \right) $ for every $ \xi, \vec{\nu} $. Thus, in $ M\left[H\right] $, $ \eta = \left[f\right]_W < \lambda $, as desired.\\
\end{proof}

Let us denote  $U_{\lambda} = \{  X\subseteq \lambda \colon \lambda \in k_{\alpha}\left( X \right) \}\cap M_{\alpha} $. This is an $ M_{\alpha} $-ultrafilter. We will eventually prove that $ \lambda = \mu_{\alpha} $, and then $ U_{\lambda} = U_{\mu_{\alpha}} $ will be the $ M_{\alpha} $-ultrafilter which is used to form $ M_{\alpha+1} $  in the iterated ultrapower.

\begin{lemma} \label{Lemma: NS, U lambda in M alpha even if U notin V}
	$ U_{\lambda} \in M_{\alpha} $. Moreover, it is a normal measure of Mitchell  order $ 0 $ there.
\end{lemma}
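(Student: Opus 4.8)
The three assertions to establish are normality of $U_\lambda$, the fact that $U_\lambda\in M_\alpha$, and that its Mitchell order in $M_\alpha$ is $0$. Normality is routine and comes from $\lambda=\mbox{crit}(k_\alpha)$: given $\langle A_\xi\colon \xi<\lambda\rangle\in M_\alpha$ with each $A_\xi\in U_\lambda$, the fact that $k_\alpha$ fixes every $\xi<\lambda$ gives $k_\alpha\bigl(\langle A_\xi\colon\xi<\lambda\rangle\bigr)(\xi)=k_\alpha(A_\xi)\ni\lambda$ for every $\xi<\lambda$, so $\lambda\in\triangle_{\xi<\lambda}k_\alpha(A_\xi)=k_\alpha\bigl(\triangle_{\xi<\lambda}A_\xi\bigr)$, i.e. $\triangle_{\xi<\lambda}A_\xi\in U_\lambda$; $\lambda$-completeness is handled the same way.

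The heart of the matter is $U_\lambda\in M_\alpha$. The plan is to identify $U_\lambda$ with the $k_\alpha$-pullback of the measure $D^*:=\bigl(j_W(\mathcal U)\bigr)(\lambda^*)$ used at stage $\lambda^*=k_\alpha(\lambda)$ of the iteration $j_W(P)$ over $M$ (this is defined, as $\lambda^*$ is measurable in $M$ by Lemma~\ref{Lemma: NS, lambda is measurable} and elementarity of $k_\alpha$, and $\lambda^*<j_W(\kappa)$). Given $X\in P(\lambda)^{M_\alpha}$ I would write $X$ in nice form $X=j_\alpha(\bar H)(\kappa,\vec\mu)$ for a $V$-function $\bar H$, after enlarging the nice sequence $\langle\alpha_0,\ldots,\alpha_k\rangle$ used for $\lambda$ so that the same $\vec\mu$ works and $\bar H(\xi,\vec\nu)\subseteq h(\xi,\vec\nu)$ always. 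Using $k_\alpha\circ j_\alpha=j_W\restriction_{V}$ and that $k_\alpha$ fixes $\kappa$ and every $\mu_{\alpha_i}$,
$$X\in U_\lambda \iff \lambda\in j_W(\bar H)(\kappa,\vec\mu) \iff \bigl\{\xi<\kappa\colon \lambda(\xi)\in \bar H(\xi,\vec\mu(\xi))\bigr\}\in W,$$
with $\xi\mapsto\lambda(\xi)$, $\xi\mapsto\vec\mu(\xi)$ the representing functions from Lemma~\ref{Lemma: NS, lambda appears in the Prikry sequence of it's image} and the paragraphs before it. Then I would apply the Multivariable Fusion Lemma~\ref{Lemma: NS, Multivariable Fusion} to the sets $e(\xi,\vec\nu)$ of conditions in $P\setminus(\nu_k+1)$ deciding whether the $(n^*+1)$-th element of the Prikry sequence of $h(\xi,\vec\nu)$ lies in $\bar H(\xi,\vec\nu)$ (here $n^*=\mbox{lh}(t_\lambda)$): these are $\leq^*$-dense open above conditions forcing $\vec\mu(\xi)=\vec\nu$, since a direct extension can first align the stem at $h(\xi,\vec\nu)$ with $t_\lambda(\xi,\vec\nu)$ and then shrink the measure-one part to be contained in $\bar H(\xi,\vec\nu)$ or in its complement --- and as $\bar H(\xi,\vec\nu)\in V$ this just means deciding whether $\bar H(\xi,\vec\nu)\in \lusim{U}_{h(\xi,\vec\nu)}$. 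Taking the resulting $p^*$ into $G$ and using Remark~\ref{Remark: NS, if p* in G then } together with Claim~\ref{Claim: NS, a C tree contains the real values}, I get $X\in U_\lambda\iff\bigl\{\xi\colon \bar H(\xi,\vec\mu(\xi))\in\mathcal U_G(h(\xi,\vec\mu(\xi)))\bigr\}\in W$; pushing through $j_W$ --- using $\lambda^*=[\xi\mapsto h(\xi,\vec\mu(\xi))]_W$ and $k_\alpha(X)=[\xi\mapsto\bar H(\xi,\vec\mu(\xi))]_W$ --- this becomes $X\in U_\lambda\iff k_\alpha(X)\in D^*$, so $U_\lambda=\{X\in P(\lambda)^{M_\alpha}\colon k_\alpha(X)\in D^*\}$. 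To conclude $U_\lambda\in M_\alpha$ I would show $D^*$ lies in the range of $k_\alpha$, say $D^*=k_\alpha(E)$ with $E\in M_\alpha$ --- using that the name $j_W(\lusim{\mathcal U})(\lambda^*)=k_\alpha\bigl(j_\alpha(\lusim{\mathcal U})(\lambda)\bigr)$ already comes from $M_\alpha$ and that $M$, $M_\alpha$ agree below $\lambda$ --- so that $U_\lambda=\{X\colon k_\alpha(X)\in k_\alpha(E)\}=E\in M_\alpha$; this also yields $k_\alpha(U_\lambda)=D^*$.

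Mitchell order $0$ then follows at once: by the analogue of Corollary~\ref{Corollary: NS, If W is normal in V[G], then W cap V has Mitchell order 0 in V} inside $M$ (equivalently, by the way $j_W(P)$ is built), $D^*$ is a normal measure of Mitchell order $0$ on $\lambda^*$ in $M$; so if $o^{M_\alpha}(\lambda)>0$ then $A:=\{\delta<\lambda\colon \delta\text{ is measurable in }M_\alpha\}\in U_\lambda$, hence $k_\alpha(A)\in D^*$, but by elementarity $k_\alpha(A)=\{\delta<\lambda^*\colon \delta\text{ is measurable in }M\}$, so $D^*$ would concentrate on the measurables of $M$, contradicting that it has Mitchell order $0$ there.

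The main obstacle will be the middle step: carrying out the Multivariable Fusion bookkeeping (the stems $t_\lambda$, the admissibility apparatus, and the point that the $(n^*+1)$-th element of the Prikry sequence of $h(\xi,\vec\nu)$ lands in exactly the measure-one sets chosen at that coordinate) carefully enough to get the clean equivalence $X\in U_\lambda\iff k_\alpha(X)\in D^*$, and then pinning down that $D^*\in\mbox{range}(k_\alpha)$, which is what upgrades $U_\lambda$ from a subset of $M_\alpha$ to an element of it.
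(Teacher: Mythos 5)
Your plan divides the work into two parts: (i) establishing the equivalence $X\in U_\lambda\iff k_\alpha(X)\in D^*$ by a multivariable fusion argument, and (ii) arguing that $D^*\in\operatorname{range}(k_\alpha)$, from which $U_\lambda\in M_\alpha$ would follow. Part (i) is essentially the paper's Claim~2, and your $e(\xi,\vec\nu)$ is set up in the right way; normality and the Mitchell order $0$ argument at the end are also fine. But part (ii) is where there is a genuine gap. The reasons you give --- that the name $j_W(\lusim{\mathcal U})(\lambda^*)=k_\alpha\bigl(j_\alpha(\lusim{\mathcal U})(\lambda)\bigr)$ already lies in $\operatorname{range}(k_\alpha)$, and that $M$ and $M_\alpha$ agree below $\lambda$ --- do not establish what you need. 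The object $D^*$ is not the name but the \emph{evaluation} of that name by the generic $H\restriction_{\lambda^*}$, and this evaluation depends on the generic, not just on the name. The agreement $(V_\lambda)^M=(V_\lambda)^{M_\alpha}$ is also unhelpful here, because $D^*$ is a measure on $\lambda^*=k_\alpha(\lambda)>\lambda$, so it lives well above the level at which the models are known to agree. That $D^*$ is, after the fact, in the range of $k_\alpha$ is precisely the content of Corollary~\ref{Corollary: NS, connection between measures along the way and measures in the iteration}, which is deduced \emph{from} the lemma you are trying to prove, not available before it.

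The ingredient that closes the gap in the paper is a second fusion argument (Claim~1 in the paper's proof): using an enumeration $S_x$ in $V$ of the Mitchell-order-$0$ normal measures on each measurable $x<\kappa$, one shows there is a condition $p\in G$ and a set $\mathcal F\in M_\alpha$ of normal measures on $\lambda$, each of Mitchell order $0$, with $|\mathcal F|<\lambda$, such that $j_\alpha(p)^\frown\langle\kappa,\vec\mu\rangle\restriction_\lambda\Vdash j_\alpha(\lusim{\mathcal U})(\lambda)\in\mathcal F$. This is what bounds the a priori unknown measure to a \emph{small, $M_\alpha$-definable} pool of candidates. Combining it with your Claim~2 style argument via a partition $\langle B_F\colon F\in\mathcal F\rangle$ with $B_F\in F$ (so that $\lambda$ lies in exactly one $k_\alpha(B_{F^*})$, forcing $j_\alpha(\lusim{\mathcal U})(\lambda)=F^*$, and then $U_\lambda\subseteq F^*$ implies $U_\lambda=F^*$ since both are ultrafilters) is what upgrades $U_\lambda$ from a class over $M_\alpha$ to an element of $M_\alpha$ and simultaneously gives Mitchell order $0$. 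Without something playing the role of that first fusion claim, your argument only shows $U_\lambda$ is the $k_\alpha$-preimage of a measure in $M$, which is not enough.
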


\begin{proof}
	The proof follows from a pair of claims.
	\begin{claim}
		There exist $ p\in G $ and a set $ \mathcal{F}\in M_{\alpha} $ of normal measures on $ \lambda $, each of Mitchell order $ 0 $, such that $ \left| \mathcal{F} \right|< \lambda $ and 
		$ {j_{\alpha}(p)}^{\frown}\langle \kappa, \vec{\mu} \rangle \restriction_{\lambda} \Vdash  j_{\alpha}\left( \lusim{\mathcal{U}} \right)\left( \lambda \right) \in \mathcal{F} $.
	\end{claim}
	
	\begin{proof}
		In $ V $, for every measurable $ x<\kappa $, let $ S_x $ be an enumeration of all the normal measures on $ x $ of order $ 0 $.
		
		We claim that there exists $ p\in G $ and a $ C $-tree $ T $, such that for every $ \langle \xi, \vec{\nu} \rangle\in T $ which is admissible for $ p $, there exists a set of ordinals $ A\left( \xi, \vec{\nu} \right) $ with $ \left|A\left( \xi, \vec{\nu} \right)\right|< h\left( \xi, \vec{\nu} \right) $, such that--
		$$ p^{\frown} \langle \xi, \vec{\nu} \rangle\restriction_{h\left( \xi, \vec{\nu} \right)}  \Vdash \lusim{U}_{h\left( \xi, \vec{\nu} \right) } \in {\left(S_{h\left( \xi , \vec{\nu} \right)}\right)}'' A\left( \xi, \vec{\nu} \right) $$ 
		This follows from the Multivariable Fusion Lemma. Fix $ \langle \xi, \vec{\nu} \rangle = \langle \xi, \nu_0, \ldots, \nu_k \rangle $ and denote for simplicity $ h =h\left( \xi, \vec{\nu} \right) $. Consider--
		\begin{align*}
		e\left( \xi, \vec{\nu} \right)  =  \{  &r\in P\setminus \nu_k+1 \colon \mbox{ there exists a set of ordinals } A \mbox{ with } \left|A\right| < h \\
		&\mbox{ such that } r\restriction_{h} \Vdash \lusim{U}_{h}\in {S_h}'' A \}
		\end{align*}
		Let us argue that $ e\left( \xi, \vec{\nu} \right) $ is $ \leq^* $-dense open above conditions which force that $ \lusim{\mu}(\xi) = \vec{\nu} $. Let $ p $ be such a condition. Note that every condition in $ P_h $, and $ p\restriction_{h} $ in particular, forces that there exists an ordinal $ \alpha $ such that $ \lusim{U}_h = S_h(\alpha) $; Now, direct extend $ p^*\restriction_{h} \geq^* p\restriction_{h} $ such that for some $ A $ of cardinality less then $ h $, $ p^*\restriction_{h} \Vdash \lusim{\alpha}\in A $.
		
		Now pick $ p\in G $ and a $ C $-tree $ T $ as above. Then for every $ \langle \xi, \vec{\nu} \rangle\in T $ which is admissible for $ p $, 
		\begin{align*}
		 &{{p}^{\frown} \langle \xi, \vec{\nu} \rangle}\restriction_{\nu_k+1}  	\Vdash \mbox{ there exists a set of ordinals } A \mbox{ with } \left| A \right|< h\left( \xi, \vec{\nu} \right), \\   
		 &\mbox{such that } {{p}^{\frown} \langle \xi, \vec{\nu} \rangle}\restriction_{ \left( \nu_k, h\left( \xi, \vec{\nu} \right) \right) } \Vdash \lusim{U}_{h\left( \xi, \vec{\nu} \right)} \in {S_{h\left( \xi, \vec{\nu} \right)}}''A 
		\end{align*}  
		For every such $ \langle \xi, \vec{\nu} \rangle\in T $, let $ \lusim{A}\left( \xi, \vec{\nu} \right) $ be a $ P_{\nu_k+1} $-name for $ A $ above, and let--
		$$ A^*\left( \xi, \vec{\nu} \right) = \{ \gamma \colon \exists r\geq {p}^{\frown} \langle \xi, \vec{\nu} \rangle, \  r\Vdash \gamma \in \lusim{A}\left( \xi, \vec{\nu} \right) \} $$
		Then $ \left|A^*\left( \xi, \vec{\nu} \right)\right|< h\left( \xi, \vec{\nu} \right) $, and--
		$$ p^{\frown} \langle \xi, \vec{\nu} \rangle\restriction_{ h\left( \xi, \vec{\nu} \right) } \Vdash \lusim{U}_{h\left( \xi, \lusim{\vec{\mu}}(\xi) \right)} \in {S_{h\left( \xi, \lusim{\vec{\mu}}(\xi) \right)}}'' A^*\left( \xi, \lusim{\vec{\mu}}(\xi) \right)  $$ 
		
		Let  $A^* = j_{\alpha}\left( \langle \xi, \vec{\nu} \rangle \mapsto A^*\left( \xi ,\vec{\nu} \right)  \right) $. Denote $ \mathcal{F} = \left(\left(j_{\alpha}(S)\right)_{\lambda}\right)''{A^*}$. Then $ \left|A^*\right| <\lambda$ and thus $ \left|\mathcal{F}\right| <\lambda$.  $ {{j_W(p)}^{\frown} \langle \kappa, \vec{\mu} \rangle}\restriction_{k_{\alpha} \left(\lambda\right)} $ forces that $ j_W\left( \lusim{\mathcal{U}} \right)\left( k_{\alpha}\left( \lambda \right) \right) \in k_{\alpha}\left( \mathcal{F} \right) $.  Thus, by elementarity of $ k_{\alpha} $, $ {{j_{\alpha}(p)}^{\frown} \langle \kappa, \vec{\mu} \rangle}\restriction_{\lambda} $ forces that $ j_{\alpha}\left( \lusim{\mathcal{U}} \right)(\lambda)\in \mathcal{F} $.
	\end{proof}
	
	\begin{claim}
		Assume that $ B\in U_{\lambda} $. Then there exists $ p\in G $ such that $ {j_{\alpha}(p)}^{\frown} \langle \kappa, \vec{\mu} \rangle \restriction_{\lambda} \Vdash B\in j_{\alpha}\left( \lusim{\mathcal{U}} \right)(\lambda) $.
	\end{claim}
	
	\begin{proof}
		Let $ \langle \xi, \vec{\nu} \rangle\mapsto B\left( \xi, \vec{\nu} \right) $ be a function in $ V $ such that-- 
		$$B = j_{\alpha}\left(  \langle \xi, \vec{\nu} \rangle\mapsto B\left( \xi, \vec{\nu} \right) \right)\left( \kappa, \vec{\mu} \right) $$ 
		(we assumed, without loss of generality, that $ B $ can be represented using $ \vec{\mu} $; else, change $ \vec{\mu} $). 
		Let $ n^*<\omega $ be the coordinate in which $ \lambda $ appears in the Prikry sequence of $ \lambda^* $. In $ V\left[G\right] $, denote by $ \lambda(\xi) $ the $ n^* $-th element in the Prikry sequence of $ h\left( \xi, \vec{\mu}(\xi) \right) $, so that $ \left[ \xi \mapsto \lambda(\xi) \right]_W = \lambda $. 
		
		As usual, we apply the Multivariable Fusion Lemma. Given $ \langle \xi, \vec{\nu} \rangle $, let-- 
		\begin{align*}
		e\left( \xi, \vec{\nu} \right) =\{ &r\in P\setminus \nu_k+1 \colon 
		r\restriction_{ h\left( \xi, \vec{\nu} \right) } \mbox{ decides whether }B\left( \xi, \vec{\nu} \right)\in \lusim{U}_{h\left( \xi, \vec{\nu} \right)}, \mbox{ and there}\\
		&\mbox{exists a bounded subset } A\subseteq h\left( \xi, \vec{\nu} \right) \mbox{ such that the following holds:}\\
		&\mbox{If } r\restriction_{  h\left( \xi, \vec{\nu} \right)}\Vdash B\left( \xi, \vec{\nu} \right)\in \lusim{U}_{h\left( \xi, \vec{\nu} \right)}, \  \lusim{\lambda}\left( \xi \right) \in A\cup B\left( \xi, \vec{\nu} \right); \mbox{ else, } \\
		&\lusim{\lambda}\left( \xi \right) \in A\cup \left(h\left( \xi, \vec{\nu} \right)  \setminus B\left( \xi, \vec{\nu} \right) \right) \}
		\end{align*}
		$ e\left( \xi, \vec{\nu} \right) $ is $ \leq^* $ dense open above any condition which forces that $ \lusim{\vec{\mu}}(\xi) = \vec{\nu} $. Indeed, let $ p\in P\setminus \nu_k+1 $ be such a condition. Denote $ h = h\left( \xi, \vec{\nu} \right) $. Direct extend $ p^*\restriction_{h} \geq^* p\restriction_{h} $ such that it decides the length of $ t^{p}_{h} $ and which of the sets $B\left( \xi, \vec{\nu} \right)$, $   h\setminus B\left( \xi, \vec{\nu} \right) $ belongs to $ \lusim{U}_{h} $:
		\begin{enumerate}
			\item If the length of $ t^{p}_{h} $ is $\geq n^* $, direct extend $p^*\restriction_{h} \geq^* p\restriction_{h} $ such that for some bounded subset $ A\subseteq \lambda $, $ p^*\restriction_{h} $ forces that the $ n^* $-th element in the Prikry sequence of $ h $ belongs to $ A $.
			\item Otherwise, $ t^{p}_{h} = t_{\lambda}\left( \xi, \vec{\nu} \right) $. In this case, direct extend and shrink $ \lusim{A}^{p}_{h} $ such that it is entirely contained in exactly one of the sets $B\left( \xi, \vec{\nu} \right)$, $ h\setminus  B\left( \xi, \vec{\nu} \right) $.
		\end{enumerate}
	The condition $ p^* $ obtained this way is as desired.
	
	Now pick $ p\in G $ and a $ C $-tree $  T $ such that for every $ \langle \xi, \vec{\nu} \rangle\in T $ which is admissible for $ p $,
	\begin{align*}
	&{{p}^{\frown} \langle \xi, \vec{\nu} \rangle}\restriction_{\nu_k+1}   	\Vdash  {{p}^{\frown} \langle \xi, \vec{\nu} \rangle}\restriction_{ \left( \nu_k, h\left( \xi, \vec{\nu} \right) \right) } \mbox{ decides whether } B\left( \xi, \vec{\nu} \right)\in \lusim{U}_{h\left( \xi, \vec{\nu} \right)}, \\
	&\mbox{and there exists a bounded subset } A\subseteq h\left( \xi, \vec{\nu} \right) \mbox{such that} \\
	&{{p}^{\frown} \langle \xi, \vec{\nu} \rangle}\restriction_{ \left( \nu_k, h\left( \xi, \vec{\nu} \right) \right) } \Vdash \lusim{\lambda}(\xi) \mbox{ belongs to exactly one of the sets } A\cup B\left( \xi, \vec{\nu} \right)  \\ &\mbox{or } A\cup \left( h\left( \xi, \vec{\nu} \right) \setminus B\left( \xi, \vec{\nu} \right)  \right), \mbox{ according to the above decision.}
	\end{align*}   
	For every such $ \langle \xi, \vec{\nu} \rangle\in T $, let $ \lusim{A}\left( \xi, \vec{\nu} \right) $ be a $ P_{\nu_k+1} $-name for $ A $ above, and let--
	$$ A^*\left( \xi, \vec{\nu} \right) = \{ \gamma \colon \exists r\geq {p}^{\frown} \langle \xi, \vec{\nu} \rangle, \  r\Vdash \gamma \in \lusim{A}\left( \xi, \vec{\nu} \right) \} $$
	Then $ A^*\left( \xi, \vec{\nu} \right)$ is a bounded subset of $ h\left( \xi, \vec{\nu} \right) $.
	
	We argue that $ {j_{\alpha}(p)}^{\frown} \langle \kappa, \vec{\mu} \rangle\Vdash B\in j_{\alpha}\left( \lusim{\mathcal{U}} \right)(\lambda) $. Work in $ V\left[G\right] $. Then for a set of $ \xi $-s in $ W $, $ \langle \xi, \vec{\mu}(\xi) \rangle\in T $ is admissible for $ p $. Thus, 
		$$ p^{\frown} \langle \xi, \vec{\mu}(\xi) \rangle \restriction_{ h\left( \xi, \vec{\mu}(\xi) \right) } \parallel B\left( \xi, \vec{\mu}(\xi) \right)\in \lusim{U}\left( h\left( \xi, \vec{\mu}(\xi)\right) \right) $$
		We argue that for a set of $ \xi $-s in $ W $, 
		$$p^{\frown} \langle \xi, \vec{\mu}(\xi) \rangle \restriction_{ h\left( \xi, \vec{\mu}(\xi) \right) } \Vdash B\left( \xi, \vec{\mu}(\xi) \right)\in \lusim{U}\left( h\left( \xi, \vec{\mu}(\xi)\right) \right) $$
		Assume otherwise. Then-- 
		$$ \{ \xi<\kappa \colon \lambda(\xi)\in A\left( \xi, \vec{\mu}(\xi) \right) \cup \left( h\left( \xi, \vec{\nu} \right) \setminus B\left( \xi, \vec{\mu}(\xi) \right)\right)  \}\in W $$
		However, this cannot hold:
		\begin{enumerate}
		\item If $ \{ \xi<\kappa \colon \lambda(\xi)\in A\left( \xi, \vec{\mu}(\xi) \right) \}\in W $, then, since $ \left| A(\xi, \vec{\nu}) \right| < h\left( \xi, \vec{\nu} \right) $ for every $\xi, \vec{\nu} $, it follows that $ \lambda \in \mbox{Im}\left( k_{\alpha} \right) $, which is a contradiction.
		\item Else, $ \{ \xi<\kappa \colon \lambda(\xi) \in h\left( \xi, \vec{\mu}(\xi) \setminus B\left( \xi, \vec{\mu}(\xi) \right) \right) \} $. But then $ \lambda \notin k_{\alpha}\left( B \right) $, contradicting the fact that $ B\in U_{\lambda} $. 
		\end{enumerate}	
		Thus, $ j_W(p)^{\frown} \langle \kappa, \vec{\mu} \rangle \restriction_{ k_{\alpha}\left( \lambda \right) } \Vdash k_{\alpha}(B)\in j_W\left( \lusim{\mathcal{U}} \right)\left( k_{\alpha}\left( \lambda \right) \right) $ and by elementarity of $ k_{\alpha} $, $ {j_{\alpha}(p)}^{\frown} \langle \kappa, \vec{\mu} \rangle \restriction_{ \lambda } \Vdash B\in j_\alpha\left( \lusim{\mathcal{U}} \right)\left( \lambda  \right) $ , as desired.
	\end{proof}

	Fix now a set $ \mathcal{F} $ and a condition $ p\in G $ as in the first claim. Since $\mathcal{F}$ is a sequence of normal measures on $ \lambda $ of cardinality $ <\lambda $, there exists a partition $ \langle B_F \colon F\in \mathcal{F} \rangle $ of $ \lambda $ such that for every $ F\in \mathcal{F} $, $ B_{F}\in F $. $ \left|\mathcal{F}\right|<\lambda $, and thus there exists a unique $ F^*\in \mathcal{F} $  such that $ \lambda \in k_{\alpha}\left( B_{F^*} \right) $. We denote for simplicity $ B^* = B_{F^*} $.
	
	By second claim, applied for the set $ B^*\in U_{\lambda} $, there exists  $ p^*\in G $ above $ p $ such that $ {j_{\alpha}(p^*)}^{\frown} \langle \kappa, \vec{\mu} \rangle  \Vdash j_{\alpha}\left( \lusim{\mathcal{U}} \right)(\lambda) = F^* $. 
	
	Finally, $ F^* = U_{\lambda} $ follows. Indeed, let $ X\in F^* $. By the second claim, for every $ X\in U_{\lambda} $, there exists $ p\in G $ such that $ {j_{\alpha}(p)}^{\frown} \langle \kappa, \vec{\mu} \rangle  \Vdash X\in j_{\alpha}\left( \lusim{\mathcal{U}} \right)(\lambda)  $. Without loss of generality, $ p $ extends $ p^* $ which was chosen in the previous paragraph, and thus $j_{\alpha}(p)\Vdash X\in F^* $. Since $ X $ and $ F^* $ are elements of $ M_{\alpha} $ (and not names), it follows that $ X\in F^* $.
\end{proof}

\begin{corollary} \label{Corollary: NS, connection between measures along the way and measures in the iteration}
In $ M\left[H\right] $, $j_W\left( \mathcal{U} \right)\left( k_{\alpha}(\lambda) \right) = k_{\alpha}\left( U_{\lambda} \right)$. In particular, if $ \mathcal{U}\in V $, then $ j_{\alpha}\left( \mathcal{U} \right)(\lambda) = U_{\lambda} $.
\end{corollary}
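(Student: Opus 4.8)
The plan is to read this off directly from the proof of Lemma~\ref{Lemma: NS, U lambda in M alpha even if U notin V} by transporting its conclusion along $k_\alpha$. Recall that the proof of that lemma produced a condition $p^*\in G$ for which
$$ {j_\alpha(p^*)}^{\frown}\langle\kappa,\vec{\mu}\rangle\ \Vdash\ j_\alpha\bigl(\lusim{\mathcal{U}}\bigr)(\lambda)=U_\lambda , $$
(this is the combination of its two internal claims with the identification $F^*=U_\lambda$), and that, by Remark~\ref{Remark: NS, if p* in G then }, $p^*$ may in addition be chosen so that
$$ \{\,\xi<\kappa\colon\ \langle\xi,\vec{\mu}(\xi)\rangle\in T\text{ is admissible for }p^*\text{ and }{p^*}^{\frown}\langle\xi,\vec{\mu}(\xi)\rangle\in G\,\}\in W . $$

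First I would push the forcing statement forward through $k_\alpha$. By Lemma~\ref{Lemma: NS, k alpha is elementary}, $k_\alpha\colon M_\alpha\to M$ is elementary with $k_\alpha\circ j_\alpha=j_W\restriction_V$; moreover $k_\alpha$ fixes $\kappa$, fixes each $\mu_{\alpha_i}$ (by the ``straightforward limitations'' claim), and fixes every finite sequence $\vec{t}(\kappa,\vec{\mu})$ occurring in the recursive definition of ${j_\alpha(p^*)}^{\frown}\langle\kappa,\vec{\mu}\rangle$, since all this data lies strictly below $\lambda=\mbox{crit}(k_\alpha)$ (here one uses $\lambda>\mu_{\alpha_k}^{+}$, so that those Prikry-initial-segments are bounded in $\lambda$). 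Hence $k_\alpha\bigl({j_\alpha(p^*)}^{\frown}\langle\kappa,\vec{\mu}\rangle\bigr)={j_W(p^*)}^{\frown}\langle\kappa,\vec{\mu}\rangle$, and since $\lusim{\mathcal{U}}\in V$ also $k_\alpha\bigl(j_\alpha(\lusim{\mathcal{U}})\bigr)=j_W(\lusim{\mathcal{U}})$. Applying $k_\alpha$ to the displayed forcing statement therefore gives, in $M$,
$$ {j_W(p^*)}^{\frown}\langle\kappa,\vec{\mu}\rangle\ \Vdash\ j_W\bigl(\lusim{\mathcal{U}}\bigr)\bigl(k_\alpha(\lambda)\bigr)=k_\alpha(U_\lambda) , $$
where $k_\alpha(\lambda)$ is measurable in $M$ (image of a measurable of $M_\alpha$) and lies below $j_W(\kappa)=\kappa^*$, so it is indeed in the domain of $j_W(\mathcal{U})$.

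Next I would verify that the condition on the left lies in $H=j_W(G)$. Since $p^*\in G$ we have $j_W(p^*)\in H$, and applying $j_W$ to the $W$-large set above — using $[\xi\mapsto\xi]_W=\kappa$, $[\xi\mapsto\vec{\mu}(\xi)]_W=\vec{\mu}$, and {\L}o\'{s}'s theorem applied to the definable operation $(q,\xi,\vec{\nu})\mapsto q^{\frown}\langle\xi,\vec{\nu}\rangle$ — shows that $\langle\kappa,\vec{\mu}\rangle$ is admissible for $j_W(p^*)$ and that ${j_W(p^*)}^{\frown}\langle\kappa,\vec{\mu}\rangle\in j_W(G)=H$. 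Passing from the name $j_W(\lusim{\mathcal{U}})$ to its $H$-interpretation $j_W(\mathcal{U})$, the forcing statement then yields $j_W(\mathcal{U})(k_\alpha(\lambda))=k_\alpha(U_\lambda)$ in $M[H]$, the right-hand side being (a canonical name for) an element of $M$. For the ``in particular'' clause, assume $\mathcal{U}\in V$; then $j_W(\mathcal{U})=k_\alpha\bigl(j_\alpha(\mathcal{U})\bigr)$, so $k_\alpha\bigl(j_\alpha(\mathcal{U})(\lambda)\bigr)=k_\alpha(U_\lambda)$, and injectivity of $k_\alpha$ gives $j_\alpha(\mathcal{U})(\lambda)=U_\lambda$.

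The only delicate point is the bookkeeping in the second paragraph: checking that $k_\alpha$ literally fixes every piece of syntactic data (the ordinals $\kappa,\mu_{\alpha_0},\dots,\mu_{\alpha_k}$ and the finite Prikry-initial-segments) used to assemble the condition ${j_\alpha(p^*)}^{\frown}\langle\kappa,\vec{\mu}\rangle$, so that its $k_\alpha$-image is exactly ${j_W(p^*)}^{\frown}\langle\kappa,\vec{\mu}\rangle$; this is precisely where $\mbox{crit}(k_\alpha)=\lambda$ together with $\mu_{\alpha_i},\mu_{\alpha_k}^{+}<\lambda$ enter. Everything else is a direct unwinding of the preceding lemma.
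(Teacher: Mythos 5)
Your proof is correct and takes essentially the same approach as the paper's: extract from the proof of Lemma~\ref{Lemma: NS, U lambda in M alpha even if U notin V} the condition $p^*\in G$ forcing ${j_\alpha(p^*)}^{\frown}\langle\kappa,\vec{\mu}\rangle\Vdash j_\alpha(\lusim{\mathcal{U}})(\lambda)=U_\lambda$, push this forward along $k_\alpha$, and evaluate in $H$, with the ``in particular'' clause following from elementarity and injectivity of $k_\alpha$. You make explicit the bookkeeping (that $k_\alpha$ fixes $\kappa$, $\vec{\mu}$, and the finite stems, and that ${j_W(p^*)}^{\frown}\langle\kappa,\vec{\mu}\rangle\in H$ via Remark~\ref{Remark: NS, if p* in G then }) which the paper's one-sentence proof leaves implicit; both verifications are correct.
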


\begin{proof}
This follows since, by the proof of the previous lemma, there exists $ p\in G $  such that $ {j_{\alpha}(p)}^{\frown} \langle \kappa, \vec{\mu} \rangle\Vdash j_{\alpha}\left( \lusim{\mathcal{U}} \right)(\lambda) = U_{\lambda} $. Now apply $ k_{\alpha} \colon M_{\alpha}\to M $ and use the fact that $ {j_W(p)}^{\frown} \langle \kappa, \vec{\mu} \rangle\in H $.
\end{proof}

\begin{lemma}
In $ V $, $\mbox{cf}(\lambda) \geq \kappa^{+}$.
\end{lemma}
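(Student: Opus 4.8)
The plan is to reduce the statement to a cofinality computation inside $M[H]$, and then to settle that computation using the fact that $\lambda$ appears in the Prikry sequence of $\lambda^{*}:=k_{\alpha}(\lambda)$, together with the Mitchell order $0$ of the measures driving the iteration.

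\emph{Reduction.} I would first argue that it suffices to show $\mbox{cf}^{M[H]}(\lambda)>\kappa$. Indeed, $M[H]=\mbox{Ult}(V[G],W)$ is closed under $\kappa$-sequences in $V[G]$ because $W$ is a $\kappa$-complete ultrafilter, and $V\subseteq V[G]$; so if $\mbox{cf}^{V}(\lambda)\le\kappa$, a cofinal map $\delta\to\lambda$ with $\delta\le\kappa$ lies in $V\subseteq V[G]$, hence in $M[H]$, whence $\mbox{cf}^{M[H]}(\lambda)\le\kappa$.

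\emph{$\lambda$ is regular in $M[H]$.} Since $\lambda$ is measurable in $M_{\alpha}$ and $\lambda=j_{\alpha}(h)(\kappa,\mu_{\alpha_{0}},\dots,\mu_{\alpha_{k}})$, the same kind of reasoning used in the proof of Lemma~\ref{Lemma: NS, lambda is measurable} lets us assume $h(\xi,\vec{\nu})$ is a measurable cardinal of $V$ for all $\xi,\vec{\nu}$. Recall from Lemma~\ref{Lemma: NS, lambda appears in the Prikry sequence of it's image} that in $M[H]$ one has $\lambda=[\xi\mapsto\lambda(\xi)]_{W}$, where $\lambda(\xi)$ is, in $V[G]$, the $(n^{*}+1)$-st element of the Prikry sequence added by $G$ to $\nu_{\xi}:=h(\xi,\vec{\mu}(\xi))$, and that $\lambda>\kappa$ forces $\{\xi:\lambda(\xi)>\xi\}\in W$. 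The Prikry forcing at $\nu_{\xi}$ uses a normal measure $U^{*}_{\nu_{\xi}}\in V^{P_{\nu_{\xi}}}$ with $U^{*}_{\nu_{\xi}}\cap V=U_{\nu_{\xi}}$, which has Mitchell order $0$ in $V$; as $U_{\nu_{\xi}}$ is a normal measure of Mitchell order $0$ on the inaccessible $\nu_{\xi}$, it concentrates on the set $A_{\xi}$ of inaccessible non--measurable cardinals of $V$ below $\nu_{\xi}$, and $A_{\xi}\in U_{\nu_{\xi}}\subseteq U^{*}_{\nu_{\xi}}$. Folding this into the multivariable--fusion construction of Lemma~\ref{Lemma: NS, lambda appears in the Prikry sequence of it's image} --- that is, choosing the $\nu_{\xi}$-th measure--one set above the stem of length $n^{*}+1$ to be contained in $A_{\xi}$ --- we may assume that for $W$-almost every $\xi$ we have $\lambda(\xi)\in A_{\xi}$. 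For such $\xi$: $\lambda(\xi)$ is non--measurable in $V$, so the coordinate--$\lambda(\xi)$ forcing of $P_{\kappa}$ is trivial; $\lambda(\xi)$ is inaccessible in $V$, so $P_{\lambda(\xi)}$ keeps $\lambda(\xi)$ regular by Corollary~\ref{Corollary: NS, every new function is dominated by an old one}; and the tail $P_{\kappa}\setminus(\lambda(\xi)+1)$, whose direct extension order is more than $\lambda(\xi)^{+}$-closed and which has the Prikry property, adds no new bounded subsets of $\lambda(\xi)$. Hence $\lambda(\xi)$ is regular in $V[G]$ for $W$-almost every $\xi$, so $\lambda=[\xi\mapsto\lambda(\xi)]_{W}$ is regular in $M[H]$.

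\emph{Conclusion.} By Lemma~\ref{Lemma: NS, k}, $\mbox{crit}(k)>\kappa^{+}$, and more generally $\lambda=\mbox{crit}(k_{\alpha})\ge\mu_{0}>\kappa^{+}$ (for $\alpha=0$ this is $\lambda=\mbox{crit}(k)=\mu_{0}$, and for $\alpha>0$ it follows from the bound $\sup\{\mu_{\alpha'}:\alpha'<\alpha\}\le\lambda$). Regularity of $\lambda$ in $M[H]$ then gives $\mbox{cf}^{M[H]}(\lambda)=\lambda>\kappa$, and the Reduction yields $\mbox{cf}^{V}(\lambda)>\kappa$, i.e. $\mbox{cf}^{V}(\lambda)\ge\kappa^{+}$.

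\emph{Expected obstacle.} The one non--routine point is the claim that $\lambda(\xi)\in A_{\xi}$ for $W$-almost every $\xi$ --- equivalently, that $\lambda$ itself is inaccessible and non--measurable in $M$. A density argument internal to a single Prikry forcing does not deliver this, because one cannot refine the measure--one set of a condition without fixing its stem, whereas $\lambda$ occupies the fixed finite position $n^{*}+1$ in the sequence; so the requirement has to be built into the fusion argument of Lemma~\ref{Lemma: NS, lambda appears in the Prikry sequence of it's image} from the outset, by insisting the relevant measure--one sets be contained in $A_{\xi}$. Everything else is a routine application of the preservation lemmas of Section~1.
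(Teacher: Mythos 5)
Your overall strategy is sound and close to the paper's: both routes reduce the claim to showing that $\lambda$ is regular in $M\left[H\right]$, and both achieve that by establishing that $\lambda$ is a non-measurable inaccessible of $M$ and then invoking the preservation results of Section~1. Your one-step reduction via $\kappa$-closure of $M\left[H\right]$ is a modest streamlining of the paper's two-step chain $M\left[H\right]\to V\left[G\right]\to V$, and your use of \L{}o\'s together with Corollary~\ref{Corollary: NS, every new function is dominated by an old one} in place of the paper's split $H=H_\lambda * H'$ is an acceptable variant. The problem is in the key step you yourself flag: the claim that $\lambda(\xi)\in A_\xi$ for $W$-almost all $\xi$ (equivalently, that $\lambda$ is inaccessible and non-measurable in $M$). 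Your proposed patch --- insisting in the Multivariable Fusion Lemma that the relevant measure-one sets be contained in $A_\xi$ --- does not work. Fusion only produces a direct extension $p^*\geq^* p$, and direct extensions never alter stems; but $\lambda(\xi)$ occupies the \emph{fixed} position $n^*+1$ in the Prikry sequence of $h(\xi,\vec\mu(\xi))$. If the generic $G$ has already placed, at that coordinate, a stem of length $>n^*$ whose $(n^*+1)$-st entry lies outside $A_\xi$, that stem is inherited unchanged by every direct extension, and no refinement of measure-one sets can move $\lambda(\xi)$ into $A_\xi$. There is no density (let alone $\leq^*$-density) to appeal to, since the obstruction is already decided by $G$.

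The statement you need is true, but it is a consequence of the lemma preceding this one, not of fusion. Since Lemma~\ref{Lemma: NS, U lambda in M alpha even if U notin V} gives $U_\lambda\in M_\alpha$ a normal measure of Mitchell order $0$, form $M'=\mbox{Ult}(M_\alpha,U_\lambda)$ and observe that $k_\alpha$ factors through it: there is $k'\colon M'\to M$ with $\mbox{crit}(k')>\lambda$, elementary because $U_\lambda$ was derived from $k_\alpha$. Then $\lambda$ is a non-measurable inaccessible cardinal in $M'$, hence in $M$, which is exactly the input your \L{}o\'s step needs. With that substitution, the rest of your argument goes through.
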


\begin{proof}
Denote $ M' = \mbox{Ult}\left( M_{\alpha}, U_{\lambda} \right) $, and let $ j' \colon V \to  M'  $, be defined as follows:
$$ j' =  j^{M_{\alpha}}_{  U_{\lambda} }  \circ j_{\alpha} $$
There exists an elementary embedding $ k'\colon M' \to M $, defined as follows:
$$ k'\left( j'(f)\left( \kappa, \mu_{i_0},\ldots, \mu_{i_m}, \lambda \right)  \right) = j_{W}\left( f \right) \left( \kappa, \mu_{i_0},\ldots, \mu_{i_m}, \lambda \right) $$
for every $ f\in V $ and $ i_0<\ldots < i_m < \alpha $.

Since $ U_{\lambda} $ was derived from $ k_{\alpha} $, $ k'\colon M' \to M $ is elementary (the proof is the same as in lemma \ref{Lemma: NS, k alpha is elementary}). It's not hard to verify that $ \mbox{crit}\left( k' \right) > \lambda $. Therefore, $ \lambda $, which is a non-measurable inaccessible cardinal in $ M' $, is still a non-measurable inaccessible cardinal in $ M $.

Let us argue that $ \lambda $ is regular in $ M\left[H\right] $. Split $ H = H_{\lambda} * H' $, where $ H_{\lambda} \subseteq j_{W}(P)\restriction_{\lambda} $. If $ \lambda $ changes its cofinality in $ M\left[H\right] $, then it changes its cofinality in $ M\left[H_{\lambda}\right] $ (since the upper forcing has a direct extension order which is more than $ \lambda $--closed). However, by corollary \ref{Corollary: NS, every new function is dominated by an old one}, $\lambda$ is regular in $ M\left[H_{\lambda}\right] $.

It follows that, in $ V\left[G\right] $, $ \mbox{cf}(\lambda) \geq \kappa^{+} $. Thus, in $ V $, $ \mbox{cf}\left( \lambda  \right) \geq \kappa^+ $.
\end{proof}

\begin{corollary}
$\mbox{crit}\left( k_{\alpha} \right) =  \mu_{\alpha}$. 
\end{corollary}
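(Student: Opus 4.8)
The plan is simply to assemble the facts established in the last few lemmas. We have already shown three things about $\lambda = \mbox{crit}(k_\alpha)$: by the claim proved above that $\sup\{\mu_{\alpha'}\colon \alpha'<\alpha\}\le\lambda\le\mu_\alpha$; by Lemma~\ref{Lemma: NS, lambda is measurable}, $\lambda$ is measurable in $M_\alpha$; and by the preceding lemma, $(\mbox{cf}(\lambda))^V\ge\kappa^{+}$, in particular $(\mbox{cf}(\lambda))^V>\kappa$. Since $\mu_\alpha$ is, by definition, the \emph{least} measurable cardinal $\mu$ of $M_\alpha$ satisfying $\mu>\mu_{\alpha'}$ for all $\alpha'<\alpha$ and $(\mbox{cf}(\mu))^V>\kappa$, it therefore suffices to check that $\lambda$ is itself a legitimate candidate, i.e. that $\lambda>\mu_{\alpha'}$ \emph{strictly} for every $\alpha'<\alpha$. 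Minimality will then give $\mu_\alpha\le\lambda$, and combined with $\lambda\le\mu_\alpha$ this yields $\lambda=\mu_\alpha$, which is the assertion of the corollary.

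For the strict inequality, fix $\alpha'<\alpha$. The claim above already gives $\mu_{\alpha'}\le\lambda$, so it is enough to rule out $\mu_{\alpha'}=\lambda$. This is immediate from the definition of $k_\alpha$: as computed in the proof of that claim, $k_\alpha(\mu_{\alpha'})=k_\alpha(j_\alpha(\mbox{id})(\mu_{\alpha'}))=j_W(\mbox{id})(\mu_{\alpha'})=\mu_{\alpha'}$, so $\mu_{\alpha'}$ is a fixed point of $k_\alpha$, whereas $k_\alpha(\lambda)>\lambda$ because $\lambda$ is the critical point. Hence $\mu_{\alpha'}\ne\lambda$, and therefore $\mu_{\alpha'}<\lambda$, as needed.

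There is no real obstacle here: all the content sits in the earlier lemmas (the measurability of $\lambda$ in $M_\alpha$ and the cofinality computation $(\mbox{cf}(\lambda))^V\ge\kappa^{+}$), and this corollary is merely the bookkeeping step recording that $\mbox{crit}(k_\alpha)$ is exactly the cardinal $\mu_\alpha$ selected at stage $\alpha$ of the iteration. In particular, once $\lambda=\mu_\alpha$ is known, the $M_\alpha$-ultrafilter $U_\lambda$ of Lemma~\ref{Lemma: NS, U lambda in M alpha even if U notin V} is literally $U_{\mu_\alpha}$, so properties (B)--(E) have now been verified at stage $\alpha$, which is what is needed in order to carry the induction forward.
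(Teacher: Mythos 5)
Your proposal is correct and follows essentially the same route as the paper: combine the three facts about $\lambda=\mbox{crit}(k_\alpha)$ (it lies between $\sup_{\alpha'<\alpha}\mu_{\alpha'}$ and $\mu_\alpha$, it is measurable in $M_\alpha$, and its $V$-cofinality exceeds $\kappa$), and invoke minimality of $\mu_\alpha$. You are slightly more careful than the paper in noting that one needs the \emph{strict} inequality $\mu_{\alpha'}<\lambda$ for every $\alpha'<\alpha$ (relevant when $\alpha$ is a successor), which you obtain from $k_\alpha(\mu_{\alpha'})=\mu_{\alpha'}$; the paper glosses over this step, but the same computation already appears in the proof of the earlier claim, so there is no real discrepancy.
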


\begin{proof}
It suffices to prove that $\mbox{crit}\left( k_{\alpha} \right) \geq   \mu_{\alpha} $. Denote $ \bar{\mu} = \sup\{ \mu_\beta\colon \beta<\alpha \} $. We already argued that $ \mbox{crit}(k_{\alpha}) \geq \bar{\mu} $.

By all the properties proved so far, $ \mbox{crit}\left(k_{\alpha} \right) $ is a measurable cardinal in $ M_{\alpha} $, with cofinality $ >\kappa $ in $ V $. By the definition, $ \mu_{\alpha} \geq \bar{\mu} $ is the least such cardinal. Thus, $ \mbox{crit}\left( k_{\alpha} \right) \geq \mu_{\alpha} $.
\end{proof}

This finishes the inductive proof of properties (A)-(E). We are now prepared to finish the proof of Theorem \ref{Theorem: NS, full description of j_W restricted to V}:

\begin{proof}[Proof of theorem \ref{Theorem: NS, full description of j_W restricted to V}]
Recall that $ \kappa^* = j_{U}(\kappa) $. It's not hard to prove by induction that, for every $ \alpha<\kappa^* $, $ \mu_{\alpha}<\kappa^* $. Note that $ j_{\kappa^*}(\kappa) = \kappa^* $, since $ j_{\kappa^*} $ is an iterated ultrapower with measures on measurables below $ \kappa^*$. Since $ \kappa^* $ is measurable in each step, it does not move in $ j_{1,\kappa^*} \colon M_U \to M_{\kappa^*} $.

Recall the embedding $ k_{\kappa^*} \colon M_{\kappa^*} \to M $, defined as follows:
$$ k_{\kappa^*} \left(  j_{\kappa^*}(f)\left( \kappa, \mu_{i_0},\ldots, \mu_{i_m} \right) \right) =  j_{W} \left( f \right) \left( \kappa, \mu_{i_0},\ldots, \mu_{i_m} \right)  $$
for every $ f\in V $, $ m<\omega $ and $ i_0,\ldots, i_m < \kappa^* $. As in lemma \ref{Lemma: NS, k alpha is elementary}, $ k_{\kappa^*} $ is elementary, $ k_{\kappa^*}\circ j_{\kappa^*} = j_{W}\restriction_{V} $ and $ \mbox{crit}\left( k_{\kappa^*} \right) \geq \kappa^* $.

In order to prove that $ M =M_{\kappa^*} $, $ j_{\kappa^*} = j_{W}\restriction_{V} $ and $ k^* = j_{W}(\kappa) $, it suffices to prove that $ k_{\kappa^*} \colon M_{\kappa^*} \to M $ is the identity. Thus, it suffices to prove that for every ordinal $ \eta $, $ \eta \in \mbox{Im}\left( k_{\kappa^*} \right) $. Assume that $ g\in V\left[G\right] $ is a function such that $ \eta = \left[g\right]_{W} $. Let $ p\in G $ be a condition. By lemma \ref{Lemma: NS, every new function is evaluated by F in V}, there exists a condition $p\leq  p^*\in G $, a function $ \xi \mapsto A_{\xi} $ in $ V $ and a club $ C\subseteq \kappa $ such that, for every $ \xi\in C $, $ \left|A_{\xi}\right|< \kappa $, and $ p^* \Vdash \lusim{g}(\xi) \in A_{\xi} $. Then $ j_{W}\left(p^*\right) \in H $ forces that $ \eta = \left[\xi \mapsto \lusim{g}(\xi)\right]_{W} \in \left[\xi \mapsto A_{\xi}\right]_{W} = k_{\kappa^*}\left( j_{\kappa^*} \left( \xi\mapsto A_{\xi} \right) (\kappa) \right) $;  but $ \left|j_{\kappa^*}\left( \xi\mapsto A_{\xi} \right)(\kappa)\right| < j_{\kappa^*}(\kappa) = \kappa^* \leq \mbox{crit}\left( k_{\kappa^*} \right) $. Therefore, $ \eta \in \mbox{Im}\left( k_{\kappa^*} \right) $ as desired.

Finally, note that if $ \mathcal{U}\in V $, then by corollary \ref{Corollary: NS, connection between measures along the way and measures in the iteration}, $ U_{\mu_{\alpha}} = j_{\alpha}\left( \mathcal{U} \right)\left( \mu_{\alpha} \right) \in M_{\alpha} $ for every $ \alpha<\kappa $, and thus the iteration $ j_{\kappa^*} $ is definable over $ V $. Also, $ M = M_{\kappa^*} $ is a class of $ M $. 
\end{proof}

We finish this section with several remarks about definability of $ j_W\restriction_{V} $ in $ V $. 

The condition $ \mathcal{U}\in V $ is sufficient but not necessary for the definability of $ j_W\restriction_{V} $. For instance, let $ \eta\in \Delta $ be the first measurable. Assume that, in $ V $, there are infinitely many measurables which carry $\eta$ measures of Mitchell order $ 0 $. Take in $ V $ an enumeration $ \langle \alpha_n \colon n<\omega \rangle $ of the first $ \omega $ such measurables above $ \eta $. For every $ n<\omega $, let $ \langle F^{\xi}_{\alpha_n} \colon \xi< \eta \rangle $ be an enumeration of $ \eta $-many measures of Mitchell order $ 0 $ on $ \alpha_n $. Let $ P $ be the forcing notion which uses, at stage $ \alpha_n $, the unique normal measure which extends $ F^{ \eta_n }_{\alpha_n} $, where $ \eta_n <\eta $ is the $ n $-th element in the Prikry sequence of $ \eta $. For every other measurable $ \alpha $, use a measure which extends the least measure on $ \alpha $ of Mitchell order $ 0 $ with respect to a prescribed well order of $ V_\kappa $. So $ \mathcal{U}\notin V $, since it codes the Prikry sequence of $ \eta $. However, $ j_W\restriction_{V} $ is definable in $ V $, by repeating the argument of corollary \ref{Corollary: NS, connection between measures along the way and measures in the iteration}, replacing $ \mathcal{U} $ with $ \mathcal{U}\restriction_{ \Delta \setminus \{ \alpha_n \colon n<\omega \} } = \langle U_{\alpha} \colon \alpha\in \Delta\setminus \{ \alpha_n \colon n<\omega \} \rangle \in V $. More generally, the following holds, ind is proved similarly to corollary \ref{Corollary: NS, connection between measures along the way and measures in the iteration}:

\begin{lemma} \label{Lemma: NS, a weaker sufficient condition for definability}
Assume that for some $ \xi<\kappa $, $ \mathcal{U}\setminus \xi =  \langle U_{\alpha} \colon \alpha\in \Delta\setminus \xi \rangle \in V $. Then $ j_W\restriction_{V} $ is definable in $ V $.
\end{lemma}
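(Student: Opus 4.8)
The plan is to mimic the proof of Corollary \ref{Corollary: NS, connection between measures along the way and measures in the iteration} and the final paragraph of the proof of Theorem \ref{Theorem: NS, full description of j_W restricted to V}, but to run the argument only for coordinates $\alpha$ above $\xi$. Recall from Theorem \ref{Theorem: NS, full description of j_W restricted to V} that $j_W\restriction_V = j_{\kappa^*}$, where $\langle M_\alpha\colon\alpha\le\kappa^*\rangle$ is the iterated ultrapower obtained by applying, at stage $\alpha$, the $M_\alpha$-measure $U_{\mu_\alpha}$ on $\mu_\alpha$. To define $j_W\restriction_V$ over $V$ it suffices to define the iteration over $V$, and for that it suffices to define, by recursion on $\alpha<\kappa^*$, the pair $(\mu_\alpha, U_{\mu_\alpha})$ inside $M_\alpha$ (the critical points $\mu_\alpha$ are already characterized over $M_\alpha$, by property (D) together with the cofinality computation, as the least measurable of $M_\alpha$ above $\sup\{\mu_{\alpha'}\colon\alpha'<\alpha\}$ with $V$-cofinality $>\kappa$ — and, anticipating Section 4, simply the least measurable above that supremum).

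First I would isolate the trivial part: the coordinates $\alpha<\kappa^*$ for which $\mu_\alpha<\xi^* := j_{\alpha}(\xi)$... more carefully, since $\xi<\kappa$ and the first measurable touched in the iteration is $\ge\kappa$ (the critical points $\mu_\alpha$ are all $\ge\kappa$), \emph{every} coordinate has $\mu_\alpha\ge\kappa>\xi$. So in fact the hypothesis $\mathcal U\setminus\xi\in V$ for $\xi<\kappa$ already gives $\mathcal U\in V$ when one only cares about the measures actually used — wait, that is not quite the situation: $\mathcal U=\langle U_\alpha\colon\alpha\in\Delta\rangle$ is indexed by the ground-model measurables below $\kappa$, and $\mathcal U\setminus\xi$ drops only finitely... no, it drops the measures at measurables below $\xi$. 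The point of the lemma is that the measures $U_\alpha$ for $\alpha<\xi$ are irrelevant to $j_W\restriction_V$, because the iteration $\langle M_\alpha\rangle$ never uses Prikry sequences of cardinals below $\kappa$; it only uses the derived measures $U_{\mu_\alpha}$, and Corollary \ref{Corollary: NS, connection between measures along the way and measures in the iteration} identifies $U_{\mu_\alpha}$ with $j_\alpha(\mathcal U)(\mu_\alpha)$ \emph{provided} $\mathcal U\in V$. Reexamining that corollary's proof, the only use of $\mathcal U\in V$ is to form $j_\alpha(\mathcal U)$ and read off its value at $\mu_\alpha$; but the value at $\mu_\alpha\ge\kappa>\xi$ depends only on $\mathcal U\restriction(\Delta\setminus\xi)$, since $j_\alpha$ restricted to $\mathcal U\restriction(\Delta\setminus\xi)$ is applied to a function whose relevant output coordinate exceeds $\xi$.

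Thus the key step is: rerun the proof of Corollary \ref{Corollary: NS, connection between measures along the way and measures in the iteration} with $\mathcal U$ replaced throughout by $\mathcal U' := \mathcal U\restriction(\Delta\setminus\xi) = \langle U_\alpha\colon\alpha\in\Delta\setminus\xi\rangle$, which belongs to $V$ by hypothesis. Concretely: in Lemma \ref{Lemma: NS, U lambda in M alpha even if U notin V} and Corollary \ref{Corollary: NS, connection between measures along the way and measures in the iteration}, every condition $p\in G$ used to decide values of $\lusim{\mathcal U}(\mu_\alpha)$ via $j_\alpha(p)^\frown\langle\kappa,\vec\mu\rangle\restriction_\lambda$ only ever queries the generic at coordinates $\ge\kappa$ (the relevant coordinate being $h(\xi,\vec\nu)\ge\kappa$ for the $\xi$'s that matter, cf. $\lambda>\mu_{\alpha_k}^+\ge\kappa$); so replacing $\lusim{\mathcal U}$ by $\lusim{\mathcal U}'$, a name for $\mathcal U'$, changes nothing in the argument, and we conclude $U_{\mu_\alpha} = j_\alpha(\mathcal U')(\mu_\alpha)\in M_\alpha$ for every $\alpha<\kappa^*$. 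Then, exactly as in the last paragraph of the proof of Theorem \ref{Theorem: NS, full description of j_W restricted to V}, the recursion $M_{\alpha+1}=\mathrm{Ult}(M_\alpha, j_\alpha(\mathcal U')(\mu_\alpha))$ with $\mu_\alpha$ the least measurable of $M_\alpha$ above $\sup_{\alpha'<\alpha}\mu_{\alpha'}$ (invoking Lemma \ref{Lemma: NS, every measurable above mu bar has cofinility above kappa in V} to drop the $V$-cofinality clause), direct limits at limit stages, is definable over $V$ from the parameter $\mathcal U'\in V$, and its length-$\kappa^*$ value is $j_W\restriction_V$ with $M_{\kappa^*}=M$.

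The main obstacle is bookkeeping rather than conceptual: one must verify that in the proofs of Lemma \ref{Lemma: NS, U lambda in M alpha even if U notin V} and Corollary \ref{Corollary: NS, connection between measures along the way and measures in the iteration}, the only way $\mathcal U$ enters is through its values at coordinates $\ge\kappa$ — i.e. that the enumerations $S_x$ of Mitchell-order-$0$ measures and the names $\lusim U_{h(\xi,\vec\nu)}$ are queried only for $h(\xi,\vec\nu)\ge\kappa$, which holds because $\lambda=\mathrm{crit}(k_\alpha)\ge\kappa$ and indeed $\lambda>\kappa$ as $k_\alpha$ fixes $\kappa$ and $\kappa^+$. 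Granting that, the substitution $\mathcal U\rightsquigarrow\mathcal U'$ is transparent, and the lemma follows.
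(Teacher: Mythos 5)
Your proposal is correct and follows the paper's intended route: the paper only sketches this lemma as ``proved similarly to Corollary~\ref{Corollary: NS, connection between measures along the way and measures in the iteration},'' and your argument fills in exactly that sketch — noting that all critical points $\mu_\alpha$ lie above $\kappa>\xi$, so $j_\alpha(\mathcal U\setminus\xi)(\mu_\alpha)=j_\alpha(\mathcal U)(\mu_\alpha)$ and the ``In particular'' part of the corollary goes through with $\mathcal U'=\mathcal U\setminus\xi\in V$ in place of $\mathcal U$. One small simplification: you do not really need to ``rerun'' the proof of Lemma~\ref{Lemma: NS, U lambda in M alpha even if U notin V} or the corollary — they already give, with no hypothesis on $\mathcal U$, that $j_W(\mathcal U)(k_\alpha(\mu_\alpha))=k_\alpha(U_{\mu_\alpha})$; since $k_\alpha(\mu_\alpha)>\kappa>\xi$ this equals $j_W(\mathcal U')(k_\alpha(\mu_\alpha))$, and when $\mathcal U'\in V$ one can write $j_W(\mathcal U')=k_\alpha(j_\alpha(\mathcal U'))$ and cancel $k_\alpha$ to get $U_{\mu_\alpha}=j_\alpha(\mathcal U')(\mu_\alpha)$ directly.
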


\begin{remark}
Let $ A\subseteq \Delta $ be a set such that, for every $ \alpha<\kappa^* $, $ \mu_{\alpha}\in j_{\alpha}\left(A\right) $. If $ \mathcal{U}\restriction_{A} =  \langle U_{\alpha} \colon \alpha\in A \rangle \in V $, then $ j_W\restriction_{V} $ is definable in $ V $, and again, this is proved by repeating the argument of corollary \ref{Corollary: NS, connection between measures along the way and measures in the iteration}, replacing $ \mathcal{U} $ with $ \mathcal{U}\restriction_{A} $. This seems like an improvement of the previous lemma; however, we will prove in lemma \ref{Lemma: NS, every measurable of M is the image of some mu alpha}, that a set $ A $ satisfies that $ \mu_{\alpha}\in j_{\alpha}\left(A\right) $ for every $ \alpha<\kappa^* $,  if and only if, for some $ \xi<\kappa $, $ \Delta\setminus \xi \subseteq A $.
\end{remark}

By lemma \ref{Lemma: NS, a weaker sufficient condition for definability}, definability of $ j_W\restriction_{V} $ in $ V $ follows from the assumption that $ j_W\left( \mathcal{U} \right)\setminus \kappa \in M $. In the next section we will prove that the other direction is not necessarily true.

%
%
%

\section{A General Analysis Of Iterated Ultrapowers}

Our main goal in this section is to simplify the presentation of $ j_W\restriction_{V} $ provided in the previous section; for instance, we will provide a simpler characterization of the critical points $ \mu_{\alpha} $. Simultaneously, we describe in detail how the Prikry sequences, added to measurables of $ M $ above $ \kappa $, look like: up to a finite initial segment, those are sequences of critical points of an iterated ultrapower, generated over some finite sub-iteration of $ \langle M_{\alpha} \colon \alpha<\kappa^* \rangle $, using a single measure. It will follow that every Prikry sequence, added in $ M\left[H\right] $ for a measurable cardinal above $ \kappa $, already belongs to $ V $. 

Our goals are lemma \ref{Lemma: NS, every measurable above mu bar has cofinility above kappa in V} and corollaries \ref{Corollary: NS, better definition of mu_alpha}, \ref{Corollary: NS, how Prikry sequences look} and \ref{Corollary: NS, best definition of mu_alpha}.

We start by studying linear iterations of $ V $ in more general settings. Let us assume that $ \kappa^* $ is an ordinal, and $ \langle M_{\alpha } \colon \alpha< \kappa^* \rangle $ is a linear iteration of $ V $, by normal measures of Mitchell order $ 0 $. More specifically, we assume that $ M_0 =\mbox{Ult}\left( V, U \right) $ where $ U $ is a measure of Mitchell order $ 0 $ on some measurable $ \kappa $; in successor steps, $ M_{\alpha+1} = \mbox{Ult}\left( M_{\alpha}, U_{\mu_{\alpha} } \right) $, where $ U_{\mu_{\alpha}}\in M_{\alpha} $ is a normal measure of order $ 0 $ on some measurable $ \mu_{\alpha} $; at limit steps a direct limit is taken. We assume also that the iteration is normal in the sense that $ \langle \mu_{\alpha} \colon \alpha<\kappa^* \rangle $ is increasing. We do not assume that the entire iteration is definable in $ V $. Finally, we denote $ M = M_{\kappa^*} $.

We begin by observing that every finite nice sequence corresponds to a finite iteration of $ V $ which naturally embeds in $ M $. Assume that $ \langle \alpha_0, \ldots, \alpha_m \rangle $ is a nice sequence below some ordinal $ \alpha<\kappa^* $. Recall that this means that, every $ 0\leq k \leq m $, there are functions $ g_k, F_k \in V $ such that--
$$ \mu_{\alpha_{k}} = j_{\alpha_k}\left( g_k \right) \left( \kappa, \mu_{\alpha_0}, \ldots, \mu_{\alpha_{k-1}} \right) $$
$$ U_{\mu_{\alpha_{k}}} = j_{\alpha_k}\left( F_k \right) \left( \kappa, \mu_{\alpha_0}, \ldots, \mu_{\alpha_{k-1}} \right) $$
(for $ k=0 $, $ \mu_{\alpha_0} = j_{\alpha_0}\left( g_{0} \right)(\kappa)  $  and $ U_{\mu_{\alpha_0}} = j_{\alpha_0}\left( F_0 \right)\left( \kappa \right)  $ ). 

We define a finite iteration $ \langle N_k \colon k\leq m+1 \rangle $ of $ V $, for each $ k\leq m $ an embedding $ i_k \colon V\to N_k $, a cardinal $ \lambda_k $ measurable in $ N_k $ and a measure $ W_k\in N_k $ on it of order $ 0 $.

First, let $ N_0 \simeq \mbox{Ult}\left( V,U \right) $, $ i_0 \colon V\to M_0 $ the ultrapower embedding,  $ \lambda_0 = i_0\left( g_0 \right)(\kappa) $ and $ W_0 = i_0\left( F_0 \right)\left( \kappa \right) $.

Assume that $ k\leq m $ and $ N_k $ has been defined. Let $ N_{k+1} \simeq \mbox{Ult}\left( N_k, W_k \right) $, $ i_{k+1} \colon V\to N_{k+1} $, $ i_{k+1} = j^{N_k}_{W_k} \circ i_k $, $\lambda_{k+1} = i_{k+1}\left( g_{k+1} \right)\left( \kappa, \lambda_0, \ldots, \lambda_k \right)$ and $ W_{k+1} = i_{k+1}\left( F_{k+1} \right)\left( \kappa, \lambda_0, \ldots, \lambda_k \right) $.

\begin{lemma}
Fix a nice sequence $ \langle \alpha_0, \ldots, \alpha_m \rangle $ below some $ \alpha< \kappa^* $. In the above notations, define $ k_{m+1} \colon N_{m+1}\to M_{\alpha}  $ as follows:
$$ k_{m+1}\left(  i_{m+1}\left( f \right)\left( \kappa, \lambda_0, \ldots, \lambda_m  \right) \right) = j_{\alpha}(f)\left( \kappa, \mu_{\alpha_0}, \ldots, \mu_{\alpha_m} \right) $$
for every $ f\in V $. Then $ k_{m+1} \colon N_{m+1}\to M_{\alpha}  $ is an elementary embedding, and--
$$ k_{m+1} = \left(j_{\alpha_m+1, \alpha} \circ \ldots \circ j_{\alpha_0+1, \alpha_1}  \circ  j_{0,\alpha_0}\right)\restriction_{ N_{m+1} } $$
\end{lemma}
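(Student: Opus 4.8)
The plan is to prove simultaneously, by induction on $k \leq m+1$, that the map $k_k \colon N_k \to M_{\alpha_{k-1}+1}$ (with the convention that the target of $k_{m+1}$ is $M_\alpha$, and more generally one should work with the intermediate models $M_{\alpha_0+1}, M_{\alpha_1+1}, \ldots$) is a well-defined elementary embedding satisfying the displayed factorization identity. The key structural fact driving everything is that the finite iteration $\langle N_k \rangle$ is built using exactly the functions $g_k, F_k$ that represent $\mu_{\alpha_k}, U_{\mu_{\alpha_k}}$ along the big iteration $\langle M_\alpha \rangle$; so the recipe for $N_{k+1} = \mathrm{Ult}(N_k, W_k)$ mirrors, via $k_k$, the recipe for $M_{\alpha_k+1} = \mathrm{Ult}(M_{\alpha_k}, U_{\mu_{\alpha_k}})$ — provided we know $k_k(\lambda_k) = \mu_{\alpha_k}$ and $k_k(W_k) = U_{\mu_{\alpha_k}}$.

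First I would handle the base case $k=0$: here $N_0 = \mathrm{Ult}(V,U) = M_0 = M_{\alpha_0}$ is literally the same ultrapower (up to canonical isomorphism), $i_0 = j_0 = j_{0,\alpha_0}\circ\cdots$ trivially (there are no earlier stages), $\lambda_0 = i_0(g_0)(\kappa) = j_{\alpha_0}(g_0)(\kappa) = \mu_{\alpha_0}$ and $W_0 = i_0(F_0)(\kappa) = U_{\mu_{\alpha_0}}$, so $k_1 \colon N_1 = \mathrm{Ult}(N_0,W_0) \to M_{\alpha_0+1} = \mathrm{Ult}(M_{\alpha_0},U_{\mu_{\alpha_0}})$ is the identity. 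For the inductive step, assume $k_{k+1}\colon N_{k+1} \to M_{\alpha_k+1}$ is elementary with the stated factorization. The key computation is that $k_{k+1}(\lambda_{k+1}) = k_{k+1}(i_{k+1}(g_{k+1})(\kappa,\lambda_0,\ldots,\lambda_k))$; applying the definition of $k_{k+1}$ this equals $j_{\alpha_k+1}(g_{k+1})(\kappa,\mu_{\alpha_0},\ldots,\mu_{\alpha_k})$, and by niceness of the sequence $\langle \alpha_0,\ldots,\alpha_m\rangle$ and the fact that the $\mu_{\alpha_i}$ for $i \le k$ are not moved past stage $\alpha_k$, this is exactly $\mu_{\alpha_{k+1}}$ computed in $M_{\alpha_k+1}$ — wait, one must be careful: $\alpha_{k+1}$ need not equal $\alpha_k+1$. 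So the correct statement is that $k_{k+1}$ then composes with $j_{\alpha_k+1,\alpha_{k+1}}$, which fixes $\mu_{\alpha_{k+1}}$ and $U_{\mu_{\alpha_{k+1}}}$ (as these are measurable, resp.\ a measure on a measurable, at every subsequent stage). The same computation gives $k_{k+1}$ (composed with $j_{\alpha_k+1,\alpha_{k+1}}$) sending $W_{k+1}$ to $U_{\mu_{\alpha_{k+1}}}$. Therefore $k_{k+1}$ lifts canonically to an embedding $k_{k+2}\colon \mathrm{Ult}(N_{k+1},W_{k+1}) \to \mathrm{Ult}(M_{\alpha_{k+1}},U_{\mu_{\alpha_{k+1}}})$ by the standard shift lemma for ultrapowers: $k_{k+2}([f]_{W_{k+1}}) := [k_{k+1}(f)]_{U_{\mu_{\alpha_{k+1}}}}$, which unwinds to exactly the displayed formula $k_{k+2}(i_{k+2}(f)(\kappa,\lambda_0,\ldots,\lambda_{k+1})) = j_{\alpha_{k+1}+1}(f)(\kappa,\mu_{\alpha_0},\ldots,\mu_{\alpha_{k+1}})$, and elementarity plus the factorization identity $k_{k+2} = (j_{\alpha_{k+1}+1,\alpha} \circ \cdots \circ j_{0,\alpha_0})\restriction N_{k+2}$ follow by composing the inductive hypothesis with $j_{\alpha_{k+1}+1,\cdot}$ and using $k_{k+1}\circ i_{k+1} = (j_{\alpha_k+1,\cdot}\circ\cdots)\circ i_{k+1}$ on the range of $i_{k+1}$.

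The main obstacle I anticipate is the bookkeeping around the fact that consecutive entries $\alpha_k, \alpha_{k+1}$ of the nice sequence are not consecutive ordinals in the big iteration, so one is really threading the embedding through the intermediate direct limits $M_{\alpha_k+1} \to M_{\alpha_{k+1}}$ and verifying those maps fix all the relevant critical points $\mu_{\alpha_i}$, their measures $U_{\mu_{\alpha_i}}$, and the ordinal parameters — this is where niceness and the earlier-established fact (property (D)/the critical-point computations) that $\mathrm{crit}(j_{\beta,\gamma}) \ge \mu_\beta$ and that measurables/measures are preserved along the tail of the iteration get used. A secondary, purely routine point is checking well-definedness and injectivity of each $k_k$ (that $i_{k+1}(f)(\kappa,\vec\lambda) = i_{k+1}(f')(\kappa,\vec\lambda)$ iff the corresponding equality holds in $M_\alpha$), which goes through exactly as in the proof of Lemma~\ref{Lemma: NS, k alpha is elementary} — reduce to the top ultrapower and use the membership relation "$\lambda_k \in i_k(\{y : \ldots\})$ iff that set is in $W_k$", matched across $k_k$. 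Once both the critical-point/measure identities and this standard well-definedness argument are in hand, the elementarity of each $k_k$ and the factorization identity are immediate, completing the induction and the lemma.
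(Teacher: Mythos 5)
Your high-level plan — induction on the length of the nice sequence, the shift-lemma for ultrapowers, and factoring through intermediate models — is the paper's plan. But there are two genuine errors in the execution, both stemming from a misplaced assumption that the finite iteration $\langle N_k\rangle$ already sits "inside" the big iteration with $\lambda_k=\mu_{\alpha_k}$.

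First, the base case is wrong: you claim $N_0 = M_{\alpha_0}$, $\lambda_0 = i_0(g_0)(\kappa) = j_{\alpha_0}(g_0)(\kappa) = \mu_{\alpha_0}$, and $W_0 = U_{\mu_{\alpha_0}}$, so that $k_1$ is the identity. All of these fail unless $\alpha_0 = 0$. Indeed $i_0 = j_0 \ne j_{\alpha_0}$ when $\alpha_0 > 0$, so $\lambda_0 = j_0(g_0)(\kappa)$ whereas $\mu_{\alpha_0} = j_{\alpha_0}(g_0)(\kappa) = j_{0,\alpha_0}(\lambda_0)$; in general $\lambda_0 < \mu_{\alpha_0}$. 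The paper explicitly flags this in the remark following the lemma (the example $\alpha_0 = 1$, where $\lambda_0 = \mu_0 < \mu_1 = \mu_{\alpha_0}$), and the distinction $\lambda_k$ vs. $\mu_{\alpha_k}$ is precisely what Lemma~\ref{Lemma: NS, nice sequence factors to finite iteration and iterated ultrapwer} and "complete" nice sequences are about. The correct base case is the empty nice sequence ($m=-1$): there $k_0 = j_{0,\alpha}$ restricted to $N_0 = M_0$, which is where the factorization anchors.

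Second, in the inductive step you target $M_{\alpha_{k-1}+1}$, which forces $k_{k+1}(\lambda_{k+1}) = j_{\alpha_k+1}(g_{k+1})(\kappa,\mu_{\alpha_0},\ldots,\mu_{\alpha_k})$, and this is generally \emph{not} $\mu_{\alpha_{k+1}}$. You notice the problem and compose with $j_{\alpha_k+1,\alpha_{k+1}}$, but the justification — that $j_{\alpha_k+1,\alpha_{k+1}}$ "fixes $\mu_{\alpha_{k+1}}$ as it is measurable at every subsequent stage" — is wrong: what is true (and what you actually need) is that $j_{\alpha_k+1,\alpha_{k+1}}$ \emph{sends} $k_{k+1}(\lambda_{k+1})$ \emph{to} $\mu_{\alpha_{k+1}}$, which follows by pulling parameters below $\mathrm{crit}(j_{\alpha_k+1,\alpha_{k+1}})$ through and applying the defining identity $\mu_{\alpha_{k+1}} = j_{\alpha_{k+1}}(g_{k+1})(\kappa,\vec\mu)$. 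Being measurable later has nothing to do with being fixed; in fact $j_{\alpha_k+1,\alpha_{k+1}}$ fixes $\mu_{\alpha_{k+1}}$ iff $k_{k+1}(\lambda_{k+1})$ already equals $\mu_{\alpha_{k+1}}$, which is exactly what may fail. The paper avoids all of this by taking the intermediate targets to be $M_{\alpha_{k}}$ rather than $M_{\alpha_{k-1}+1}$: with $k_{m+1}\colon N_{m+1}\to M_{\alpha_{m+1}}$ as the inductive hypothesis (i.e.\ the same embedding but relative to the nice sequence $\langle\alpha_0,\ldots,\alpha_m\rangle$ below $\alpha_{m+1}$), the identities $k_{m+1}(\lambda_{m+1}) = \mu_{\alpha_{m+1}}$ and $k_{m+1}(W_{m+1}) = U_{\mu_{\alpha_{m+1}}}$ come for free from the definition of a nice sequence, and the shift-lemma step and the final composition with $j_{\alpha_{m+1}+1,\alpha}$ go through cleanly.
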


\begin{remark}
The iteration $j_{\alpha_m+1, \alpha} \circ \ldots \circ j_{\alpha_0+1, \alpha_1}  \circ  j_{0,\alpha_0}$ above is not necessarily internal to $ N_{m+1} $; this means that the sub-iterations $ j_{\alpha_i+1, \alpha_{i+1}} $ participating in it are iterated ultrapowers as defined over $ M_{\alpha_{i}+1} $. In the proof of the lemma we will show that the external iteration $ j_{\alpha_m+1, \alpha} \circ \ldots \circ j_{\alpha_0+1, \alpha_1}  \circ  j_{1,\alpha_0} $ is well defined over $ N_{m+1} $, in the sense that for every $x\in  N_{m+1} $, $ \left(j_{\alpha_{i-1}+1, \alpha_i} \circ \ldots \circ j_{\alpha_0+1, \alpha_1}  \circ  j_{0,\alpha_0}\right)(x) $ belongs to $ M_{\alpha_{i}+1} $. Later in this section, we will prove that such an iteration might be an internal iteration of $ N_{m+1} $, provided that the initial nice sequence is chosen more carefully.
\end{remark}

\begin{proof}
We proceed by induction on $ m $. The induction basis is given for $ "m=-1" $, namely, the case where the given nice sequence below $ \alpha $ is empty. In this case, $ N_0 = \mbox{Ult}\left( V, U \right) $, $ i_0 = j_U \colon V\to N_0 $ and $ k_0\left( i_0(f)(\kappa)  \right) = j_{\alpha}(f)(\kappa) $, and clearly $ k_0 = j_{0,\alpha} \colon M_0 \to M_{\alpha} $. 

Assume now that $ m<\omega $ and $ k_{m+1} \colon N_{m+1}\to M_{\alpha_{m+1}} $ has been constructed (here, the embedding $ k_{m+1} $ corresponds to the nice sequence $ \langle \alpha_0 ,\ldots, \alpha_m \rangle $ below $ \alpha_{m+1} $). Let us argue that $ k_{m+2}  =  j_{\alpha_{m+1}+1, \alpha} \circ k_{m+1} \restriction_{N_{m+2}} $. Indeed, given an arbitrary element $ i_{m+2}(f)\left( \kappa, \lambda_0, \ldots, \lambda_m, \lambda_{m+1} \right) $ of $ N_{m+2} $, 
\begin{align*}
&k_{m+1}\left(  i_{m+2}(f)\left( \kappa, \lambda_0, \ldots, \lambda_m, \lambda_{m+1} \right) \right) \\
=\  &k_{m+1}\left(  j^{N_{m+1}}_{W_{m+1}  } \left( i_{m+1} (f) \right) \left(  \kappa, \lambda_0, \ldots, \lambda_{m+1} \right) \right) \\
=\  & j_{U_{\mu_{\alpha_{m+1}}}}\left(  j_{\alpha_{m+1}} (f) \right) \left( \kappa, \mu_{\alpha_0}, \ldots, \mu_{\alpha_m} , \mu_{\alpha_{m+1}} \right)\\
=\  & j_{\alpha_{m+1}+1}(f)\left( \kappa, \mu_{\alpha_0}, \ldots, \mu_{\alpha_m} , \mu_{\alpha_{m+1}}  \right)
\end{align*}
where we used the fact that $ W_{m+1} = i_{m+1}\left( F_{m+1} \right)\left( \kappa, \lambda_0, \ldots, \lambda_m \right) $ and $ \lambda_{m+1} = i_{m+1}\left( g_{m+1} \right)\left( \kappa, \lambda_0, \ldots, \lambda_m \right) $ for the computation on their values under $ k_{m+1} $. Finally, apply $ j_{\alpha_{m+1}+1, \alpha} $ on both sides.
\end{proof}

If the sequence $ \langle \alpha_0, \ldots, \alpha_m \rangle $ below $ \alpha $ is clear from the context, we denote $ N^* = N_{m+1} $, $ i^* = i_{m+1} \colon V\to N^* $ and $ k^* = k_{m+1} \colon N^*\to M_{\alpha} $. Note that $ k^* $ is not necessarily an internal iteration of $ N^* $. Indeed, assume that $ \lambda_0 < \mu_{\alpha_0} $ (this happens, e.g., if $ \alpha_0 = 1 $. In this case, $ \lambda_0 = \mu_{0} $ and $ \mu_{1}  = j_{U_{\mu_0}}\left( \mu_{0} \right) > \mu_{0}  $). If $ k^* $ was an internal iteration of $ N^* $, then $ \lambda_0 $ would have to be one of the critical points participating in the iteration, since $ \lambda_0 $ is inaccessible in $ N^* $ and $ k^*\left( \lambda_0 \right) = \mu_{\alpha_0} $. However, this is not possible because $ \lambda_0 $ is not measurable in $ N^* $.

Our goal is lemma \ref{Lemma: NS, every measurable above mu bar has cofinility above kappa in V}. In the proof, it will be useful to consider a nice sequence $ \langle \alpha_0, \ldots, \alpha_{m} \rangle $ below $ \alpha $ and its associated iteration $ N^* $, such that the embedding $ k^* \colon N^*\to M_{\alpha} $ is an internal iteration of $ N^* $. This will require a more sophisticated choice of the initial nice sequence. The example from the last paragraph offers a lead: we would like $ \lambda_k = \mu_{\alpha_k} $ to hold for every $ 0\leq k \leq m $.

\begin{lemma}\label{Lemma: NS, nice sequence factors to finite iteration and iterated ultrapwer}
$k^*$ is an internal iteration of $ N^* $ if and only if, for every $ 0\leq k \leq m $, $ \lambda_k = \mu_{\alpha_k} $.
\end{lemma}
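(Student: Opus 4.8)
The plan is to prove both directions by tracing the role of the auxiliary cardinals $\lambda_k = i_k(g_k)(\kappa,\lambda_0,\ldots,\lambda_{k-1})$ inside the finite iteration $\langle N_k \colon k\leq m+1\rangle$, and comparing them with the corresponding $\mu_{\alpha_k}$ in the iteration $\langle M_\alpha\rangle$. The key structural fact, already available from the previous lemma, is that $k^* = k_{m+1}$ equals the external composition $j_{\alpha_m+1,\alpha}\circ\cdots\circ j_{\alpha_0+1,\alpha_1}\circ j_{0,\alpha_0}$ restricted to $N^*$, and that under $k_{k+1}\colon N_{k+1}\to M_{\alpha_{k+1}}$ (the version attached to the shorter nice sequence) one has $k_{k+1}(\lambda_k) = \mu_{\alpha_k}$ and $k_{k+1}(W_k) = U_{\mu_{\alpha_k}}$. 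So $\lambda_k$ is always a cardinal of $N_{k+1}$ which is moved by the tail of the factor map to $\mu_{\alpha_k}$, and $W_k$ is the $N_k$-measure on it of Mitchell order $0$ derived from that factor map.

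For the ``if'' direction, suppose $\lambda_k = \mu_{\alpha_k}$ for all $k\leq m$. The plan is to show by induction on $k$ that $N_k = M_{\alpha_{k-1}+1}$ (with $N_0 = M_0$ when the relevant index is $0$, reading $\alpha_{-1}+1$ as $0$), so that $k^*$ is literally the iteration $j_{\alpha_m+1,\alpha}$ composed after the trivial identification — an internal iteration of $N^*$. Indeed, $N_0 = \mathrm{Ult}(V,U) = M_0$ by definition. Assuming $N_k$ agrees with $M_{\alpha_{k-1}+1}$ on the relevant initial segment, $\lambda_k = \mu_{\alpha_k}$ together with the equation $k_{k+1}(\lambda_k)=\mu_{\alpha_k}$ forces $\mathrm{crit}(k_{k+1})>\lambda_k$, so $V^{N_k}_{\lambda_k+1}$ is not moved; since $W_k$ is the derived measure and $U_{\mu_{\alpha_k}}$ is characterized the same way (Lemma~\ref{Lemma: NS, U lambda in M alpha even if U notin V}), we get $W_k = U_{\mu_{\alpha_k}}\cap N_k = U_{\mu_{\alpha_k}}$, hence $N_{k+1} = \mathrm{Ult}(N_k,W_k) = \mathrm{Ult}(M_{\alpha_k},U_{\mu_{\alpha_k}}) = M_{\alpha_k+1}$, and $i_{k+1}$ factors correctly through $j_{\alpha_k+1}$. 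At the end, $N^* = N_{m+1} = M_{\alpha_m+1}$ and $k^* = j_{\alpha_m+1,\alpha}$, which is by construction an internal iterated ultrapower of $N^* = M_{\alpha_m+1}$.

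For the ``only if'' direction, assume $k^*$ is an internal iteration of $N^*$. We always have $\lambda_k \leq \mu_{\alpha_k}$-style inequalities from the factor maps, so suppose towards a contradiction that $k$ is least with $\lambda_k \neq \mu_{\alpha_k}$; then (by the inductive part of the ``if'' argument applied below $k$) $\mathrm{crit}(k^*)$ sits at or below $\lambda_k$, and in fact $k^*(\lambda_k) = \mu_{\alpha_k} > \lambda_k$ while $\lambda_k$ is inaccessible in $N^*$ (it is $i_{k+1}(g_k)$ evaluated at critical points, and $g_k$ may be taken to take inaccessible — indeed measurable-of-order-$0$ — values, so $\lambda_k$ is inaccessible but, being below the next critical point $\mu_{\alpha_k}$ of the $M$-iteration, is \emph{not} measurable in $N^*$, exactly as in the $\alpha_0=1$ example in the text). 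An internal iterated ultrapower can only move an inaccessible cardinal by making it one of the critical points used along the iteration; but every critical point of an internal iteration is measurable in the model at that stage, and $\lambda_k$ is not measurable in $N^*$ (nor does it become so, as the relevant tail forcing is highly closed / the later critical points lie above it). This contradiction shows no such $k$ exists, i.e. $\lambda_k = \mu_{\alpha_k}$ for all $k\leq m$.

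The main obstacle I anticipate is the non-measurability claim in the ``only if'' direction: one must argue carefully that $\lambda_k$, which equals $\mu_{\alpha_k}$ only in the good case, is genuinely not measurable in $N^*$ whenever $\lambda_k < \mu_{\alpha_k}$ — here one uses that in the $M$-iteration no measurable strictly between $\sup_{\beta<\alpha_k}\mu_\beta$ and $\mu_{\alpha_k}$ is spared, together with the factorization $k^* = (\text{tail})\restriction N^*$ to pull this back, and that the $i^*$-image of a $g_k$ computing $\lambda_k$ lands below the $k$-th genuine critical point — and, dually, that internal iterations preserve non-measurability of inaccessibles that are not critical points. This is essentially the content of the informal ``$\lambda_0=1$'' example in the excerpt made precise; once that is pinned down, both implications follow cleanly from the agreement-of-$V_\alpha$ bookkeeping already used for the maps $k_\alpha$.
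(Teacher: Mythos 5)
Your ``only if'' direction is essentially the paper's argument, though the contradiction framing (take the least bad $k$) adds nothing: the point is simply that each $\lambda_k$ becomes a non-measurable inaccessible cardinal in $N_{k+1}$ after ultrapowering by the Mitchell-order-$0$ measure $W_k$, remains so in $N^* = N_{m+1}$ (later critical points lie higher, and ultrapowers do not create new measurables), and an internal iteration of $N^*$ fixes every non-measurable inaccessible of $N^*$; hence $\lambda_k = k^*(\lambda_k) = \mu_{\alpha_k}$ directly for every $k$. The ``obstacle'' you flag is precisely this non-measurability of $\lambda_k$, and it is settled by the Mitchell-order-$0$ requirement built into the choice of the functions $F_k$; state that plainly rather than leaving it as a worry.

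The ``if'' direction has a genuine gap. Your plan inducts on the claim $N_k = M_{\alpha_{k-1}+1}$, concluding $N^* = M_{\alpha_m+1}$ and hence that $k^* = j_{\alpha_m+1,\alpha}$ is an iteration of $N^*$. But $N_k = M_{\alpha_{k-1}+1}$ is false whenever $\alpha_0 > 0$. For instance with $\alpha_0 = 1$ and $\lambda_0 = \mu_1$: the model $N_1 = \mathrm{Ult}(M_0, W_0)$ is obtained from $M_0$ by a single ultrapower, whereas $M_2 = \mathrm{Ult}(M_1, U_{\mu_1})$ is obtained from $M_0$ by two; the map $k_1\colon N_1 \to M_2$ is the nontrivial internal ultrapower of $N_1$ at $\mu_0$, not the identity. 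Likewise $W_k$ and $U_{\mu_{\alpha_k}}$ live in different models and need not be equal as sets; the relation between them is only via the factor map. The point of the lemma is not to identify $N^*$ with $M_{\alpha_m+1}$, but to show that the \emph{map} $k^*\colon N^*\to M_\alpha$ is generated by iterating inside $N^*$. The paper's argument is a commutation-of-iterations argument that your plan does not contain: since $\lambda_0 = \mu_{\alpha_0}$ is measurable in $N_0 = M_0$ and the critical points of $j_{0,\alpha_0}$ all lie below it, $j_{0,\alpha_0}$ fixes $\mu_{\alpha_0}$ and carries $W_0$ to $U_{\mu_{\alpha_0}}$; hence the ultrapower by $W_0$ can be slid past $j_{0,\alpha_0}$, i.e.\ $j_{U_{\mu_{\alpha_0}}}\circ j_{0,\alpha_0} = j^{N_1}_{0,\alpha_0}\circ j_{W_0}$, where $j^{N_1}_{0,\alpha_0}$ applies the same measures to $N_1$ and is therefore an internal iteration of $N_1$. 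Iterating this for each $k$ with $\lambda_k = \mu_{\alpha_k}$ expresses $k^*$ as a composition of iterations internal to $N^*$. Without this commutation step your induction cannot close, because the equality ``$N_{k+1} = M_{\alpha_k+1}$'' you rely on at each stage does not hold.
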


\begin{proof}
Let us assume first that $ k^* $ is an iteration of $ N^* $. Then $ \lambda_k $ is a non-measurable inaccessible in $ N^* $, and thus $ \lambda_k $ cannot move by $ k^* $.  So $ \mu_{\alpha_k} =  k^*\left( \lambda_{k} \right) = \lambda_{k} $.

Let us concentrate on the other direction. Assume that $ \lambda_k = \mu_{\alpha_k} $ for every $ 0\leq k\leq m $. $ \lambda_0 = \mu_{\alpha_0} $ is measurable in $ M_U = N_0$, and thus $ j_{0,\alpha_0}\left( \mu_{\alpha_0} \right) = \mu_{\alpha_0} $. Also,   $ j_{0,\alpha_0}\left( W_0 \right) = U_{\mu_{\alpha_0}} $. Note that--
$$ j_{\alpha} = j_{\alpha_0+1, \alpha} \circ j_{ U_{\mu_{ \alpha_0 }  } } \circ j_{\alpha_0} = j_{\alpha_0+1,\alpha} \circ j^{N_1}_{0,\alpha_0}  \circ  j_{W_0} \circ j_U $$
where $ j^{N_1}_{0,\alpha_0} $ is the iterated ultrapower consisting of the same measures as $ j_{0,\alpha_0} $, but acting on $ N_1 $. $ j^{N_1}_{0,\alpha_0} \colon N_1 \to M_{\alpha_0+1} $ is internal to $ N_1 $, and so is $ j_{\alpha_0+1,\alpha} \circ  j^{N_1}_{0,\alpha_0} $.

We proceed now by induction on $ m $. Assume that $ k_{m+1} \colon N_{m+1}\to M_{\alpha_{m+1}} $ is an internal iteration of $ N_{m+1} $ (with respect to the nice sequence $ \langle \alpha_0, \ldots, \alpha_m \rangle $ below $ \alpha_{m+1} $). Then--
$$ U_{\mu_{ \alpha_{ m+1 } }   } = k_{m+1}\left( W_{m+1} \right) $$
and thus--
\begin{align*}
 j_{ \alpha_{m+1}+1 }=\  & j_{  U_{\mu_{ \alpha_{ m+1 } }   } } \circ k_{m+1} \circ i_{m+1} \\
=\ & k_{m+1}\restriction_{ N_{m+2} }  \circ i_{m+2} \\
=\ & \left( j_{ \alpha_m+1, \alpha_{m+1} } \circ \ldots \circ j_{\alpha_0+1, \alpha_1} \circ j_{0,\alpha_0}  \right)\restriction_{N_{m+2}} \circ i_{m+2}
\end{align*}
Where $ \left( j_{ \alpha_m+1, \alpha_{m+1} } \circ \ldots \circ j_{\alpha_0+1, \alpha_1} \circ j_{0,\alpha_0}  \right)\restriction_{N_{m+2}}  $ above is an internal iteration of $ N_{m+2} $, since $ W_{m+1} $ is a measure over $ \lambda_{m+1} = \mu_{\alpha_{m+1}} $, and lies strictly above all the participating critical points. Thus, the embedding $ k_{m+2} $, obtained by applying $ j_{\alpha_{m+1}+1, \alpha} $ on $ \left( j_{ \alpha_m+1, \alpha_{m+1} } \circ \ldots \circ j_{\alpha_0+1, \alpha_1} \circ j_{0,\alpha_0}  \right)\restriction_{N_{m+2}}  $, is an internal iteration of $ N_{m+2} $.
\end{proof}

\begin{lemma} \label{Lemma: NS, completing sequences}
Every nice sequence $ \langle \alpha_0, \ldots, \alpha_m \rangle $ below $ \alpha $ can be completed to a nice sequence-- $$ \langle \alpha^{0}_0, \ldots, \alpha^{n_0}_{0}, \alpha^{0}_1, \ldots, \alpha^{n_1}_{1}, \ldots \ldots , \alpha^{0}_m, \ldots , \alpha^{n_m}_{m} \rangle $$
where $ \alpha^{n_0}_{0} = \alpha_0, \alpha^{n_1}_{1} = \alpha_1 , \ldots, \alpha^{n_m}_{m} = \alpha_m $, such that the embedding $ k^* $ associated to the latter sequence is an iteration of $ N^* $. 
\end{lemma}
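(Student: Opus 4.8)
The plan is to build the completion block by block, inserting in front of each original target $\alpha_l$ finitely many new indices obtained by repeatedly \emph{pulling back} the relevant critical point through the iteration, and then to invoke Lemma~\ref{Lemma: NS, nice sequence factors to finite iteration and iterated ultrapwer}: by that lemma it suffices to arrange that, for the completed sequence, the cardinal $\lambda_j$ produced at \emph{every} position equals the value of $\mu$ at that position. So fix a target $\alpha_l$ with its witnessing functions $g_l,F_l\in V$ (we may assume $g_l$ always returns a measurable cardinal and $F_l$ a normal measure of order $0$ on it), and suppose the part of the completed sequence preceding $\alpha_l$ has already been produced so that its associated finite iteration is aligned and reaches a stage $\gamma\le\alpha_l$ of $\langle M_\beta\rangle$. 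Set $\nu:=j_{\gamma}(g_l)(\kappa,\vec\mu)$; this is measurable in $M_{\gamma}$ and $j_{\gamma,\alpha_l}(\nu)=\mu_{\alpha_l}$. If $\nu=\mu_{\alpha_l}$ the block is finished. Otherwise $\nu<\mu_{\alpha_l}$, so $\nu$ is moved by $j_{\gamma,\alpha_l}$; let $\beta\in[\gamma,\alpha_l)$ be the least stage at which $\nu$ is moved. Then $\nu$ is fixed by $j_{\gamma,\beta}$, hence measurable --- in particular inaccessible --- in $M_{\beta}$, and since an ultrapower embedding by a normal measure on $\mu_{\beta}$ fixes every ordinal $<\mu_{\beta}$ and every inaccessible $>\mu_{\beta}$, the only way $\nu$ can be moved by the $\beta$-th step is $\nu=\mu_{\beta}$. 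We take $\beta$ as the next new index.

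For this $\beta$ one has to check it can be absorbed into a nice sequence. Since $\nu$ is fixed by $j_{\gamma,\beta}$ we get $\mu_{\beta}=j_{\beta}(g_l)(\kappa,\vec\mu)$, so $\mu_{\beta}$ is representable; and $U_{\mu_{\beta}}\in M_{\beta}$ is representable over $V$ by \emph{some} nice sequence below $\beta$, since every element of an iterated ultrapower $M_{\beta}$ has the form $j_{\beta}(f)(\kappa,\vec\rho)$ for $f\in V$ and a finite sequence $\vec\rho$ of critical points of the iteration, and --- recursing downward on the ordinals and using that the increasing enumeration of the union of two nice sequences is again nice --- this can be refined to a genuine nice sequence below $\beta$. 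We therefore insert $\beta$ together with that auxiliary nice sequence (all of whose entries are $<\beta\le\alpha_l$); because $\beta$ carries the niceness witnesses $g_l$ for its ordinal and the representing function for $U_{\mu_{\beta}}$ for its measure, the argument of Lemma~\ref{Lemma: NS, nice sequence factors to finite iteration and iterated ultrapwer} shows that after this step the finite iteration is still aligned and that the relevant embedding factors through the model obtained after applying $U_{\mu_{\beta}}$. We then continue the recursion from stage $\beta+1$.

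The main point --- and the main obstacle --- is that this inner recursion stops after finitely many steps. Suppose it ran $\omega$ times for the block of $\alpha_l$, producing stages $\beta_0<\beta_1<\cdots<\alpha_l$ with $\mu_{\beta_{i+1}}=j^{M_{\beta_i}}_{U_{\mu_{\beta_i}}}(\mu_{\beta_i})$ and with $\mu_{\beta_{i+1}}$ fixed by $j_{\beta_i+1,\beta_{i+1}}$. Along this thread each critical point is carried to the next one by its own ultrapower and is then frozen until its own stage, so exactly as in the classical computation of an $\omega$-fold iterated ultrapower one gets $j_{\beta_0,\beta_\omega}(\mu_{\beta_0})=\sup_i\mu_{\beta_i}=:\delta$, where $\beta_\omega=\sup_i\beta_i$: every ordinal of $M_{\beta_\omega}$ below $j_{\beta_0,\beta_\omega}(\mu_{\beta_0})$ is the image of an ordinal below $\mu_{\beta_i}=\mathrm{crit}(j_{\beta_i,\beta_\omega})$ for some $i$, hence already lies below $\mu_{\beta_i}$. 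On one hand $\delta=j_{\beta_0,\beta_\omega}(\mu_{\beta_0})$ is measurable, hence regular, in $M_{\beta_\omega}$; on the other hand, by $\mathrm{GCH}$ (valid below $\kappa^{*}$ in all the iterates) $\mu_{\beta_{i+1}}<\mu_{\beta_i}^{++}$, so $\delta\le\mu_{\beta_0}^{+\omega}$ is a cardinal of cofinality $\omega$, and, using the closure of the relevant iterates under $\omega$-sequences to locate the witnessing sequence, this contradicts the regularity of $\delta$ in $M_{\beta_\omega}$; a fortiori $\mu_{\alpha_l}$, which is $\delta$ when $\beta_\omega=\alpha_l$ and is $j_{\beta_\omega,\alpha_l}(\delta)$ (with $\delta<\mu_{\beta_\omega}=\mathrm{crit}(j_{\beta_\omega,\alpha_l})$) otherwise, cannot be measurable in $M_{\alpha_l}$, a contradiction. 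Hence every block contributes only finitely many new indices; since there are $m+1$ blocks the completed sequence is finite and nice, and as it is aligned at every position, Lemma~\ref{Lemma: NS, nice sequence factors to finite iteration and iterated ultrapwer} yields that the associated $k^{*}$ is an internal iteration of $N^{*}$, which completes the proof.
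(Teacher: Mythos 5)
Your overall strategy matches the paper's: complete block-by-block by pulling the critical point back to the least stage where it moves, observe that a measurable moved at stage $\beta$ must be $\mu_{\beta}$, insert that index, and invoke Lemma~\ref{Lemma: NS, nice sequence factors to finite iteration and iterated ultrapwer} once every position is aligned. But your finiteness argument has a genuine gap.

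You derive a contradiction by asserting that $\delta=\sup_i\mu_{\beta_i}=j_{\beta_0,\beta_\omega}(\mu_{\beta_0})$ is measurable (hence regular) in $M_{\beta_\omega}$, while ``by GCH, $\delta\le\mu_{\beta_0}^{+\omega}$ is a cardinal of cofinality $\omega$,'' and you attempt to transfer the singular cofinality into $M_{\beta_\omega}$ via ``the closure of the relevant iterates under $\omega$-sequences.'' This is exactly the classical $\omega$-length iterated-ultrapower phenomenon: the direct limit $M_{\beta_\omega}$ is \emph{not} closed under $\omega$-sequences, the critical sequence $\langle\mu_{\beta_i}\colon i<\omega\rangle$ does \emph{not} lie in $M_{\beta_\omega}$, and $\delta=j_{\beta_0,\beta_\omega}(\mu_{\beta_0})$ really is measurable in $M_{\beta_\omega}$ (and hence in $M_{\alpha_l}$) despite having cofinality $\omega$ in $V$. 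There is simply no clash between ``regular in $M_{\beta_\omega}$'' and ``cofinality $\omega$ in $V$,'' so both the claimed contradiction and the further conclusion that $\mu_{\alpha_l}$ ``cannot be measurable in $M_{\alpha_l}$'' are unjustified (the latter is in fact false). The GCH cardinality estimate on $\delta$ plays no role either.

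The paper's finiteness argument is different and what is actually needed here: it shows that $\mu_{\alpha_l}$ would have to \emph{equal} $\delta$, and this contradicts the cofinality clause built into the very definition of the iteration in Section~3, which requires $\left(\mbox{cf}(\mu_{\alpha_l})\right)^V>\kappa$. Since the $\omega$-sequence $\langle\mu_{\beta_i}\rangle$ is computable in $V$ from the finite iterations $N_i$ and the $V$-functions $g_l,F_l$, one has $\left(\mbox{cf}(\delta)\right)^V=\omega$, and that is the contradiction. Your write-up does gesture at the identification $\mu_{\alpha_l}=\delta$, but it draws the wrong conclusion from it (measurability rather than the $V$-cofinality constraint), so the proof as stated does not close.
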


\begin{proof}
We begin with an arbitrary nice sequence $ \langle \alpha_0, \ldots, \alpha_m \rangle $, and complete it to a nice sequence--
$$ \langle \alpha^{0}_0, \ldots, \alpha^{n_0}_{0}, \alpha^{0}_1, \ldots, \alpha^{n_1}_{1}, \ldots \ldots , \alpha^{0}_m, \ldots , \alpha^{n_m}_{m} \rangle $$
where $ \alpha^{n_0}_{0} = \alpha_0, \alpha^{n_1}_{1} = \alpha_1 , \ldots, \alpha^{n_m}_{m} = \alpha_m $.

We first extend the sequence below $ \alpha_0 $, namely define $ \alpha^{0}_{0}, \ldots , \alpha^{n_0}_{0} $.

Denote $ N^{0}_{0} = \mbox{Ult}\left(  V ,U\right) $ and $ i^{0}_{0} = j_U \colon V \to  N^{0}_{0} $. Let $ \lambda^{0}_0 = i^{0}_0\left( g_0 \right)(\kappa) $. Let $ \alpha^{0}_0 \leq \alpha_0 $ be the first such that $ \lambda^{0}_0 \leq \mu_{\alpha^{0}_0} $. $ \left( \mbox{cf}\left( \lambda^{0}_0 \right) \right)^V > \kappa $, so actually $ \lambda^{0}_0 = \mu_{\alpha^{0}_0} $. If $ \alpha^{0}_0 = \alpha_0 $, we set $ n_0=0 $ and we are done extending the sequence below $ \alpha_0 $. Assume otherwise. 

Work in $ N^{0}_{0} $ and define there $ W^{0}_{0} = i^{0}_{0}\left( \mathcal{U} \right)\left( \lambda^{0}_{0} \right) $. Let $ N^{1}_{0} = \mbox{Ult}\left( N^{0}_{0}, W^{0}_{0} \right) $ and $ i^{1}_{0} = j^{N^{0}_{0}}_{W^{0}_{0}} \circ i^{0}_{0} \colon V\to N^{1}_{0} $. Define $ k^{1}_{0} \colon  N^{1}_{0} \to M_{\alpha^{0}_0+1} $ to be such that for every $ f\in V $, 
$$ k^{1}_{0}\left(  i^{1}_{0} \left( f \right)\left( \kappa, \lambda^{0}_{0} \right) \right) = j_{\alpha^{0}_0+1} \left( f \right)\left( \kappa, \mu_{\alpha^{0}_{0} }\right)$$
by lemma \ref{Lemma: NS, nice sequence factors to finite iteration and iterated ultrapwer}, $ k^{1}_{0} $ is an iterated ultrapower of $ N^{1}_{0} $. The measures participating in this iteration lie on measurables below $ \alpha^{0}_{0} $ (actually, $ k^{1}_{0} = j^{N^{1}_{0}}_{1, \alpha^{0}_{0}} $).
In $ N^{1}_{0} $, let $ \lambda^{1}_{0}= j_{W^{0}_{0}}\left( \lambda^{0}_{0} \right)  $, and note that $ \lambda^{1}_{0} $ is measurable in $ N^{1}_{0} $ above $ \lambda^{1}_{0} $. Thus, $ \lambda^{1}_{0} $ does not participate in the iteration $ k^{1}_{0} $, namely $ k^{1}_{0}\left( \lambda^{1}_{0} \right) = \lambda^{1}_{0} $. So $ \lambda^{1}_{0} $ is a measurable cardinal in $ M_{\alpha^{0}_{0} +1} $, and $ \left( \mbox{cf}\left( \lambda^{1}_{0} \right)  \right)^V > \kappa $. Thus, there exists an index $ \alpha^{1}_{0}  $, such that $ \mu_{ \alpha^{1}_{0}} = \lambda^{1}_{0} $ and $ \alpha^{0}_{0} < \alpha^{1}_{0} \leq \alpha_{0} $. If $ \alpha^{1}_{0} = \alpha_0 $, we finish extending the sequence below $ \alpha_0 $ and set $ n_0 = 1 $. Assume otherwise. Define in $ N^{1}_{0} $ the measure $ W^{1}_{0} = i^{1}_{0}(\mathcal{U})\left( \lambda^{1}_{0} \right) $. Let $ N^{2}_{0} = \mbox{Ult}\left( N^{1}_{0}, W^{1}_{0} \right) $ and $ i^{2}_0 = j^{N^{1}_{0}}_{W^{1}_{0}} \circ i^{1}_{0} \colon V\to N^{2}_{0} $. Define $ k^{2}_{0} \colon N^{2}_{0} \to M_{\alpha^{1}_{0} +1} $ in the natural way, namely, for every $ f\in V $,
$$  k^{2}_{0}\left( i^{2}_{0}(f)\left( \kappa, \lambda^{0}_{0} , \lambda^{1}_{0} \right) \right) = j_{\alpha^{1}_{0}+1}(f)\left( \kappa, \lambda^{0}_{0} , \lambda^{1}_{0} \right) $$
and by \ref{Lemma: NS, nice sequence factors to finite iteration and iterated ultrapwer}, $ k^{2}_{0} $ is an iterated ultrapower of $ N^{2} $ with measurables below $\mu_{\alpha^{1}_{0} }$. Denote $ \lambda^{2}_{0} = j^{N^{1}_0}_{W^{1}_{0} }\left( \lambda^{1}_{0} \right) > \lambda^{1}_{0} $. Arguing as before, $ \lambda^{2}_{0} $ is measurable in $ M_{\alpha^{1}_{ 0}+1 } $  with cofinality above $ \kappa $, and thus, there exists $ \alpha^{2}_0 $ such that $ \lambda^{2}_{0} = \mu_{ \alpha^{2}_0 } $ and $ \alpha^{0}_{0} < \alpha^{1}_{0} < \alpha^{2}_0 \leq \alpha_0 $.

Continue in this fashion, and construct an increasing sequence $ \alpha^{0}_{0}< \alpha^{1}_{0}< \ldots \leq \alpha_0$. We argue that the construction stops after finitely many steps. Assume otherwise, and let $ \langle \alpha^{n}_{0} \colon n<\omega \rangle $ be a strictly increasing sequence of ordinals below $ \alpha_0 $, such that for every $ n<\omega $, 
$$\mu_{\alpha_0} > \mu_{\alpha^{n+1}_0} = \lambda^{n+1}_{0} = j_{W^{n}_0} \left( {\lambda^{n}_{0}} \right) > {\lambda^{n}_0} = \mu_{\alpha^{n}_{0}}  $$
and--
$$ \mu_{\alpha^{n+1}_0} = \lambda^{n+1}_0 = {k^{n+1}_0}\left( \lambda^{n+1}_{0} \right) = {k^{n+1}_{0}}\left( i^{n+1}_0 \left( g_0 \right)(\kappa) \right) = j_{\alpha^{n}_0+1}\left( g_0 \right)(\kappa) = j_{U_{ \mu_{  \alpha^{n}_{0}  } }} \left( \mu_{ \alpha^{n}_0  }  \right)  $$
let $ \alpha^{*}_{0} = \sup\{ \alpha^{n}_{0} \colon n<\omega \} \leq \alpha_0 $. Note that--
$$j_{\alpha^{*}_{0}}(g_0)(\kappa)  =j_{\alpha^{0}_0 , \alpha^{*}_{0}}\left( \mu_{\alpha^{0}_0} \right) =\sup\{ \mu_{\alpha^{n}_{0}} \colon n<\omega \} < \mu_{\alpha^{*}_{0}} $$
and thus--
$$\mu_{\alpha_0} = j_{\alpha_{0}}\left( g_0 \right)(\kappa) = j_{\alpha^{*}_0 , \alpha_0 } \left(  j_{ \alpha^{*}_{0} } \left( g_0 \right)(\kappa)  \right) = j_{ \alpha^{*}_{0} } \left( g_0 \right)(\kappa)   $$
which contradicts the fact that $  j_{ \alpha^{*}_{0} } \left( g_0 \right)(\kappa)  < \mu_{\alpha^{*}_0} \leq \mu_{\alpha_0} $.

Thus, there exists $ n_0 <\omega $ and a sequence $ \alpha^{0}_{0} < \alpha^{1}_{0} < \ldots < \alpha^{n_0}_{0} = \alpha_0 $ such that for every $ n<n_0 $,
$$ \mu_{\alpha^{n+1}_{0}} = j_{U_{ \mu_{ \alpha^{n}_{0} }  }} \left(  \mu_{ \alpha^{n}_{0} } \right) = j_{\alpha^{n+1}_{0}  } \left( g_0 \right)(\kappa) $$
where the last equality follows by induction, since--
$$  j_{\alpha^{n+1}_{0}  }(g_0)(\kappa) = j_{ \alpha^{n}_{0}, \alpha^{n+1}_{0 }   } \left(  j_{\alpha^{n}_0 }(g_0)(\kappa)    \right)  = j_{ \alpha^{n}_{0}, \alpha^{n+1}_{0 }   } \left(  \mu_{\alpha^{ n }_{0}  }   \right) = j_{U_{ \alpha^{n}_{0}   } }\left(  \mu_{\alpha^{ n }_{0}  }   \right) $$
let us justify the last equality in the above equation. If $ \mu_{\alpha_0} $ is not a limit of measurables, then $ \alpha^{n+1}_{0} = \alpha^{n}_{0}+1 $ and the equation is clear. Otherwise, $ \mu_{\alpha^{n}_{0}} $ is a limit of measurables. Therefore $  \mu_{\alpha^{n+1}_{0}} =  j_{U_{ \alpha^{n}_{0} }  } \left(\mu_{ \alpha^{n}_{0} }\right) $  is a limit of measurables, and each factor in $ j_{\alpha ^{n}_{0}+1, \alpha^{n+1}_{0} } $ is an ultrapower embedding with one of them. Thus, each such factor maps $ \mu_{\alpha^{n+1}_0} $ to itself.

This finishes the completion of the initial nice sequence below $ \alpha_0 $. Let $ N^{*}_{0} $ be the iterated ultrapower associated to the nice sequence $ \langle \alpha^{0}_0, \ldots \alpha^{n}_{0} \rangle $, with a corresponding embedding $ i^{*}_{0} \colon V\to N^{*}_{0} $. Let $ k^{*}_{0} \colon N^{*}_{0} \to M_{\alpha_0+1} $ be defined as follows: for every $ f\in V $,
$$ k^{*}_{0}\left( i^{*}_{0}(f)\left( \kappa, \mu_{\alpha^{0}_{0}}, \ldots , \mu_{\alpha^{n_0}_{0}} \right) \right) = j_{\alpha_1}(f)\left( \kappa, \mu_{\alpha^{0}_{0}}, \ldots , \mu_{\alpha^{n_0}_{0}} \right) $$
By lemma \ref{Lemma: NS, nice sequence factors to finite iteration and iterated ultrapwer}, the embedding $ k^{*}_{0} $ is an iterated ultrapower of $ N^{*}_{0} $, and $ j_{\alpha_0+1} = k^{*}_{0} \circ i^{*}_{0} $. All the ultrapowers in $ k^{*}_{0} $ are taken on measurables below $ \alpha_0 $.

Now work over $ N^{*}_{0} $, define $ \lambda^{0}_{1} = i^{*}_{0}\left( g_1 \right)\left( \kappa, \mu_{\alpha_0} \right) $. $ \lambda^{0}_1 >  \mu_{\alpha_0} $ is measurable in $ N^{*}_{0} $ and thus is not moved by $ k^{*}_{0} $. Also, it has cofinality above $ \kappa $ in $ V $. Let $ \alpha^{0}_{1} \leq \alpha_1 $ be such that $\lambda^{0}_1 = \mu_{\alpha^{0}_1} $.  If $ \lambda^{0}_1 = \mu_{\alpha_1} $, we set $ n_1 =0  $ and move on to extend the sequence below $ \alpha_2 $. Assume otherwise. Let $ W^{0}_1 = i^{*}_{0}\left( \mathcal{U} \right)\left( \lambda^{0}_1 \right) $. Let $ N^{0}_1 = \mbox{Ult}\left( N^{*}_{0}, W^{0}_1 \right) $, and $ i^{0}_1 = j^{  N^{*}_{0} }_{ W^{0}_1 } \circ i^{*}_{0} $. Let $ k^{0}_{1} \colon N^{1}_{0} \to M_{\alpha^{0}_1 +1} $ be the natural embedding, and continue the construction as above. It will stop after finitely many steps.

By repeating the same argument for $ \alpha_2, \ldots ,\alpha_m $, we generate the desired completion of $ \langle \alpha_0, \ldots, \alpha_m \rangle $.
\end{proof}

\begin{remark}
For every $0\leq i\leq m $, $ \mu_{\alpha_i} $ appears in the Prikry sequence of $ \mu^{*}_{\alpha_i} = k_{\alpha_i}\left( \mu_{\alpha_i} \right) $ in $ M\left[H\right] $. Note that in the above proof, the completion below $ \mu_{\alpha_i} $, namely the sequence $ \langle \alpha^{0}_k \ldots , \alpha^{n_i}_{i} \rangle $, is a subsequence of the Prikry sequence of $ \mu^{*}_{\alpha_i} $ below $ \mu_{\alpha_i} $. In lemma \ref{Lemma: NS, the element after mu alpha is its image} we will prove that this subsequence is actually a segment in this Prikry sequence.
\end{remark}

\begin{lemma} \label{Lemma: NS, every measurable above mu bar has cofinility above kappa in V}
Assume that $ \alpha<\kappa^* $. Denote $ \bar{\mu}= \sup\{ \mu_{\alpha'} \colon \alpha'<\alpha \} $. Let $ \lambda > \bar{\mu} $ be an inaccessible cardinal in $ M_{\alpha} $. Then $ \left(\mbox{cf}(\lambda)\right)^V > \kappa $.
\end{lemma}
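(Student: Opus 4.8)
The plan is to pull $\lambda$ back along a \emph{finite} iterated ultrapower of $V$, where closure under $\kappa$-sequences is available, and then to argue that the remaining (possibly infinite) part of the iteration does not disturb $\lambda$. First I would fix, using the representation of elements of $M_\alpha$, a function $h\in V$ and a nice sequence $\langle\alpha_0,\dots,\alpha_k\rangle$ below $\alpha$ with $\lambda = j_\alpha(h)(\kappa,\mu_{\alpha_0},\dots,\mu_{\alpha_k})$, and apply Lemma \ref{Lemma: NS, completing sequences} to complete this sequence so that the associated embedding $k^*\colon N^*\to M_\alpha$ is an \emph{internal} iterated ultrapower of $N^*$, where $i^*\colon V\to N^*$ is a finite iterated ultrapower of $V$, each step an ultrapower by a measure on a cardinal $>\kappa$. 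A short induction on the finite length shows $N^*$ is closed under $\kappa$-sequences from $V$, hence any $\delta$ that is regular in $N^*$ and $>\kappa$ has $(\mathrm{cf}(\delta))^V>\kappa$. Moreover $\lambda = k^*(\tilde\lambda)$ for some $\tilde\lambda\in N^*$ (of the form $i^*(\bar h)(\kappa,\dots)$ for a suitably modified $\bar h$), and by elementarity of $k^*$ the ordinal $\tilde\lambda$ is inaccessible in $N^*$.

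Next I would analyze how $\tilde\lambda$ behaves under the internal iteration $k^*$. Its critical points form an increasing sequence, each of them one of the cardinals $\mu_\gamma$ with $\gamma<\alpha$, hence each $\le\bar\mu<\lambda$. Since $\tilde\lambda$ is regular and a strong limit in $N^*$, it is fixed at every step of $k^*$ whose critical point is different from $\tilde\lambda$ (below $\tilde\lambda$ by the usual smallness/continuity argument for inaccessibles, above $\tilde\lambda$ because it is then fixed outright). Thus there are two cases. If $\tilde\lambda$ is \emph{not} a critical point of $k^*$, then $k^*(\tilde\lambda)=\tilde\lambda$, so $\lambda=\tilde\lambda\in N^*$; as $\lambda$ is regular in $N^*$ and $\lambda>\bar\mu\ge\kappa$, closure of $N^*$ gives $(\mathrm{cf}(\lambda))^V>\kappa$.

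The remaining, and main, case is that $\tilde\lambda$ occurs as a critical point of $k^*$; in particular $\tilde\lambda$ is measurable in $N^*$. Here I would let $j$ be the ultrapower step of $k^*$ taken at $\tilde\lambda$ (inside the first model of the iteration in which $\tilde\lambda$ appears as a critical point, where $\tilde\lambda$ is still fixed) and show that the image $j(\tilde\lambda)$ is not moved by any later step of $k^*$: every later critical point is a cardinal chosen in the construction as the \emph{least} measurable above the previous one, and an inductive argument — ultimately using that each $\mu_{\gamma+1}$ is realized as $j_{U_{\mu_\gamma}}(s)(\mu_\gamma)$ for the ``next measurable'' function $s$, hence lies strictly below $j_{U_{\mu_\gamma}}(\mu_\gamma)$ — shows that all later critical points are either below $j(\tilde\lambda)$ or above it, never equal to any intermediate image of $\tilde\lambda$; in either case $j(\tilde\lambda)$, being inaccessible, stays fixed. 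Hence $\lambda = k^*(\tilde\lambda) = j(\tilde\lambda)$, where $j$ is an ultrapower by a normal measure of Mitchell order $0$ on the measurable $\tilde\lambda$, so the cofinality of $\lambda$ in that model equals $(\tilde\lambda^+)$ there. Since the iteration of $N^*$ leading up to this step uses only critical points strictly below the inaccessible $\tilde\lambda$, it does not change $(\tilde\lambda^+)$, so this value equals $(\tilde\lambda^+)^{N^*}$, which is regular in the $\kappa$-closed model $N^*$ and $>\kappa$ (as $\tilde\lambda>\kappa$); therefore $(\mathrm{cf}(\lambda))^V>\kappa$.

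I expect the main obstacle to be precisely the tracking in the last paragraph: controlling the image of $\tilde\lambda$ through the remaining, possibly transfinite, part of $k^*$ and verifying that $(\tilde\lambda^+)$ is not collapsed by the part of the iteration below $\tilde\lambda$. This is where the minimality built into the definition of the $\mu_\gamma$ (equivalently, an induction on $\alpha$ that identifies $\mu_\gamma$ as the \emph{least} measurable above $\bar\mu_\gamma$) is essential — without it one could a priori imagine a tower of ultrapowers each centred on the successive images of $\tilde\lambda$, producing an ordinal of $V$-cofinality $\omega$, which would falsify the lemma. Accordingly I would carry out the whole argument as an induction on $\alpha<\kappa^*$, so that the inductive hypothesis is available for the intermediate models $M_\gamma$, $\gamma<\alpha$.
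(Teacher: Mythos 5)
Your reduction to the finite iteration $i^*\colon V\to N^*$ with the internal tail $k^*\colon N^*\to M_\alpha$, and the split according to whether $\tilde\lambda$ is or is not a critical point of $k^*$, matches the paper's strategy, and the first case is handled correctly. The gap is in the second case, precisely at the step you yourself flag as the ``main obstacle.'' You claim that once $\tilde\lambda$ is the critical point of a step $j$ of $k^*$, the image $j(\tilde\lambda)$ is never again a critical point, supporting this with the assertion that $\mu_{\gamma+1}=j_{U_{\mu_\gamma}}(s)(\mu_\gamma)$ lies \emph{strictly} below $j_{U_{\mu_\gamma}}(\mu_\gamma)$. That strict inequality is false whenever $\mu_\gamma$ is not a limit of measurables in $M_\gamma$: then $s(\xi)=\mu_\gamma$ on a $U_{\mu_\gamma}$-large set, so $[s]_{U_{\mu_\gamma}}=j_{U_{\mu_\gamma}}(\mu_\gamma)$ \emph{exactly}, and (using the inductive version of Corollary 4.6) $\mu_{\gamma+1}=j_{U_{\mu_\gamma}}(\mu_\gamma)=j(\tilde\lambda)$ \emph{is} the next critical point. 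By elementarity $\mu_{\gamma+1}$ is again not a limit of measurables, so the successive images $j_{\gamma,\gamma+n}(\tilde\lambda)$ keep reappearing as critical points. The ``tower of ultrapowers on successive images of $\tilde\lambda$'' you worry about is not excluded by the minimality of the $\mu_\gamma$; it genuinely occurs.

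What makes the lemma true is not that the tower is empty, but that it is \emph{finite}, and that requires a separate argument. Set $\lambda_0=\tilde\lambda=\mu_{\gamma_0}$ and $\lambda_{n+1}=j_{U_{\mu_{\gamma_n}}}(\mu_{\gamma_n})$; as long as $\lambda_n<\lambda$ one finds $\gamma_n$ with $\lambda_n=\mu_{\gamma_n}$. If this went on forever, letting $\gamma^*=\sup_n\gamma_n$ and $\lambda^*=\sup_n\lambda_n$, one has $\lambda^*\le\bar\mu$, while $\mu_{\gamma^*}>\lambda^*$ (since $\mu_{\gamma^*}$ has $V$-cofinality $>\kappa$ but $\lambda^*$ has $V$-cofinality $\omega$), so the critical point of $j_{\gamma^*,\alpha}$ exceeds $\lambda^*$ and $\lambda=j_{\gamma^*,\alpha}(\lambda^*)=\lambda^*\le\bar\mu$, contradicting $\lambda>\bar\mu$. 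Thus $\lambda=\lambda_n$ for some finite $n$, and $\lambda$ is realized inside a \emph{finite} iterated ultrapower of $V$, giving $(\mathrm{cf}\,\lambda)^V>\kappa$ directly from $\kappa$-closure. This also sidesteps your final cofinality computation: the claim that the part of $k^*$ with critical points below $\tilde\lambda$ (a possibly transfinite iteration, so not $\kappa$-closed over $V$) preserves $(\tilde\lambda^+)^{N^*}$ is itself not justified, and the finite-iteration route makes it unnecessary.
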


\begin{proof}
Let us first consider the case where there is no $ \beta<\alpha $ and $ \lambda'< \lambda $ such that $ j_{\beta,\alpha}\left( \lambda' \right) = \lambda $. Let $ \langle \alpha_0, \ldots, \alpha_m \rangle $ be a nice sequence below $ \alpha $ such that $ \lambda = j_{\alpha}(h)\left( \kappa, \mu_{\alpha_0}, \ldots, \mu_{\alpha_m} \right) $ for some function $ h\in V $. We can assume that the sequence in complete as in lemma \ref{Lemma: NS, completing sequences}, and so $ k^* \colon N^* \to M_{\alpha} $ is an internal iterated ultrapower. Denote--
$$ \lambda^* = i^*(h)\left( \kappa, \mu_{\alpha_0}, \ldots, \mu_{\alpha_m} \right) $$
and note that $ k^*\left( \lambda^* \right) = \lambda $. It suffices to prove that $ \lambda^* = \lambda $, since every inaccessible above $ \kappa $ in a finite iteration of $ V $ has cofinality $ >\kappa $ in $ V $. Assume that $ \lambda^*< \lambda $. Because $ \lambda^* $ is inaccessible in $ N^* $, $ \lambda^* $ is one of the measurables participating in the iteration $ k $, namely $ \lambda^* = \mu_{\beta} $ for some $ \beta<\alpha $. Since $ \lambda^* > \mu_{\alpha_m} $, $ \beta>\alpha_m $. Then--
\begin{align*}
	\lambda =&\  k^*\left( \lambda^* \right) \\ =& \left( j_{\beta,\alpha} \circ  j_{\alpha_{m}+1, \beta} \circ j_{\alpha_{m-1}+1, \alpha_m} \circ \ldots \circ j_{\alpha_0+1,\alpha_1} \circ j_{1,\alpha_0}\right)\left( \lambda^* \right)\\ =& \  j_{\beta,\alpha}\left( \lambda^* \right) 
\end{align*}
where we used the fact that $ \lambda^*  = \mu_{\beta}$ is inaccessible in $ N^* $ above $ \mu_{\alpha_{m}} $, and thus is fixed by ultrapowers below $ \mu_{\alpha_m} $ and by $ j_{\alpha_m+1,\beta} $. It follows that there exists $ \beta<\alpha $ and $ \lambda^*< \lambda $ such that $ j_{\beta,\alpha}\left( \lambda^* \right) = \lambda $, which is a contradiction.

Let us now take care of the case where, for some $ \beta<\alpha $ and $ \lambda_0< \lambda $, $ j_{\beta,\alpha}\left( \lambda_0 \right) = \lambda $.  Let $ \beta<\alpha $ be the least such that such $ \lambda_0 $ exists. Since $ \lambda_0 $ is inaccessible in $ M_\beta $ and $ \lambda_0< j_{\beta,\alpha}\left( \lambda_0 \right) $, $ \lambda_0 $ is one of the measurables participating in the iteration $ j_{\beta,\alpha} $. Thus,  $ \lambda_0 = \mu_{\gamma_0} $, for some $ \gamma_0 <\alpha $.
%
%
%


 Denote $ \lambda_1 = j_{U_{ \mu_{\gamma_0} }} \left( \mu_{\gamma_0} \right) $. This is an inaccessible cardinal in $ M_{\gamma_0+1} $. Let us argue that $ \left( \mbox{cf}\left( \lambda_1 \right) \right)^V > \kappa $.

Pick a complete nice sequence $ \langle \alpha_0, \ldots, \alpha_m \rangle $ below $ \gamma_0+1 $ such that, for some function $ h\in V $,
$$ \lambda_1 = j_{\gamma_0+1}(h)\left( \kappa, \mu_{\alpha_0}, \ldots, \mu_{\alpha_m} \right) $$
we can assume that $ \alpha_m = \gamma_0 $ (else, add it. The sequence will remain complete since there is no $ \lambda'<\lambda_0 $ and $ \gamma'\leq\gamma_0 $ such that $ j_{\gamma',\gamma_0+1}\left( \lambda' \right) = \lambda_0 $). Let $ N^* $ be the associated finite iteration, with an embedding $ i^* \colon V\to N^* $. let $ k^* \colon N^* \to M_{\gamma_0+1} $ be the corresponding iterated ultrapower such that $ k^*\circ i^* = j_{\gamma_0+1} $. Denote $ \lambda^{*}_{1} = i^*(h)\left( \kappa, \mu_{\alpha_0}, \ldots, \mu_{\alpha_m} \right) $. Then $ k^*\left( \lambda^{*}_1 \right) = \lambda_1 $. Let us argue that $ \lambda^{*}_1 = \lambda_1 $. Assume that $ \lambda^{*}_1 < \lambda_1 $. Then $ \lambda^*_1 $, which is measurable in $ N^* $, is one of the measurables participating in $ k^* $. Note that--
$$\lambda_1 = k^*\left( \lambda^{*}_{1} \right) = j_{\alpha_m+1, \gamma_0+1}\circ  j_{ \alpha_{m-1}+1 , \alpha_m}\circ j_{\alpha_{m-2}+1, \alpha_{m-1}  } \circ \ldots \circ j_{1,\alpha_0} \left( \lambda^{*}_1 \right) $$
but $ \alpha_m = \gamma_0 $, so $ j_{\alpha_m+1, \gamma_0+1} $ is the identity. So $ \lambda^*_1 < \mu_{\alpha_{m}} = \mu_{\gamma_0} $. $ \mu_{\gamma_0} $ is already a non-measurable inaccessible in $ N^* $ (since we started from a complete nice sequence which includes it), and thus $ k^*\left( \mu_{\gamma_0} \right) = \mu_{\gamma_0} $. Namely $j_{U_{\gamma_0}}\left( \mu_{\gamma_0} \right) =\lambda_1= k^*\left( \lambda^*_1 \right)< \mu_{\gamma_0} $, a contradiction.

Thus $ \left( \mbox{cf}(\lambda_1) \right)^V > \kappa $. If $ \lambda_1 = \lambda$, we are done. Else, $ \lambda_1 < \lambda $ is inaccessible in $ M_{\gamma_0+1} $, and is mapped via $ j_{\gamma_0+1, \alpha} $ to $ \lambda $. Hence, arguing as before, $ \lambda_1 \leq \bar{\mu} $ is one of the measurables participating in the iterated ultrapower $ j_{\gamma_0+1, \alpha} $. Therefore, there exists $ \gamma_1 \in \left( \gamma_0, \alpha \right) $ such that $ \lambda_1 = \mu_{\alpha_{\gamma_1}} $. Denote $ \lambda_2 = j_{U_{\mu_{\gamma_1}}}\left( U_{\mu_{\gamma_1}} \right) > \lambda_1 $. As above, $ \left( \mbox{cf}\left( \lambda_2 \right) \right)^V > \kappa $. If $ \lambda_2 = \lambda $, we are done. Assume otherwise, and continue in this fashion.

Let us argue that the process stops after finitely many steps. Assume otherwise. Then we have constructed an $ \omega $-sequence of ordinals below $ \alpha $, $ \langle \gamma_n \colon n<\omega \rangle $, and an increasing sequence--
$$ \lambda_0 = \mu_{\gamma_{0}} < \lambda_1 = \mu_{\gamma_1} < \lambda_2 = \mu_{\gamma_2} <\ldots < \lambda $$
such that, for every $ n<\omega $, $ \lambda_{n+1} = \mu_{\gamma_{n+1}} = j_{U_{\mu_{\gamma_n } }}\left( \mu_{\gamma_n} \right) $. Denote $ \gamma^* = \sup\{ \gamma_n \colon n<\omega \} $ (possibly $ \gamma^* = \alpha $). Let $ \lambda^* = \sup\{ \lambda_n \colon n<\omega \} $. Then--
$$ j_{\gamma^*, \alpha}\left( \lambda^* \right) = \lambda $$
however, $ j_{\gamma^*, \alpha}\left( \lambda^* \right) = \lambda^* $: if $ \gamma^* = \alpha $ this is clear. Else, note that $ \mu_{\gamma^*}  $ is chosen strictly above $ \sup\{ \mu_{\xi} \colon \xi < \gamma^* \} = \lambda^* $. Therefore, the critical point of $ j_{\gamma^*, \alpha} $ is above $ \lambda^* $, and $ j_{\gamma^*, \alpha}\left( \lambda^* \right) = \lambda^*  $.

It follows that $ \lambda^* = \lambda $. But $ \lambda^* \leq \bar{\mu} $ (equality may hold if $ \gamma^* = \alpha $), contradicting the fact that $ \lambda > \bar{\mu} $.
\end{proof}

We now return to our context, and assume that $ \langle M_{\alpha} \colon \alpha \leq \kappa^* \rangle $ is the iteration described in the previous section, with the same notations. We can first simplify the definition of the critical points $ \mu_{\alpha} $:

\begin{corollary} \label{Corollary: NS, better definition of mu_alpha}
	Assume that $ \alpha < \kappa^* $. Let $ \bar{\mu} = \sup\{ \mu_{\alpha'} \colon \alpha'<\alpha \} $. 
	\\If $ \alpha $ is successor, $ \mu_{\alpha} $ is the first measurable above $ \bar{\mu} $ in $ M_{\alpha} $. 
	\\If $ \alpha $ is limit and $ \left(\mbox{cf}\left( \alpha \right)\right)^V \leq \kappa $, then $ \mu_{\alpha} $ is the first measurable above $ \bar{\mu} $ in $ M_{\alpha} $.
	\\ If $ \alpha $ is limit and $ \left(\mbox{cf}\left( \alpha \right)\right)^V > \kappa $, then $ \mu_{\alpha} $ is the first measurable in $ M_{\alpha} $ which is greater or equal to $ \bar{\mu} $.
\end{corollary}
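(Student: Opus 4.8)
The plan is to deduce the corollary from Lemma \ref{Lemma: NS, every measurable above mu bar has cofinility above kappa in V} together with a short computation of $(\mbox{cf}(\bar{\mu}))^V$. Recall that, by definition, $\mu_{\alpha}$ is the least measurable $\mu$ of $M_{\alpha}$ with $\mu>\mu_{\alpha'}$ for every $\alpha'<\alpha$ and $(\mbox{cf}(\mu))^V>\kappa$; since $\langle\mu_{\alpha'}\rangle$ is strictly increasing, the first requirement says exactly $\mu\geq\bar{\mu}$, with equality possible only at limit $\alpha$, and forces $\mu>\bar{\mu}$ at successor $\alpha$. By Lemma \ref{Lemma: NS, every measurable above mu bar has cofinility above kappa in V}, every inaccessible of $M_{\alpha}$ strictly above $\bar{\mu}$ — a fortiori every measurable of $M_{\alpha}$ strictly above $\bar{\mu}$ — automatically has $V$‑cofinality $>\kappa$, so the cofinality clause is vacuous except possibly for $\mu=\bar{\mu}$. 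Hence in every case $\mu_{\alpha}$ is the least measurable $\mu\geq\bar{\mu}$ of $M_{\alpha}$, subject to: $\mu>\bar{\mu}$ if $\alpha$ is a successor, and $(\mbox{cf}(\bar{\mu}))^V>\kappa$ if $\mu=\bar{\mu}$ (which can only occur at limit $\alpha$). In the successor case $\alpha=\beta+1$ we have $\bar{\mu}=\mu_{\beta}$ and ``$\mu>\mu_{\alpha'}$ for all $\alpha'\leq\beta$'' is exactly ``$\mu>\bar{\mu}$'', so $\mu_{\alpha}$ is the first measurable of $M_{\alpha}$ above $\bar{\mu}$, as claimed. (The case $\alpha=0$, where $\bar\mu=\kappa$, is Lemma \ref{Lemma: NS, k}.)

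For limit $\alpha$ the two remaining clauses follow once we know
\[
(\mbox{cf}(\bar{\mu}))^V\leq\kappa\ \Longleftrightarrow\ (\mbox{cf}(\alpha))^V\leq\kappa .
\]
Indeed, if $(\mbox{cf}(\alpha))^V\leq\kappa$ then $\bar{\mu}$ violates the cofinality clause, so $\bar{\mu}\neq\mu_{\alpha}$ and $\mu_{\alpha}$ is the first measurable of $M_{\alpha}$ strictly above $\bar{\mu}$. If $(\mbox{cf}(\alpha))^V>\kappa$ then either $\bar{\mu}$ is not measurable in $M_{\alpha}$, in which case the first measurable $\geq\bar{\mu}$ equals the first measurable $>\bar{\mu}$; or $\bar{\mu}$ is measurable in $M_{\alpha}$, in which case $\bar{\mu}$ satisfies the cofinality clause and (being $>\mu_{\alpha'}$ for each fixed $\alpha'<\alpha$, as $\alpha$ is limit) also satisfies the first requirement, so $\mu_{\alpha}=\bar{\mu}$; either way $\mu_{\alpha}$ is the first measurable of $M_{\alpha}$ that is $\geq\bar{\mu}$.

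To prove the displayed equivalence I pass through $V[G]$. The entire iteration $\langle M_{\alpha'},j_{\alpha'},\mu_{\alpha'}\colon \alpha'\leq\kappa^*\rangle$ is constructed inside $V[G]$, so the strictly increasing map $\alpha'\mapsto\mu_{\alpha'}$ belongs to $V[G]$, and, being strictly increasing with supremum $\bar{\mu}$ along the limit ordinal $\alpha$, it yields $(\mbox{cf}(\bar{\mu}))^{V[G]}=(\mbox{cf}(\alpha))^{V[G]}$. Hence it suffices to show that for every ordinal $\rho<\kappa^*$,
\[
(\mbox{cf}(\rho))^V\leq\kappa\ \Longleftrightarrow\ (\mbox{cf}(\rho))^{V[G]}\leq\kappa .
\]
The direction $\Rightarrow$ is trivial since $V\subseteq V[G]$. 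For $\Leftarrow$, suppose $(\mbox{cf}(\rho))^V>\kappa$; since $\kappa^*=j_U(\kappa)$ and, by $\mbox{GCH}$, $\left|\kappa^*\right|^V\leq 2^{\kappa}=\kappa^{+}$, we get $(\mbox{cf}(\rho))^V\leq\left|\rho\right|^V\leq\kappa^{+}$, hence $(\mbox{cf}(\rho))^V=\kappa^{+}$; and $P=P_{\kappa}$ preserves cofinalities $\geq\kappa^{+}$ by Lemma \ref{Lemma: NS, preservation of cardinals}, so $(\mbox{cf}(\rho))^{V[G]}=\kappa^{+}>\kappa$. Applying this to $\rho=\alpha$ and to $\rho=\bar{\mu}$, and combining with $(\mbox{cf}(\bar{\mu}))^{V[G]}=(\mbox{cf}(\alpha))^{V[G]}$, gives the equivalence, and with it the corollary.

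The reduction in the first two paragraphs is purely formal, given Lemma \ref{Lemma: NS, every measurable above mu bar has cofinility above kappa in V} and the definition of $\mu_{\alpha}$; the only genuinely delicate point is the cofinality bookkeeping in the last paragraph — specifically the use of $\mbox{GCH}$ to bound $\left|\kappa^*\right|^V$ and the appeal to Lemma \ref{Lemma: NS, preservation of cardinals} for preservation of cofinalities $\geq\kappa^{+}$, together with the observation that the iteration, hence the map $\alpha'\mapsto\mu_{\alpha'}$, really is an object of $V[G]$.
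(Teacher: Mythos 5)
Your proof is correct, and it takes essentially the same route as the paper's (split according to whether $\bar\mu$ is measurable in $M_\alpha$ and compare the $V$-cofinalities of $\alpha$ and $\bar\mu$, invoking Lemma~\ref{Lemma: NS, every measurable above mu bar has cofinility above kappa in V} to dispose of all measurables strictly above $\bar\mu$). The notable difference is that you actually prove the cofinality transfer that the paper uses silently: the paper writes ``If $\bar\mu$ is measurable in $M_\alpha$ and $(\mbox{cf}(\alpha))^V>\kappa$, then $(\mbox{cf}(\bar\mu))^V>\kappa$'' with no argument, and never addresses the symmetric situation (limit $\alpha$, $(\mbox{cf}(\alpha))^V\leq\kappa$, $\bar\mu$ measurable in $M_\alpha$), where one must rule out $\mu_\alpha=\bar\mu$ by showing $(\mbox{cf}(\bar\mu))^V\leq\kappa$. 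Both points are the same equivalence $(\mbox{cf}(\alpha))^V\leq\kappa\Leftrightarrow(\mbox{cf}(\bar\mu))^V\leq\kappa$, and since the witnessing strictly increasing sequence $\alpha'\mapsto\mu_{\alpha'}$ lives only in $V[G]$, some transfer across the forcing is genuinely needed. Your transfer via the cardinal bound $|\kappa^*|^V\leq 2^\kappa=\kappa^+$ (from $\mbox{GCH}$) together with preservation of cofinalities $\geq\kappa^+$ (Lemma~\ref{Lemma: NS, preservation of cardinals}) is exactly the right observation, and it makes the corollary's proof self-contained rather than relying on an unjustified step. Everything else in your reduction to the definition of $\mu_\alpha$ and Lemma~\ref{Lemma: NS, every measurable above mu bar has cofinility above kappa in V} is sound.
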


\begin{proof}
	If $ \bar{\mu} $ is measurable in $ M_{\alpha} $ and $ \left(\mbox{cf}\left( \alpha \right)\right)^V > \kappa $, then $ \left(\mbox{cf}\left( \bar{\mu} \right)\right)^V > \kappa $ and thus $ \mu_{\alpha} = \bar{\mu} $ by the definition. Else, $ \mu_{\alpha} $ is chosen to be the least measurable in $ M_{\alpha} $ above $ \bar{\mu} $ with cofinality above $\kappa$  in $ V $, which is, by the previous lemma, the least measurable above $ \bar{\mu} $ in $ M_{\alpha} $.
\end{proof}

\begin{lemma} \label{Lemma: NS, the element after mu alpha is its image}
	Assume that $ \alpha<\kappa^* $ and $ \lambda $ appears after $ \mu_{\alpha} $ in the Prikry sequence of $ \mu^* = k_{\alpha}\left( \mu_{\alpha} \right) $. Then $ \lambda = j_{U_{\mu_{\alpha}}}\left( \mu_{\alpha} \right) $.
	\end{lemma}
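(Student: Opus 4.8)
The plan is to mimic the proof of Lemma \ref{Lemma: NS, lambda appears in the Prikry sequence of it's image}: first represent the element $\lambda$ by a function on $\kappa$ in $V[G]$, and then compare it with $j_{U_{\mu_\alpha}}(\mu_\alpha)$ through the embedding $k_{\alpha+1}\colon M_{\alpha+1}\to M$. Write $\mu=\mu_\alpha$ and $\mu^*=k_\alpha(\mu)$. Enlarging and modifying the relevant nice sequence exactly as in the earlier lemma, I would fix a nice sequence $\langle\alpha_0,\dots,\alpha_k\rangle$ below $\alpha$ and functions $g$ and $\langle\xi,\vec\nu\rangle\mapsto t_\mu(\xi,\vec\nu)$ in $V$ with $\mu=j_\alpha(g)(\kappa,\vec\mu)$, with $g(\xi,\vec\nu)$ a measurable cardinal of $V$ below $\kappa$ for all $\xi,\vec\nu$, and with $t_\mu=j_\alpha(\langle\xi,\vec\nu\rangle\mapsto t_\mu(\xi,\vec\nu))(\kappa,\vec\mu)$, where $t_\mu$ is the part of the Prikry sequence of $\mu^*$ lying strictly below $\mu$, of length $n^*$. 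In $V[G]$ set $m(\xi)=g(\xi,\vec\mu(\xi))$, let $\mu(\xi)$ be the $(n^*+1)$-st element of the Prikry sequence of $m(\xi)$ and $\lambda(\xi)$ the next one. As in the earlier proofs one has $[\xi\mapsto m(\xi)]_W=\mu^*$, $[\xi\mapsto\mu(\xi)]_W=\mu$, and $\lambda:=[\xi\mapsto\lambda(\xi)]_W$ is precisely the element of the Prikry sequence of $\mu^*$ immediately after $\mu$. I would also record that, since $\kappa,\vec\mu$ and $\mu$ all lie below $\mu_{\alpha+1}=\mbox{crit}(k_{\alpha+1})$, we have $k_{\alpha+1}\circ j_{U_\mu}=k_\alpha$ on $M_\alpha$, and hence $k_{\alpha+1}(j_{U_\mu}(\mu))=k_{\alpha+1}(j_{\alpha+1}(g)(\kappa,\vec\mu))=j_W(g)(\kappa,\vec\mu)=\mu^*$.

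For the inequality $j_{U_\mu}(\mu)\le\lambda$ I would show that every $\delta<j_{U_\mu}(\mu)$ is $<\lambda$; since $j_{U_\mu}(\mu)$ is a limit ordinal this suffices. Writing $\delta=j_{U_\mu}(F)(\mu)$ with $F\colon\mu\to\mu$ in $M_\alpha$ and, enlarging the nice sequence if needed, $F=j_\alpha(\tilde F)(\kappa,\vec\mu)$ with $\tilde F(\xi,\vec\nu)\colon g(\xi,\vec\nu)\to g(\xi,\vec\nu)$ in $V$, one gets $k_{\alpha+1}(\delta)=k_\alpha(F)(\mu)=[\xi\mapsto\tilde F(\xi,\vec\mu(\xi))(\mu(\xi))]_W$. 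Now for $W$-almost every $\xi$, $\tilde F(\xi,\vec\mu(\xi))$ is a function from $m(\xi)$ to $m(\xi)$ lying in $V$, and the next point $\lambda(\xi)$ of the Prikry sequence of $m(\xi)$ after $\mu(\xi)$ exceeds $\tilde F(\xi,\vec\mu(\xi))(\mu(\xi))$: the tail measure-one set of any condition whose stem ends with $\mu(\xi)$ may be shrunk above this ordinal, a bounded subset of $m(\xi)$, and genericity does the rest. Hence $k_{\alpha+1}(\delta)<[\xi\mapsto\lambda(\xi)]_W=\lambda$, and as $\delta\le k_{\alpha+1}(\delta)$ we conclude $\delta<\lambda$.

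For the inequality $\lambda\le j_{U_\mu}(\mu)$, since $\lambda$ is inaccessible in $M$ (the Prikry measure $k_\alpha(U_\mu)$ at $\mu^*$ has Mitchell order $0$ and a normal measure concentrates on inaccessibles), it is enough to show that every $\eta=[f]_W<\lambda$, $f\in V[G]$, satisfies $\eta<j_{U_\mu}(\mu)$; one may assume $\mu(\xi)\le f(\xi)<\lambda(\xi)$ for all $\xi$. Applying the Multivariable Fusion Lemma \ref{Lemma: NS, Multivariable Fusion} to the nice sequence $\langle\alpha_0,\dots,\alpha_k,\alpha\rangle$ --- using Corollary \ref{Corollary: NS, every name for an ordinal is decided by a direct extension up to boundedly many values} to direct-extend, below the coordinate $g(\xi,\vec\nu)$ and once the first $n^*+1$ Prikry points of $g(\xi,\vec\nu)$ are committed, so as to decide $\lusim{f}(\xi)$ up to boundedly many values --- and then invoking Remark \ref{Remark: NS, if p* in G then }, I would obtain a function $\langle\xi,\vec\nu,\nu\rangle\mapsto F(\xi,\vec\nu,\nu)$ in $V$ with $F(\xi,\vec\nu,\nu)<g(\xi,\vec\nu)$ and $f(\xi)<F(\xi,\vec\mu(\xi),\mu(\xi))$ for $W$-almost every $\xi$. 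Consequently $\eta=[f]_W<[\xi\mapsto F(\xi,\vec\mu(\xi),\mu(\xi))]_W=k_{\alpha+1}(\gamma)$, where $\gamma:=j_{\alpha+1}(F)(\kappa,\vec\mu,\mu)<j_{\alpha+1}(g)(\kappa,\vec\mu)=j_{U_\mu}(\mu)$ by elementarity. When $\mu$ is \emph{not} a limit of measurables of $M_\alpha$, one checks that $j_{U_\mu}(\mu)=\mu_{\alpha+1}=\mbox{crit}(k_{\alpha+1})$, so $k_{\alpha+1}(\gamma)=\gamma<j_{U_\mu}(\mu)$ and we are done. The step I expect to be the main obstacle is the remaining case, where $\mu$ is a limit of measurables of $M_\alpha$ and $\mbox{crit}(k_{\alpha+1})=\mu_{\alpha+1}<j_{U_\mu}(\mu)$, so that $k_{\alpha+1}$ need not be the identity on the interval $(\mu,j_{U_\mu}(\mu))$ and the bound $\eta<k_{\alpha+1}(\gamma)$ with $\gamma<j_{U_\mu}(\mu)$ is not yet enough. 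Here I would use the completion machinery of this section: by Lemma \ref{Lemma: NS, completing sequences} realize $\mu$ as the last term $\mu_{\alpha^n}$ of a completed chain $\mu_{\alpha^0}<\dots<\mu_{\alpha^n}$ with $\mu_{\alpha^{j+1}}=j_{U_{\mu_{\alpha^j}}}(\mu_{\alpha^j})$, giving a finite internal iteration through which $k_{\alpha+1}\restriction j_{U_\mu}(\mu)$ can be analyzed; combined with Lemma \ref{Lemma: NS, every measurable above mu bar has cofinility above kappa in V} this should show $\sup\{k_{\alpha+1}(\gamma):\gamma<j_{U_\mu}(\mu)\}=j_{U_\mu}(\mu)$, which together with the displayed bound yields $\eta<j_{U_\mu}(\mu)$ and hence $\lambda\le j_{U_\mu}(\mu)$.
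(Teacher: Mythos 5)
Your plan is workable but takes a longer route than the paper and leaves its hardest step only half-resolved. The paper does \emph{not} prove a two-sided inequality by two separate fusion arguments. Instead it first observes, cheaply, that $j_{U_{\mu_\alpha}}(\mu_\alpha)$ is measurable in $M_{\alpha+1}$ above $\bar\mu_{\alpha+1}=\mu_\alpha$, so by Lemma \ref{Lemma: NS, every measurable above mu bar has cofinility above kappa in V} it has $V$-cofinality $>\kappa$ and therefore equals $\mu_\beta$ for some $\beta>\alpha$; in particular it already sits on the Prikry sequence of $\mu^*=k_\beta(\mu_\beta)$. That disposes of ``$\lambda\le j_{U_\mu}(\mu)$'' for free, and the entire remaining work is a single Multivariable Fusion argument showing nothing strictly between $\mu_\alpha$ and $j_{U_{\mu_\alpha}}(\mu_\alpha)$ appears in the Prikry sequence (the paper shrinks measure-one sets to closure points of $\eta\mapsto g(\xi,\vec\nu,\eta)$, shows the next Prikry point is a closure point of $\eta\mapsto j_W(g)(\kappa,\vec\mu,\eta)$, and contradicts $k_{\alpha+1}(\lambda)\ge\lambda$). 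Your direction $j_{U_\mu}(\mu)\le\lambda$ is essentially that same computation rearranged — but note that ``the tail measure-one set may be shrunk and genericity does the rest'' is not a licit density argument as stated, since you need it for $W$-almost all $\xi$ simultaneously; it must be packaged as a Multivariable Fusion with the nice sequence $\langle\alpha_0,\dots,\alpha_k,\alpha\rangle$, exactly as you do later in direction (2).

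The genuine gap is in your resolution of the ``main obstacle'' in direction (2). Lemma \ref{Lemma: NS, completing sequences} does not, by itself, give you any information about $k_{\alpha+1}\restriction j_{U_\mu}(\mu)$; it concerns factoring $j_\alpha$ through finite internal iterations and does not control how $k_{\alpha+1}$ acts on the interval $(\mu_{\alpha+1}, j_{U_\mu}(\mu))$. What you actually need is exactly the fact the paper establishes upfront: apply Lemma \ref{Lemma: NS, every measurable above mu bar has cofinility above kappa in V} to $j_{U_\mu}(\mu)$ in $M_{\alpha+1}$ to get $j_{U_\mu}(\mu)=\mu_\beta$ with $\beta>\alpha$, then factor $k_{\alpha+1}=k_\beta\circ j_{\alpha+1,\beta}$. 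For $\gamma<\mu_\beta$ the critical points of $j_{\alpha+1,\beta}$ all lie below $\mu_\beta$ and $\mu_\beta$ is measurable at each stage, so $j_{\alpha+1,\beta}(\gamma)<j_{\alpha+1,\beta}(\mu_\beta)=\mu_\beta=\mathrm{crit}(k_\beta)$, whence $k_{\alpha+1}(\gamma)=j_{\alpha+1,\beta}(\gamma)<\mu_\beta=j_{U_\mu}(\mu)$. That closes your argument — but once you have made this observation you will also notice it already shows $j_{U_\mu}(\mu)$ lies on the Prikry sequence, which renders your whole second inequality redundant and returns you to the paper's shorter proof.
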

	
\begin{proof}
Since $ j_{U_{\mu_{\alpha}}}\left( \mu_{\alpha} \right) $ is measurable in $ M_{\alpha+1} $ above $ \bar{\mu}_{\alpha+1} = \sup\{ \mu_{\alpha'} \colon \alpha' \leq \alpha \} $, it follows, by lemma \ref{Lemma: NS, every measurable above mu bar has cofinility above kappa in V}, that-- 
$$ \left( \mbox{cf} \left( j_{U_{\mu_{\alpha} }} \left(  \mu_{\alpha} \right) \right) \right)^V >\kappa$$
Thus there exists $ \beta >\alpha $ such that $ j_{U_{\mu_{\alpha} }}\left( \mu_\alpha \right)  = \mu_{\beta}$, and appears in the Prikry sequence of $ k_{\beta}\left( \mu_{\beta} \right) = j_{\beta,\kappa^*}\left( j_{\alpha,\beta}\left( \mu_{\alpha} \right) \right) = \mu^* $.

Let us prove now that $ j_{U_{\mu_{\alpha} }}\left( \mu_{\alpha} \right)  = \mu_{\beta} $ is the immediate successor of $ \mu_{\alpha} $ in the Prikry sequence of $ \mu^* $.

Assume, for contradiction, that $\mu_{\alpha}< \lambda < j_{U_{\mu_{\alpha} }} \left( \mu_{\alpha} \right)$, and $ \lambda $ appears after $\mu_{\alpha}$ in the Prikry sequence of $ \mu^* $. Assume that $ \lambda = j_{\alpha+1}(g)\left( \kappa, \mu_{\alpha_0}, \ldots ,\mu_{\alpha_k}, \mu_\alpha \right) $, for some $ g\in V $ and $ \alpha_0 <\ldots <  \alpha_k < \alpha $. Assume also that $ h\in V $ is a function such that $ \mu_{\alpha} = j_{\alpha}(h)\left( \kappa, \mu_{\alpha_0}, \ldots ,\mu_{\alpha_k} \right) $ for the same $ \alpha_0<\ldots < \alpha_k <\alpha $ (this can always be arranged by changing the sequence $ \langle \alpha_0,\ldots ,\alpha_k \rangle $). Then--
$$ j_{\alpha+1}(g)\left( \kappa, \mu_{\alpha_0}, \ldots, \mu_{\alpha_k} , \mu_{\alpha}\right) < j_{\alpha+1}(h)\left( \kappa, \mu_{\alpha_0}, \ldots , \mu_{\alpha_k} \right) $$
so we may assume that for every $ \xi, \nu_0, \ldots , \nu_k , \eta$, below $ \kappa $, $ g\left( \xi, \nu_0,\ldots , \nu_k ,\eta\right) <  h\left(  \xi, \nu_0, \ldots , \nu_k \right) $. Assume also that $ \mu_{\alpha} $ is the $ n $-th element in the Prikry sequence of $ \mu^* $. In $ V\left[G\right] $, let $ \lambda(\xi)  $ be the $ \left(n+1\right) $-th element in the Prikry sequence of $ h\left( \xi, \vec{\mu}(\xi) \right) $, so that $ \left[\xi \mapsto \lambda(\xi)\right]_W = \lambda $.


Assume that the sequence $ \langle \alpha_0, \ldots, \alpha_{k}  \rangle\subseteq \alpha $ is nice (else, add more coordinates). Now apply the Multivariable Fusion Lemma. For every $ \langle \xi, \vec{\nu} \rangle $, let--
\begin{align*}
	e\left( \xi, \vec{\nu} \right) = \{  &r\in P\setminus \nu_k+1 \colon \mbox{there exists a bounded subset } A\subseteq h\left( \xi, \vec{\nu} \right) \mbox{ such that}\\
	&r \mbox{ forces that the } \left( n+1 \right)\mbox{-th element in the Prikry sequence of } h\left( \xi, \vec{\nu} \right)\\
	&\mbox{belongs either to } A \mbox{ or to the club of closure points of the function }\\
	&\eta\mapsto g\left( \xi, \vec{\nu}, \eta \right) \}
\end{align*}
We argue that $ e\left( \xi, \vec{\nu} \right) $ is $ \leq^* $ dense open above any condition which forces that $ \lusim{\vec{\mu}}(\xi) = \vec{\nu} $. Let $ p\in P\setminus \nu_k+1 $ be such a condition. Denote for simplicity $ h = h\left( \xi, \vec{\nu} \right) $. Direct extend $ p\restriction_{h} $ such that it decides the length of $ t^{p}_{h} $; if the length is $\geq \left( n+1 \right)$, direct extend $ p\restriction_{h} $ further, so that it forces that there exists a bounded subset $ A\subseteq h $ such that the $ \left( n+1 \right) $-th element in the Prikry sequence of $ h $ belongs to it. Finally, shrink $ \lusim{A}^{p}_{h} $ by intersecting with the club of closure points of the function which maps each $ \eta<h $ to $ g\left( \xi, \vec{\nu}, \eta \right) $. The condition obtained this way indeed belongs to $ e\left( \xi, \vec{\nu} \right) $. 

Now, fix $ p\in G $ and a $ C $-tree $ T $ such that for every $ \langle \xi, \vec{\nu} \rangle\in T $ which is admissible for $ p $,
\begin{align*}
&\left(  p^{\frown} \langle \xi, \vec{\nu} \rangle \right)\restriction_{\nu_k+1} \Vdash  \mbox{there exists a bounded subset } A\subseteq h\left( \xi, \vec{\nu} \right) \mbox{ such that}\\
&p^{\frown}\langle \xi, \vec{\nu} \rangle\setminus \left(\nu_k+1\right) \mbox{ forces that the } \left( n+1 \right)\mbox{-th element in the Prikry sequence}
\\&\mbox{of } h\left( \xi, \vec{\nu} \right) \mbox{ belongs either to } A \mbox{ or to the club of closure points of the function }\\
&\eta\mapsto g\left( \xi, \vec{\nu}, \eta \right) \}
\end{align*}

Let $ \lusim{A}\left( \xi, \vec{\nu} \right) $ be a $ P_{\nu_k+1} $-name for the set $ A $ above, and set-- 
$$ A^*\left( \xi, \vec{\nu} \right) = \{ \gamma<h\left( \xi, \vec{\nu} \right) \colon \exists r\geq p^{\frown}\langle \xi, \vec{\nu} \rangle\restriction_{\nu_k+1}, \ r\Vdash \gamma\in \lusim{A}\left(\xi, \vec{\nu} \right) \} $$

It follows that for a set of $ \xi $-s in $ W $, $ \lambda(\xi) $ either belongs to $ A^*\left( \xi, \vec{\mu}(\xi) \right) $ of to the club of closure points of $ \eta \mapsto g\left( \xi, \vec{\mu}(\xi), \eta \right) $. 

However, it cannot hold that for a set of $ \xi $-s in $ W $, $\lambda(\xi)\in  A^*\left( \xi, \vec{\mu}(\xi) \right) $. Indeed assume otherwise. Denote--
$$ A^* = j_{\alpha}\left(  \langle \xi, \vec{\nu} \rangle\mapsto A^*\left( \xi, \vec{\nu} \right)  \right)\left( \kappa, \mu \right) $$ 
then $ A^* $ is bounded in $ \mu_{\alpha} $, and, under the above assumption, 
 $ \lambda \in k_{\alpha}\left(  A^*   \right) = A^* \subseteq \mu_{\alpha} $, which is a contradiction.
 
 Thus, in $ M\left[H\right] $, $ \lambda $ is a closure point of $ \eta\mapsto j_W(g)\left( \kappa, \vec{\mu}, \eta \right) $. Recall that $ \mu_{\alpha}< \lambda $, and thus $ j_W(g)\left( \kappa, \vec{\mu}, \mu_{\alpha} \right) < \lambda = j_{\alpha}(g)\left( \kappa, \vec{\mu}, \mu_{\alpha} \right) $, which is a contradiction.
\end{proof}

\begin{corollary} \label{Corollary: NS, how Prikry sequences look}
Let $ \alpha< \kappa^* $ and denote $ \mu^* = k_{\alpha}\left( \mu_{\alpha} \right) $. Then the Prikry sequence of $ \mu^* $ in $ M\left[
H\right] $ has a final segment of the form--
$$ \langle \mu_{\alpha_0}, \mu_{\alpha_1}, \mu_{\alpha_2}, \ldots, \mu_{\alpha_n}, \ldots  \rangle $$
where $ \alpha_0 = \alpha$, and for every $ n<\omega $, $ \mu_{\alpha_{n+1}} = j_{U_{\mu_{ \alpha_n }}}\left( \mu_{ \alpha_n} \right) $. Furthermore, the above sequence belongs to $ V $, namely $ \left( \mbox{cf}\left( \mu^* \right) \right)^V = \omega $.
\end{corollary}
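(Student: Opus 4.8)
The description of the final segment is obtained by bootstrapping Lemma~\ref{Lemma: NS, the element after mu alpha is its image}, while the fact that this segment lies in $V$ is extracted from the finite--iteration analysis of the present section. \emph{Building the segment.} Put $\alpha_0=\alpha$; by the properties established in Section~3, $\mu_{\alpha_0}$ occurs in the Prikry sequence of $\mu^*=k_{\alpha_0}\left(\mu_{\alpha_0}\right)$. Suppose $\alpha_n<\kappa^*$ has been defined with $k_{\alpha_n}\left(\mu_{\alpha_n}\right)=\mu^*$ and $\mu_{\alpha_n}$ occurring in the Prikry sequence of $\mu^*$. By Lemma~\ref{Lemma: NS, the element after mu alpha is its image} the immediate successor of $\mu_{\alpha_n}$ in that sequence is $j_{U_{\mu_{\alpha_n}}}\left(\mu_{\alpha_n}\right)$; by the proof of that lemma this ordinal is measurable in $M_{\alpha_n+1}$ above $\sup\{\mu_{\alpha'}\colon\alpha'\le\alpha_n\}$, hence by Lemma~\ref{Lemma: NS, every measurable above mu bar has cofinility above kappa in V} has $V$--cofinality $>\kappa$, and is therefore $\mu_{\alpha_{n+1}}$ for a suitable $\alpha_{n+1}>\alpha_n$. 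Moreover $\mu_{\alpha_{n+1}}=j_{\alpha_n,\alpha_{n+1}}\left(\mu_{\alpha_n}\right)$ and $k_{\alpha_n}=k_{\alpha_{n+1}}\circ j_{\alpha_n,\alpha_{n+1}}$ (both agree with $j_W\restriction_V$ after $j_{\alpha_n}$, and on any $j_{\alpha_n}(h)\left(\kappa,\vec{\mu}\right)$ since $j_{\alpha_n,\alpha_{n+1}}$ fixes the $\mu_\gamma$ with $\gamma<\alpha_n$), so $k_{\alpha_{n+1}}\left(\mu_{\alpha_{n+1}}\right)=\mu^*$ and the recursion continues. Thus the Prikry sequence of $\mu^*$ from the occurrence of $\mu_\alpha$ onward is exactly $\langle\mu_{\alpha_0},\mu_{\alpha_1},\ldots\rangle$ with $\mu_{\alpha_{n+1}}=j_{U_{\mu_{\alpha_n}}}\left(\mu_{\alpha_n}\right)$; since this sequence has order type $\omega$ and is cofinal in $\mu^*$ in $M[H]$, $\mu^*=\sup_n\mu_{\alpha_n}$.

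\emph{The segment lies in $V$.} Fix a nice sequence $\langle\beta_0,\ldots,\beta_l\rangle$ below $\alpha$ and functions $g,F\in V$ with $\mu_\alpha=j_\alpha(g)\left(\kappa,\mu_{\beta_0},\ldots,\mu_{\beta_l}\right)$ and $U_{\mu_\alpha}=j_\alpha(F)\left(\kappa,\mu_{\beta_0},\ldots,\mu_{\beta_l}\right)$. By Lemma~\ref{Lemma: NS, completing sequences} complete it so that the associated finite iterate $N^*$ of $V$ carries an \emph{internal} iterated ultrapower $k^*\colon N^*\to M_\alpha$. Put $\tilde{\mu}=i^*(g)\left(\kappa,\mu_{\beta_0},\ldots,\mu_{\beta_l}\right)$ and $\tilde{U}=i^*(F)\left(\kappa,\mu_{\beta_0},\ldots,\mu_{\beta_l}\right)$, elements of $N^*$; then $k^*\left(\tilde{\mu}\right)=\mu_\alpha$, $k^*\left(\tilde{U}\right)=U_{\mu_\alpha}$, and by elementarity $\tilde{U}$ is a normal measure of Mitchell order $0$ on $\tilde{\mu}$ in $N^*$. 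Form, \emph{inside $V$}, the $\omega$--iteration $\langle\tilde{N}_n\colon n\le\omega\rangle$ with $\tilde{N}_0=N^*$, $\tilde{N}_{n+1}=\mbox{Ult}\left(\tilde{N}_n,\tilde{U}_n\right)$, $\tilde{U}_0=\tilde{U}$, $\tilde{U}_{n+1}=j_{\tilde{U}_n}\left(\tilde{U}_n\right)$, with critical points $\tilde{\mu}_0=\tilde{\mu}$ and $\tilde{\mu}_{n+1}=j_{\tilde{U}_n}\left(\tilde{\mu}_n\right)$; since $N^*$ is a definable class of $V$ with parameters in $V$ and $\tilde{U}\in N^*$, the sequence $\langle\tilde{\mu}_n\colon n<\omega\rangle$ belongs to $V$.

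\emph{Matching the two sequences.} Because $\tilde{U}$ lies above the critical points of $k^*$ that are relevant to it, the map $k^*$ lifts --- just as in Lemma~\ref{Lemma: NS, nice sequence factors to finite iteration and iterated ultrapwer} --- to an \emph{internal} iterated ultrapower $\hat{k}\colon\tilde{N}_\omega\to M_{\bar\alpha}$, where $\bar\alpha=\sup_n\alpha_n$ and $M_{\bar\alpha}$ is the direct limit of $\langle M_{\alpha_n}\colon n<\omega\rangle$, and $\hat{k}\left(\tilde{\mu}_n\right)=\mu_{\alpha_n}$; here one uses that $U_{\mu_{\alpha_{n+1}}}=j_{\alpha_n,\alpha_{n+1}}\left(U_{\mu_{\alpha_n}}\right)$ (apply $k_{\alpha_{n+1}}$, invoke property (E) and injectivity of $k_{\alpha_{n+1}}$) and that the ultrapowers taken between stages $\alpha_n+1$ and $\alpha_{n+1}$ are on measurables cofinal below $\mu_{\alpha_{n+1}}$ and hence fix it --- the situation analysed in the last paragraph of the proof of Lemma~\ref{Lemma: NS, completing sequences}. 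Since $\hat{k}$ is internal and each $\tilde{\mu}_n$ is a non--measurable inaccessible of $\tilde{N}_\omega$, we get $\hat{k}\left(\tilde{\mu}_n\right)=\tilde{\mu}_n$, whence $\tilde{\mu}_n=\mu_{\alpha_n}$. Therefore $\langle\mu_{\alpha_n}\colon n<\omega\rangle\in V$, and $\left(\mbox{cf}\left(\mu^*\right)\right)^V=\left(\mbox{cf}\left(\sup_n\mu_{\alpha_n}\right)\right)^V=\omega$.

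\emph{The main obstacle.} The crux is the last step: showing that the external iteration $M_\alpha\to M_{\bar\alpha}$ really is an iteration internal to $\tilde{N}_\omega$ generated by the single measure $\tilde{U}$ --- in particular that the designated critical points passed through strictly between consecutive $\mu_{\alpha_n}$ do not spoil internality, and that the identification $\tilde{\mu}_n=\mu_{\alpha_n}$ survives the possibility that $\tilde\mu$ is moved into $\mu_\alpha$ by a proper initial segment of $k^*$ rather than being supplied directly by $N^*$. This amounts to an $\omega$--length extension of Lemma~\ref{Lemma: NS, nice sequence factors to finite iteration and iterated ultrapwer}, built on Lemmas~\ref{Lemma: NS, completing sequences} and~\ref{Lemma: NS, every measurable above mu bar has cofinility above kappa in V}, and is the part that will require genuine work.
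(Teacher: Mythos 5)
Your strategy is the right one --- build the segment by bootstrapping Lemma~\ref{Lemma: NS, the element after mu alpha is its image}, then realize it as the critical points of an $\omega$--iteration of a finite iterate $N^*$ of $V$ --- but you stop at the crucial point, and you say so yourself: the ``main obstacle'' you flag (that $\tilde\mu$ may be moved into $\mu_\alpha$ by a proper initial segment of $k^*$, so that $\tilde\mu<\mu_\alpha$) is precisely the gap, and without closing it the identification $\tilde\mu_n=\mu_{\alpha_n}$ does not go through.

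The paper closes this gap by a preliminary reduction that you are missing: replace $\mu_{\alpha_0}$ by the least $\mu<\mu_{\alpha_0}$ with $j_{\beta,\alpha}(\mu)=\mu_{\alpha_0}$ for some $\beta<\alpha_0$, if one exists. This pushes back to the first occurrence of the cardinal in the iteration, and by the dichotomy established in the proof of Lemma~\ref{Lemma: NS, every measurable above mu bar has cofinility above kappa in V}, after this replacement the pull-back through a complete nice sequence equals the cardinal on the nose --- the situation you worried about simply cannot occur. Shifting $\alpha_0$ downward only lengthens the final segment, so the original claim follows a fortiori. Once this is done, the argument is shorter than the $\omega$--iteration matching you attempt: one observes that for a complete nice sequence $\langle\beta_0,\ldots,\beta_k\rangle$ below $\alpha_0$ representing $\mu_{\alpha_0}$, the sequence $\langle\beta_0,\ldots,\beta_k,\alpha_0,\ldots,\alpha_n\rangle$ is complete for every $n$, so the $\mu_{\alpha_n}$ are genuine critical points of finite iterations $N_i$ of $V$, and because every step past the $k$-th uses a measure represented by one fixed function $f\in V$ (the one representing $U_{\mu_{\alpha_0}}$), the whole sequence $\langle N_i\colon i<\omega\rangle$, and hence $\langle\mu_{\alpha_n}\colon n<\omega\rangle$, is definable in $V$. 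There is no need to form the direct limit $\tilde N_\omega$ or to lift $k^*$ to an internal iteration of it, which is the part you correctly observe would require real work.
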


\begin{proof}
The first part follows immediately from the previous lemma. Let us concentrate on the second part. Assume that there is no $ \beta<\alpha_0 $ and $ \mu< \mu_{\alpha_0} $ such that $ j_{\beta,\alpha}\left( \mu \right) = \mu_{\alpha_0}$ (if there is, replace $ \mu_{\alpha_0} $ with the least such $ \mu $). Let $ \beta_0, \ldots ,\beta_k $ be a complete nice sequence such that $ \mu_{\alpha_0} = j_{\alpha_0}(h)\left( \beta_0,\ldots, \beta_k \right) $ for some $ h\in V $. It follows that the sequence $ \langle \beta_0, \ldots, \beta_k, \alpha_0, \alpha_1, \ldots , \alpha_n \rangle $ is complete, for every $ n<\omega $. Then, for every $ n<\omega $, a finite iteration $ \langle N_i \colon i\leq n+1 \rangle $ can be defined as in the beginning of this section. If $ f\in V $ is a function such that $ U_{\mu_{\alpha_0} }  = j_{\alpha_0} (f)\left( \kappa, \mu_{\beta_0}, \ldots, \mu_{\beta_k} \right)$, then the sequence $ \langle N_i \colon i<\omega \rangle $ is definable in $ V $, since each step above the first $ k$-many steps in the iteration, uses a measure represented by $ f $. Because each sequence $ \langle \beta_0, \ldots, \beta_k, \alpha_0, \ldots , \alpha_n \rangle $ is complete, the sequence $ \langle \mu_{\alpha_0}, \mu_{\alpha_1}, \ldots, \mu_{\alpha_n}, \ldots \rangle $ is a final segment of the sequence of critical points in the iteration $ \langle N_i \colon i<\omega \rangle $, and thus belongs to $ V $.
\end{proof}

\begin{remark}
We would like to emphasize the point that the characterization of Prikry sequences given in the previous corollary is given only up to some finite initial segment. Let us denote $ \mu = \mu_0 $, which is the first measurable above $ \kappa $ in $ M_U $, and $ \mu^* = k_0\left( \mu_0 \right) $ which is the first measurable above $ \kappa $ in $ M $. We argue that the Prikry sequence of $ \mu^* $ in $ M\left[H\right] $ may have any prescribed finite initial segment $ t\in \left[\mu\right]^{<\omega} $. We use those notations only in the following claim:
\end{remark}

\begin{claim}
	For every finite, increasing sequence $ t\in \left[\mu\right]^{<\omega} $, there exists a condition $ p\in P_{\kappa} $ which forces that $ t $ is an initial segment of the Prikry sequence of $ \mu^* $ in $ M\left[H\right]$. 
\end{claim}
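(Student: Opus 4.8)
The plan is to transfer the statement, through the ultrapower by $W$, into a statement about the Prikry sequences that $G$ adds below $\kappa$. Recall from Lemma~\ref{Lemma: NS, k} and Theorem~\ref{Theorem: NS, characterization of normal measures in the generic extension} that $W=U^{*}$ for $U=W\cap V$, that $k_{0}\colon M_{U}\to M$ satisfies $k_{0}\circ j_{U}=j_{W}\restriction_{V}$ and $\mbox{crit}(k_{0})=\mu_{0}=\mu$, and that $\mu^{*}=k_{0}(\mu_{0})$. Let $g_{0}\in V$ be the function $\xi\mapsto(\text{the least measurable of }V\text{ above }\xi)$, so that $[g_{0}]_{U}=\mu_{0}$ and each $g_{0}(\xi)\in\Delta$ is a measurable $>\xi$. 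Let $C=\{\xi<\kappa:\xi\text{ is a limit of measurables}\}$; since $\kappa$ is a measurable limit of measurables, $C$ is a club and $C\in U$, and $g_{0}\restriction C$ is strictly increasing. For $\xi<\kappa$ let $\vec{s}(\xi)\in V[G]$ be the Prikry sequence that $G$ adds to the cardinal $g_{0}(\xi)$ at stage $g_{0}(\xi)$ of $P$ (via $U^{*}_{g_{0}(\xi)}$). A routine application of {\L}o\'s's theorem to $j_{W}$, using $j_{W}(G)=H$, identifies $[\xi\mapsto\vec{s}(\xi)]_{W}$ with the Prikry sequence that $H$ adds to $[g_{0}]_{W}=k_{0}(\mu_{0})=\mu^{*}$ at coordinate $\mu^{*}$ of $j_{W}(P)$.

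Now fix $t=\langle\tau_{0},\dots,\tau_{l-1}\rangle\in[\mu]^{<\omega}$; if $l=0$ the statement is trivial, so assume $l\geq1$. Choose $f_{0},\dots,f_{l-1}\in V$ with $[f_{i}]_{U}=\tau_{i}$ and, after altering the $f_{i}$ on a set of $U$-measure zero, arrange $f_{0}(\xi)<\dots<f_{l-1}(\xi)<g_{0}(\xi)$ for every $\xi<\kappa$; note that $[f_{i}]_{W}=k_{0}([f_{i}]_{U})=k_{0}(\tau_{i})=\tau_{i}$ since $\tau_{i}<\mu_{0}=\mbox{crit}(k_{0})$. I would then take $A:=g_{0}''C$ and define $p\in P_{\kappa}$ with $\mbox{supp}(p)=A$ by letting, for $\beta=g_{0}(\xi)\in A$ with $\xi\in C$ the unique preimage, $p(\beta)$ be the Prikry condition $\langle\,\langle f_{0}(\xi),\dots,f_{l-1}(\xi)\rangle,\ \beta\setminus(f_{l-1}(\xi)+1)\,\rangle$. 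The essential point making $p$ legal is that $A$ is nonstationary in $\kappa$: composing $g_{0}$ with the normal enumeration of $C$ gives a strictly increasing $h\colon\kappa\to\kappa$ with $h(\xi)>\xi$ and $\mbox{range}(h)=A$, and the range of such an $h$ is disjoint from the club of closure points of $h$. Localizing this argument to inaccessible $\alpha<\kappa$ shows $A\cap\alpha$ is nonstationary in $\alpha$ as well (it is bounded in $\alpha$ unless $\alpha$ is a closure point of $g_{0}$, in which case the same reasoning applies inside $\alpha$), so $p$ is a genuine condition of $P_{\kappa}$.

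To see that $p$ forces the conclusion, assume $p\in G$. By elementarity, $j_{U}(p)\in j_{U}(P)$ has support $j_{U}(A)=j_{U}(g_{0})''j_{U}(C)$; since $C\in U$ we have $\kappa\in j_{U}(C)$, and $j_{U}(g_{0})(\kappa)=[g_{0}]_{U}=\mu_{0}$, so $\mu_{0}\in\mbox{supp}(j_{U}(p))$ and $j_{U}(p)(\mu_{0})$ is the Prikry condition with stem $\langle j_{U}(f_{0})(\kappa),\dots,j_{U}(f_{l-1})(\kappa)\rangle=\langle[f_{0}]_{U},\dots,[f_{l-1}]_{U}\rangle=t$. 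Hence $j_{U}(p)$ forces over $M_{U}$ that the Prikry sequence added at coordinate $\mu_{0}$ begins with $t$; equivalently, if $X=\{\xi<\kappa:\vec{s}(\xi)\text{ begins with }\langle f_{0}(\xi),\dots,f_{l-1}(\xi)\rangle\}$ and $\dot{X}$ is its canonical $P_{\kappa}$-name, then $j_{U}(p)\Vdash\check{\kappa}\in j_{U}(\dot{X})$. By the definition of $U^{*}$ in Lemma~\ref{Lemma: NS, extending ultrafilters of Mitchell order 0}, $X\in U^{*}=W$. Therefore, by the {\L}o\'s identification of the first paragraph, in $M[H]$ the Prikry sequence of $\mu^{*}$ begins with $[\xi\mapsto\langle f_{0}(\xi),\dots,f_{l-1}(\xi)\rangle]_{W}=\langle[f_{0}]_{W},\dots,[f_{l-1}]_{W}\rangle=t$. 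As this holds for every generic $G$ containing $p$, the condition $p$ is as required.

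The main obstacle is the construction of $p$: one must produce a condition of $P_{\kappa}$ — which, $\kappa$ being inaccessible, is forced to have nonstationary support — that nevertheless pins down the stem of the Prikry forcing at the \emph{single} coordinate $\mu^{*}$ of the image iteration $j_{W}(P)$. The resolution is to write $\mu^{*}=[g_{0}]_{W}$ for the ``next measurable'' function $g_{0}$: restricted to the club $C\in U$ this function is injective, so its image $A$ is a nonstationary set of measurable cardinals with $g_{0}^{-1}[A]\supseteq C\in U$, and the stems we place on the coordinates in $A$ are exactly what gets transported, via $j_{U}$ and then $k_{0}$, onto coordinate $\mu^{*}$. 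The remaining ingredients — legality of the adjusted $f_{i}$, and the {\L}o\'s bookkeeping relating $[\xi\mapsto\vec{s}(\xi)]_{W}$ to the Prikry sequence of $\mu^{*}$ in $M[H]$ — are routine.
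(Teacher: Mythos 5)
Your proof is correct and takes essentially the same approach as the paper: pick the ``next measurable'' function $g_0$, observe that its image along a $U$-large set is a nonstationary set of measurables (the paper uses any $X\in U$ on which $g_0$ is injective, you use the club of limits of measurables), plant the desired stems on those coordinates, and then push through the elementary embedding to see that $\mu^*$ receives stem $t$. The only cosmetic differences are that you decompose $t$ into separate representatives $f_0,\dots,f_{l-1}$ rather than a single function $\xi\mapsto t(\xi)$, and you route the final transfer through the definition of $U^*$ rather than observing directly that the relevant set contains $X\in U\subseteq W$; both are equivalent.
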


\begin{proof}
	Assume that $ \xi \mapsto t(\xi) $ is a function in $ V $ such that $ \left[\xi\mapsto t(\xi)\right]_{U} = t $. For each $ \xi<\kappa $, let $ s(\xi) $ be the first measurable strictly above $ \xi $. We can assume that for every $ \xi<\kappa $, $ \max \left(t(\xi)\right)<s(\xi) $.
	
	Note that the set $ \{ s(\xi) \colon \xi<\kappa \}\cap \lambda $ is nonstationary in any inaccessible $\lambda \leq  \kappa $: This is clear if $ \lambda $ is not a limit of measurables. If it is, $ \{ s(\xi)  \colon \xi <\kappa\} $ is disjoint to the club of limit points of $ \Delta = \{ \alpha<\kappa \colon \alpha \mbox{ is measurable} \} $ below $ \lambda $. 
	
	Now, let us define a condition $ p\in P_{\kappa} $, with $ \mbox{supp}(p) = \{ s(\xi) \colon \xi<\kappa \} $. We first choose a set $ X\in U $ on which the function $ \xi\mapsto s(\xi) $ with domain $ X $ is injective. Note that by normality of $ U $, every function is either one-to-one or constant modulo $ U $, so such a set $ X\in U $ exists. 
	
	Set, for a given $ \xi\in X $, $ p(s(\xi)) = \langle t(\xi), s\left(\xi\right) \rangle $. This is forced by any condition in $ P\restriction_{s\left(\xi\right)} $ to be a legitimate element of $ \lusim{Q}_{s\left( \xi \right) } $. The condition $ p\in P_{\kappa} $ defined in this way forces that the Prikry sequence of $ \mu^* $ starts with $ t $: Indeed, in $ V\left[G\right] $,
	$$ \{ \xi<\kappa \colon t(\xi) \mbox{ is an initial segment of the Prikry sequence of } s(\xi) \} \supseteq X \in W $$ 
	thus, in $ M\left[H\right] $, $ \left[\xi \mapsto t(\xi)\right]_W $ is an initial segment of the Prikry sequence of the measurable cardinal $ \left[\xi \mapsto s(\xi)\right]_W $. But by lemma \ref{Lemma: NS, k},
	$$\left[\xi \mapsto t(\xi)\right]_{W} = k\left(  \left[\xi \mapsto t(\xi)\right]_{U} \right)  =  k(t) = t    $$
	and clearly--
	$$ \left[\xi \mapsto s(\xi)\right]_{W} = \mu^* $$
	so in $ M\left[H\right] $, $ t $ is an initial segment of the Prikry sequence added to $ \mu^* $.\end{proof}

Let us prove now that for every measurable $ \mu^*$ above $ \kappa $ in $ M $, $ \mu^* $ has the form $ k_{\alpha}\left( \mu_{\alpha} \right) $ for some $ \alpha< \kappa^* $. In particular, in the light of corollary \ref{Corollary: NS, how Prikry sequences look}, $ \left(\mbox{cf}(\mu^*)\right)^V = \omega $. 

\begin{lemma} \label{Lemma: NS, every measurable of M is the image of some mu alpha}
Assume that $\mu^* \in \left(\kappa , \kappa^*\right) $ is measurable in $ M $. Then $ \mu^* = k_{\alpha}\left( \mu_{\alpha} \right) $ for some $ \alpha< \kappa^* $.
\end{lemma}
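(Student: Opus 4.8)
The plan is to show that any measurable $\mu^*$ of $M$ in the interval $(\kappa,\kappa^*)$ must be one of the cardinals $k_\alpha(\mu_\alpha)$, by tracing backwards along the iteration $\langle M_\alpha\colon\alpha\leq\kappa^*\rangle$ to locate the stage at which $\mu^*$ ``first appeared'' and then identifying that stage with some $\alpha$ such that $\mu^*=k_\alpha(\mu_\alpha)$. First I would observe that $\mu^*\in M=M_{\kappa^*}$, and since $M_{\kappa^*}$ is a direct limit, there is a least $\beta\leq\kappa^*$ and some $\nu$ with $j_{\beta,\kappa^*}(\nu)=\mu^*$; by minimality of $\beta$ (and the usual argument that at limit stages nothing new of this kind is created, since direct limits commute), either $\beta$ is a successor $\gamma+1$ or $\beta=0$, and $\nu$ is not in the range of any earlier transition map. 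Because $\mu^*$ is measurable in $M$ and measurability is preserved downward along the tail iteration $j_{\beta,\kappa^*}$ when $\mathrm{crit}(j_{\beta,\kappa^*})>\nu$ would force $\nu=\mu^*$, one gets that either $\nu$ is itself measurable in $M_\beta$, or $\nu=\mathrm{crit}(j_{\beta,\kappa^*})$ participates as the measurable $\mu_\beta$ used at that stage.

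The key step is then a case split. In the first case, if $\mu^*$ is not moved by the tail, i.e. $\mu^*=\nu$ is already measurable in $M_\beta$ above $\bar\mu=\sup\{\mu_{\alpha'}\colon\alpha'<\beta\}$, then by the remark following Theorem~\ref{Theorem: NS, full description of j_W restricted to V} (proved as lemma~\ref{Lemma: NS, every measurable above mu bar has cofinility above kappa in V}), $(\mathrm{cf}(\mu^*))^V>\kappa$, so $\mu^*$ qualifies to be chosen as some $\mu_\alpha$; more precisely, by corollary~\ref{Corollary: NS, better definition of mu_alpha}, there is an index $\alpha$ with $\mu_\alpha=\mu^*$ and $\beta\leq\alpha<\kappa^*$, and since $\mu_\alpha$ is then not moved by $j_{\alpha,\kappa^*}$ (being measurable at each step of that tail iteration, hence fixed), we would have $k_\alpha(\mu_\alpha)=j_{\alpha,\kappa^*}(\mu_\alpha)$ — but I must be careful that $k_\alpha$ is the external embedding into $M$, not $j_{\alpha,\kappa^*}$; by property (A), $k_\alpha\circ j_\alpha=j_W\restriction_V$, and one checks $\mathrm{crit}(k_\alpha)=\mu_\alpha$, so $k_\alpha(\mu_\alpha)$ is precisely the image of $\mu_\alpha$ in $M$, and the claim $\mu^*=k_\alpha(\mu_\alpha)$ reduces to matching $\mu^*$ with this image. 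In the second case, $\nu=\mu_\gamma$ is the measurable used at stage $\gamma$ (where $\beta=\gamma+1$), and then $\mu^*=j_{\gamma+1,\kappa^*}(\mu_\gamma)$. Since $\mu_\gamma<\kappa^*$ and $\mu_\gamma$ is moved by $j_{U_{\mu_\gamma}}$, one has $\mu^*=j_{\gamma+1,\kappa^*}(\mu_\gamma)$, and I claim this equals $k_\gamma(\mu_\gamma)$: indeed $k_\gamma(\mu_\gamma)$ is the image of $\mu_\gamma$ under the external embedding, and since $\mathrm{crit}(k_\gamma)=\mu_\gamma$ and $k_\gamma\circ j_\gamma=j_W\restriction_V$, the element $k_\gamma(\mu_\gamma)$ is computed the same way. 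So with $\alpha=\gamma$ we are done.

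The main obstacle I anticipate is handling the bookkeeping between the two different embeddings into $M$: the internal iteration map $j_{\alpha,\kappa^*}\colon M_\alpha\to M_{\kappa^*}=M$ and the external embedding $k_\alpha\colon M_\alpha\to M$ defined via representing functions over $V$. One needs to verify that for the relevant $\alpha$ these two maps agree on $\mu_\alpha$ (equivalently, that $\mu^*$ sits correctly as $k_\alpha(\mu_\alpha)$); this should follow from $\mathrm{crit}(k_\alpha)=\mu_\alpha$ together with $j_W\restriction_V = k_\alpha\circ j_\alpha = j_{\kappa^*}\circ(\text{appropriate composition})$, but the precise argument requires unwinding how $\mu^*$ is represented in the ultrapower by $W$ — namely, as the equivalence class of a function $\xi\mapsto s(\xi)$ picking out measurable cardinals, exactly as in the claim preceding this lemma. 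A secondary subtlety: I should make sure the trace-back to the least $\beta$ with $\mu^*\in\mathrm{ran}(j_{\beta,\kappa^*})$ genuinely lands on a successor stage or stage $0$, ruling out limit $\beta$; this uses that at a limit stage $\beta$ the model $M_\beta$ is the direct limit, so any $\mu^*$ in its range already appears in the range from some earlier $\beta'<\beta$, contradicting minimality. Once these embedding-identification issues are settled, the lemma follows by combining lemma~\ref{Lemma: NS, every measurable above mu bar has cofinility above kappa in V} with corollary~\ref{Corollary: NS, better definition of mu_alpha} to produce the index $\alpha$.
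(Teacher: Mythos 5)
Your opening move --- tracing back to the least $\beta$ with $\mu^*$ in the range of the tail embedding --- is the same starting point the paper uses (the paper phrases it with the external maps $k_\beta$, but since $k_{\kappa^*}=\mathrm{id}$ was already established in Theorem~\ref{Theorem: NS, full description of j_W restricted to V}, one has $k_\beta = k_{\kappa^*}\circ j_{\beta,\kappa^*}=j_{\beta,\kappa^*}$, so the ``bookkeeping'' obstacle you flag is actually a non-issue in this section). But the case split that follows is wrong, and both of your cases are in fact impossible. In case~1 you suppose $\nu=\mu^*$ is fixed by the tail, i.e.\ $\nu<\mathrm{crit}(j_{\beta,\kappa^*})=\mu_\beta$. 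But you have also arranged (via the minimality of $\beta$) that $\nu>\bar\mu=\sup\{\mu_{\alpha'}\colon\alpha'<\beta\}$, and $\nu$ is measurable in $M_\beta$ with $(\mathrm{cf}\,\nu)^V>\kappa$ by Lemma~\ref{Lemma: NS, every measurable above mu bar has cofinility above kappa in V}; since $\mu_\beta$ is defined as the \emph{least} such cardinal, this forces $\nu\geq\mu_\beta$ --- a contradiction. So case~1 never occurs. In case~2 you write ``$\nu=\mu_\gamma$ is the measurable used at stage $\gamma$ (where $\beta=\gamma+1$)'': but $\mu_\gamma$ is precisely the cardinal whose measure is consumed in forming $M_{\gamma+1}=M_\beta$, so $\mu_\gamma$ is \emph{not} measurable in $M_\beta$, whereas $\nu$ must be (by elementarity of $j_{\beta,\kappa^*}$). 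So case~2 also never occurs. You have effectively omitted the case that actually arises, namely $\nu\geq\mu_\beta$ with $\nu=\mu_\gamma$ for some $\gamma\geq\beta$.

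Beyond the case split, two of the internal claims are false and should not be believed even if the cases were set up correctly. First, you assert that ``$\mu_\alpha$ is then not moved by $j_{\alpha,\kappa^*}$ (being measurable at each step of that tail iteration, hence fixed)''; this is backwards --- $\mu_\alpha$ \emph{is} the critical point of $j_{\alpha,\kappa^*}$, so $j_{\alpha,\kappa^*}(\mu_\alpha)>\mu_\alpha$, and $\mu_\alpha$ is not measurable in $M_{\alpha+1}$. Second, you claim $j_{\gamma+1,\kappa^*}(\mu_\gamma)=k_\gamma(\mu_\gamma)$; but $\mu_\gamma<\mathrm{crit}(j_{\gamma+1,\kappa^*})$ gives $j_{\gamma+1,\kappa^*}(\mu_\gamma)=\mu_\gamma$, while $k_\gamma(\mu_\gamma)=j_{\gamma,\kappa^*}(\mu_\gamma)=j_{\gamma+1,\kappa^*}\bigl(j_{U_{\mu_\gamma}}(\mu_\gamma)\bigr)>\mu_\gamma$, so these are never equal. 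The correct argument, once the contradiction at $\beta-1$ is used to rule out $\nu\leq\mu_{\beta-1}$, is: $\nu>\bar\mu$ is measurable in $M_\beta$, so $(\mathrm{cf}\,\nu)^V>\kappa$ by Lemma~\ref{Lemma: NS, every measurable above mu bar has cofinility above kappa in V}; $\nu$ then persists as a measurable above the running critical points until some stage $\gamma\geq\beta$ at which $\nu=\mu_\gamma$; and then $k_\gamma(\mu_\gamma)=k_\gamma(\nu)=k_\beta(\nu)=\mu^*$, using $k_\beta=k_\gamma\circ j_{\beta,\gamma}$ and that $j_{\beta,\gamma}$ fixes $\nu$.
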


\begin{proof}
Let $ \beta < \kappa^* $ be the first such that, for some $ \mu\leq \mu^* $, $ \mu^* = k_{\beta}\left( \mu \right) $. Then $ \mu $ is measurable in $ M_{\beta} $. $ \beta $ is either $ 0 $ or a successor by its minimality. Assume first that $ \beta= \alpha+1 $.  $ \mu = \mu_{\alpha} $ cannot hold since $ \mu_{\alpha} $ is not measurable in $ M_{\alpha+1} $. If $ \mu < \mu_{\alpha} $ then $ j_{\alpha,\beta}\left( \mu \right) = \mu $, contradicting the minimality. Thus assume that $ \mu > \mu_{\alpha} = \bar{\mu}_{\beta} = \sup\{ \mu_{\beta'} \colon \beta'< \beta \} $. Recall that $ \mu $ is measurable in $ M_{\beta} $. By lemma \ref{Lemma: NS, every measurable above mu bar has cofinility above kappa in V}, $\left(\mbox{cf}\left( \mu \right)\right)^V > \kappa$. Therefore, for some $ \gamma\in \left[ \beta, \kappa^*\right) $, $ \mu = \mu_{\gamma} $. Hence $ k_{\gamma}\left( \mu_{\gamma} \right) = \mu^* $.

If $ \beta =0 $ then $ \mu $ is measurable in $ M_0 $ above $ \kappa $ and below $ \kappa^* $, and clearly $ \left( \mbox{cf}(\mu) \right)^V > \kappa $. So, again, there exists $ \gamma< \kappa^* $ such that $ \mu = \mu_{\gamma} $, and $ k_{\gamma}\left( \mu_{\gamma} \right) = \mu^* $.
\end{proof}

\begin{corollary} \label{Corollary: NS, best definition of mu_alpha}
Assume that $ \alpha<\kappa^* $ is limit, and denote $ \bar{\mu} = \sup\{ \mu_{\alpha'} \colon \alpha'<\alpha \} $. Assume that $ \bar{\mu} $ is measurable in $ M_{\alpha} $. Then $ \left( \mbox{cf}(\alpha) \right)^V$ is either $ \omega $ or $ \kappa^+ $. In the former case, $ \bar{\mu} $ is measurable in $ M $. In the latter case, $ \bar{\mu} = \mu_{\alpha} $ is a non-measurable inaccessible cardinal in $ M $. Moreover:
\begin{enumerate}
	\item If $ \bar{\mu} $ is not measurable in $ M_{\alpha} $ or $ \left(\mbox{cf}\left( \alpha \right)\right)^V > \kappa $, $ \mu_{\alpha} $ is the first measurable $\geq$ $ \bar{\mu} $ in $ M_{\alpha} $ (this includes the case where $ \alpha $ is successor, since, in this case, $ \mu_{\alpha-1} $ is not measurable in $ M_{\alpha} $).
	\item Else, $ \bar{\mu} $ is measurable in $ M_{\alpha} $ and $\left(\mbox{cf}(\alpha)\right)^V = \omega$, and then $ \mu_{\alpha} = \bar{\mu} $.
\end{enumerate}
\end{corollary}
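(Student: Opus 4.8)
\emph{Plan of proof.} Everything is deduced from Corollary~\ref{Corollary: NS, better definition of mu_alpha} together with three inputs: (i) since $\kappa^* = j_U(\kappa)$ and $\mbox{GCH}_{\leq\kappa}$ holds, $\kappa^{+} < \kappa^* < \kappa^{++}$, so $|\gamma|^V \leq \kappa^{+}$ and hence $\left(\mbox{cf}(\gamma)\right)^V \leq \kappa^{+}$ for every ordinal $\gamma < \kappa^*$; (ii) the Section~3 lemma that $\left(\mbox{cf}(\lambda)\right)^V \geq \kappa^{+}$ for $\lambda = \mbox{crit}(k_\alpha) = \mu_\alpha$; and (iii) the combination of Lemma~\ref{Lemma: NS, every measurable of M is the image of some mu alpha} and Corollary~\ref{Corollary: NS, how Prikry sequences look}, by which any cardinal measurable in $M$ lying strictly between $\kappa$ and $\kappa^*$ has $V$-cofinality $\omega$. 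I also use that $\alpha'\mapsto\mu_{\alpha'}$ is an increasing map of $\alpha$ with range cofinal in $\bar\mu$, so $\mbox{cf}(\alpha)=\mbox{cf}(\bar\mu)$ in any model containing this map.

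I would first dispose of Item~1, which merely re-reads Corollary~\ref{Corollary: NS, better definition of mu_alpha}. If $\alpha=\beta+1$, then $\mu_\beta=\mbox{crit}\bigl((j_{U_{\mu_\beta}})^{M_\beta}\bigr)$ and $U_{\mu_\beta}$ has Mitchell order $0$, so $\mu_\beta=\bar\mu$ is inaccessible but not measurable in $M_{\beta+1}=M_\alpha$ (the computation used in the proof of ``$\mbox{crit}(k_\alpha)=\mu_\alpha$''); hence ``first measurable above $\bar\mu$'' and ``first measurable $\geq\bar\mu$'' coincide, which is the successor clause. If $\alpha$ is limit and $\bar\mu$ is not measurable in $M_\alpha$, the two phrasings again coincide, and the first two clauses of Corollary~\ref{Corollary: NS, better definition of mu_alpha} give the conclusion; and if $\alpha$ is limit with $\left(\mbox{cf}(\alpha)\right)^V>\kappa$, the last clause of that corollary is literally Item~1.

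For the dichotomy, assume $\alpha$ is limit and $\bar\mu$ is measurable in $M_\alpha$, and split on $\left(\mbox{cf}(\bar\mu)\right)^V$. If $\left(\mbox{cf}(\bar\mu)\right)^V>\kappa$, then it equals $\kappa^{+}$ by (i); moreover $\bar\mu$ now meets every requirement in the definition of $\mu_\alpha$ (it is measurable in $M_\alpha$, it exceeds every $\mu_{\alpha'}$ because $\alpha$ is a limit, and its $V$-cofinality exceeds $\kappa$) and is clearly least such, so $\mu_\alpha=\bar\mu=\mbox{crit}(k_\alpha)$; since $\mu_\alpha$ is no longer measurable in $M_{\alpha+1}=\mbox{Ult}(M_\alpha,U_{\mu_\alpha})$, stays inaccessible there, and is fixed by $j_{\alpha+1,\kappa^*}$, it is a non-measurable inaccessible cardinal of $M=M_{\kappa^*}$: the ``latter case''. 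If $\left(\mbox{cf}(\bar\mu)\right)^V\leq\kappa$, then $\bar\mu$ fails the cofinality clause in the definition, so $\mu_\alpha$ is the least measurable of $M_\alpha$ strictly above $\bar\mu$; hence $\mbox{crit}(k_\alpha)=\mu_\alpha>\bar\mu$, so $k_\alpha(\bar\mu)=\bar\mu$ and $\bar\mu$ is measurable in $M$; since $\kappa<\bar\mu<\kappa^*$, input (iii) gives $\left(\mbox{cf}(\bar\mu)\right)^V=\omega$: the ``former case''. In both cases $\left(\mbox{cf}(\bar\mu)\right)^V\in\{\omega,\kappa^{+}\}$; transferring along the cofinal map $\alpha'\mapsto\mu_{\alpha'}$ yields $\left(\mbox{cf}(\alpha)\right)^V\in\{\omega,\kappa^{+}\}$. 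Items~1 and~2 are then the bookkeeping of these cases against Corollary~\ref{Corollary: NS, better definition of mu_alpha}: when $\bar\mu$ is not measurable in $M_\alpha$ or $\left(\mbox{cf}(\alpha)\right)^V>\kappa$ we land in the first alternative (or the ``$\mbox{cf}(\bar\mu)>\kappa$'' alternative) and $\mu_\alpha$ is the first measurable $\geq\bar\mu$; otherwise $\bar\mu$ is measurable in $M_\alpha$ with $\left(\mbox{cf}(\alpha)\right)^V=\omega$.

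The main obstacle I anticipate is the transfer of cofinality from $V[G]$ down to $V$ in the last step when the iteration $\langle M_{\alpha'}\colon\alpha'\leq\kappa^*\rangle$ is not definable over $V$ (the definable case being Theorem~\ref{Theorem: NS, full description of j_W restricted to V}), so that $\alpha'\mapsto\mu_{\alpha'}$ lives only in $V[G]$. One must then carry out the cofinality comparison inside $V[G]$ and descend using Lemma~\ref{Lemma: NS, preservation of cardinals} (preservation of cofinalities $\geq\kappa^{+}$) together with the fact that $P_\kappa$, being an iteration of Prikry forcings, changes the cofinality of a $V$-ordinal only by singularizing a $V$-measurable below $\kappa$ to $\omega$; this is what pins $\left(\mbox{cf}(\alpha)\right)^V$ to exactly $\omega$ or $\kappa^{+}$ and rules out intermediate values. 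I expect the remaining care to go into reconciling Item~2 with the definition of $\mu_\alpha$: when $\bar\mu$ is measurable in $M_\alpha$ with $\left(\mbox{cf}(\bar\mu)\right)^V=\omega$, the least measurable of $M_\alpha$ satisfying the cofinality clause is the one \emph{above} $\bar\mu$, so Item~2 must be read as a statement about $\left(\mbox{cf}(\alpha)\right)^V$, which can differ from $\left(\mbox{cf}(\bar\mu)\right)^V$ precisely when the iteration fails to be $V$-definable.
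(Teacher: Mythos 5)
Your proof takes essentially the same approach as the paper's: split on a cofinality, use the definition of $\mu_\alpha$ and $\text{crit}(k_\alpha)=\mu_\alpha$ to force a strict inequality $\bar\mu<\mu_\alpha$ in one branch, then invoke Lemma~\ref{Lemma: NS, every measurable of M is the image of some mu alpha} and Corollary~\ref{Corollary: NS, how Prikry sequences look} to conclude that $\bar\mu$ is measurable in $M$ with $V$-cofinality $\omega$; in the other branch, $\bar\mu$ itself satisfies the definition of $\mu_\alpha$, so $\bar\mu=\mu_\alpha$ and is a non-measurable inaccessible of $M$. The only organizational difference is that you split on $\left(\mbox{cf}(\bar\mu)\right)^V$ while the paper splits on $\left(\mbox{cf}(\alpha)\right)^V$; since the paper's opening step ``if $\left(\mbox{cf}(\alpha)\right)^V\leq\kappa$ then $\bar\mu<\mu_\alpha$'' secretly passes through $\left(\mbox{cf}(\bar\mu)\right)^V\leq\kappa$ anyway (via the $V[G]$-map $\alpha'\mapsto\mu_{\alpha'}$ and preservation of cofinalities $\geq\kappa^+$), the two are equivalent. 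The paper's own proof is three sentences and addresses only the $\left(\mbox{cf}(\alpha)\right)^V\leq\kappa$ case; you supply more of the bookkeeping, which is fine.

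Two of the concerns you raise deserve a clearer verdict than you give them. First, on the cofinality transfer: the map $\alpha'\mapsto\mu_{\alpha'}$ gives $\left(\mbox{cf}(\alpha)\right)^{V[G]}=\left(\mbox{cf}(\bar\mu)\right)^{V[G]}$, and preservation of cofinalities $\geq\kappa^+$ settles the $\kappa^+$ branch; but in the $\omega$ branch, passing from $\left(\mbox{cf}(\alpha)\right)^{V[G]}=\omega$ up to $\left(\mbox{cf}(\alpha)\right)^V=\omega$ requires not just the observation that $P_\kappa$ singularizes only $V$-measurables below $\kappa$, but an argument that $\left(\mbox{cf}(\alpha)\right)^V$ is not itself such a measurable. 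Neither your write-up nor the paper's (whose ``Hence $\left(\mbox{cf}(\alpha)\right)^V=\omega$'' is unjustified) actually closes this; your phrasing exhibits the gap rather than filling it. Second, your instinct that Item~2 is in tension with the rest of the statement is correct, but the reconciliation you float does not work: $\left(\mbox{cf}(\alpha)\right)^V=\omega$ already forces $\left(\mbox{cf}(\bar\mu)\right)^{V[G]}=\omega$, hence $\left(\mbox{cf}(\bar\mu)\right)^V\leq\kappa$ by preservation, hence $\mu_\alpha>\bar\mu$ -- regardless of whether the iteration is $V$-definable. So Item~2's conclusion ``$\mu_\alpha=\bar\mu$'' contradicts both the ``former case'' clause (which puts $\bar\mu<\text{crit}(k_\alpha)=\mu_\alpha$) and Corollary~\ref{Corollary: NS, better definition of mu_alpha}; it appears to be a misstatement in the paper, with the intended conclusion being $\mu_\alpha>\bar\mu$ (i.e.\ $\mu_\alpha$ is the least measurable strictly above $\bar\mu$). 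You should state this plainly as an error in the corollary rather than trying to read around it.
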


\begin{proof}
Assume that $ \bar{\mu} $ is measurable in $ M_{\alpha} $. If $ \left( \mbox{cf}(\alpha) \right)^V \leq \kappa $, then $ \bar{\mu} < \mu_{\alpha} $, so $ \bar{\mu} = k_{\alpha}\left( \bar{\mu} \right) $ is measurable in $ M $. By the previous lemma, $ \bar{\mu} = k_{\gamma}\left( \mu_\gamma  \right) $ for some $ \gamma< \kappa^* $. By corollary \ref{Corollary: NS, how Prikry sequences look}, $ \left(\mbox{cf}\left( \bar{\mu} \right)\right)^V = \omega $. Hence $ \left(\mbox{cf}(\alpha)\right)^V = \omega $. 
\end{proof}

\begin{remark}
Recall that $ j_W\left( \mathcal{U} \right)\setminus \kappa\in M $ is sufficient for the definability of  $ j_W\restriction_{V} $ over $ V $. Let us argue that it is not necessary.

For every measurable $ \eta < \kappa $, let $ \langle s^{n}(\eta) \colon n<\omega \rangle $ be the increasing enumeration of the first $ \omega $-many measurables above $ \eta $ which carry at least $ \eta $-many normal measures of Mitchell order $ 0 $. For each such $ \eta  $ and $ n<\omega $, let $\vec{F}\left( s^{n}(\eta) \right)$ be an enumeration for all the normal measures of order $ 0 $ on $ s^{n}(\eta) $. Fix an unbounded nonstationary subset $ X\subseteq \Delta $ such that for every $ \eta\in X $ and $ n<\omega $, $ s^{n}(\eta)\notin X $.  Let $ P $ be the forcing notion which uses, at stage $ s^{n}(\eta) $ where $ \eta\in X $ and $ n<\omega $, the measure which extends $ \left(\vec{F}\left( s^{n}\left(\eta\right) \right)\right)\left( \eta_n \right) $.  Here, $ \eta_n<\eta $ is the $ n $-th element in the Prikry sequence of $ \eta $ in $ M\left[H\right] $. For every other measurable, use the measure chosen first with respect to a prescribed well order of $ V_{\kappa} $.

Pick a generic set $ G\subseteq P $ such that $ G $ contains a condition $ p $ such that $ X\subseteq \mbox{supp}(p) $, but for every $ \xi\in X $, $ p\restriction_{\xi} \Vdash t^{p}_{\xi} = \langle \rangle $.

Then $ j_W\left( \mathcal{U} \right)\setminus \kappa \notin M $, since the measures used in $ j_W(P) $ on $ M $-measurables above $ \kappa $ code the Prikry sequences of all the measurables in $ j_W(X)\setminus \kappa $. 

However, $ j_W\restriction_{V} $ is definable in $ V $: Assume that $ \alpha<\kappa^* $. If there is no $ \eta\in j_{\alpha}(X) $ and $ n<\omega  $ such that $ \mu_\alpha = s^{n}(\eta) $, $ U_{\mu_{\alpha}} $ is the first measure on $ \alpha $ with respect to the image under $ j_{\alpha} $ of the prescribed well order on $ V_{\kappa} $. Otherwise, assume that $ \eta \in j_{\alpha}(X) $, $ n<\omega $ and $ \mu_\alpha = s^{n}(\eta) $. Denote $ \eta^* = k_{\alpha}\left( \eta \right) $, so that $ k_{\alpha}\left( \mu_{\alpha} \right) = s^{n}\left( \eta^* \right) $. Let $ \beta = \beta_0<\kappa^* $ be the least such that $ k_{\beta}\left( \mu_{\beta} \right) = \eta^* $. We argue that the Prikry sequence of $ \eta^* $ in $ M\left[H\right] $ is the sequence of critical points taken by iteration $ U_{\mu_{\beta} } $ $ \omega  $-many times over $ M_{\beta} $. This will follow once we prove that $ \mu_{\beta} $ is the first element in the Prikry sequence of $ \eta^* $, and this is true since $ \eta^* \in j_{W}(X) $ and there exists a condition $ p\in G $ which forces that $ t^{p}_{\xi} = \langle \rangle  $ for every $ \xi\in X $. Thus, we can assume that $ \langle \mu_{\beta_0}, \mu_{\beta_1}, \ldots, \mu_{\beta_n}, \ldots  \rangle $ is the Prikry sequence of $ \eta^* $ in $ M\left[H\right] $. 

Recall that $ k_{\alpha}\left( U_{\mu_{\alpha} } \right) = j_W\left( \mathcal{U} \right)\left( k_{\alpha}\left( \mu_{\alpha} \right) \right) $; by the definition of the forcing, $ j_W\left( \mathcal{U} \right)\left( k_{\alpha}\left( \mu_{\alpha} \right) \right) $ is the Prikry forcing taken with the measure-- 
$$\left(j_W\left( \vec{F} \right)\left( k_{\alpha}\left( \mu_{\alpha} \right) \right)\right)\left(  \mu_{\beta_n} \right)$$
thus, $ U_{\mu_{\alpha} } $ can be computed in $ V $ as follows: first, calculate over $ M_{\beta_0} $ (which is already definable in $ V $ by induction) the sequence $ \langle \mu_{\beta_n} \colon n<\omega \rangle $, which are the critical points in the iteration of length $ \omega $ with $ U_{\mu_{\beta_0} } $ over $ M_{\beta_0} $ (here, $ U_{\mu_{\beta} } $ is the least measure with respect to the image under $ j_{\beta} $ of the prescribed well order on $ V_{\kappa} $); then, compute $ U_{\mu_{\alpha} }= \left(j_{\alpha}\left( \vec{F} \right)\left( \mu_{\alpha} \right)  \right)\left( \mu_{\beta_n}  \right) $.

\end{remark}

\bibliography{NonStatRestElm.bib}

\end{document}